    \pgfplotsset{compat=1.11}
    \definecolor{MainAccent}{HTML}{57068c}
    \definecolor{SecAccent}{HTML}{ab82c5}
\newcommand{\labitem}[2]{%
\def\@itemlabel{\textbf{#1}}
\item
\def\@currentlabel{{\upshape #1}}\label{#2}}
\newtheoremstyle{boldnote}
  {\topsep} 
  {\topsep} 
  {\rm}
  {} 
  {\bfseries} 
  {.} 
  {.5em} 
  {\thmname{#1}\thmnumber{ #2}: \thmnote{\bfseries#3}}
\newtheorem{theorem}{Theorem}[section]
    \newtheorem{proposition}[theorem]{Proposition}
    \newtheorem{conjecture}[theorem]{Conjecture}
    \newtheorem{lemma}[theorem]{Lemma}
    \newtheorem{corollary}[theorem]{Corollary}
    \newtheorem*{claim*}{Claim}
\theoremstyle{definition}
    \newtheorem{remark}[theorem]{Remark}
    \newtheorem{assumption}{Assumption}[subsection]
    \newtheorem{assumptionD}{Assumption}
    \newtheorem{assumptionC}{Assumption}
\theoremstyle{boldnote}
    \newtheorem{scenario}{Scenario}
\numberwithin{equation}{section}
\newcommand{\nn}{\mathbb{N}}
\newcommand{\zz}{\mathbb{Z}}
\newcommand{\rr}{\mathbb{R}}
\newcommand{\one}{\mathbf{1}}
    \newcommand{\N}{\nn}
    \newcommand{\R}{\rr}
\newcommand{\eps}{\varepsilon}
\newcommand{\Exp}[1]{\mathrm{e}^{#1}}
\renewcommand{\complement}{\mathsf{c}}
	\providecommand*{\diff}%
	{\@ifnextchar^{\DIfF}{\DIfF^{}}}
	\def\DIfF^#1{%
	\mathop{\mathrm{\mathstrut d}}%
	\nolimits^{#1}\gobblespace}
	\def\gobblespace{%
	\futurelet\diffarg\opspace}
	\def\opspace{%
	\let\DiffSpace\!%
	\ifx\diffarg(%
	\let\DiffSpace\relax
	\else
	\ifx\diffarg%
	\let\DiffSpace\relax
	\else
	\ifx\diffarg\{%
	\let\DiffSpace\relax
	\fi\fi\fi\DiffSpace}
    \newcommand{\dd}{\diff}
\newcommand{\PP}{\mathbb{P}}
\newcommand{\pp}{\mathbf{P}}
\newcommand{\ee}{\mathbf{E}}
\newcommand{\cF}{\mathcal{F}}
\newcommand{\empi}[1]{\mu^{#1}}
\newcommand{\invm}{\pi}
\newcommand{\cL}{\mathcal{L}}
\newcommand{\BB}{\mathbb{B}}
\newcommand{\cM}{\mathcal{M}}
\newcommand{\convEvent}{B}
\newcommand{\bX}{\mathbb{X}}
\newcommand{\bXo}{\bX^{\circ}}
    \newcommand{\X}{\bX}
\newcommand{\bU}{\mathbb{U}}
\newcommand{\Ed}{\mathsf{E}}
\newcommand{\Or}{\mathsf{O}}
\newcommand{\Supt}{\mathcal{S}}
\newcommand{\Attr}{\mathcal{A}}
\newcommand{\Faces}{\mathbb{F}}
\newcommand{\oldlambda}{\Lambda_2}
\newcommand{\oldbarlambda}{\bar{\Lambda}_2}
\newcommand{\oldbarlambdastar}{\bar{\Lambda}_{2}^{+}}
\newcommand{\eivalline}{\lambda}
\newcommand{\eivalh}{\lambda_1}
\newcommand{\eivalv}{\lambda_2}
\newcommand{\eivalmoins}{\lambda_-}
\newcommand{\eivalplus}{\lambda_+}
\newcommand{\eivalplustilde}{\tilde{\lambda}_+}
\newcommand{\factorLyap}{\theta}
\newcommand{\rootEd}{\mathcal{R}}
\newcommand{\Index}{\Pi}
\newcommand{\rotat}{\Theta}
\newcommand{\Qset}{Q}
\newcommand{\Sset}{S}
    \newcommand{\Ssetin}{\Sset_{\mathrm{i}}}
    \newcommand{\Ssetout}{\Sset_{\mathrm{o}}}
\newcommand{\Rset}{R}
\newcommand{\rplus}{\delta}
\newcommand{\rmoins}{\epsilon}
\newcommand{\cT}{\tau}
\newcommand{\hit}{\zeta}
\newcommand{\exit}{\xi}
    \newcommand{\exituv}{\exit_{U_{r,2r}}}
    \newcommand{\exituh}{\exit_{U_{2r,r}}}
    \newcommand{\exitg}{\exit_{G_{r,r}}}
\newcommand{\zplus}{z^*}
\newcommand{\zmoins}{{z_*}}
\newcommand{\tauplus}{{\tau^*}}
\newcommand{\taumoins}{{\tau_*}}
\newcommand{\tauwtv}{\tau}
\newcommand{\yplus}{{y^*}}
\newcommand{\ymoins}{{y_*}}
\definecolor{green}{rgb}{0.0, 0.6, 0.0}
\definecolor{red}{rgb}{0.8, 0.1, 0.1}
\tikzset{cpatternone/.style={fill=black!10}}
\tikzset{cpatterntwo/.style={pattern={Lines[angle=45,distance={5pt}]}}}
\tikzset{cpatternthree/.style={pattern={Lines[angle=-45,distance={3pt}]}}}
\pgfmathsetmacro{\err}{0.25}
\pgfmathsetmacro{\errbutsmaller}{0.15}
\pgfmathsetmacro{\errplus}{0.25}
\pgfmathsetmacro{\errmoins}{0.825}
\pgfmathsetmacro{\near}{0.1}
\pgfmathsetmacro{\xmaximum}{1.01}
\pgfmathsetmacro{\ymaximum}{.17}
\tikzset{every picture/.style={baseline={(current bounding box.north)}}}
\newcommand{\swsaddlesmall}{\draw[->] (0,0) -- (0.5*\err,0); \draw[-<] (0,0) -- (0,0.5*\err);}
\newcommand{\swsaddletildesmall}{\draw[->] (1,0) -- (1,0.5*\err); \draw[-<] (1,0) -- (1-0.5*\err,0);}
\begin{document} 

\title{ Random attractors and nonergodic attractors \\ for diffusions with degeneracies}
\author{Yuri Bakhtin\textsuperscript{a}, Renaud Raquépas\textsuperscript{b} and Lai-Sang Young\textsuperscript{a}}
\date{}

\maketitle

\begin{center}
\small
\begin{tabular}{c c c}
   a. New York University & & b. Duke University \\
   Courant Institute of Mathematical Sciences & & Department of Mathematics\\
  New York, NY, United States & & Durham, NC, United States\\
\end{tabular}
\end{center}

\medskip

\begin{abstract}
    We consider a diffusion on a bounded domain, assuming that the system is irreducible
    inside the domain and that the diffusion has varying degree of degeneracy on the domain's boundary. The long-term statistical properties of typical trajectories started inside the domain may be governed by one invariant measure or more than one invariant measure. We describe various possible scenarios. In dimensions~$1$ and~$2$ under boundary hyperbolicity assumptions, we give a complete classification of the limiting behavior and answer the question whether sequential averaging involving more than one invariant distribution occurs. In all cases, we compute the set of weak limit points of empirical measures. Our hitting-time estimates used to prove transience or recurrence are based on a new version of the Foster--Lyapunov technique. Extensions to nonhyperbolic boundaries and higher dimensions are discussed and an application to growth rates in scalable networks is given.

    \medskip

    \noindent \textit{MSC 2020} \hspace{1.4em} 37A25, 37H30, 60J60

    \noindent \textit{Keywords} \hspace{1.75em} diffusion, noise degeneracy, boundary, nonergodic behavior, attractor, heteroclinic cycling, Lyapunov function 
\end{abstract}

\tableofcontents

\section{Introduction}
\label{sec:intro}

In this paper, we study stochastic dynamical systems defined by SDEs with a noise term that is degenerate on parts of the phase space. 
In the basic setup that our results apply to, we may assume that a bounded 
finite-dimensional domain $\bX$ with a piecewise smooth boundary is invariant under the dynamics, and that the diffusion  is nondegenerate (elliptic) in the interior of $\bX$  and degenerate on its boundary, necessarily so as trajectories cannot exit the domain. 
Although we prove our results under a weaker irreducibility assumption in place of ellipticity, in this introduction we restrict ourselves to the basic setup described above. 

Systems of this type arise naturally, e.g., in scalable reaction networks, such as metabolic networks involving the stochastic interconversion of a finite number of substances, or ecological networks describing the interaction of a finite number of species. The interaction is called scalable if changes in the relative composition of the constituent substances are independent of the total size of the system. As the quantities of substances or numbers of animals cannot be negative, such networks are a priori defined on the positive orthant $\{(x_1, \dotsc, x_N) \in \rr^N: x_i \ge 0\}$; scalability implies that the dynamics factor onto a system on the $(N-1)$-dimensional simplex $\Delta_{N-1} =\{(x_1, \dotsc, x_N): \sum_{i=1}^N x_i =1\}$; see \cite{LKYJW20}.

We are interested in the large-time dynamics of trajectories starting from initial conditions in $\bXo$, the interior of $\bX$.  For compact manifolds without boundary,
it is a standard fact that a stochastic system defined 
by a nondegenerate diffusion gives rise to a unique, hence ergodic, invariant probability measure having a Lebesgue density, and the large-time statistical properties of solutions are described by this invariant measure and do not depend on the starting point. The presence of a boundary on which the diffusion degenerates leads to a richer and more complex picture. Near the boundary, the trajectories may feel repulsion or attraction. Transient behavior with no invariant measure on $\bXo$ may emerge if trajectories are asymptotically attracted to the boundary.   Note, however, that whether or not the system carries an invariant measure on $\bXo$, the long-term scenarios starting from all initial conditions in $\bXo$ are qualitatively similar, as all starting points are 
accessible from each other.
In this sense, the system has only ``one future”, and it is this future that we seek to describe.

In the reaction network example above, attraction to the boundary translates into a tendency for one or more of the substances to deplete, or one or more of the species to go extinct. As one of the components of the system nears depletion, the dynamics are altered; see, e.g., \cite{LKYJW20,Nandori-LSY:MR4544421}. 

Leaning on the example of scalable reaction networks for motivation, we will consider a setting where the invariant domain $\bX$ is a polytope, such as a simplex or a cube. In addition to our assumption on the invariance of $\bXo$ and ellipticity of the diffusion on $\bXo$,  we impose invariance and relative ellipticity assumptions on the restriction of the system onto  each lower-dimensional polytope of the boundary of $\bX$.
Thus the diffusion has different degrees of degeneracy on boundary polytopes of different dimensions. In this setting, each ergodic probability measure is concentrated either on $\bXo$ or on one of its lower-dimensional faces, and there are finitely many such measures.  The situation where $\bXo$ is recurrent and carries an invariant measure despite the presence of the boundary is called stochastic persistence (see~\cite{Bepreprint} and references therein for results). We are also interested in the role played by ergodic invariant measures on the boundary of $\bXo$. 

Not all of these measures are equally interesting to us, however.
For almost all diffusion paths, every weak limit point of the empirical distribution is an invariant measure (see \cite{Bepreprint}) and thus a mixture of ergodic ones. To contribute to the long-term statistical properties of the trajectory, an ergodic measure must contribute to this mixture with a positive weight. We will view only those ergodic measures as “visible”, or “observable”.  

The relatively simple model setting above allows one to explore what might constitute an observable invariant measure, to characterize them and to study their accompanying dynamics. In this paper, under fairly general assumptions, we give a complete description of the dynamical picture in dimensions 1 and 2. Our results also provide a partial picture in higher dimensions, but a complete classification will require more work.

The main challenge is that we do not have the tools of ergodic theory at our disposal for dealing with dynamics of trajectories starting from $\bXo$ if it does not carry an invariant measure. Even though all initial conditions in $\bXo$ have essentially the same ``future'', that future is not necessarily governed by the usual laws of stationarity. 

For example, when there are multiple attractors on the boundary, every initial condition in $\bXo$ is attracted to each attractor with positive probability, contrary to the ergodic decomposition picture for Markov chains (but there is no contradiction because points in $\bXo$ are not typical with respect to any invariant measure). In this situation, where the trajectory has to choose one of the attractors randomly, there still is an element of ergodic theory that survives. 
Namely, statistical properties of a trajectory are governed by the ergodic probability measure concentrated on the attractor it is attracted~to.  

In general, convergence of time averages cannot be taken for granted. We will describe conditions giving rise to scenarios where none of the ergodic measures governs the averaging in the long run. Instead, in these situations, there are several relevant ergodic measures concentrated on their respective faces that contribute to the averaging. 
The system spends increasingly long epochs near these faces and makes relatively fast transitions between them. During each of these epochs the statistics of the trajectory are governed by the relevant ergodic measure on the face, so that measure is ``observable'' for this realization, but on longer time intervals, the system switches between these long epochs and none of them dominates, so the classical convergence of averages does not hold, and we can speak of a nonergodic attractor.     

Our results suggest that this scenario is quite typical of stochastic dynamics on domains with boundaries. One natural feature of the system leading to this phenomenon  is an attracting heteroclinic cycle on the boundary. However, in the presence of noise, this phenomenon is much broader than pure cycling along a chain of heteroclinic orbits connecting several saddle points.

In dimensions 1 and 2, under fairly general regularity assumptions on the diffusion (weaker than the usual ellipticity assumption) and generic hyperbolicity assumptions near the boundary, we provide a complete classification of possible behaviors. They fall into one of the following categories:  (A) scenarios with a unique ergodic attractor, (B)  scenarios where there are several ergodic attractors
and the diffusion randomly chooses one of them, (C)  scenarios with a nonergodic attractor supporting a family of nonergodic measures sampled continuously by the diffusion.

\medskip

The picture we are describing is relevant not just for dynamics in a polytope with boundaries. It is applicable to more general diffusions allowing for invariant manifolds (possibly, with corners) of varying dimensions organized into cell complexes. 
In Section~\ref{sec:discussion}, we discuss the broader context of our results and connections to the existing literature. 

Some of our results on recurrence and transience rely on an extension of the Foster--Lyapunov technique. Namely, developing an idea from~\cite{YM12}, we prove a new estimate on the average hitting time for a set. Compared to the usual setup where the Lyapunov function applied to the process is required to have a negative drift on the complement of the set, we consider 
a less restrictive one allowing this decay assumption to fail in some regions where the process spends little time; see Section~\ref{sec:YM}.

\paragraph*{Acknowledgments} The research of RR was partially funded by the Natural Sciences and Engineering Research Council of Canada and the Fonds de Recherche du Qu\'ebec -- Natures et technologies while at the Courant Institute.  YB is grateful to NSF for partial support through award DMS-2243505.
LSY is grateful to NSF for partial support through awards DMS-1901009 and DMS-2350184

\section{Main results}
\label{sec:main-results}
\subsection{Dimension 1}
\label{sec:1d}

As the boundary of a polytope is the union of lower-dimensional polytopes, we seek to build a theory of diffusions on polytopes starting with the lowest dimension. The relative simplicity of dimension~1 allows us to state succinctly our hypotheses and results all of which have higher dimensional analogs.

Consider a process~$X = (X(t))_{t\geq 0}$ driven by  a $1$-dimensional Stratonovich SDE
\begin{equation}
  \label{eq:1d-SDE} \dd X(t) = b (X(t)) \dd t +  \sigma (X(t)) \circ \dd W(t)
\end{equation}
on $\bX=[0,1]\subset\rr^1$, 
where $W = (W(t))_{t\geq 0}$ is  standard Brownian motion defined on a complete probability space $(\Omega,\cF,\pp)$. 
Throughout the paper, we use ``$\circ\dd W$'' for Stratonovich integrals and simply ``$\dd W$'' for It\^o integrals. 
We usually denote elements of $\Omega$ by $\omega$. We denote by $(\cF_t)_{t\ge 0}$ the augmented natural filtration of $W$ (for each $t\ge 0$, $\cF_t$ is generated by $(W(s))_{0\leq s\le t}$, and $\pp$-null sets).
We will work under the following assumptions:

\begin{assumption}[{\bf Smoothness}]
\label{req:smoothness-1d}
    The drift $b$ and diffusion $\sigma$ belong to $C^2(U)$, where $U\subset \rr$ is an open set containing $\bX$. 
\end{assumption}

\begin{assumption}[{\bf Invariance of boundary}]
\label{req:invars-1d}
    Both drift and diffusion leave invariant $\partial  \bX$, or equivalently $b(0)=\sigma(0)=b(1)=\sigma(1)=0$. 
\end{assumption}

\begin{assumption}[{\bf Irreducibility}]
\label{req:ellip-1d}
    For every $x\in \bXo$ and every nonempty open interval $(a,b) \subset \bXo$, there is $t>0$ such that $\pp\{X^x(t) \in (a,b)\} > 0$.
\end{assumption}

\begin{remark}
    Together, the above assumptions imply that, for every $x\in \bX^\circ = (0,1)$, we have $\sigma(x)\ne 0$, i.e., that the diffusion is elliptic. To see this, suppose for the sake of contradiction that there is $x_*\in(0,1)$ such that $\sigma(x_*) = 0$, and then without loss of generality that $b(x_*) \ge 0$. 
    By Assumption~\ref{req:smoothness-1d}, for all $t>0$ and all continuous controls $u:[0,t]\to\rr$, the (unique, continuously differentiable) solution $x(\,\cdot\,)$ of the controlled ODE
    $\dot{x}(s) = b(x(s)) + u(s) \sigma(x(s))$ with initial condition $x(0)=x_*$
    satisfies $x(t)\ge x_*$. Then, the Stroock--Varadhan support theorem~\cite[Section~{8.3}]{Stroock:MR1980149} implies that $\pp\{X^{x_*}(t) \in (0,x_*)\} = 0$ for all $t \geq 0$, contradicting Assumption~\ref{req:ellip-1d}.
\end{remark}

Assumptions~\ref{req:smoothness-1d} and~\ref{req:invars-1d} ensure that, given any initial $x\in\bX$, there is a (unique strong) solution 
$(X^x(t))_{t \geq 0}$ to the SDE~\eqref{eq:1d-SDE} in $\bX$. If $x\in\bXo$, then $X^x(t) \in \bXo$ for all $t \geq 0$, with probability~1. 
Adding Assumption~\ref{req:ellip-1d} to Assumptions~\ref{req:smoothness-1d} and~\ref{req:invars-1d}
guarantees, in particular, that there is at most one invariant probability measure concentrated on~$\bX^\circ$ (we say that a probability measure~$\mu$ is \emph{concentrated} on a set~$C$ if $\mu(C)=1$).

\begin{assumption}[{\bf Boundary hyperbolicity}]
\label{req:hyperbolicity-1d} 
    The coefficient $\eivalline^0$ in the linearization $b(x) = \eivalline^0x + o(x)$ about $x = 0$ is nonzero, and so is its counterpart~$\eivalline^1$ about $x = 1$.
\end{assumption} 

Assumption~\ref{req:hyperbolicity-1d} ensures that $x=0$ is either a {\it source} (if $\eivalline^0>0$) or
a {\it sink} (if $\eivalline^0<0$), and the same is true for $x=1$. 
This assumption (and its counterparts in higher dimensions) is easier to state and interpret for Stratonovich SDEs, which is the main reason why we define the diffusion via~\eqref{eq:1d-SDE}.

Our main result describes the asymptotic behavior of the process $X^x(t)$.
Denoting the (possibly empty) set of sinks by~$\Attr$, for $x\in\bXo$ and $k\in\Attr$,  
we define the convergence event $\convEvent^x_k=\{\lim_{t\to \infty} X^x(t)= k\}$ and its probability $p^x_k=\pp(\convEvent^x_k)$. 
The  empirical distribution ({\it aka} occupation measure) is a Borel probability measure on $\X$  defined by
\begin{equation}
  \label{eq:empirical}
 \empi{x}_t =\empi{x}_{t,\omega} = \frac{1}{t} \int_0^t \delta_{X^x_\omega(s)}
    \dd s,\quad x\in\X,\ t > 0,\ \omega\in\Omega.
\end{equation}
Convergence of measures (and in particular of empirical distributions) will always be considered with respect to the topology of weak convergence, i.e., tested against continuous functions on~$\bX$.

\begin{theorem}    
\label{thm:1d}
\label{prop:1d-sink}
    Suppose that Assumptions \ref{req:smoothness-1d}--\ref{req:hyperbolicity-1d}  hold. 
    \begin{enumerate}[I.]
    \item\label{it:there-are-sinks}
    If~$\Attr \neq \emptyset$, then we have $p^x_k>0$ for each $k \in \Attr$ and every $x \in \bXo$,  and 
    \begin{align}
        \label{eq:attr-1d-prob-1}
        \sum_{k \in \Attr} p_k^x = 1, \quad x \in \bXo.
    \end{align}
    \item\label{it:1d-averaging} 
    If $\Attr =\emptyset$, then there is a unique ergodic invariant probability measure~$\invm$ concentrated on $\bXo$, and this measure has a positive Lebesgue density on~$\bXo$. For every $x \in \bXo$, 
    {almost surely, we have $\empi{x}_t \to \invm$.}
\end{enumerate}
\end{theorem}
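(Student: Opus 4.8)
The plan is to reduce Theorem~\ref{thm:1d} to the classical scale/speed description of one-dimensional diffusions, with Assumption~\ref{req:hyperbolicity-1d} entering only through the asymptotics of the scale function and speed measure at $0$ and $1$. Since $\sigma$ is continuous, vanishes exactly on $\{0,1\}$ (Assumption~\ref{req:invars-1d} and the Remark) and $W$ may be replaced by $-W$, assume first $\sigma>0$ on $\bXo$. Rewrite~\eqref{eq:1d-SDE} in It\^o form, with drift $\tilde b=b+\tfrac12\sigma\sigma'$ and generator $L=\tfrac12\sigma^2\partial^2+\tilde b\,\partial$; fix $x_0\in\bXo$ and set $s(x)=\int_{x_0}^x\exp(-\int_{x_0}^y 2\tilde b/\sigma^2)\,\dd y$ and $m(\dd x)=\dd x/(\sigma^2s')$ on $\bXo$. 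Then $s\colon\bXo\to(s(0^+),s(1^-))$ is an increasing $C^2$-diffeomorphism and, since $X^x_t\in\bXo$ for all $t$, $\dd s(X^x_t)=s'(X^x_t)\sigma(X^x_t)\,\dd W_t$ defines a continuous local martingale. Using $b(x)=\eivalline^0 x+o(x)$, $\sigma(0)=0$ and $\sigma\in C^2$ one finds $2\tilde b(y)/\sigma^2(y)=(1+2\eivalline^0/\sigma'(0)^2)\,y^{-1}+o(y^{-1})$ near $0$ when $\sigma'(0)\ne0$, and a more singular expression of the same sign as $\eivalline^0$ when $\sigma'(0)=0$: the Stratonovich correction shifts $\tilde b$ but never moves the relevant threshold, which is the sign of $\eivalline^0$. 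Consequently $s(0^+)>-\infty$ and $\int_0 m=+\infty$ if $0$ is a sink ($\eivalline^0<0$), while $s(0^+)=-\infty$ and $\int_0 m<+\infty$ if $0$ is a source ($\eivalline^0>0$), and symmetrically at $1$ (consistent with $X^x_t$ remaining in $\bXo$).

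The engine for both parts is a dichotomy coming from this local martingale. Fix a compact $[a,b]\subset\bXo$ and let $\tau=\inf\{t\colon X^x_t\notin[a,b]\}$; on $\{\tau=\infty\}$ the bounded continuous martingale $s(X^x_{t\wedge\tau})$ converges a.s., whereas $\langle s(X^x)\rangle_\tau=\int_0^\tau s'(X^x_u)^2\sigma(X^x_u)^2\,\dd u=+\infty$ there, and a continuous martingale with infinite quadratic variation cannot converge (Dambis--Dubins--Schwarz). Hence $\pp\{\tau=\infty\}=0$: almost surely $X^x$ converges to no point of $\bXo$ and leaves every compact subset of $\bXo$. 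For part~\ref{it:there-are-sinks}, assume $\Attr\ne\emptyset$. If both endpoints are sinks, $s(X^x)$ is a bounded local martingale; if exactly one is, $s(X^x)$ shifted by the finite endpoint value is a local martingale bounded below, hence a supermartingale. Either way $s(X^x_t)$ converges a.s., and its limit, lying in $\overline{s(\bXo)}$ but not in $s(\bXo)$ by the dichotomy, must be one of the finite endpoint values; thus $X^x_t$ converges a.s.\ to a point of $\Attr$, which is~\eqref{eq:attr-1d-prob-1}. For $p^x_k>0$, take $k=0$ (the case $k=1$ is symmetric): fix $\delta\in\bXo$, and for $y\in(0,\delta)$ put $\rho=\inf\{t\colon X^y_t=\delta\}$. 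On $\{\rho=\infty\}$ the bounded martingale $s(X^y_t)$ converges and, by the dichotomy again, the limit can only be $s(0^+)$, so $X^y_t\to0$; optional stopping on $s(X^y_{\cdot\wedge\rho})$ then gives $p^y_0\ge\pp\{\rho=\infty\}=\bigl(s(\delta)-s(y)\bigr)/\bigl(s(\delta)-s(0^+)\bigr)$, which is bounded below by some $c_0>0$ for $y\le\delta/2$. By Assumption~\ref{req:ellip-1d} there is $t_1>0$ with $\pp\{X^x_{t_1}\in(\delta/4,\delta/2)\}>0$, so the Markov property yields $p^x_0\ge c_0\,\pp\{X^x_{t_1}\in(\delta/4,\delta/2)\}>0$.

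For part~\ref{it:1d-averaging}, assume $\Attr=\emptyset$: both endpoints are sources, so $s\colon\bXo\to\rr$ is an increasing diffeomorphism onto $\rr$ and $s(X^x)$ is a continuous local martingale on $\rr$. If $\langle s(X^x)\rangle_\infty<\infty$ on an event of positive probability, then $s(X^x_t)$ converges there and $X^x_t$ is forced into $\bXo$, contradicting the dichotomy; hence $\langle s(X^x)\rangle_\infty=+\infty$ a.s., $s(X^x_t)$ oscillates between $\pm\infty$, and therefore $X^x$ visits every point of $\bXo$ at arbitrarily large times — $X^x$ is recurrent in $\bXo$. By the first paragraph $m(\bXo)<\infty$, so $\invm:=m/m(\bXo)$ is a probability measure on $\bXo$ with strictly positive Lebesgue density $\bigl(\sigma^2s'\,m(\bXo)\bigr)^{-1}$; being (a multiple of) the speed measure it is invariant, and it is the only invariant probability concentrated on $\bXo$ (as noted after Assumption~\ref{req:ellip-1d}), hence ergodic. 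Finally, fixing a reference point $x_0\in\bXo$, recurrence cuts the trajectory of $X^x$, after its a.s.\ finite first hit of $x_0$, into i.i.d.\ excursions of finite mean length — the finiteness being exactly $m(\bXo)<\infty$ — so the renewal--reward/ergodic theorem for one-dimensional diffusions gives $\tfrac1t\int_0^t f(X^x_s)\,\dd s\to\invm(f)$ a.s.\ for each $f\in C(\bX)$; applying this over a countable dense family and using compactness of $\bX$ yields $\empi{x}_t\to\invm$ a.s.

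The hardest part is not the martingale dichotomy, which is robust and drives everything, but the endpoint analysis of the first paragraph, and above all the finiteness $m(\bXo)<\infty$ needed for part~\ref{it:1d-averaging}: this is precisely where hyperbolicity ($\eivalline^0,\eivalline^1>0$) is used, and one must check it survives an \emph{arbitrary}, not just first-order, vanishing of $\sigma$ at the endpoints — it does, because the drift dominates the exponent of $s'$ no matter how fast $\sigma$ degenerates, a manifestation of the Stratonovich normalization. A secondary subtlety is that part~\ref{it:1d-averaging} asks for the ergodic average from \emph{every} starting point rather than $\invm$-almost every one; this is handled through the regeneration structure at $x_0$ together with a.s.\ finiteness of the hitting time of $x_0$, rather than by a direct appeal to Birkhoff's theorem for the stationary process.
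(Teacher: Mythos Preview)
Your proof is correct and follows a genuinely different route from the paper's. The paper argues in the chart $Y=\ln X$: the It\^o drift of $Y$ tends to $\eivalline^0$ as $Y\to-\infty$, so near a source $Y$ is pushed up with exponential control on the hitting time of a compact, and near a sink it drifts down with positive probability of never returning; a finite-transitions argument gives Part~I, and the exponential return-time bounds feed into standard Foster--Lyapunov results (their references to Khasminskii/Kuksin--Shirikyan) for Part~II. You use instead the classical scale/speed picture: the sign of $\eivalline^0$ decides whether $s(0^+)$ is finite and whether $m$ is integrable there, and then $s(X)$ being a local martingale drives everything (supermartingale convergence plus the DDS dichotomy for Part~I, recurrence via DDS and the ratio/regeneration ergodic theorem for Part~II). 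Your approach is cleaner and more self-contained in dimension one, but the paper's logarithmic-Lyapunov viewpoint is the one that generalizes: it yields exponential moments for return times (their Remark after the proof, used for mixing and for the corrector $\psi$), and it is exactly the template they extend to the square in Sections~4--7, where no scale function exists. One small comment on your endpoint analysis: the statement that $2\tilde b/\sigma^2$ has ``the same sign as $\eivalline^0$'' when $\sigma'(0)=0$ is correct (the Stratonovich correction $\tfrac12\sigma\sigma'$ is then $o(y)$, so $\tilde b\sim\eivalline^0 y$ and $2\tilde b/\sigma^2\sim2\eivalline^0 y/\sigma^2$ dominates $\sigma'/\sigma$), but a uniform way to obtain all four conclusions, including the crucial $m(\bXo)<\infty$, is to observe that under the substitution $z=\int_y^{x_0}2b/\sigma^2$ both $\int_0 s'\,\dd y$ and $\int_0\dd m$ become integrals of $e^{\pm z}\cdot\sigma(y)/(2|b(y)|)$, and $\sigma(y)/|b(y)|$ stays bounded because $\sigma(y)/y$ is bounded ($C^1$) while $|b(y)|\ge c\,y$ (hyperbolicity); together with the trivial bound $1/\sigma(y)\ge c/y$ this settles the sink cases as well.
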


A proof of this simple result is given in Section~\ref{sec:proofs-1d}. Its elements will be used in broader settings. 
We will also discuss this result in broader context in Section~\ref{sec:summary-result}.

\subsection{Dimension 2}
\label{sec:2d-setup-result}

For definiteness, we work with a diffusion process
on the square $\bX=[0,1]^2$. Our results and proofs extend {\it mutatis mutandis}
to arbitrary convex polygons and their diffeomorphic sets.  
We assume that $X=(X_1,X_2) \in \bX$ is driven by a  Stratonovich SDE:
\begin{equation}
\label{eq:main-eq-vec}
        \dd X(t)=b(X(t))\dd t+\sigma_1(X(t))\circ \dd W_1(t) + \sigma_2(X(t))\circ \dd W_2(t),
\end{equation}    
where $W_1 = (W_1(t))_{t \geq 0}$ and $W_2 = (W_2(t))_{t \geq 0}$ are independent standard Brownian motions defined on a complete probability space~$(\Omega,\cF,\pp)$, and where $\sigma_1$, $\sigma_2$, and $b$ are vector fields.
We denote  the augmented natural filtration of~$W = (W_1, W_2)$ by~$(\cF_t)_{t\ge 0}$. 
Equivalently, this SDE can be written as a system
    \begin{subequations}
    \label{eq:maineq}
      \begin{align}
      \dd X_1(t) & =  b_1 (X(t)) \dd t + \sigma_{11} (X) \circ \dd W_1 (t) + 
      \sigma_{21} (X) \circ \dd W_2 (t),  \label{eq:main-eq-h} \\
      \dd X_2(t) & =  b_2 (X(t)) \dd t + \sigma_{12} (X) \circ \dd W_1 (t) + 
      \sigma_{22} (X) \circ \dd W_2 (t), \label{eq:main-eq-v}
      \end{align}
    \end{subequations}   
where 
$b(x)=(b_1(x),b_2(x))$, 
$\sigma_1(x)=(\sigma_{11}(x),\sigma_{12}(x))$, and  $\sigma_2=(\sigma_{21}(x),\sigma_{22}(x))$. Throughout the paper, for any $n\in\N$, 
any points $x,x'\in\R^n$, we denote $d(x,x')=\sum_{i=1}^n|x_i-x'_i|$, and for 
$x\in\R^n$ and  $S\subset \R^n$,
$d(x,S)=\inf\{d(x,x'):\ x'\in S\}$.

The assumptions below are parallel to those in dimension~$1$. 

\begin{assumption}[{\bf Smoothness}]
\label{req:smoothness-2d}
\label{it:S0}
    We assume that the vector fields $b,\sigma_1,\sigma_2:U\to\rr^2$ are smooth (i.e., they
     belong to~$C^\infty$) on some open set $U$ satisfying $\bX \subset U \subset \rr^2$.
\end{assumption}

For most of our results, we will, in fact, need these vector fields to belong only to $C^2$,
similarly to the $1$-dimensional case; see Remark~\ref{rem:hypoelliptic-discussion}.  

\begin{assumption}[{\bf Invariance of boundary components}]
\label{req:invar-2d}
\label{it:I0} We assume that on each edge, the vector fields $b$, $\sigma_1$, $\sigma_2$ are tangent to that edge. As a consequence, these vector fields vanish at all vertices of the square.
\end{assumption}

Assumptions~\ref{req:smoothness-2d} and~\ref{req:invar-2d}  ensure that for every initial condition $x\in\X$, there is a unique $\X$-valued strong solution $X^x(t)=X^x_\omega(t)$ of 
system~\eqref{eq:maineq} and $\bXo$ is invariant: if $x\in\bXo$, then, with probability~$1$, $X^x(t)\in\bXo$ for all $t\ge0$.  
All vertices
$\Or^0 = \Or = (0, 0)$, $\Or^1 = (1, 0)$, $\Or^2  = (1, 1)$, $\Or^3 = (0, 1)$; and (open) edges $\Ed^0 = \Ed = (0,1)\times\{0\}$, $\Ed^1 = \{1\} \times (0,1)$, $\Ed^2 = (0,1)\times\{1\}$, $\Ed^3 = \{0\} \times (0,1)$ of the square 
are also invariant under the dynamics; see Figure~\ref{fig:square}.

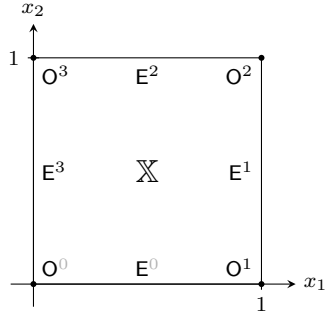
\begin{figure}
    \centering
    \begin{tikzpicture}[scale=3]
        \footnotesize
        \node at (.5,0.5) {\large $\bX$}; 

        \fill (0,0) circle (.375pt);
        \node[anchor=south west] at (0,0) {$\Or^{\color{black!30}0}$};
        \draw (0,0) -- (1,0);
        \node[anchor=south] at (.5,0) {$\Ed^{\color{black!30}0}$};
    
        \fill (1,0) circle (.375pt);
        \node[anchor=south east] at (1,0) {$\Or^1$};
        \draw (1,0) -- (1,1);
        \node[anchor=east] at (1,.5) {$\Ed^1$};
    
        \fill (1,1) circle (.375pt);
        \node[anchor=north east] at (1,1) {$\Or^2$};
        \draw (1,1) -- (0,1);
        \node[anchor=north] at (.5,1) {$\Ed^2$};
    
        \fill (0,1) circle (.375pt);
        \node[anchor=north west] at (0,1) {$\Or^3$};
        \draw (0,1) -- (0,0);
        \node[anchor=west] at (0,0.5) {$\Ed^3$};

        \draw[-stealth] (-.1,0) -- (1.15,0);
        \node[anchor=west] at (1.15,0) {$x_1$};
        \draw (1,-.025) -- (1,.025);
        \node[anchor=north] at (1,-.025) {$1$};
    
        \draw[-stealth] (0,-.1) -- (0,1.15);
        \node[anchor=south] at (0,1.15) {$x_2$};
        \draw (-.025,1) -- (.025,1);
        \node[anchor=east] at (-.025,1) {$1$};
    \end{tikzpicture}
    \caption{Sketch of the 2-dimensional setup in the coordinates $(x_1,x_2)$.
    } 
    \label{fig:square}
\end{figure}

The semigroup $(P^t)_{t\ge 0}$ associated with the SDE~\eqref{eq:main-eq-vec} is defined for bounded measurable functions~$f: \bX \to \rr$ according to the formula
\[
    (P^tf)(x)= \ee[f(X^x(t))].
\]
Under Assumptions~\ref{req:smoothness-2d} and~\ref{req:invars-1d}, this semigroup has the Feller property, i.e., it maps continuous functions to continuous functions for all $t\ge 0$. The restrictions of the process to the interior of the square, edges, and vertices are also Feller.  In particular, for every $t > 0$ and every open set $V$, the transition probabilities~$\pp\{X^x(t) \in V\}$ are jointly lower semicontinuous in~$x \in \bX$ and $t \in [0,\infty)$.

We now turn to a $2$-dimensional counterpart of the irreducibility Assumption~\ref{req:ellip-1d} that was used in dimension~1.

\begin{assumptionC}[{\bf Irreducibility}]
    \label{req:irreducibility}
    In addition to assuming that the 1-dimensional irreducibility assumption~\ref{req:ellip-1d} holds along each edge, we assume that, for every $x\in \bXo$ and every nonempty open subset $V$ of~$\bXo$, there exists $t>0$ such that $\pp\{X^x(t) \in V\} > 0$.
\end{assumptionC}  

Unlike in dimension~1, this assumption does not imply any form of ellipticity. In order to retain some important properties that were afforded to us by ellipticity in dimension 1, we introduce a form of H\"ormander's condition at one point. 

There are several useful versions of H\"ormander's condition of various generalities; see, e.g.,~\cite[Section~{6.5.2}]{Benaim-Hurth:MR4559704}. We will work with one that is sometimes called simply the {\it H\"ormander condition}, but also sometimes the {\it parabolic H\"ormander condition}. 
To introduce it, we first recall that the Lie bracket of two smooth vector fields $u,v$ at a point $x$
is a vector defined by 
\[
[u,v](x)=u (x) D v (x)- v(x) D u (x),
\] 
or, in coordinates, 
\[
[u,v]_i(x)=\sum_{j=1}^2 \big(u_j(x)\partial_j v_i(x)-v_j(x)\partial_j u_i(x)\big),\quad i=1,2.  
\]
Let $\mathfrak{L}$ be the vector space spanned by 
\[
    \sigma_1,\sigma_2,\ [\sigma_i,\sigma_j],\, 0\le i,j \le 2,\ [[\sigma_i,\sigma_j],\sigma_k],\, 0\le i,j,k \le 2,\ \ldots,
\] 
where we temporarily set $\sigma_0=b$ for notational convenience.

\begin{assumptionC}[{\bf H\"ormander condition}] 
    \label{req:Hor} We assume that 
    there is $x_* \in \bXo$  such that $\mathfrak{L}(x_*) = \rr^2$.
\end{assumptionC}

\begin{remark} 
\label{rem:hypoelliptic-discussion}    
    An important consequence of 
    Assumptions~\ref{req:smoothness-2d},~\ref{req:invars-1d},~\ref{req:irreducibility}, and~\ref{req:Hor} is that the interior of the square and all edges carry at most one invariant probability measure for the diffusion each; see~\cite[Theorem~6.34]{Benaim-Hurth:MR4559704}. 
    In fact, Assumption~\ref{req:Hor} will be needed only for this uniqueness statement in case~\ref{it:recurrent-case} of Theorem~\ref{thm:2d-trich} but we state it here to stress the connection with the ellipticity requirement in dimension~$1$. 
    The uniqueness of invariant measure is also the only part where we need $C^\infty$-smoothness of the vector fields to invoke the existing results based on hypoellipticity and Malliavin calculus.
    Had we required the stronger ellipticity property instead of the H\"ormander's hypoellipticity condition, we would have needed only a $C^2$ requirement in Assumption~\ref{req:smoothness-2d}.
    Also, for this uniqueness property in the interior,
    a weaker requirement than~\ref{req:irreducibility}  (namely, accessibility of just one point $x_*$ as in Assumption~\ref{req:Hor}) would suffice, but we also rely on~\ref{req:irreducibility} to control diffusion exit times for large compact sets in the proofs of two of our main results, Theorems~\ref{thm:2d-attr} and~\ref{thm:2d-cycle}; see Lemma~\ref{lem:exit-compact}.  
\end{remark}

\begin{remark} 
    Assumptions~\ref{req:irreducibility} and~\ref{req:Hor} cover a broad class of processes such as elliptic or hypoelliptic diffusions on $\bXo$ but they can be broadened further with minimal modifications of the proofs. Namely, one can take any compact set $K \subset \bXo$ and require Assumption~\ref{req:irreducibility} to hold only for open sets $V$ satisfying $V\subset \bXo\setminus K$ provided that Assumption~\ref{req:Hor} holds for a point $x_*\in\bXo\setminus K$. In their present form, Assumptions~\ref{req:irreducibility} and~\ref{req:Hor} are effectively stated for $K=\emptyset$.
\end{remark}  

As in dimension 1, we impose hyperbolicity conditions on the behavior of the drift $b$ near the boundary. Three such notions of hyperbolicity are presented below as Assumptions~\ref{req:hyperbolicity-2d}--\ref{req:index-stab-ne-1}. The Stratonovich form of the system helps to simplify these assumptions.

\begin{assumptionD}[{\bf Hyperbolicity of vertices}]
\label{req:hyperbolicity-2d}
    At all vertices, all the eigenvalues of linearizations of $b$ are nonzero.
\end{assumptionD}

In our setting, eigenvalues at all vertices must be real, thus,
Assumption~\ref{req:hyperbolicity-2d} means that each vertex is a {\it source} (when both eigenvalues are positive), a {\it sink} (when both eigenvalues are negative), or a {\it saddle} (when the eigenvalues have opposite signs).  We will also sometimes refer to such sinks as \emph{attracting vertices}.

The next hyperbolicity assumption concerns edges. If an invariant probability measure $\invm_\Ed$ is concentrated on the edge $\Ed=(0,1)\times\{0\}$ as in part~\ref{it:1d-averaging} of Theorem~\ref{thm:1d} applied to the restriction of~\eqref{eq:main-eq-vec} to~$\overline{\Ed}$, then we define its {\it transversal Lyapunov exponent} by 
\begin{equation}
\label{eq:average-repulsion-over-edge}
    \oldbarlambda = \int_{\Ed}\Lambda_2(x)\,\invm_{\Ed}(\dd x),
\end{equation}  
where, denoting $\partial_i=\frac{\partial}{\partial x_i}$ and $\partial_{ij}=\frac{\partial^2}{\partial x_i \partial x_j}$, 
\begin{equation}
\label{eq:new-def-oldlambda}
    \Lambda_2(x)=\partial_2 b_2(x)+\frac{1}{2}\big(\sigma_{11}(x)\partial_{12}\sigma_{12}(x)+\sigma_{21}(x)\partial_{12}\sigma_{22}(x)\big). 
\end{equation}  
It can be shown that $\oldbarlambda$ is a genuine Lyapunov exponent, i.e., the exponential rate of attraction or repulsion in the $x_2$-direction near~$\Ed$; see Remark~\ref{rem:Lambda_as_attr_rate}.  This definition is similar to the one given in 
~\cite{FK22,FK23} but it explicitly separates the contribution from the drift and the noise. 
In contrast with the one-dimensional situation, the quadratic covariation terms contributed by the noise are required in this definition.  
In the case of diagonal noise where $\sigma_{12}(x)=\sigma_{21}(x)=0$, the definition of $\oldbarlambda$ writes, more intuitively, as 
\begin{equation}
\label{eq:average-repulsion-over-edge-diag}
    \oldbarlambda = \int_{\Ed}\partial_2 b_2(x)\,\invm_{\Ed}(\dd x).
\end{equation} 

Denoting by~$\Supt$ the set of edges on which an invariant probability measure is concentrated, we can define 
transversal Lyapunov exponents analogously for all edges in $\Supt$. They are not defined for edges on which no invariant probability measure is concentrated. 
\begin{assumptionD}[{\bf Hyperbolicity of edges}]
\label{req:Lyap-of-edge}
  \rm For every edge in $\Supt$, its transversal Lyapunov exponent is nonzero.
\end{assumptionD}

Assumption~\ref{req:Lyap-of-edge} means that each edge on which an invariant probability measure is concentrated can be
either {\it attracting} (when $\oldbarlambda<0$) or {\it repelling} (when $\oldbarlambda>0$). 
Let $\Attr$ be the set of attracting vertices and edges. 
Each~$A\in\Attr$ comes with a unique ergodic probability measure~$\invm_A$. It is either a Dirac mass or a measure with the properties discussed in part~\ref{it:1d-averaging} of Theorem~\ref{thm:1d}. 

Let us now introduce some notation in order to state a 2-dimensional analog of Proposition~\ref{prop:1d-sink}. 
We will write $X^x(t)\to S$ 
as $t\to\infty$, or simply 
$X^x(t)\to S$,
as a shorthand for $\bigcap_{t > 0} \overline{\{X^x(s) : s\geq t\}} = S$, i.e, to state that
$S$ is the set of limit points of $X^x(t)$ as $t\to\infty$.
Note that $X^x(t) \to S$ implies $d(X^x(t),S) \to 0$, but not the other way around.
For  $x\in\bXo$ and $A\in\Attr$, we define
\begin{equation}
\label{eq:def-conv-prob}
    \convEvent^x_A
    =
    \left\{ 
        X^x(t)\to \overline{A}
    \right\}
    \qquad\text{and}\qquad   
    p^x_A=\pp(\convEvent^x_A).
\end{equation}
Similarly to the $1$-dimensional case, the empirical distribution $ \empi{x}_t$ is defined by~\eqref{eq:empirical}. 

\begin{theorem}[Attracting vertices and edges]
\label{th:2d_a} 
\label{thm:2d-attr}
    Suppose that Assumptions \ref{req:smoothness-2d}--\ref{req:irreducibility}
    and \ref{req:hyperbolicity-2d}--\ref{req:Lyap-of-edge} hold. 
    If there is at least one attracting vertex or edge, i.e., $\Attr \neq \emptyset$, then
    $p^x_A>0$ for each $A \in \Attr$ and every $x\in\bXo$, and        
    \begin{align}
        \label{eq:attr-with-prob-1}
        \sum_{A \in \Attr} p^x_A = 1,\quad x\in\bXo.
    \end{align}
    Moreover, for each $A \in \Attr$ and every $x\in\bXo$, on $\convEvent^x_A$, we almost surely have $\empi{x}_t \to \invm_A$ as $t\to \infty$. 
\end{theorem}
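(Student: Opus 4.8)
I would prove the three assertions of the theorem separately; the main difficulty lies in the second, $\sum_{A\in\Attr}p^x_A=1$. \emph{Positivity and uniform lower bounds.} For each $A\in\Attr$ I would first establish a local stochastic‑stability bound: there are an open neighbourhood $N_A$ of $\overline A$ and a constant $c_A>0$ with $p^y_A\ge c_A$ for every $y\in N_A\cap\bXo$. When $A$ is an attracting vertex this is standard: linearizing $b$ there (two negative eigenvalues) gives a nonnegative quadratic $V_A$ vanishing only at the vertex with $\cL V_A\le-\gamma V_A$ on a small ball $B_A$; then $\Exp{\gamma(t\wedge\tau)}V_A(X^y(t\wedge\tau))$ is a nonnegative supermartingale ($\tau$ being the exit time of $B_A$), optional stopping gives $\pp(\tau=\infty)\ge 1-V_A(y)/\min_{\partial B_A}V_A$, and on $\{\tau=\infty\}$ the supermartingale converges, forcing $V_A(X^y(t))\to 0$, i.e.\ $X^y(t)\to\overline A$. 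When $A=\Ed$ is an attracting edge ($\oldbarlambda<0$) I would run the same scheme with a transversal Lyapunov function of the form $V_A(x)=x_\perp\Exp{u(x_\parallel)}$, where $x_\perp$ is the coordinate transverse to $\Ed$, $x_\parallel$ the one along it, and $u$ solves a Poisson equation on $\Ed$ against $\invm_{\Ed}$ chosen so that $\cL V_A\le-\gamma V_A$ in a one‑sided neighbourhood of $\Ed$; this is precisely the mechanism making $\oldbarlambda$ a genuine Lyapunov exponent (Remark~\ref{rem:Lambda_as_attr_rate}). Since $V_A$ controls only $x_\perp(t)\to 0$, I would add that the along‑edge component is asymptotically the edge diffusion, which is recurrent on $\Ed$ with positive‑density invariant measure $\invm_{\Ed}$ (its endpoints being repelling for the along‑edge dynamics, by part~\ref{it:1d-averaging} of Theorem~\ref{thm:1d}), so that the limit set is exactly $\overline\Ed$. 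Given these $N_A$, Assumption~\ref{req:irreducibility} together with the Feller property (joint lower semicontinuity of $(x,t)\mapsto\pp\{X^x(t)\in N_A\}$) and the Markov property yields $p^x_A>0$ for every $x\in\bXo$, and a compactness argument upgrades this to $c(K):=\inf_{x\in K}\sum_{A\in\Attr}p^x_A>0$ for every compact $K\subset\bXo$.

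\emph{The limit set, and $\sum_A p^x_A=1$.} Put $E=\bigcup_{A\in\Attr}\convEvent^x_A$, a disjoint union since distinct $\overline A$ are distinct limit sets; since $\convEvent^x_A$ depends only on the path after any finite time, $M_t:=\pp(E\mid\cF_t)=\sum_{A\in\Attr}p^{X^x(t)}_A$ is a bounded martingale with $M_t\to\one_E$ a.s. On $E^\complement$ this forces $\sum_A p^{X^x(t)}_A\to 0$, so by the bound $c(K)$ the process leaves every compact $K\subset\bXo$ eventually; that is, $d(X^x(t),\partial\bX)\to 0$ on $E^\complement$. It remains to prove $\pp(E^\complement)=0$. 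Writing $L$ for the (nonempty, closed, connected) limit set of $X^x$, which lies in $\partial\bX$ on $E^\complement$, I would show $L=\overline A$ for some $A\in\Attr$ by analyzing the behaviour near the boundary: near a repelling edge $\Ed$ ($\oldbarlambda>0$), if $\overline\Ed\subseteq L$ then the along‑edge component is recurrent while $x_\perp(t)\to 0$, contradicting $t^{-1}\log x_\perp(t)\to\oldbarlambda$ with $\oldbarlambda>0$ (the ergodic theorem for the edge diffusion, part~\ref{it:1d-averaging} of Theorem~\ref{thm:1d}); near an edge carrying no invariant measure, its along‑edge dynamics is pushed into an endpoint, so $L$ cannot contain that open edge; near a source or saddle vertex, the linearization has a positive eigenvalue and a local Lyapunov function excludes $L$ being that single vertex. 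To exclude $L$ being a larger union of faces I would use that on a convex polygon the vertices and edges form a single cycle, so the only ``face‑cycle'' is the whole boundary, and that the whole boundary fails to be a heteroclinic cycle once $\Attr\ne\emptyset$ (a $2$‑dimensional sink is not a saddle, and any edge carrying an invariant measure is recurrent along itself and hence cannot connect its endpoints); the induced dynamics on faces then carries the process from every non‑attracting face to an adjacent one in finite time, and by the local stability of the previous paragraph it cannot approach an attracting face infinitely often without being absorbed (once more via the martingale argument above), so $L$ must be a single attracting face and $\convEvent^x_A$ occurs. I expect this boundary analysis to be the main obstacle: making the transitions near repelling edges and saddle vertices quantitative is precisely where the extended Foster--Lyapunov hitting‑time estimates of Section~\ref{sec:YM} enter (the transversal Lyapunov function fails to decay near the corners of an edge), together with the exit‑time control of Lemma~\ref{lem:exit-compact}.

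\emph{Convergence of empirical measures.} On $\convEvent^x_A$ I would argue pathwise. If $A$ is a vertex then $X^x(t)\to A$, so $\delta_{X^x(t)}\to\delta_A=\invm_A$ weakly and hence $\empi{x}_t\to\invm_A$. If $A=\Ed$ is an edge then $X^x(t)\to\overline\Ed$ gives $x_\perp(t)\to 0$, so $f(X^x(s))-g(x_\parallel(s))\to 0$ as $s\to\infty$ for every $f\in C(\bX)$, where $g\in C(\overline\Ed)$ is $f$ read in the along‑edge coordinate, and it suffices to prove $t^{-1}\int_0^t g(x_\parallel(s))\,\dd s\to\int g\,\dd\invm_{\Ed}$. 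I would let $h$ solve the Poisson equation $\cL_{\Ed}h=g-\int g\,\dd\invm_{\Ed}$ for the edge generator $\cL_{\Ed}$ (well posed as in the proof of part~\ref{it:1d-averaging} of Theorem~\ref{thm:1d}), extend it to $\tilde h$ constant in the transversal variable, and apply It\^o's formula: $\tilde h(X^x(t))-\tilde h(X^x(0))=\int_0^t(\cL\tilde h)(X^x(s))\,\dd s+M(t)$, where $M$ is a martingale with $\langle M\rangle_t\le Ct$. Since $\cL\tilde h$ involves only the along‑edge part of $\cL$, one has $(\cL\tilde h)(x_\parallel,x_\perp)\to g(x_\parallel)-\int g\,\dd\invm_{\Ed}$ as $x_\perp\to 0$, so on $\convEvent^x_A$ the integrand equals $g(x_\parallel(s))-\int g\,\dd\invm_{\Ed}+o(1)$; dividing by $t$ and using $M(t)/t\to 0$ a.s., together with a mild control of $\tilde h$ along the path (it blows up at most logarithmically at the endpoints of $\Ed$, which $x_\parallel$ approaches only sub‑exponentially fast), gives the claim. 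A countable convergence‑determining family of test functions then yields $\empi{x}_t\to\invm_A$ off a single null set.
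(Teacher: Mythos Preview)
Your proposal has the right skeleton and the right ingredients (local Lyapunov functions near attractors, a L\'evy 0--1 type absorption argument, the Poisson-equation corrector), but it diverges from the paper in two places, and in one of them there is a genuine gap.

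\textbf{On $\sum_A p^x_A=1$.} The paper does \emph{not} first reduce to the boundary and then analyse the limit set $L$. Instead it constructs, for each configuration of attractors, a concrete closed set $R$ (a central compact together with small neighbourhoods of the attractors), builds a single piecewise Lyapunov function $\Phi$ on $R^\complement$ from the local logarithmic/corrector pieces, and applies the generalized Foster--Lyapunov hitting-time estimate (Theorem~\ref{prop:YM12}) to show that $R$ is positive recurrent; then Lemma~\ref{lem:levy-conseq} finishes. Your martingale step $M_t=\sum_A p^{X^x(t)}_A\to\one_E$, forcing $d(X^x(t),\partial\bX)\to 0$ on $E^\complement$, is correct and elegant, but your subsequent ``boundary analysis'' is where the argument is incomplete. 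The claim that near a repelling edge one has $t^{-1}\log x_\perp(t)\to\oldbarlambda>0$ presupposes that the along-edge empirical distribution of the \emph{two-dimensional} process converges to $\invm_\Ed$, which you have not established (and which is circular with the conclusion you want). Likewise, ``the induced dynamics on faces carries the process from every non-attracting face to an adjacent one in finite time'' describes the boundary dynamics, not the nearby 2D process; turning it into a statement about $X$ requires uniform, quantitative escape-time bounds from neighbourhoods of saddles, sources, and repelling edges. Those bounds are exactly the content of Section~\ref{sec:proofs-Lyap}: one must glue the local Lyapunov pieces across the ``buffer'' regions between corner and edge neighbourhoods (where $\cL\Phi$ need not be negative) and balance the time spent there against the decay elsewhere, which is precisely why Theorem~\ref{prop:YM12} is needed. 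So your approach and the paper's converge on the same technical core; the paper's route is just more direct because it packages all the local escape estimates into one recurrence statement rather than tracking the process along the boundary.

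\textbf{On $\empi{x}_t\to\invm_\Ed$.} Here your approach is genuinely different and also valid. The paper uses Bena\"im's invariance theorem (Theorem~\ref{thm:benaim}) to reduce to ruling out mass at the endpoints, which it does via a time-spent estimate (Lemma~\ref{lem:ee-ap-bound}). Your direct Poisson-equation route works too, but note two points: first, $\cL\tilde h$ does involve transversal derivatives of the diffusion coefficients (cross terms), not ``only the along-edge part''; what is true (and what the paper proves as Lemma~\ref{lem:psi}) is that $\cL\tilde h(x)=\cL^\Ed h(x_1)+o(1)$ as $x_2\to 0$. Second, your control $\tilde h(X^x(t))/t\to 0$ requires $|\ln x_\parallel(t)|=o(t)$ on $\convEvent^x_\Ed$; this is true because the endpoints of an edge in $\Supt$ are sources for the along-edge dynamics, but it needs a short argument (essentially the same drift comparison as in Lemma~\ref{lem:1d-aux}).
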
  

We prove this theorem in Sections~\ref{sec:local-attr} and~\ref{sec:proofs-Lyap}. 

In dimension~$2$, it is possible for the boundary as a set to be attracting or repelling without
any individual vertex or edge being attracting or repelling. We begin with some definitions.

For a saddle at $\Or^k$, there are two possible \emph{orientations}. We define the orientation
of the saddle to be the sign of $\det(v_+^k,v_-^k)$, where eigenvectors~$v_+^k$ and $v_-^k$ for the eigenvalues $\eivalplus^k>0$ and $\eivalmoins^k<0$ of the linearization of $b$ at a vertex~$\Or^k$ are chosen to point along edges towards adjacent vertices\,---\,this is possible due to Assumptions~\ref{req:smoothness-2d} and~\ref{req:invar-2d}.  

We say that the vertices form a 
{\it stochastic cycle} if they are all saddles and 
are consistently oriented, i.e., they have the same orientation.
Stochastic cycles are reminiscent of heteroclinic cycles in deterministic dynamics, except
that  no assumptions are made about the dynamics on the edges connecting the saddles.
The \emph{stability index} $\Index$ of a stochastic cycle is defined to be
\begin{equation}
    \label{eq:product-of-rhos} 
\Index =  \rho^0 \rho^1 \rho^2 \rho^3 
\end{equation}
where, for each $k=0,1,2,3$,
\begin{equation}
    \label{eq:rhos}
\rho^k=\frac{|\eivalmoins^k|}{\eivalplus^k}.
\end{equation}
We say the cycle is {\it stable} if $\Index > 1$, and {\it unstable} if $\Index < 1$.

\begin{assumptionD}[{\bf Hyperbolicity of stochastic cycles}]
\label{req:index-stab-ne-1} If the vertices form a stochastic cycle, we 
 assume that $\Index\ne 1$.
\end{assumptionD}

The next theorem summarizes the picture when there is a  stable stochastic 
cycle. Roughly, almost all sample paths
spend asymptotically $100\%$ of their time near the 4 vertices, with
the duration of visit to each corner increasing exponentially.

\begin{theorem}[Stable stochastic cycle]
\label{th:2d_b} 
\label{thm:2d-cycle}
    In addition to Assumptions~\ref{req:smoothness-2d}--\ref{req:irreducibility} and
    \ref{req:hyperbolicity-2d}, we assume the vertices of $\bX$
    form a stable stochastic cycle, which without loss of generality we take to be
    positively oriented. 
    Then, given any $x \in \bX^\circ$, the following holds with probability~1: 
    \begin{enumerate}
        \item\label{it:conv-to-bdry} 
        We have $X^x(t) \to \partial\bX$ as $t\to\infty$.

        \item\label{it:mostly-corners} 
        For any choice of neighborhoods $U^k$ of $\Or^k$, $k \in \{0,1,2,3\}$, we have 
        $
            \empi{x}_t(\bigcup_{k=0}^3 U^k) \to 1
        $ as $t\to\infty$.
        \item\label{it:recent-corner-is-essential}
        For any choice of a neighborhood $U^k$ of $\Or^k$, $k \in \{0,1,2,3\}$, there exists a constant $c \in (0,1)$ and a sequence $(T^k_n)_{n\in\nn}$ of times converging to $+\infty$ with the property that 
        $
        X^x(t) \in U^k
        $
        for all $t \in [cT^k_n, T^k_n]$ and $n\in\nn$.
    \end{enumerate}
\end{theorem}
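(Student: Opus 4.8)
The plan is to prove that, from any $x\in\bXo$, the process almost surely enters and then remains forever in a \emph{cycling regime} in which it visits small fixed neighborhoods of $\Or^0,\Or^1,\Or^2,\Or^3$ in this cyclic order, with the distance to $\partial\bX$ tending to $0$; the three assertions will then follow by tracking the time scales of the cycling. The core is a local one-passage lemma near a single vertex. Since the cycle is positively oriented, near $\Or^k$ the edge $\Ed^k$ is the unstable direction and $\Ed^{k-1}$ the stable one; in coordinates $(y_1,y_2)$ with $y_1$ measuring the distance to $\Ed^{k-1}$ and $y_2$ the distance to $\Ed^k$, the drift is $(\eivalplus^k y_1,\eivalmoins^k y_2)+O(|y|^2)$ and, because the diffusion vector fields vanish at $\Or^k$ and are tangent to the two edges, the noise is multiplicative of order $O(|y|)$; hence in logarithmic coordinates $(\log y_1,\log y_2)$ the process compares to a planar Brownian motion with constant drift $(\eivalplus^k,\eivalmoins^k)$ and bounded diffusion matrix. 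From this comparison, together with Assumption~\ref{req:irreducibility} to bring the process into position near the incoming edge, I would derive: there are a box size $\delta>0$ and a function $\phi$ with $\phi(0{+})=0$ such that, conditioned on entering the box $[0,\delta]^2$ near $\Ed^{k-1}$ at transversal distance $\eps$, with probability at least $1-\phi(\eps)$ the process exits through the $\Ed^k$-face after spending time $\tfrac{1}{\eivalplus^k}\log(1/\eps)\,(1+o(1))$ in the box and reaches a neighborhood of $\Or^{k+1}$ at transversal distance $\eps'$ satisfying $\bigl|\log(1/\eps')-\rho^k\log(1/\eps)\bigr|\le C\sqrt{\log(1/\eps)}+C$ with Gaussian-type tails uniform in small $\eps$; moreover $\phi(\eps)$ decays like a power of $\eps$. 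The passage along the edge $\Ed^k$ between the two boxes adds only $O(1)$ to times and to $\log$-transversal distances, by the same multiplicative-noise comparison near the edge and by Theorem~\ref{thm:1d} applied to $\overline{\Ed^k}$, for which $\Or^{k+1}$ is the unique sink and which therefore transports the edge process from near $\Or^k$ to near $\Or^{k+1}$ in finite time with good tails.

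\textbf{Iterating the passage and reaching the regime.} Iterating the one-passage lemma around the cycle and using the hypothesis $\Index=\rho^0\rho^1\rho^2\rho^3>1$, if $L_n$ denotes $\log(1/\eps)$ at the $n$-th entry near $\Or^0$ then $L_{n+1}=\Index L_n+O(\sqrt{L_n})$; choosing $\delta$ small enough that $\log(1/\delta)$ exceeds an explicit threshold, the sequence $(L_n)$ grows like $\Index^n$ and never returns below the threshold, and the failure probabilities $\sum_n\phi(\mathrm e^{-L_n})$ are summable (in fact super-exponentially small). By Borel--Cantelli, once the process enters some vertex box the cycling continues forever with probability at least $q>0$ (indeed close to $1$ for $\delta$ small). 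To see that a vertex box is entered almost surely, note that by the forthcoming Lemma~\ref{lem:exit-compact} the process exits every compact subset of $\bXo$ in almost surely finite time, and combined with Assumption~\ref{req:irreducibility} this forces the process, unless it is already cycling, to enter vertex boxes infinitely often; applying the strong Markov property and a conditional Borel--Cantelli argument at the successive entry times, at least one of these entries almost surely initiates a forever cycle. In particular the process cannot converge to a single vertex, since near any vertex the transversal coordinate has strictly positive drift and hence leaves every fixed neighborhood. Thus the cycling regime is reached and sustained almost surely.

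\textbf{Reading off the three statements.} Let $L_n^{(k)}$ be the $\log$-transversal distance at the $n$-th entry into the $\Or^k$-box; within one cycle these four quantities are comparable up to bounded factors, and from one cycle to the next each is multiplied by $\Index+o(1)$, so all $L_n^{(k)}\to\infty$ at rate $\Index^n$. The $\Or^k$-box visit on cycle $n$ lasts $\tfrac{1}{\eivalplus^k}L_n^{(k)}\,(1+o(1))$, while each inter-box edge transit lasts only $O(1)$; hence the total time $T$ elapsed by the end of cycle $n$ is dominated by the sum of the four box-visit durations on cycle $n$, which is in turn comparable to any single one of them. For part~\ref{it:recent-corner-is-essential}: given a neighborhood $U^k\ni\Or^k$, the process enters $U^k$ within $O(1)$ of entering the $\Or^k$-box (by the contraction of $y_2$) and stays in $U^k$ until it exits it; so, with $T_n^k$ the time of that exit, for a suitable fixed $c\in(0,1)$ one has $X^x(t)\in U^k$ for all $t\in[c\,T_n^k,T_n^k]$, because $T_n^k$ is comparable to the $\Or^k$-visit duration on cycle $n$, which is comparable to the in-$U^k$ portion of that visit. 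For part~\ref{it:mostly-corners}: the time spent up to $t$ outside $\bigcup_kU^k$ is at most a constant times the current cycle number, hence $o(t)$ since $t$ is at least of order $\Index$ to that cycle number, so $\empi{x}_t(\bigcup_kU^k)\to1$. For part~\ref{it:conv-to-bdry}: inside the $\Or^k$-box on cycle $n$ one has $d(X^x(t),\partial\bX)=\min(y_1,y_2)$, whose maximum over a hyperbolic passage tends to $0$ with the entry transversal distance, so $d(X^x(t),\partial\bX)\to0$; and during each cycle the process passes within a vanishing distance of every point of every edge and of every vertex, so its set of limit points is exactly $\partial\bX$, giving $X^x(t)\to\partial\bX$.

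\textbf{Main obstacle.} The crux is the one-passage lemma near a saddle whose noise degenerates on the two adjacent edges: one must simultaneously control which face the box is exited through, the (polynomially small) probability of a wrong exit, the new transversal distance together with its tail, and the passage time, all uniformly as the entry distance tends to $0$, relying only on the multiplicative structure of the noise rather than on ellipticity — this is where the logarithmic change of coordinates and the comparison with Brownian motion with drift do the work, and where Assumption~\ref{req:irreducibility} is used to place the process near the incoming edge. A secondary difficulty is the ``the cycling regime is reached almost surely'' step: each boundary return yields a forever cycle only with probability bounded below, so it must be combined with the recurrence estimate of Lemma~\ref{lem:exit-compact} through a conditional Borel--Cantelli argument rather than a single application of the strong Markov property.
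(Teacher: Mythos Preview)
Your overall architecture matches the paper's proof closely: local passage near a saddle in logarithmic coordinates, an edge-transit estimate, combination into a one-step lemma (the paper's Proposition~\ref{lem:highprob-events}), iteration using $\Index>1$ with summable failure probabilities (Proposition~\ref{prop:eventual-cycling}), and a L\'evy~0-1/conditional Borel--Cantelli argument via Lemma~\ref{lem:exit-compact} to enter the regime almost surely. The time-scale bookkeeping for parts~\ref{it:conv-to-bdry}--\ref{it:recent-corner-is-essential} is also the same as the paper's.

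There is, however, a genuine gap in your one-passage lemma. You claim that from an entry at transversal distance~$\eps$ into the square box $[0,\delta]^2$, the process exits through the ``forward'' face with probability at least $1-\phi(\eps)$ where $\phi(\eps)\to 0$. This is false: the wrong-exit event is that $Y_2=\log x_2$ climbs from $\log\delta$ back up by a fixed amount against drift $\eivalmoins<0$, and the probability of this depends on~$\delta$ and the martingale bounds but \emph{not} on~$\eps$ (the $Y_1$-coordinate). The paper's Lemma~\ref{lem:proto-corner-prob} accordingly only gives a uniform lower bound $p>0$ on the forward-exit probability. The paper repairs this in the proof of Proposition~\ref{lem:highprob-events}: one allows the process to bounce back and forth a geometric number of times between the corner box and the adjacent edge strip (stopping times $\chi_n$), using Proposition~\ref{prop:corner}(b), Proposition~\ref{lem:2-->1,3}, and Proposition~\ref{lem:exit-from-G} to show each bounce is short ($\le |y_2|^{1/2}$) and changes $Y_2$ by at most $|y_2|^{1/2}$; Lemma~\ref{lem:cumulative-displacement} then controls the cumulative displacement over $\nu\le |y_2|^{1/8}$ bounces. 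Only after incorporating these bounces does the ``with high probability the process eventually goes forward'' statement hold with error $\to 0$ as the log-distance $|y_2|\to\infty$. A second, smaller imprecision: your edge transits are not $O(1)$ almost surely but only in expectation; the paper uses $\le |y_2|^{1/2}$ with stretched-exponential tails, which is what feeds into the $\eps|y_2|$ error in Proposition~\ref{lem:highprob-events} and the $o(1)$ empirical-mass statement.
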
  

Parts~\ref{it:conv-to-bdry} and~\ref{it:recent-corner-is-essential} of Theorem~\ref{thm:2d-cycle}, imply that, almost surely, visits to the different neighborhoods~$U^k$ occur cyclically. In fact, this cycling is key to our analysis in Section~\ref{sec:main-heteroc-proof}.

While Theorem~\ref{thm:2d-cycle} describes the set of limit points of the process $(X_t)_{t\geq 0}$ in the space $\bX$ when the vertices form a stable stochastic cycle, we are also interested in the limit points of the process~$(\empi{x}_t)_{t > 0}$ in the space $\PP(\X)$ of Borel probability measures on~$\bX$. The following general result of Bena\"im~\cite[Section~{2.1}]{Bepreprint} already narrows down the search to convex combinations of only a few ergodic measures.

\begin{theorem}
\label{thm:benaim}
    Suppose that $P=(P^t)_{t\ge 0}$ is the semigroup associated with a homogeneous Feller Markov process $(X_t)_{t\geq 0}$ with continuous paths on a compact metric space $\X$. 
    Then, for each $x\in\X$, with probability 1, all weak limit points of empirical distributions~$(\empi{x}_t)_{t > 0}$ as $t\to\infty$ are invariant under~$P$.  
\end{theorem}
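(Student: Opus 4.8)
The plan is the classical pathwise Krylov--Bogolyubov argument: reduce the uncountable family of invariance identities defining $P$-invariance to a countable one, then verify each of those by a law-of-large-numbers estimate along the trajectory.

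First I would record a countable invariance criterion. Since $\X$ is compact metric, $C(\X)$ is separable; fix a countable sup-norm-dense set $\mathcal D\subset C(\X)$ and a countable dense set $\mathcal Q\subset[0,\infty)$. Then a Borel probability measure $\nu$ on $\X$ is $P$-invariant if and only if $\int_\X(P^sf-f)\,\dd\nu=0$ for all $f\in\mathcal D$ and all $s\in\mathcal Q$. The nontrivial points are: $P^s$ is Feller, so $P^sf-f\in C(\X)$ and the integrals make sense; density of $\mathcal D$ together with the fact that $P^s$ is a sup-norm contraction upgrades the identity to all $f\in C(\X)$; for fixed $f$, the map $s\mapsto P^sf(y)=\ee[f(X^y(s))]$ is continuous by path-continuity of $X^y$ and dominated convergence, so dominated convergence in $\nu$ upgrades the identity to all $s\ge0$; and $\nu P^s$ and $\nu$, being finite Borel measures on a Polish space that agree against $C(\X)$, must coincide.

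Next, fixing $x\in\X$, $f\in\mathcal D$, $s\in\mathcal Q$ and writing $g:=P^sf-f$, I would prove that almost surely $\int_\X g\,\dd\empi{x}_t=\tfrac1t\int_0^t g(X^x(u))\,\dd u\to0$. By the Markov property, $P^sf(X^x(u))=\ee[f(X^x(u+s))\mid\cF_u]=f(X^x(u+s))-\xi_u$ with $\xi_u:=f(X^x(u+s))-\ee[f(X^x(u+s))\mid\cF_u]$, so a change of variables gives, for $t\ge s$,
\[
  \int_0^t g(X^x(u))\,\dd u=\int_t^{t+s}f(X^x(v))\,\dd v-\int_0^s f(X^x(v))\,\dd v-\int_0^t\xi_u\,\dd u,
\]
where the first two terms are bounded by $s\|f\|_\infty$ and hence negligible after dividing by $t$. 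For the remaining term, the tower property gives $\ee[\xi_v\mid\cF_r]=0$ whenever $r\le v$, hence $\ee[\xi_u\xi_v]=0$ whenever $|u-v|\ge s$, while trivially $|\ee[\xi_u\xi_v]|\le4\|f\|_\infty^2$; since $(u,\omega)\mapsto\xi_u(\omega)$ is jointly measurable (using Feller-continuity of $P^sf$), Fubini yields $\ee\big[(\int_0^t\xi_u\,\dd u)^2\big]\le8s\|f\|_\infty^2\,t$. Thus $\tfrac1t\int_0^t\xi_u\,\dd u\to0$ in $L^2$, and Borel--Cantelli along $t=k^2$, combined with the bound $|\xi_u|\le2\|f\|_\infty$ to interpolate between consecutive squares, promotes this to almost-sure convergence.

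Finally, intersecting the countably many almost-sure events over $(f,s)\in\mathcal D\times\mathcal Q$ gives a full-measure event on which $\int_\X(P^sf-f)\,\dd\empi{x}_t\to0$ for all such pairs. On this event, if $\empi{x}_{t_n}\to\nu$ weakly along some $t_n\to\infty$, then since each $P^sf-f$ is bounded and continuous, $\int_\X(P^sf-f)\,\dd\nu=\lim_n\int_\X(P^sf-f)\,\dd\empi{x}_{t_n}=0$, so $\nu$ is $P$-invariant by the criterion above. The only step requiring a little thought is the $L^2$ estimate: the process $t\mapsto\int_0^t\xi_u\,\dd u$ just barely fails to be a martingale because of the length-$s$ overlap in the conditioning, so instead of a martingale strong law I would use the elementary second-moment bound above (alternatively, one could pass through a discrete-time martingale strong law for the time-$s$ skeleton chain and then compare with the continuous-time average); everything else is routine.
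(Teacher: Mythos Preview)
Your argument is correct: the decomposition $P^sf(X^x(u))-f(X^x(u))=f(X^x(u+s))-f(X^x(u))-\xi_u$, the telescoping that leaves only boundary terms plus $\int_0^t\xi_u\,\dd u$, the $s$-band orthogonality giving the $O(t)$ second-moment bound, and the Borel--Cantelli plus interpolation to upgrade $L^2$ to almost-sure convergence are all standard and sound. The countable reduction via separability of $C(\X)$ and continuity of $s\mapsto P^sf$ is also fine.

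There is nothing to compare with, however: the paper does not prove Theorem~\ref{thm:benaim}. It is quoted as a known result of Bena\"im (see the reference to \cite[Section~2.1]{Bepreprint} immediately preceding the statement) and used as a black box in the analysis of limit points of empirical measures. Your write-up would thus serve as a self-contained replacement for that citation rather than an alternative to anything in the paper.
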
   

In the case of a stable stochastic cycle, Theorem~\ref{thm:1d} precludes the existence of invariant measures giving positive measure to any edge, and Theorem~\ref{thm:2d-cycle} precludes the existence of invariant measures giving positive measure to interior of the square, leaving us with the 3-dimensional simplex $\Delta$ of mixtures of $\delta_{\Or^0},\delta_{\Or^1},\delta_{\Or^2},\delta_{\Or^3}$. The next proposition (and the comments that follow) completes this picture.

\begin{proposition}
\label{prop:quadri}
    {Under the conditions of Theorem~\ref{thm:2d-cycle},} given any $x \in \bXo$, with probability~1, the set of weak limit points of~$(\empi{x}_t)_{t > 0}$ is a closed curve~$\Gamma$ that consists of 4 distinct, nondegenerate line segments in the relative interior of the simplex~$\Delta$, and is independent of $x$. In particular, given any $x \in \bXo$, with probability~1, the measures~$(\empi{x}_t)_{t > 0}$ fail to converge weakly.
\end{proposition}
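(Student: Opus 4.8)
The idea is to promote the qualitative cycling recorded in Theorem~\ref{thm:2d-cycle} to a quantitative statement about the sizes of the successive ``epochs'' spent near the vertices, and then to read off the trajectory of $(\empi{x}_t)_{t>0}$ inside the simplex $\Delta$ of convex combinations of $\delta_{\Or^0},\delta_{\Or^1},\delta_{\Or^2},\delta_{\Or^3}$. Fix $x\in\bXo$ and work on the probability-one event carrying the conclusions of Theorem~\ref{thm:2d-cycle}. By Theorem~\ref{thm:benaim} every weak limit point of $(\empi{x}_t)_{t>0}$ is invariant, and, as noted in the text, Theorem~\ref{thm:1d} (each edge of a stochastic cycle has a source at one endpoint and a sink at the other, so carries no invariant probability measure on its interior), together with part~\ref{it:conv-to-bdry} of Theorem~\ref{thm:2d-cycle}, forces every invariant probability measure to be supported on the four vertices; hence every weak limit point lies in $\Delta$. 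What remains is to pin down exactly which subset of $\Delta$ is realized.

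\textbf{Limiting ratios of epoch lengths.}
By parts~\ref{it:conv-to-bdry} and~\ref{it:recent-corner-is-essential} of Theorem~\ref{thm:2d-cycle}, once $t$ is large the path stays near $\partial\bX$ and visits fixed small $\delta$-neighborhoods $U^0,\dots,U^3$ of the vertices cyclically, with sojourn times tending to infinity. Label the successive sojourns by $m=1,2,\dots$; let $v_m\in\{\Or^0,\dots,\Or^3\}$ be the vertex of the $m$-th sojourn, $\tau_m$ its length, and $S_m=\tau_1+\dots+\tau_m$. The hyperbolic estimates near the saddles (already used in the proof of Theorem~\ref{thm:2d-cycle}) show that if the path enters $U^k$ with transverse, outgoing-edge coordinate of size $\epsilon_k$, then it stays for time $\tau_m=(\eivalplus^k)^{-1}\log(\delta/\epsilon_k)+O(1)$ while its incoming-edge coordinate contracts to size of order $\delta^{\,1-\rho^k}\epsilon_k^{\rho^k}$, which becomes $\epsilon_{k+1}$ after an $O(1)$-length transit along the next edge. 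With $L_k=\log(1/\epsilon_k)$ this yields $L_{k+1}=\rho^kL_k+\beta_k$, where $\beta_k$ is controlled by the inter-vertex transit times and by the nonlinear corrections to the linearizations; one shows, using the hyperbolic structure and part~\ref{it:mostly-corners} of Theorem~\ref{thm:2d-cycle}, that $\beta_k/L_k\to0$. Because $\rho^0\rho^1\rho^2\rho^3=\Index>1$, the $L_k$ then grow geometrically along the cycle, so $L_{k+1}/L_k\to\rho^k$, and therefore
\[
  \frac{\tau_{m+1}}{\tau_m}\ \longrightarrow\ r_k:=\frac{|\eivalmoins^k|}{\eivalplus^{k+1}}\qquad\text{whenever }v_m=\Or^k ,
\]
a deterministic number independent of $x$, with $r_0r_1r_2r_3=\Index$.

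\textbf{The curve $\Gamma$.}
Since the $\tau_m$ grow geometrically and the transits cost negligible time, $\empi{x}_{S_m}=S_m^{-1}\sum_{i\le m}\tau_i\,\delta_{v_i}$ up to an error that tends to $0$ with $\delta$; telescoping $\tau_{i+1}/\tau_i\to r_{v_i}$ and summing the resulting geometric series in the $r_k$ shows that, as $m\to\infty$ through the indices with $v_m=\Or^j$, $\empi{x}_{S_m}$ converges to an explicit point $P_j\in\Delta$ whose four coordinates are positive ratios of products of the $r_k$, so $P_j$ lies in the relative interior of $\Delta$ and does not depend on $x$. For $t\in[S_m,S_{m+1}]$ one has $\empi{x}_t=\tfrac{S_m}{t}\empi{x}_{S_m}+(1-\tfrac{S_m}{t})\,\nu_{[S_m,t]}$ with $\nu_{[S_m,t]}\to\delta_{v_{m+1}}$ once $t-S_m$ is large, so $\empi{x}_t$ follows, up to $o(1)$, the segment from $\empi{x}_{S_m}$ to $\empi{x}_{S_{m+1}}$, which converges in Hausdorff distance to $[P_j,P_{j+1}]\subset[P_j,\delta_{\Or^{j+1}}]$; this segment is nondegenerate because $S_m/S_{m+1}$ stays bounded away from $0$ and $1$ and $P_j\neq\delta_{\Or^{j+1}}$. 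Hence the set of weak limit points of $(\empi{x}_t)_{t>0}$ is exactly the closed curve $\Gamma=\bigcup_{j=0}^{3}[P_j,P_{j+1}]$ (indices mod $4$), made of four nondegenerate segments and independent of $x$. That these four segments are distinct reduces to $P_0\neq P_2$ and $P_1\neq P_3$, which is again where $\Index>1$ enters: from the explicit formulas, an equality $P_0=P_2$ (or $P_1=P_3$) would force $A^2=\Index^{-1}$ for a partial sum $A>1$, which is impossible. Finally, $\Gamma$ is not a single point, so $(\empi{x}_t)_{t>0}$ does not converge weakly.

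\textbf{Main obstacle.}
The delicate step is the second one: turning the purely topological cycling of Theorem~\ref{thm:2d-cycle} into the sharp assertion that consecutive epoch lengths have a deterministic limit ratio. This rests on the local hyperbolic control of entrance/exit coordinates and sojourn times near the saddles, and, crucially, on showing that the accumulated random corrections $\beta_k$ (from the inter-vertex transits and the nonlinear terms) are asymptotically small compared with $L_k$ on the relevant event. Much of this analysis is already available from the proof of Theorem~\ref{thm:2d-cycle} in Section~\ref{sec:main-heteroc-proof}; the remaining, more routine, content of Proposition~\ref{prop:quadri} is the bookkeeping in $\Delta$ carried out above, including the appeal to $\Index>1$ to guarantee that $\Gamma$ is a genuine quadrilateral rather than a degenerate curve.
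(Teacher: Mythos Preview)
Your proposal is correct and follows essentially the same approach as the paper: both extract from the machinery of Section~\ref{sec:main-heteroc-proof} (specifically Proposition~\ref{prop:eventual-cycling}, which packages the hyperbolic entrance/exit estimates of Proposition~\ref{lem:highprob-events}) the deterministic limiting ratios of successive epoch lengths, compute the resulting checkpoint measures $P_j$ (the paper's $\overline{\mu}_k$ in Corollary~\ref{cor:empirical-limit-points-cycle}), interpolate between checkpoints to trace out the quadrilateral, and invoke $\Index>1$ for nondegeneracy. Your ``main obstacle'' paragraph correctly identifies that the quantitative step $\beta_k/L_k\to 0$ is exactly what Proposition~\ref{prop:eventual-cycling} supplies; the paper's version of your distinctness argument (Corollary~\ref{cor:dyn-conseq-of-high-probab} and the corollary following it) shows $\overline{\mu}_k\neq\overline{\mu}_{k+1}$ and nonplanarity directly via the relation~\eqref{eq:increase-k-empi}, which is cleaner than your $A^2=\Index^{-1}$ reduction.
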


We identify the precise line segments forming the closed curve~$\Gamma$ in Proposition~\ref{prop:quadri} as part of its proof. The proof also shows that, given any~$x$, almost surely, up to any given tolerance, the path~$(\empi{x}_t)_{t > 0}$ eventually cycles along this closed curve, at a speed that converges to 0. 

\medskip

The picture described in Theorem~\ref{thm:2d-cycle} and 
Proposition~\ref{prop:quadri}, both proved in Section~\ref{sec:main-heteroc-proof},
is quite different from the ergodic behavior of nondegenerate diffusions 
on compact manifolds without boundary, where all empirical measures $\empi{x}_{t,\omega}$ 
(for all $x$ and almost every~$\omega$) converge to the unique ergodic invariant probability measure of the process. Instead, for all initial conditions inside the square, the process is almost surely attracted to the boundary, visiting the four corners of the square cyclically. While transitions from corner to corner along edges are relatively fast, the times the process
spends in the corners grow fast enough to ensure that the single most recent visit to a corner dominates the entire empirical measure, so the empirical measures do not converge. Thus, despite the compactness of the phase space $\X$ and the high regularity of the evolution (the solutions are given by a flow of diffeomorphisms defined on the entire~$\X$ including the boundary), the long-term behavior of the process is not described in terms of the traditional ergodic theory. It is one of the main messages of this paper that degeneracies of the diffusion\,---\,a property forced by the presence of a boundary in our setting\,---\,can substantially impact
the laws governing the large-time behavior of the process.

Our final classification result in dimension~2 depends on all the assumptions introduced above. This is the only result where condition~\ref{req:Hor} is used; see Remark~\ref{rem:hypoelliptic-discussion}.

\begin{theorem}[Complete classification]
\label{thm:main2d} 
\label{thm:2d-trich}
    Under Assumptions~\ref{req:smoothness-2d}--\ref{req:index-stab-ne-1}, one and only one of the following holds:
    \begin{enumerate}[I.]
        \item \label{it:attractor-case} There is an attracting vertex or edge, i.e., $\Attr\neq\emptyset$, and Theorem~\ref{thm:2d-attr} applies.
        \item \label{it:cycle-case} There is a stochastic cycle with  stability index $\Index > 1$, and Theorem~\ref{thm:2d-cycle} 
        and Proposition~\ref{prop:quadri} apply.
        \item \label{it:recurrent-case} There is an ergodic invariant probability measure~$\invm_\circ$ concentrated on $\bXo$, absolutely continuous with respect to the Lebesgue measure on~$\bXo$ and, for every $x \in \bXo$, 
        almost surely, we have $X^x(t) \to \bX$ and $\empi{x}_t \to \invm_\circ$ as $t\to\infty$.
    \end{enumerate}
\end{theorem}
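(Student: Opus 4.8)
The plan is to read off mutual exclusivity and the ``easy'' half of the trichotomy directly from Theorems~\ref{thm:1d}, \ref{thm:2d-attr}, \ref{thm:2d-cycle} and Proposition~\ref{prop:quadri}, thereby reducing everything to a single recurrence statement, and then to prove that statement by a Foster--Lyapunov argument based on the refined hitting-time estimate of Section~\ref{sec:YM}.

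For mutual exclusivity, I would first note that if the vertices form a stochastic cycle then all four are saddles\,---\,so no vertex is attracting\,---\,and a short check of the eigenvector configuration shows that, around a consistently oriented cycle, each edge $\Ed^k$ is the unstable edge of one of its two endpoints and the stable edge of the other; hence one endpoint is a source and the other a sink for the one-dimensional dynamics obtained by restricting~\eqref{eq:main-eq-vec} to $\overline{\Ed^k}$, so by part~\ref{it:there-are-sinks} of Theorem~\ref{thm:1d} no invariant probability measure is concentrated on $\Ed^k$ and no edge is attracting. Thus the hypothesis of case~\ref{it:cycle-case} forces $\Attr=\emptyset$, so cases~\ref{it:attractor-case} and~\ref{it:cycle-case} are mutually exclusive. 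Case~\ref{it:cycle-case} excludes case~\ref{it:recurrent-case} because part~\ref{it:conv-to-bdry} of Theorem~\ref{thm:2d-cycle} gives $X^x(t)\to\partial\bX$ almost surely and Proposition~\ref{prop:quadri} gives nonconvergence of $\empi{x}_t$, while case~\ref{it:recurrent-case} asserts $X^x(t)\to\bX$ and $\empi{x}_t\to\invm_\circ$. And under the hypothesis of case~\ref{it:attractor-case}, Theorem~\ref{thm:2d-attr} gives, from every $x\in\bXo$, that $\sum_{A\in\Attr}p^x_A=1$ and that $\empi{x}_t\to\invm_A$ on $\convEvent^x_A$ with $\invm_A$ concentrated on $\overline{A}\subsetneq\bX$; this contradicts $X^x(t)\to\bX$, contradicts $\empi{x}_t\to\invm_\circ$, and in fact rules out the existence of any invariant probability measure on $\bXo$, so case~\ref{it:attractor-case} excludes case~\ref{it:recurrent-case}. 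On the existence side: if $\Attr\ne\emptyset$ we are in case~\ref{it:attractor-case}; if $\Attr=\emptyset$ and the vertices form a stochastic cycle then $\Index\ne1$ by Assumption~\ref{req:index-stab-ne-1}, and if $\Index>1$ we are in case~\ref{it:cycle-case}. It thus remains to prove case~\ref{it:recurrent-case} in the \emph{residual regime}: $\Attr=\emptyset$ and either the vertices do not form a stochastic cycle, or they form an unstable one with $\Index<1$.

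The core of the proof is to show that in the residual regime $\bXo$ is positively recurrent, i.e., that there is a compact $K\subset\bXo$ whose expected hitting time from any $x\in\bXo$ is finite and bounded locally uniformly in $x$. I would obtain this from a Lyapunov function $V\in C^2(\bXo)$ with $V\ge1$ and $V(x)\to+\infty$ as $x\to\partial\bX$ such that $\mathcal{L}V\le-1$ on $\bXo\setminus K$, except on a controlled family of thin regions in which the process provably spends asymptotically negligible time, by feeding $V$ into the refined criterion of Section~\ref{sec:YM}. The local building blocks are: near a repelling edge, a small negative power of the distance to that edge has negative drift because the transversal Lyapunov exponent is positive (Assumption~\ref{req:Lyap-of-edge}); near a source vertex, a small negative power of the distance to that vertex works; near a saddle vertex the drift pushes trajectories toward one incident edge, so the bad set is a thin neighborhood of the stable direction, where the plain drift inequality fails. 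If the vertices do not form a stochastic cycle, a trajectory travelling near the boundary must, after finitely many near-passages of vertices, reach a configuration (a source vertex, a repelling edge, or an inconsistency of orientations) from which it is expelled into the bulk; if they form an unstable stochastic cycle, the ratios $\rho^k$ multiply to $\Index<1$, giving a net transverse contraction around the cycle. In both cases the refined Foster--Lyapunov criterion lets the thin bad sets be ignored, and a partition-of-unity patching assembles the local pieces into a single global $V$; the resulting hitting-time bound is precisely the positive recurrence of $\bXo$. This construction\,---\,and especially closing the transverse estimate around an unstable stochastic cycle\,---\,is the main obstacle; the remaining steps are routine.

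Granting positive recurrence, the conclusions of case~\ref{it:recurrent-case} follow by standard arguments. A Krylov--Bogolyubov procedure along the returns to $K$, together with the control of $\limsup_t\empi{x}_t(V)$ afforded by the drift inequality (which prevents occupation mass from escaping to $\partial\bX$), produces an invariant probability measure $\invm_\circ$ with $\invm_\circ(\bXo)=1$; its uniqueness is the hypoellipticity-based statement recalled in Remark~\ref{rem:hypoelliptic-discussion} (this is the only place Assumption~\ref{req:Hor} is used), and its absolute continuity with respect to Lebesgue measure follows from the same hypoellipticity. Positive recurrence together with the irreducibility Assumption~\ref{req:irreducibility} forces the trajectory, almost surely, to enter every nonempty open subset of $\bXo$ at arbitrarily large times and hence to be dense in $\bXo$; since every boundary point is a limit of such interior points, the set of limit points of $X^x(t)$ equals $\bX$, i.e., $X^x(t)\to\bX$. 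Finally, by Theorem~\ref{thm:benaim} every weak limit point of $(\empi{x}_t)_{t>0}$ is an invariant probability measure, and the bound on $\limsup_t\empi{x}_t(V)$ forces every such limit point to be concentrated on $\bXo$, hence to equal $\invm_\circ$ by uniqueness; compactness of $\PP(\bX)$ then upgrades this to $\empi{x}_t\to\invm_\circ$ almost surely, which completes case~\ref{it:recurrent-case}.
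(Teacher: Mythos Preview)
Your overall architecture is the paper's: after dispatching mutual exclusivity and invoking Theorems~\ref{thm:2d-attr} and~\ref{thm:2d-cycle} for cases~\ref{it:attractor-case} and~\ref{it:cycle-case}, case~\ref{it:recurrent-case} is reduced to positive recurrence of a central compact set, to be obtained from a patched Lyapunov function and Theorem~\ref{prop:YM12}, and then upgraded by standard machinery (the paper uses Meyn--Tweedie rather than your Krylov--Bogolyubov plus Theorem~\ref{thm:benaim}, but either route works). The gap is in your local building blocks, which is precisely where the paper does its real work.

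Near a repelling edge $\Ed^k\in\Supt$, the function $d(x,\Ed^k)^{-\alpha}$ does \emph{not} have negative drift: Assumption~\ref{req:Lyap-of-edge} guarantees only that the averaged exponent $\oldbarlambda^k$ is positive, while the pointwise rate $\oldlambda(x_1,0)$ may change sign along the edge; the paper repairs this with a corrector $\psi^k$ so that $\cL(\ln x_2^k+\psi^k)\approx\oldbarlambda^k$ uniformly (Corollary~\ref{cor:mean-H-process}). More seriously, your plan for saddles\,---\,absorbing ``a thin neighbourhood of the stable direction'' into the exceptional set $\Qset$\,---\,violates condition~(c) of Theorem~\ref{prop:YM12}: for a starting point in the corner with $\ln(x_1^k/x_2^k)$ very negative, the expected time to cross into the complementary cone is of order $-\ln(x_1^k/x_2^k)$, hence unbounded over $\Qset$. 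The paper instead uses at each vertex $\Or^k$ the weighted logarithm $-\beta_{k-1}\ln x_1^k-\beta_k\ln x_2^k$, which by Corollary~\ref{cor:Lyap-corner-pre} has negative drift on the \emph{entire} corner once $\beta_{k-1}\eivalh^k+\beta_k\eivalv^k>0$; the exceptional set $\Qset$ then consists only of the narrow gluing strips between corner and edge charts, where exit times are uniformly bounded by a fixed logarithmic increment (Lemma~\ref{lem:return-Rset-particular-c}). The algebraic heart of the argument is finding weights $(\beta_k)_{k=0}^3$ satisfying all four of these inequalities simultaneously: in the unstable-cycle subcase they read $\beta_k>\rho^k\beta_{k-1}$ and the loop closes precisely because $\Index<1$; in the non-cycle subcase the paper assigns the $\beta_k$ along a directed acyclic graph built from the saddle orientations. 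Your ``net transverse contraction'' intuition points to the former, but the actual Lyapunov construction near saddles\,---\,and along edges not in $\Supt$, which your list of building blocks omits entirely\,---\,is the missing ingredient.
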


\subsection{Towards a more general dynamical picture}\label{sec:summary-result}

Our results describe large-time behaviors of typical sample paths starting
from $\bXo$. For a fixed starting point $x \in \bXo$, we say that a dynamical
behavior is typical if it
occurs with positive probability. The irreducibility assumption on the diffusion on $\bXo$ implies that this notion does not depend on the initial condition $x \in \bXo$, and we can simply talk about typical large-time behavior.

In dimensions~$1$ and~$2$, under the assumptions given in  Sections~\ref{sec:1d} and~\ref{sec:2d-setup-result}, we
have a complete classification of all typical large-time behaviors. 
Namely, each diffusion
system satisfying these conditions fits one of the following three 
mutually exclusive
scenarios (organized slightly differently than
it is done in Theorem~\ref{thm:main2d}): 

\begin{scenario}[Unique ergodic attractor]
    \label{scen:uniqe-erg-attr}
    There is a closed set $S$ and an ergodic probability measure $\invm$ with support $S$ such that, for every $x \in \bXo$, almost surely, $X^x(t) \to S$ and $\empi{x}_t=\frac{1}{t} \int_0^t \delta_{X^x(s)}\dd s\to \invm$ as $t\to\infty$.
\end{scenario}    
This scenario may happen with either $S={\X}$ and $\invm = \invm_\circ$, or with $S$ being the closure of the unique element~$A$ of~$\Attr$ and $\invm$ being the unique ergodic measure~$\invm_A$ concentrated on~$A$.
\begin{scenario}[Random ergodic attractors]
    \label{scen:random-erg-attr}
    There are $k \geq 2$ distinct closed sets $S_1, S_2, \dotsc, S_k$ and ergodic measures $\invm_1, \invm_2, \dotsc, \invm_k$ with respective supports $S_1, S_2, \dotsc, S_k$ such that, for every~$x \in \bXo$, there exist positive numbers $p_1^x, p_2^x, \dotsc, p_k^x$ summing to 1~giving the probabilities that $X^x(t) \to {S_i}$ and $\empi{x}_t \to \invm_i$ as $t\to\infty$. In addition, ${S_j} \not\subset {S_i}$ whenever $j\neq i$.
\end{scenario}
Note that
although the supports of $\invm_i$ can be viewed as random ergodic attractors, one cannot classify the points of $\bXo$ into $k$ ergodic components, one associated with each $\invm_i$, as the numbers $p_1^x, p_2^x, \dotsc, p_k^x$ are all positive for every~$x$.

In both dimensions~1 and~2, this may happen with $S_1, S_2, \dotsc, S_k$ the closures of the distinct elements of~$\Attr$.
Note that the noninclusion property constraints the 
possible configurations: none of the $S_i$ can be all of~$\bX$, and if one of the $S_i$ is the closure of an edge, then none of the vertices it connects can be an~$S_j$.
\begin{scenario}[Unique nonergodic attractor]
    \label{scen:nonerg-attr}
    There exists a closed set~$S$ such that, for every $x \in \bXo$, almost surely, we have $X^x(t) \to S$ as $t \to \infty$, but the empirical distributions $\empi{x}_t$ do not converge as $t\to\infty$. The set~$S$ supports a continuous family~$\Gamma$ of nonergodic limit points of $\empi{x}_t$ as $t \to \infty$.
\end{scenario}
This scenario is ruled out in dimension~1, but may happen in dimension 2, with $S = \partial \bXo$ and $\Gamma$ the curve described in Theorem~\ref{prop:quadri}.

\medskip

In this paper, we have taken the view that not all ergodic invariant measures are equally important, namely, some are {\it observable}  and others are not. In Scenarios~\ref{scen:uniqe-erg-attr} and~\ref{scen:random-erg-attr}, the observable measures are the ergodic measures supported by attractors. For all large $t$, one observes only the statistics described by these ergodic measures.
In Scenario~\ref{scen:nonerg-attr}, 
the limit points of the empirical measures are mixtures of 
Dirac measures at vertices, and contributions of these Dirac measures  to the mixtures are positive.
This means that all these ergodic measures
are observable: a typical trajectory is sequentially exposed to averaging with respect to each of them. They all contribute to the long-term statistics of the system, and these statistics are time dependent. However, the length of epochs during which the statistics of each measure is sampled grows exponentially.

Our broader aim in future work is to identify all observable invariant measures of diffusions satisfying some genericity assumptions. In addition to dimensions~$1$ and $2$, our results in this
paper give a partial description of observable invariant measures in arbitrary dimension, though
in high dimensions the scenarios are not limited to those described above, and 
more work is needed for a complete classification; see Section 8.

Our results can be interpreted in terms of recurrence and transience (persistence and extinction in the terminology of~\cite{Bepreprint,FSpreprint}). In fact, we provide a more detailed classification of dynamical and statistical behavior, especially for the transient case, where we describe how the diffusion gets attracted to the boundary.

\section{Proofs in dimension 1}
\label{sec:proofs-1d}
In this section, we give a proof of Theorem~\ref{prop:1d-sink}. Although we do not claim any novelty here, this study prepares us for the analysis in dimension~2.

We begin with a lemma collecting some ingredients we need. For~$x,x'\in\bXo$, we denote 
$\tau_{x,x'}=\inf\{t\ge 0:\ X^x(t)=x' \}$, where we set $\inf \emptyset=+\infty$.
For~$x,x'\in\bXo$ and $k\in\{0,1\}$, we denote
\begin{align*}
    p_k^x(x')&=\pp (B_k^x \cap  \{\tau_{x,x'}=\infty\}),\\
    q^{x}(x')&= \pp \{\tau_{x,x'}<\infty\}.
\end{align*}     
    
\begin{lemma} \label{lem:1d-aux} 
Suppose Assumptions~\ref{req:smoothness-1d}--\ref{req:ellip-1d} hold.
\begin{enumerate}
    \item\label{it:repelling-endpoint0}
    If $\lambda_0>0$, then there is $x'\in(0,1)$ such that $p_0^x(x')=0$ and $q^x(x')=1$ for all $x\in(0,x')$; moreover,
    for any such choice of $x'$ and for all $x\in(0,x')$, the tail $\pp\{\tau_{x,x'}>t\}$ decays exponentially in~$t$.
    \item\label{it:attracting-endpoint0}
    If $\lambda_0<0$, then there is $x'\in(0,1)$ such that  $p_0^x(x')>0$ and $q^x(x')<1$ for all $x\in(0,x')$. 
    \item\label{it:exit-from-middle} 
    If $0<x'<x<x''<1$, then $\pp\{\tau_{x,x'}\wedge \tau_{x,x''}<\infty \}=1$.
\end{enumerate}
\end{lemma}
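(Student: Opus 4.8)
\textit{Overview.} All three statements are classical one-dimensional facts, and the plan is to derive them from Foster--Lyapunov estimates with power test functions capturing the linearization of~\eqref{eq:1d-SDE} at the degenerate endpoint $0$. First I would record the setup: by the Remark following Assumption~\ref{req:ellip-1d} we have $\sigma(x)\neq0$ on $(0,1)$, so there the process is a genuine elliptic diffusion with It\^o drift $\tilde b=b+\tfrac12\sigma\sigma'$ and generator $\cL f=\tilde b\,f'+\tfrac12\sigma^2 f''$. From $b(x)=\lambda_0x+o(x)$ and $\sigma(x)=\sigma'(0)x+O(x^2)$ (recall $\sigma(0)=0$) I would extract the local expansions $\tilde b(x)=(\lambda_0+\tfrac12\sigma'(0)^2)x+o(x)$ and $\sigma^2(x)=\sigma'(0)^2x^2+o(x^2)$; morally, near $0$ the drift of $\log X^x$ tends to $\lambda_0$, so the sign of $\lambda_0$ decides whether $0$ repels or attracts. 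By the substitution $x\mapsto1-x$ the endpoint $1$ is symmetric, and part~\ref{it:exit-from-middle} involves only a compact subinterval, so I would carry out the analysis near $0$.

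\textit{Part~\ref{it:repelling-endpoint0} ($\lambda_0>0$).} I would take $V(x)=x^{-\gamma}$ with $\gamma\in(0,2\lambda_0/\sigma'(0)^2)$ (any $\gamma>0$ if $\sigma'(0)=0$); the expansions give $\cL V(x)=\gamma x^{-\gamma}(-\lambda_0+\tfrac12\sigma'(0)^2\gamma)+o(x^{-\gamma})$, so there are $x'\in(0,1)$ and $\kappa>0$ with $\cL V\le-\kappa V$ on $(0,x')$. For $x\in(0,x')$ write $\tau=\tau_{x,x'}$ and localize at small levels $\epsilon$: on the compact $[\epsilon,x']$ the coefficients are smooth and bounded, so $e^{\kappa(t\wedge\tau\wedge\tau_{x,\epsilon})}V(X^x(t\wedge\tau\wedge\tau_{x,\epsilon}))$ is a nonnegative supermartingale started from $x^{-\gamma}$. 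Using $V\ge(x')^{-\gamma}$ and then $\epsilon\downarrow0$, $t\uparrow\infty$ gives $\ee[e^{\kappa(\tau\wedge\tau_{x,0+})}]\le(x'/x)^\gamma$ with $\tau_{x,0+}:=\lim_{\epsilon\downarrow0}\tau_{x,\epsilon}$; evaluating the same supermartingale on $\{\tau_{x,\epsilon}<\tau\}$, where $V=\epsilon^{-\gamma}$, instead gives $\pp\{\tau_{x,\epsilon}<\tau\}\le(\epsilon/x)^\gamma\to0$, hence $\pp\{\tau_{x,0+}<\tau\}=0$, i.e.\ $0$ is not reached before $x'$. Together these yield $\tau<\infty$ a.s.\ with $\ee[e^{\kappa\tau}]\le(x'/x)^\gamma$, hence $q^x(x')=1$, $p_0^x(x')\le\pp\{\tau=\infty\}=0$, and $\pp\{\tau>t\}\le(x'/x)^\gamma e^{-\kappa t}$. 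For an arbitrary admissible $x'$ the exponential tail would follow by splitting $\tau_{x,x'}$ at the first visit to a small level and combining this estimate near $0$ with the uniform exit-time bound on compact subintervals from part~\ref{it:exit-from-middle}, via the strong Markov property.

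\textit{Part~\ref{it:attracting-endpoint0} ($\lambda_0<0$).} Here I would instead take $V(x)=x^\gamma$ with $\gamma>0$ small enough that $\lambda_0+\tfrac12\sigma'(0)^2\gamma<0$; then $\cL V(x)=\gamma x^\gamma(\lambda_0+\tfrac12\sigma'(0)^2\gamma)+o(x^\gamma)$, so $\cL V\le-\kappa V$ on $(0,x')$ for some small $x'$ and $\kappa>0$, the bound persisting at $x=0$ since $V(0)=0$. For $x\in(0,x')$, $\tau=\tau_{x,x'}$: evaluating the nonnegative supermartingale $e^{\kappa(t\wedge\tau\wedge\tau_{x,\epsilon})}V(X^x(t\wedge\tau\wedge\tau_{x,\epsilon}))$ at the a.s.\ finite exit time of the bounded elliptic interval $(\epsilon,x')$ gives $(x')^\gamma\pp\{\tau<\tau_{x,\epsilon}\}\le x^\gamma$; sending $\epsilon\downarrow0$ gives $q^x(x')=\pp\{\tau<\infty\}\le(x/x')^\gamma<1$. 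On the complementary event $\{\tau=\infty\}$, of probability $\ge1-(x/x')^\gamma>0$, the nonnegative supermartingale $e^{\kappa(t\wedge\tau)}V(X^x(t\wedge\tau))$ converges a.s.\ to a finite limit; since on $\{\tau=\infty\}$ it equals $e^{\kappa t}V(X^x(t))$, this forces $V(X^x(t))=X^x(t)^\gamma\to0$, i.e.\ $X^x(t)\to0$. Hence $\{\tau=\infty\}\subseteq B_0^x$ up to a null set, so $p_0^x(x')=\pp(B_0^x\cap\{\tau=\infty\})=\pp\{\tau=\infty\}>0$.

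\textit{Part~\ref{it:exit-from-middle} and the main obstacle.} Part~\ref{it:exit-from-middle} is a routine nondegenerate exit-time estimate: on $[x',x'']\subset(0,1)$ the diffusion is uniformly elliptic ($\sigma^2\ge c_0>0$) with bounded drift, so for $\beta$ large $\psi(y)=e^{\beta y}$ has $\cL\psi\ge\delta>0$ there, and Dynkin's formula for $\psi(X^x(t\wedge\tau))$ with $\tau=\tau_{x,x'}\wedge\tau_{x,x''}$ together with boundedness of $\psi$ forces $\ee[\tau]<\infty$, hence $\pp\{\tau<\infty\}=1$ (one may alternatively cite Feller's boundary classification, or use Assumption~\ref{req:ellip-1d}, the Feller property and compactness to get a uniform-in-starting-point positive probability of exiting $(x',x'')$ within a fixed time). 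The generator computations on power functions are routine; the step I expect to be the real obstacle is the careful treatment of the degenerate endpoint---namely proving that $0$ is \emph{inaccessible before $x'$} when $\lambda_0>0$ (so that $q^x(x')$ equals exactly $1$), and that $X^x(t)$ genuinely \emph{converges} to $0$ on the survival event when $\lambda_0<0$ (rather than merely failing to hit $x'$). Both are handled above by choosing the power test function to blow up, respectively vanish, at $0$ and reading off the conclusion from the convergence of the resulting nonnegative supermartingale.
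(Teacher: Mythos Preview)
Your proof is correct, but it takes a different route from the paper. The paper passes to the logarithmic coordinate $Y(t)=\ln X(t)$, notes that the It\^o drift of $Y$ tends to $\lambda^0$ as $Y\to-\infty$, and then compares increments of $Y$ to those of a process $Z(t)=\pm ct+M(t)$ with $M$ in the martingale class~$\cM$ of Appendix~\ref{app:martingale}; parts~\ref{it:repelling-endpoint0} and~\ref{it:attracting-endpoint0} are then read off from the hitting-time lemmas for such $Z$ collected there (Corollary~\ref{cor:new-exp-moment}, Lemma~\ref{lem:drift-wins-over-mart-bound}, Corollary~\ref{cor:drift-wins-over-mart-div}). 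You instead stay in the original coordinate and run a Foster--Lyapunov supermartingale argument with the power functions $V(x)=x^{\pm\gamma}$. Both routes are classical and equally valid; yours is more self-contained for this one-dimensional lemma, whereas the paper's log-coordinate reduction is deliberately chosen because it is the template for the two-dimensional analysis in Sections~\ref{sec:local-attr} and~\ref{sec:main-heteroc-proof}, where the functions $\ln x_i$ and the class~$\cM$ reappear throughout.
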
        
\begin{proof} 
    Part~\ref{it:exit-from-middle} follows from nondegeneracy of $\sigma$ on $(x',x'')$; see, e.g., Lemma~7.4. in~\cite[Section~{5.7.A}]{KaSh}. For parts~\ref{it:repelling-endpoint0} and \ref{it:attracting-endpoint0}, we
    change variables $Y(t)=\ln X(t)$ and rewrite the resulting SDE in the It\^o form
    \begin{equation}
    \label{eq:Ito-SDE-for_Y}
        \dd Y(t) = \ell(Y(t))\dd t + s (Y(t))\dd W(t),
    \end{equation}
    where $s$ is a positive continuous  function and $\ell$ is a continuous function satisfying 
    \begin{equation}
    \label{eq:Lyap-exp-as-limit-at-infty}
        \lim_{y\to-\infty} \ell(y)=\eivalline^0.
    \end{equation}
    Since $\lambda_0\ne0$, there is $y'\in(0,1)$ and $c>0$ such that for all $y<y'$, we have $\ell(y)>c$  in the case of $\lambda_0>0$ and $\ell(y)<-c$ in the case of~$\lambda_0<0$. This allows to compare the increments of $Y$ to 
    the increments of one of the  
    processes $Z(t)=\pm c t+M(t)$ for some martingale $M\in\cM$. The definition of the class $\cM$ and various results on such processes $Z$ are given in Appendix~\ref{app:martingale}. In particular, part~\ref{it:repelling-endpoint0} follows from Corollary~\ref{cor:new-exp-moment},
    and part~\ref{it:attracting-endpoint0} follows from Lemma~\ref{lem:drift-wins-over-mart-bound} and Corollary~\ref{cor:drift-wins-over-mart-div}.
\end{proof}

\begin{proof}[Proof of Theorem~\ref{prop:1d-sink}]    
    Suppose both endpoints are sinks. Using part~\ref{it:attracting-endpoint0} of Lemma~\ref{lem:1d-aux} near both endpoints, we find $x_0, x',x'',x_1$  satisfying $0<x_0<x'<x''<x_1<1$ such that $p_0^{x_0}(x')>0$ and $p_1^{x_1}(x'')>0$. Then, we use part~\ref{it:exit-from-middle} of Lemma~\ref{lem:1d-aux} and the strong Markov property to see that the following happens with probability $1$: the process makes at most finitely many transitions between points $x_0$ and $x_1$, after which it converges to one of the endpoints. In fact, the probability that the number of such returns to $x_0$ exceeds $n$ is bounded by $((1-p_0^{x_0}(x'))(1-p_1^{x_1}(x'')))^n$.

    If only one endpoint (say, $1$) is a sink, and the other one is not, then we can use parts~\ref{it:repelling-endpoint0} 
    and~\ref{it:attracting-endpoint0} of Lemma~\ref{lem:1d-aux} near, respectively, $0$ and $1$, and find points $x_0, x',x'',x_1$  satisfying $0<x_0<x'<x''<x_1<1$ such that $q^{x_0}(x')=1$ and $p_1^{x_1}(x'')>0$. Combining this with part~\ref{it:exit-from-middle} of Lemma~\ref{lem:1d-aux} and the strong Markov property, we obtain that the following happens with probability $1$: the process makes finally many transitions between $x_0$ and $x_1$, after which it converges to $1$.
    This completes the proof of part~\ref{it:there-are-sinks} of Theorem~\ref{prop:1d-sink}.

    To prove part~\ref{it:1d-averaging}, we assume that no endpoint is a sink. By Assumption~\ref{req:hyperbolicity-1d}, this implies both endpoints are sources, and, due to part~\ref{it:repelling-endpoint0} of Lemma~\ref{lem:1d-aux} and its natural counterpart at the other endpoint, there is a compact set $R\subset \bXo$ with the following property: for every starting point in $
    \bXo\setminus R$ the hitting time for $R$ has exponential tails. Moreover, for every compact set $K\subset \bXo$, the exponential bounds on tails of hitting times for $R$ are uniform over starting points in $K$. 
    This implies existence of an invariant probability measure concentrated on~$\bXo$; see, e.g.,~\cite[Section~{4.4}]{Kha}.
    Uniqueness, convergence and absolute continuity now follow from Assumptions~\ref{req:smoothness-1d} and~\ref{req:ellip-1d}; see, e.g.,~\cite[Sections~{4.4--4.9}]{Kha} or \cite[Section~{3.0}]{KuSh}.
\end{proof}

\begin{remark} 
    In this proof, it is crucial that functions like $\Phi(x)=\ln x$ near~$0$  play the role of a 
    Foster--Lyapunov function for the process $X$ making the bounded variation part of the process $Y(t)=\Phi(X(t))$ monotone, increasing or decreasing. As a result, the long-term evolution of~$Y$ (and $X$) is dominated by these unidirectional changes.  We will use similar Lyapunov functions in our study of $2$-dimensional systems.  
    The relation~\eqref{eq:Lyap-exp-as-limit-at-infty} also allows to interpret~$\eivalline^0$ as the Lyapunov exponent, i.e., the exponential growth/decay rate of $X$ near $0$. Its 2-dimensional counterpart will also play an important role.
\end{remark}  

\begin{remark}
\label{rem:mixing}
    The aforementioned bounds on the hitting time of the compact set~$R$, together with classical lower bounds on transition probabilities from~$R$ implied by Assumptions~\ref{req:smoothness-1d} and~\ref{req:ellip-1d}, suffice to use a classical coupling argument that gives exponential mixing in the total-variation distance; see e.g.~\cite[Section~{3.0}]{KuSh}. As a particular consequence, we have that, for every $x$ in $\bXo$ and every  bounded measurable $f : \bX \to \rr$, 
    \[ 
        \int_0^\infty \Big|\ee[f(X^x(t))] - \int_{\bX} f \dd\invm \Big| \dd t < \infty.
    \]
\end{remark}

\section{Local behavior near attracting vertices and edges in dimension 2}
\label{sec:local-attr}

\subsection{Preliminaries}
\label{sec:coord-prelim}

For $f \in C^2(\bXo)$, the (local) generator $\cL f$ for the diffusion~\eqref{eq:main-eq-vec} is  defined by
\begin{align}
\label{eq:def-cL-as-diff}
    \cL f
        &= \sum_{i} \Bigl(b_{i} + \frac 12 \sum_{j} \sum_{m} \partial_{j}\sigma_{mi} \sigma_{mj} \Bigr)\partial_{i} f 
        + \frac 12 \sum_{i,j} \sum_{m} \sigma_{mi} \sigma_{mj} \partial_{ij}f.
\end{align}
The It\^o formula and the SDE~\eqref{eq:main-eq-vec} imply that if $f \in C^2(\bXo)$, then, for all $x$, with probability~$1$,
\begin{equation}
\label{eq:first-explicit-Ito}
    f(X^x(t)) - f(x) = \int_{0}^t \cL f(X^x(s)) \dd s + \sum_{i}\sum_{m} \int_0^t \partial_{i} f(X^x(s))\sigma_{mi}(X^x(s))\dd W_{m}(s),\quad t\ge 0. 
\end{equation}
We refer to \cite[Sections 4.2, 7.3, 6.1]{Oksendal:MR2001996} for the basic reminders about the It\^o formula, generators, and the transition between It\^o and Stratonovich diffusions resulting in a correction to the drift.  
The function $\cL f$ can be interpreted as the average local rate of change of the process $(f(X(t)))_{t\geq0}$. We are particularly interested in functions~$f$
admitting bounds on $\cL f$ in key regions. Relevant results are given in this section and Sections~\ref{sec:YM} and~\ref{sec:proofs-Lyap}.

In view of Assumptions~\ref{req:smoothness-2d} and~\ref{req:invar-2d},  the coefficients in~$\cL$ have linear expansions near~$\Or$ that allow us to approximate the rate of expected change of the logarithmic distance from~$\Or$.
\begin{lemma}
\label{lem:Lyap-corner-pre}
    Under Assumptions~\ref{req:smoothness-2d} and~\ref{req:invar-2d}, 
    as $d(x,\Or)\to 0$, we have
    \begin{align}
        \label{eq:logL-near-O}
        (\cL(\ln{}\circ x_1))(x) = \eivalh + o(1)
        \quad\text{ and }\quad
        (\cL(\ln{}\circ x_2))(x) = \eivalv + o(1),
    \end{align}
    where $\eivalh = \partial_1 b_1(0,0)$ and $\eivalv = \partial_2 b_2(0,0)$.
\end{lemma}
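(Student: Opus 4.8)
The plan is to evaluate $\cL(\ln{}\circ x_1)$ directly from the expression~\eqref{eq:def-cL-as-diff} and identify which contributions survive the limit $d(x,\Or)\to 0$. With $f(x)=\ln x_1$ one has $\partial_1 f=1/x_1$, $\partial_{11}f=-1/x_1^2$ and all remaining first- and second-order partials of $f$ equal to $0$, so only the index $i=1$ contributes to the drift part of~\eqref{eq:def-cL-as-diff} and only $i=j=1$ to the second-order part, yielding
\[
    (\cL(\ln{}\circ x_1))(x)=\frac{b_1(x)}{x_1}+\frac{1}{2x_1}\sum_{j=1}^2\sum_{m=1}^2\partial_j\sigma_{m1}(x)\,\sigma_{mj}(x)-\frac{1}{2x_1^2}\sum_{m=1}^2\sigma_{m1}(x)^2 .
\]

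The structural input is Assumption~\ref{req:invar-2d} applied to the edge $\Ed^3=\{0\}\times(0,1)$, along which $b,\sigma_1,\sigma_2$ are tangent: this forces $b_1$, $\sigma_{11}$ and $\sigma_{21}$ to vanish identically on $\{x_1=0\}$. Since these functions are $C^2$ by Assumption~\ref{req:smoothness-2d}, Hadamard's lemma gives $b_1(x)=x_1\beta_1(x)$ and $\sigma_{m1}(x)=x_1 s_{m1}(x)$ with $\beta_1,s_{m1}\in C^1$ near $\Or$ and $\beta_1(\Or)=\partial_1 b_1(0,0)=\eivalh$. Substituting into the displayed identity, the first term becomes $\beta_1(x)\to\eivalh$; the $j=1$ summand of the middle term combines with the last term into $\tfrac12\sum_m x_1\, s_{m1}(x)\,\partial_1 s_{m1}(x)=O(x_1)$; and the $j=2$ summand of the middle term equals $\tfrac12\sum_m\partial_2 s_{m1}(x)\,\sigma_{m2}(x)$, which tends to $0$ because $\sigma_{m2}(\Or)=0$ (every vector field vanishes at the vertex, again by Assumption~\ref{req:invar-2d}). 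Hence $(\cL(\ln{}\circ x_1))(x)=\eivalh+o(1)$, and the companion estimate for $\ln{}\circ x_2$ follows verbatim after exchanging the two coordinates, using tangency along $\Ed^0=(0,1)\times\{0\}$ to write $b_2(x)=x_2\beta_2(x)$, $\sigma_{m2}(x)=x_2 s_{m2}(x)$ with $\beta_2(\Or)=\partial_2 b_2(0,0)=\eivalv$.

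I do not anticipate a genuine obstacle here; the one point deserving care is to arrange the algebra so that the cancellation between the Stratonovich-correction piece of the drift in~\eqref{eq:def-cL-as-diff} and the genuine second-order term $-\tfrac{1}{2x_1^2}\sum_m\sigma_{m1}^2$ is manifest. This is exactly what kills the naively singular $1/x_1^2$ contribution and explains why the limit is precisely the drift linearization $\eivalh$, with no noise-induced shift. Pulling the factor $x_1$ out of $b_1$ and each $\sigma_{m1}$ at the very start makes this transparent and reduces the whole computation to evaluating continuous functions at $\Or$.
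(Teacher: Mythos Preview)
Your proof is correct and follows essentially the same route as the paper's: both compute $\cL(\ln\circ x_i)$ from~\eqref{eq:def-cL-as-diff}, exploit the vanishing of $b_i$ and $\sigma_{mi}$ on $\{x_i=0\}$ to remove the apparent $1/x_i$ and $1/x_i^2$ singularities, and identify the surviving term as $\partial_i b_i(0,0)$. The only cosmetic difference is that you invoke Hadamard's lemma to factor $b_1=x_1\beta_1$ and $\sigma_{m1}=x_1 s_{m1}$ up front, whereas the paper tracks the same information via $O$- and $o$-expansions; your packaging makes the key cancellation between the Stratonovich correction and the second-order term slightly more transparent, but the underlying computation is identical.
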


\begin{proof}
    The following summation conventions are used here and in other generator computations below: sums over~$i$ and $j$ range over~$\{1,2\}$ (two coordinates); sums over~$m$ range over~$\{1,2\}$ (two noises).
    It is convenient to separate terms with different behaviors as $x_i \to 0$ for a given~$i$:
    \begin{align}
    \label{eq:def-cL-as-diff-order-split}
        \frac 12 \sum_{j} \sum_{m} \partial_{j}\sigma_{mi} \sigma_{mj} \partial_{i} f 
        &=
        \frac 12 \sum_{m} \partial_{i}\sigma_{mi} \sigma_{mi} \partial_{i} f + \frac 12 \sum_{m} \partial_{i'}\sigma_{mi} \sigma_{mi'} \partial_{i} f,
    \end{align}
    where $i'$ is used for the element of $\{1,2\}$ that is not~$i$.
    We
    compute using~\eqref{eq:def-cL-as-diff}, \eqref{eq:first-explicit-Ito}, and~\eqref{eq:def-cL-as-diff-order-split}:
    \begin{align*}
        \cL(\ln{}\circ x_{i})
        &= \frac{b_{i}}{x_{i}}  + \sum_{m}\left[\frac{ \partial_{i'} \sigma_{mi} \sigma_{mi'}}{2x_{i}} + \frac{ \partial_{i} \sigma_{mi} \sigma_{mi}}{2x_{i}} -  \frac{\sigma_{mi} \sigma_{mi}}{2x_{i}^2}\right] \\
        &= \frac{b_{i}}{x_{i}}+ \sum_{m}\left[\frac{ \partial_{i'} \sigma_{mi}  \sigma_{mi'}}{2x_{i}} + \frac{ [x_{i}\partial_{i} \sigma_{mi}  - \sigma_{mi}] \sigma_{mi}}{2x_{i}^2}\right].
    \end{align*}
    By Assumptions~\ref{req:smoothness-2d} and~\ref{req:invar-2d}, we have $\partial_{i'} \sigma_{mi} (x)= O(x_{i})$, $\sigma_{mi'}(x) = O(x_i')$,  $\sigma_{mi}(x) = x_{i}\partial_{i} \sigma_{mi}(x) + o(x_{i})$, $\sigma_{mi}(x) = O(x_{i})$ and $b_i(x) = x_{i}\partial_{i} b_{i}(0,0) + o(x_{i}) + O(x_{i'})O(x_{i})$ as $d(x,\Or)\to 0$.
    Therefore, 
    \begin{align*}
        (\cL(\ln{}\circ x_{i}))(x) 
        &= \frac{x_{i}\partial_{i} b_{i}(0,0) + o(x_{i}) + O(x_{i'})O(x_{i})}{x_{i}} + \sum_{m}\left[\frac{ O(x_{i}) O(x_i')}{2x_{i}} + \frac{ o(x_{i}) O(x_{i})}{2x_{i}^2}\right] \\
        &= \partial_{i} b_{i}(0,0) + o(1)
    \end{align*}
    as $d(x,\Or)\to 0$.
\end{proof}

\begin{corollary}
\label{cor:Lyap-corner-pre}
    Under Assumptions~\ref{req:smoothness-2d},~\ref{req:invar-2d}, and~\ref{req:hyperbolicity-2d}, there exists $r\in(0,\tfrac 14)$ such that 
    \begin{align*}
        d(x,\Or)<r \qquad\Rightarrow\qquad
            |(\cL(\alpha \ln{}\circ x_1 + \beta \ln{}\circ x_2))(x) - (\alpha\eivalh + \beta\eivalv)| \leq \tfrac 12 |\alpha||\eivalh| + \tfrac 12 |\beta||\eivalv|.
    \end{align*}
    for all constants $\alpha$ and $\beta$.
\end{corollary}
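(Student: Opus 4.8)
The plan is to derive Corollary~\ref{cor:Lyap-corner-pre} as an immediate consequence of Lemma~\ref{lem:Lyap-corner-pre} by exploiting the linearity of the generator~$\cL$ on the argument functions and the additivity of the $o(1)$ errors. First I would observe that $\cL$ is a linear differential operator, so for any constants $\alpha,\beta$ we have
\[
    \cL(\alpha \ln{}\circ x_1 + \beta \ln{}\circ x_2) = \alpha\,\cL(\ln{}\circ x_1) + \beta\,\cL(\ln{}\circ x_2).
\]
Applying Lemma~\ref{lem:Lyap-corner-pre}, each of the two terms on the right equals $\alpha\eivalh + \alpha\,o(1)$ and $\beta\eivalv + \beta\,o(1)$ respectively as $d(x,\Or)\to 0$, so that
\[
    (\cL(\alpha \ln{}\circ x_1 + \beta \ln{}\circ x_2))(x) - (\alpha\eivalh + \beta\eivalv) = \alpha\cdot o(1) + \beta\cdot o(1)
\]
as $d(x,\Or)\to 0$.

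The key step is then to extract a \emph{single} radius~$r$ that works simultaneously for all $\alpha,\beta$. The subtlety is that the corollary's bound must be homogeneous of degree one in $(\alpha,\beta)$ — the right-hand side $\tfrac12|\alpha||\eivalh| + \tfrac12|\beta||\eivalv|$ scales linearly — so I cannot simply quantify over all real $\alpha,\beta$ in the $o(1)$ statement (which is not uniform in unbounded parameters). Instead I would apply Lemma~\ref{lem:Lyap-corner-pre} with $\alpha=1,\beta=0$ to get a radius $r_1$ such that $d(x,\Or)<r_1$ implies $|(\cL(\ln{}\circ x_1))(x) - \eivalh| \le \tfrac12|\eivalh|$, and separately with $\alpha=0,\beta=1$ to get a radius $r_2$ with the analogous bound for $x_2$. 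Here Assumption~\ref{req:hyperbolicity-2d} enters: it guarantees $\eivalh = \partial_1 b_1(0,0)\neq 0$ and $\eivalv = \partial_2 b_2(0,0)\neq 0$, so $\tfrac12|\eivalh|$ and $\tfrac12|\eivalv|$ are genuinely positive tolerances (without hyperbolicity the bound would force an exact $o(1)\to 0$ control at a fixed radius, which is not available). Set $r = \min\{r_1, r_2, \tfrac14\}$.

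Finally, for any $x$ with $d(x,\Or)<r$ and any constants $\alpha,\beta$, I would combine the two scalar estimates via the triangle inequality:
\begin{align*}
    |(\cL(\alpha \ln{}\circ x_1 + \beta \ln{}\circ x_2))(x) - (\alpha\eivalh + \beta\eivalv)|
    &\le |\alpha|\,|(\cL(\ln{}\circ x_1))(x) - \eivalh| + |\beta|\,|(\cL(\ln{}\circ x_2))(x) - \eivalv| \\
    &\le \tfrac12|\alpha||\eivalh| + \tfrac12|\beta||\eivalv|,
\end{align*}
which is exactly the claimed inequality. The proof is essentially routine; the only point requiring a moment's care is the quantifier order just discussed — one must fix the radius from the two basis cases $(\alpha,\beta)\in\{(1,0),(0,1)\}$ first and only afterwards invoke linearity for general $(\alpha,\beta)$, rather than trying to apply the (non-uniform) $o(1)$ statement directly to the combined function with arbitrary coefficients.
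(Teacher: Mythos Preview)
Your proposal is correct and is essentially the same approach as the paper's own proof, which is a one-line appeal to Lemma~\ref{lem:Lyap-corner-pre} together with the observation that Assumption~\ref{req:hyperbolicity-2d} ensures $\eivalh,\eivalv\ne 0$. You have simply unpacked this in detail, including the linearity of $\cL$, the separate choice of radii for the two coordinate functions, and the triangle inequality.
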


\begin{proof}
    This follows from Lemma~\ref{lem:Lyap-corner-pre} since, under Assumption~\ref{req:hyperbolicity-2d}, $\eivalh,\eivalv\ne 0$. 
\end{proof}

Our next goal is to obtain similar properties near the edge~$\Ed$ {using expansion of the coefficients of~$\cL$ near $\Ed$}.
\begin{lemma}
\label{lem:Lyap-edge-pre}
    Under Assumptions~\ref{req:smoothness-2d} and~\ref{req:invar-2d}, 
    as $d(x,\Ed)\to 0$,
    \begin{align}
        \label{eq:L-log-near-E}
        (\cL(\ln{}\circ x_2))(x) = \oldlambda(x_1,0) + o(1),
    \end{align}
    where $\oldlambda$ is defined by~\eqref{eq:new-def-oldlambda}.
\end{lemma}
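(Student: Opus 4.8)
The plan is to specialize to $i=2$ the explicit formula for $\cL(\ln{}\circ x_i)$ obtained in the course of the proof of Lemma~\ref{lem:Lyap-corner-pre}, namely
\[
  \cL(\ln{}\circ x_i) = \frac{b_i}{x_i} + \sum_m\left[\frac{\partial_{i'}\sigma_{mi}\,\sigma_{mi'}}{2x_i} + \frac{(x_i\partial_i\sigma_{mi} - \sigma_{mi})\,\sigma_{mi}}{2x_i^2}\right],
\]
and to read off the behavior as $x_2\to 0$ term by term, using Taylor expansion of the coefficients in the transverse variable $x_2$ about the edge $\{x_2=0\}$. The structural input is Assumption~\ref{req:invar-2d}: tangency of $b,\sigma_1,\sigma_2$ to $\Ed$ forces $b_2(x_1,0)=\sigma_{12}(x_1,0)=\sigma_{22}(x_1,0)=0$ for all $x_1$, and differentiating the last two identities along the edge also gives $\partial_1\sigma_{12}(x_1,0)=\partial_1\sigma_{22}(x_1,0)=0$. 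Throughout, $x$ is taken with $x_2>0$ and close to $\overline{\Ed}$, and all the "$o(1)$" and "$O(\cdot)$" estimates below are meant uniformly in $x_1$ over the compact set $[0,1]$ (slightly enlarged if needed), by uniform continuity of the relevant continuous first and second derivatives of $b$ and $\sigma$ on a compact neighborhood of $\overline{\Ed}$; this is also why $C^2$ regularity is enough here.

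The drift term and the self-interaction term are routine one-variable estimates analogous to those in Lemma~\ref{lem:Lyap-corner-pre}. Since $b_2(x_1,\cdot)$ vanishes at $0$ with bounded second derivative, $b_2(x)=x_2\,\partial_2 b_2(x_1,0)+O(x_2^2)$, so $b_2/x_2 = \partial_2 b_2(x_1,0)+O(x_2)$. For the last bracketed term, with $g(x_2)=\sigma_{m2}(x_1,x_2)$ one has $g(0)=0$ and $g''$ bounded, so $\sigma_{m2}=O(x_2)$ and $x_2\partial_2\sigma_{m2}-\sigma_{m2}=\int_0^{x_2}sg''(s)\,\dd s=O(x_2^2)$; hence $(x_2\partial_2\sigma_{m2}-\sigma_{m2})\sigma_{m2}/(2x_2^2)=O(x_2)$ and contributes nothing in the limit.

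The crux is the cross term $\partial_1\sigma_{m2}\,\sigma_{m1}/(2x_2)$, which carries a factor $1/x_2$ and so forces one to show that $\partial_1\sigma_{m2}$ vanishes to first order as $x_2\to 0$. This is exactly where one uses that the tangential derivative $\partial_1\sigma_{m2}(\cdot,0)$ vanishes identically on $\Ed$: integrating $\partial_2\partial_1\sigma_{m2}=\partial_{12}\sigma_{m2}$ from the edge gives $\partial_1\sigma_{m2}(x_1,x_2)=x_2\,\partial_{12}\sigma_{m2}(x_1,0)+o(x_2)$, uniformly, by uniform continuity of $\partial_{12}\sigma_{m2}$. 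Combined with $\sigma_{m1}(x)\to\sigma_{m1}(x_1,0)$, the cross term tends to $\tfrac12\,\sigma_{m1}(x_1,0)\,\partial_{12}\sigma_{m2}(x_1,0)$. Summing over $m\in\{1,2\}$ and adding the drift contribution yields
\[
  (\cL(\ln{}\circ x_2))(x) = \partial_2 b_2(x_1,0) + \tfrac12\big(\sigma_{11}(x_1,0)\,\partial_{12}\sigma_{12}(x_1,0) + \sigma_{21}(x_1,0)\,\partial_{12}\sigma_{22}(x_1,0)\big) + o(1),
\]
which is precisely $\oldlambda(x_1,0)+o(1)$ by the definition~\eqref{eq:new-def-oldlambda}. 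The only genuine obstacle is this cross term — recognizing that the transverse noise components $\sigma_{m2}$ have vanishing tangential derivative along $\Ed$, so that the $1/x_2$ singularity is cancelled and the mixed second derivatives appearing in $\oldlambda$ emerge at exactly the right order; everything else is bookkeeping parallel to Lemma~\ref{lem:Lyap-corner-pre}, plus care about uniformity in $x_1$.
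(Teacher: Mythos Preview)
Your proof is correct and follows essentially the same approach as the paper: specialize the generator formula from the proof of Lemma~\ref{lem:Lyap-corner-pre} to $i=2$, use the vanishing of $b_2,\sigma_{m2}$ (and hence of $\partial_1\sigma_{m2}$) along $\Ed$ to Taylor-expand each term in $x_2$, and identify the surviving contributions with the definition of $\Lambda_2$. Your treatment is slightly more explicit about uniformity in $x_1$ and about the integral identity $x_2\partial_2\sigma_{m2}-\sigma_{m2}=\int_0^{x_2}s\,\partial_2^2\sigma_{m2}\,\dd s$, but the argument is the same.
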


\begin{proof}
    As in the proof of Lemma~\ref{lem:Lyap-corner-pre}, we have
    \begin{align*}
        \cL(\ln{}\circ x_2)
        &= \frac{b_2}{x_2}+ \sum_{m}\left[\frac{ \partial_{1} \sigma_{m 2}  \sigma_{m 1}}{2x_2} + \frac{ [x_2\partial_{2} \sigma_{m 2}  - \sigma_{m 2}] \sigma_{m 2}}{2x_2^2}\right].
    \end{align*}
    Using Assumptions~\ref{req:smoothness-2d} and~\ref{req:invar-2d}, we find that
    \begin{align*}
        \cL(\ln{}\circ x_2)(x)
        &= \frac{x_2\partial_2b_2(x_1,0) + o(x_2)}{x_2} \\
        &\qquad\qquad{} + \sum_{m}\left[\frac{ [x_2 \partial_{21} \sigma_{m 2}(x_1,0) + o(x_2)] [\sigma_{m 1}(x_1,0) + O(x_2)]}{2x_2} + \frac{ o(x_2) O(x_2)}{2x_2^2}\right] \\
        &= \partial_2b_2(x_1,0) + \frac 12 \sum_{m} \partial_{21} \sigma_{m 2}(x_1,0) \sigma_{m 1}(x_1,0) + o(1)
    \end{align*}
    as $d(x,\Ed)\to 0$.
\end{proof}

Unlike the right-hand sides in~\eqref{eq:logL-near-O}, the right-hand side of~\eqref{eq:L-log-near-E} is not guaranteed to have a definite sign near~$\Ed$. Thus  there is no useful corollary to Lemma~\ref{lem:Lyap-edge-pre}  stated in terms of $\ln x_2$ that would be a perfect counterpart to Corollary~\ref{cor:Lyap-corner-pre}. However, following the strategy of~\cite{FK22}, we will
obtain such a counterpart for a corrected logarithmic distance. Namely, we will show  that if
\begin{equation}
\label{eq:Lyap-for-edge-attr-rep}
    \Phi(x)=\factorLyap(\ln x_2+\psi(x_1)),\quad x\in \bXo,
\end{equation}
for a suitably chosen {\it corrector}~$\psi$, then $\cL\Phi$ will be close to
$\factorLyap\oldbarlambda$  near $\Ed$. Recalling that  $\oldbarlambda$ 
was defined in~\eqref{eq:average-repulsion-over-edge} and required to be nonzero due to Assumption~\ref{req:Lyap-of-edge}, we will be able to choose the constant $\factorLyap\in\R$ to ensure an estimate like $\cL\Phi<-1$.
Thus $\Phi$ can be viewed as a Lyapunov function near $\Ed$; see Corollaries~\ref{cor:pre-mean-H-process} and~\ref{cor:mean-H-process} below for precise statements.

Before implementing this plan, let us see what Lemmas~\ref{lem:Lyap-corner-pre} and~\ref{lem:Lyap-edge-pre} say about our diffusion in a special useful coordinate system. To define the new coordinates, we fix a pair of smooth increasing functions~$y_1 : (0,1) \to \rr$ and $y_2 : (0,1) \to \rr$ with the property that
$y_i(x_i) = \ln x_i$ for $x_i < \tfrac 14$ and $y_i(x_i) = - \ln (1-x_i)$ for $x_i > \tfrac 34$. Let us also set $y = (y_1,y_2)$. 
The following lemma follows from Lemmas~\ref{lem:Lyap-corner-pre} and~\ref{lem:Lyap-edge-pre} and the It\^o formula~\eqref{eq:first-explicit-Ito}.

\begin{lemma}
    \label{lem:Y}
    Under Assumptions~\ref{req:smoothness-2d}, \ref{req:invar-2d} and \ref{req:hyperbolicity-2d}, if  $X$ satisfies~\eqref{eq:maineq}, then $Y(t)=y(X(t))$ solves  a system of It\^o SDEs
    \begin{subequations}
    \label{eq:alt-SDE-first}  
    \begin{align}
        \dd Y_{1}(t) &= \ell_1(Y(t)) \dd t + \sum_{m}s_{m 1}(Y(t)) \dd W_{m}(t), \label{eq:alt-SDE-first-h}
        \\
        \dd Y_{2}(t) &= \ell_2(Y(t)) \dd t + \sum_{m}s_{m 2}(Y(t)) \dd W_{m}(t), 
    \end{align}
    \end{subequations}
    where $\ell_1$, $\ell_2$, $s_{11}$, $s_{12}$, $s_{21}$ and $s_{22}$ are bounded and in $C^2(\rr^2)$.
    Moreover,
    there exists $y_* \in (-\infty,-1)$ such that 
    \begin{subequations}
    \label{eq:exp-near-infinities}
    \begin{align}
        y_1\wedge y_2 &< y_* 
        \qquad\Rightarrow\qquad 
        |\ell_1(y) - \eivalh| \leq \tfrac 12|\eivalh|
        \quad\text{ and }\quad
        |\ell_2(y) - \eivalv| \leq \tfrac 12|\eivalv| 
        ,
        \\
        y_2 &< y_* 
        \qquad\Rightarrow\qquad 
        |\ell_2(y) - \oldlambda(x_1(y_1),0)| \leq \tfrac 14|\oldbarlambda|, 
    \end{align}
    \end{subequations}
    where $\oldbarlambda$ is defined in \eqref{eq:average-repulsion-over-edge}
\end{lemma}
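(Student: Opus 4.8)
\medskip\noindent\textbf{Proof proposal.}
The plan is to read~\eqref{eq:alt-SDE-first} off the It\^o formula~\eqref{eq:first-explicit-Ito} (which already carries the Stratonovich correction inside~$\cL$), then verify regularity and boundedness of the new coefficients, and finally transcribe Corollary~\ref{cor:Lyap-corner-pre} and Lemma~\ref{lem:Lyap-edge-pre} into the new chart to obtain~\eqref{eq:exp-near-infinities}. I would apply~\eqref{eq:first-explicit-Ito} to the two functions $x\mapsto y_1(x_1)$ and $x\mapsto y_2(x_2)$, which lie in $C^2(\bXo)$ and make sense along the trajectory because $X$ stays in $\bXo=(0,1)^2$ at all times. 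Writing $x_i=y_i^{-1}\colon\rr\to(0,1)$ for the inverse maps (smooth, since $y_i$ is smooth and strictly increasing), $x(y)=(x_1(y_1),x_2(y_2))$, and using $\partial_j(y_i\circ x_i)=y_i'(x_i)\,\delta_{ij}$, equation~\eqref{eq:first-explicit-Ito} is exactly~\eqref{eq:alt-SDE-first} with
\[
    \ell_i(y)=\bigl(\cL(y_i\circ x_i)\bigr)(x(y)),\qquad s_{mi}(y)=y_i'(x_i(y_i))\,\sigma_{mi}(x(y)).
\]
That $\ell_i,s_{mi}\in C^2(\rr^2)$ (indeed $C^\infty$) is then immediate: by Assumption~\ref{req:smoothness-2d} the data $b$, $\sigma_{mi}$, $\partial_j\sigma_{mi}$, $y_i$ and $x_i$ are all $C^\infty$, and $\ell_i,s_{mi}$ are obtained from them by composition, products and sums.

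The step I expect to require real care is boundedness: the factors $y_i'(x_i)$ blow up like $1/x_i$ as $x_i\downarrow 0$ and like $1/(1-x_i)$ as $x_i\uparrow 1$, and one must see that this is exactly absorbed by the boundary degeneracy imposed by Assumption~\ref{req:invar-2d}. For the diffusion coefficients this is direct: $\sigma_{mi}$ vanishes identically on the two edges $\{x_i=0\}$ and $\{x_i=1\}$ on which the $i$th coordinate is extremal, so smoothness gives $|\sigma_{mi}(x)|\le Cx_i$ near the first and $|\sigma_{mi}(x)|\le C(1-x_i)$ near the second; combined with the shape of $y_i'$ (equal to $1/x_i$ for $x_i<\tfrac14$, to $1/(1-x_i)$ for $x_i>\tfrac34$, and bounded in between), this makes $x\mapsto y_i'(x_i)\sigma_{mi}(x)$ extend to a continuous function on the compact square $[0,1]^2$, so $s_{mi}$ is bounded. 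For the drift coefficients I would rerun the Taylor-expansion bookkeeping from the proofs of Lemmas~\ref{lem:Lyap-corner-pre} and~\ref{lem:Lyap-edge-pre} near each of the four vertices and along the relative interior of each of the four edges\,---\,the vertices and edges other than $\Or$ and $\Ed$ being reduced to those two by the reflections $x_i\mapsto 1-x_i$, which preserve Assumptions~\ref{req:smoothness-2d} and~\ref{req:invar-2d}\,---\,to see that $\cL(y_i\circ x_i)(x)$ admits a finite limit at every boundary point of $\bX$ (a vertex eigenvalue at the vertices, and a function of the tangential coordinate of the shape~\eqref{eq:new-def-oldlambda} along the edges). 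Thus $\ell_i$ also extends continuously to $[0,1]^2$ and is bounded; as in dimension~$1$, the cancellations making these limits finite are precisely the vanishing of $b$ and $\sigma$ on $\partial\bX$.

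Finally, for the estimates~\eqref{eq:exp-near-infinities}: Corollary~\ref{cor:Lyap-corner-pre} supplies $r\in(0,\tfrac14)$ such that $d(x,\Or)<r$ forces $|\cL(\ln\circ x_1)(x)-\eivalh|\le\tfrac12|\eivalh|$ and $|\cL(\ln\circ x_2)(x)-\eivalv|\le\tfrac12|\eivalv|$ (apply it with $(\alpha,\beta)=(1,0)$, then $(0,1)$), and Lemma~\ref{lem:Lyap-edge-pre} supplies $\rho\in(0,\tfrac14)$ such that $x_2=d(x,\Ed)<\rho$ forces $|\cL(\ln\circ x_2)(x)-\oldlambda(x_1,0)|\le\tfrac14|\oldbarlambda|$. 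One then fixes $y_*\in(-\infty,-1)$ negative enough that, on the small-coordinate regions appearing in~\eqref{eq:exp-near-infinities}, the point $x(y)$ lies in $\{d(\,\cdot\,,\Or)<r\}$ (first line) resp.\ satisfies $x_2<\rho$ (second line) with all coordinates in play below $\tfrac14$, so that $\ell_i=\cL(\ln\circ x_i)$ there; recalling that the old first coordinate of $x(y)$ is $x_1(y_1)$, both displays of~\eqref{eq:exp-near-infinities} follow. The only genuinely nonroutine ingredient is the boundedness verification of the second paragraph; the rest is the It\^o formula together with a transcription of Lemmas~\ref{lem:Lyap-corner-pre}--\ref{lem:Lyap-edge-pre} and Corollary~\ref{cor:Lyap-corner-pre} into the $(y_1,y_2)$-chart.
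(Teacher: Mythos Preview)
Your approach is exactly the paper's: the lemma is stated to follow from Lemmas~\ref{lem:Lyap-corner-pre} and~\ref{lem:Lyap-edge-pre} together with the It\^o formula~\eqref{eq:first-explicit-Ito}, and you have spelled this out carefully, including the boundedness verification (degeneracy of $b,\sigma$ on $\partial\bX$ absorbing the blow-up of $y_i'$) that the paper leaves implicit. One small point worth noting: as literally written, the hypothesis $y_1\wedge y_2<y_*$ in the first line of~\eqref{eq:exp-near-infinities} means \emph{at least one} coordinate is very negative, which does not force $x(y)$ near~$\Or$; your final paragraph tacitly proves the estimate under $y_1\vee y_2<y_*$ (both small), which is how the lemma is actually used downstream (see the proof of Lemma~\ref{lem:corners-attract}) and is evidently the intended hypothesis.
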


\begin{remark}
\label{rem:strat-ito-combo}
    Stratonovich equations are more geometric while It\^o equations allow for direct martingale analysis. Thus, depending on the context, it will be convenient to use~\eqref{eq:maineq},~\eqref{eq:alt-SDE-first}, or some combination of them.
\end{remark}

Now let us find a function $\psi$ that would be useful in 
the definition~\eqref{eq:Lyap-for-edge-attr-rep} of the Lyapunov function~$\Phi$. Following  \cite[Section~{3}]{FK22}, we define
\begin{equation}
    \label{eq:def-psi}
        \psi(x_1)  = \int_0^\infty \ee g(X_1^{(x_1,0)}(t)) \dd t,\quad x_1\in(0,1),
    \end{equation}
where 
\begin{equation}
    \label{eq:g_is_centered_Lambda}
    g(x_1)=\oldlambda(x_1,0)-\oldbarlambda,\quad x_1\in[0,1].
\end{equation}  
By Remark~\ref{rem:mixing}, 
the integral on the right-hand side of~\eqref{eq:def-psi} converges absolutely
for any continuous function~$g : [0,1] \to \rr$ with zero mean with respect to~$\invm_{\Ed}$, in particular for $g$ defined in~\eqref{eq:g_is_centered_Lambda}. 

\begin{lemma} 
\label{lem:psi}
    Suppose Assumptions~\ref{req:smoothness-2d},~\ref{req:invar-2d},~\ref{req:irreducibility} and \ref{req:hyperbolicity-2d} hold. 
    Suppose that 
    $\Ed \in \Supt$ 
    and that $\psi: (0,1)\to\rr$ is defined by \eqref{eq:def-psi}  
    for a continuous function~$g : [0,1] \to \rr$ with zero mean with respect to~$\invm_{\Ed}$. If $g$ is differentiable on~$(0,1)$ and has a bounded first derivative, then 
    the function $\varphi:\bXo\to\rr$ defined by $\varphi(x)=\varphi(x_1,x_2)=\psi(x_1)$ satisfies
    \begin{align*}
        \cL\varphi(x) 
        &= -g(x_1) + o(1) 
    \end{align*}
    as $d(x,\Ed)\to 0$.
\end{lemma}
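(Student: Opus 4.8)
The plan is to compute $\cL\varphi$ directly using the definition~\eqref{eq:def-cL-as-diff} of the generator and the structure of $\varphi(x) = \psi(x_1)$, which depends only on $x_1$. Since $\partial_2\varphi = 0$ and $\partial_{22}\varphi = 0$, the generator simplifies to
\begin{align*}
    \cL\varphi(x) = \Bigl(b_1(x) + \tfrac12\sum_{j}\sum_m \partial_j\sigma_{m1}(x)\,\sigma_{mj}(x)\Bigr)\psi'(x_1) + \tfrac12\sum_m \sigma_{m1}(x)^2\,\psi''(x_1).
\end{align*}
As $d(x,\Ed)\to 0$, i.e. $x_2\to 0$ with $x_1$ in a compact subinterval, Assumptions~\ref{req:smoothness-2d} and~\ref{req:invar-2d} give $\sigma_{mj}(x) = \sigma_{mj}(x_1,0) + O(x_2)$ and similarly for the derivatives, and $\sigma_{m2}(x_1,0) = 0$ by invariance of $\Ed$ (the vector fields are tangent to the edge). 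So the cross terms in $\sum_j$ with $j = 2$ vanish in the limit, and we obtain
\begin{align*}
    \cL\varphi(x) = \Bigl(b_1(x_1,0) + \tfrac12\sum_m \partial_1\sigma_{m1}(x_1,0)\,\sigma_{m1}(x_1,0)\Bigr)\psi'(x_1) + \tfrac12\sum_m \sigma_{m1}(x_1,0)^2\,\psi''(x_1) + o(1).
\end{align*}
The quantity in parentheses multiplying $\psi'$ is exactly the drift coefficient of the one-dimensional diffusion $X_1^{(x_1,0)}$ obtained by restricting~\eqref{eq:main-eq-vec} to the invariant edge $\overline{\Ed}$, and $\tfrac12\sum_m\sigma_{m1}(x_1,0)^2$ is half its diffusion coefficient squared; hence the right-hand side (without the $o(1)$) is precisely $\cL_\Ed\psi(x_1)$, where $\cL_\Ed$ is the generator of that edge diffusion.

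The second step is to identify $\cL_\Ed\psi$. Here I would invoke the standard fact that $\psi$, defined by~\eqref{eq:def-psi} as the resolvent-type integral $\psi(x_1) = \int_0^\infty \ee g(X_1^{(x_1,0)}(t))\dd t$ of the centered function $g$, solves the Poisson equation $\cL_\Ed\psi = -g$ on $(0,1)$. This is where the hypotheses that $g$ is continuous with zero $\invm_\Ed$-mean, differentiable with bounded derivative, and that $\Ed\in\Supt$ (so $\invm_\Ed$ exists) are used: together with the exponential mixing noted in Remark~\ref{rem:mixing} and the regularity from Assumptions~\ref{req:smoothness-2d} and~\ref{req:irreducibility}, these ensure the integral converges absolutely, that $\psi\in C^2((0,1))$, and that differentiating under the semigroup and applying Dynkin/the fundamental theorem of calculus yields $\cL_\Ed\psi = -g$. (One can justify $\psi\in C^2$ either via the explicit scale-function/speed-measure formula for the 1d resolvent or via interior Schauder estimates for the elliptic operator $\cL_\Ed$, which is nondegenerate on compact subintervals of $(0,1)$ by the ellipticity forced by Assumption~\ref{req:ellip-1d} along the edge.) Combining the two steps gives $\cL\varphi(x) = -g(x_1) + o(1)$ as $d(x,\Ed)\to 0$, as claimed.

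The main obstacle is the bookkeeping in the first step: one must be careful that the error terms coming from replacing the two-dimensional coefficients by their traces on $\Ed$ are genuinely $o(1)$ and not merely $O(1)$. The potentially dangerous terms are those where a factor of $1/x_2$ or $1/x_2^2$ could appear — but unlike the computation of $\cL(\ln\circ x_2)$ in Lemma~\ref{lem:Lyap-edge-pre}, here $\varphi$ does not involve $x_2$ at all, so no negative powers of $x_2$ are produced; every coefficient is simply evaluated near $\Ed$ and the errors are $O(x_2)$ uniformly for $x_1$ in compact subintervals, hence $o(1)$. The one subtlety worth stating carefully is that $\psi'$ and $\psi''$ must be bounded on the relevant range of $x_1$; since $g$ has a bounded derivative, standard regularity for the Poisson equation gives $\psi\in C^2((0,1))$ with $\psi',\psi''$ locally bounded, and the limit $d(x,\Ed)\to 0$ should be understood (as elsewhere in this section) with $x_1$ confined to a fixed compact subinterval of $(0,1)$, on which all the error estimates and the bounds on $\psi',\psi''$ are uniform.
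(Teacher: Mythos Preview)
Your overall two-step strategy (expand $\cL\varphi$, reduce to the edge generator, then invoke the Poisson equation $\cL^\Ed\psi=-g$) matches the paper's, and your identification of $\cL^\Ed\psi = -g$ is fine. The gap is in the first step, specifically in your last paragraph's claim that ``the limit $d(x,\Ed)\to 0$ should be understood \dots\ with $x_1$ confined to a fixed compact subinterval of $(0,1)$.'' This is not how the lemma is stated or used. Since $\Ed=(0,1)\times\{0\}$, we have $d(x,\Ed)=x_2$ for all $x\in\bXo$, so $d(x,\Ed)\to 0$ allows $x_1$ to approach $0$ or $1$ simultaneously. The downstream applications (Corollary~\ref{cor:mean-H-process}, Lemma~\ref{lem:edge-abrsob-prob}, and the Lyapunov-function extensions in Section~\ref{sec:proofs-Lyap}) all require the estimate on strips $\{x_2<r\}$ that reach the corners. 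The paper flags exactly this issue in the remark following the lemma: $\Ed$ is not compact and $\psi$ typically grows logarithmically at the endpoints.

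Concretely, $\psi'(x_1)$ and $\psi''(x_1)$ blow up as $x_1\to 0$ or $x_1\to 1$, so an error term like $[b_1(x_1,x_2)-b_1(x_1,0)]\psi'(x_1)$ is \emph{not} automatically $o(1)$: the first factor is $O(x_2)$ but the second can be $O(1/x_1)$. The paper's proof (Appendix~\ref{app:fk-function}) does two things you are missing. First, it proves quantitative growth bounds $\psi'(x_1)=O(1/(x_1(1-x_1)))$ and $\psi''(x_1)=O(1/(x_1(1-x_1))^2)$ (Lemma~\ref{lem:psi-properties-ii}, which in turn rests on the increment representation of Lemma~\ref{lem:psi-as-time}). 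Second, it uses invariance of \emph{all} boundary components (Assumption~\ref{req:invar-2d}), not just of $\Ed$: near the left corner, the vertical edge $\{0\}\times(0,1)$ being invariant forces $b_1(x)=O(x_1)$, $\sigma_{m1}(x)=O(x_1)$, $\partial_2\sigma_{m1}(x)=O(x_1)$, so that, e.g., $b_1(x_1,x_2)-b_1(x_1,0)=O(x_1)O(x_2)$. These extra factors of $x_1$ exactly cancel the $1/x_1$ from $\psi'$ (and similarly $x_1^2$ against $1/x_1^2$ from $\psi''$), yielding a genuine $O(x_2)$ uniformly in $x_1$. Without this matching of blow-up rates against vanishing rates, your argument only proves the lemma on compact subintervals of $x_1$, which is not enough.
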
 
We prove this lemma in Appendix~\ref{app:fk-function}. Note that 
in contrast with the situation studied in~\cite{FK22}, 
we need to take additional care of the fact that 
the set $\Ed$ is not compact, and the corrector $\psi$ defined by~\eqref{eq:def-psi}--\eqref{eq:g_is_centered_Lambda}
typically fails to be bounded, growing logarithmically near the endpoints of $\Ed$.  

\begin{corollary}
\label{cor:pre-mean-H-process}
    Suppose that Assumptions~\ref{req:smoothness-2d},~\ref{req:invar-2d},~\ref{req:irreducibility} and \ref{req:hyperbolicity-2d} hold and that
    $\Ed \in \Supt$. 
    Let $\Phi$ be defined 
    by~\eqref{eq:Lyap-for-edge-attr-rep}, where  
    the corrector $\psi$ is defined by~\eqref{eq:def-psi} for $g$ given in \eqref{eq:g_is_centered_Lambda}.
    Then, as $d(x,\Ed)\to 0$,
    \begin{align}
    \label{eq:pre-mean-H-process}
        \cL \Phi(x) = \factorLyap\oldbarlambda + o(1).
    \end{align}    
\end{corollary}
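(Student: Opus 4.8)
The plan is to combine the two local generator estimates already at our disposal. We have $\Phi(x) = \factorLyap(\ln x_2 + \psi(x_1)) = \factorLyap\,(\ln{}\circ x_2)(x) + \factorLyap\,\varphi(x)$, where $\varphi(x) = \psi(x_1)$ as in Lemma~\ref{lem:psi}. Since $\cL$ is linear, $\cL\Phi = \factorLyap\,\cL(\ln{}\circ x_2) + \factorLyap\,\cL\varphi$. First I would apply Lemma~\ref{lem:Lyap-edge-pre} to the first term, giving $\cL(\ln{}\circ x_2)(x) = \oldlambda(x_1,0) + o(1)$ as $d(x,\Ed)\to 0$. Next I would apply Lemma~\ref{lem:psi} to the second term; its hypotheses hold because $g$ defined in~\eqref{eq:g_is_centered_Lambda} is $g(x_1) = \oldlambda(x_1,0) - \oldbarlambda$, which is continuous on $[0,1]$, has zero mean against $\invm_\Ed$ by the very definition~\eqref{eq:average-repulsion-over-edge} of $\oldbarlambda$, and is differentiable on $(0,1)$ with bounded derivative because $\oldlambda$ is smooth on the compact edge (it is built from $b$ and the $\sigma_{mi}$, which are $C^\infty$ by Assumption~\ref{req:smoothness-2d}). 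Lemma~\ref{lem:psi} then yields $\cL\varphi(x) = -g(x_1) + o(1) = -\oldlambda(x_1,0) + \oldbarlambda + o(1)$ as $d(x,\Ed)\to 0$.

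Adding the two contributions, the $x_1$-dependent terms $\oldlambda(x_1,0)$ cancel exactly:
\begin{align*}
    \cL\Phi(x) = \factorLyap\big(\oldlambda(x_1,0) + o(1)\big) + \factorLyap\big(-\oldlambda(x_1,0) + \oldbarlambda + o(1)\big) = \factorLyap\oldbarlambda + o(1)
\end{align*}
as $d(x,\Ed)\to 0$, which is~\eqref{eq:pre-mean-H-process}. One small point to verify is that the two $o(1)$ estimates are with respect to the same limit $d(x,\Ed)\to 0$, so that their sum is again $o(1)$ in that limit; this is immediate since both lemmas are stated exactly in that regime, uniformly in the $x_1$-coordinate along the (possibly non-compact) edge.

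The main obstacle is not in this corollary itself, which is essentially a one-line cancellation, but in ensuring the cited inputs genuinely apply. The delicate point is the verification that $g = \oldlambda(\,\cdot\,,0) - \oldbarlambda$ meets the differentiability and bounded-derivative requirements of Lemma~\ref{lem:psi}: smoothness of the coefficients gives this directly, but one must note that Lemma~\ref{lem:psi} (proved in Appendix~\ref{app:fk-function}) already absorbs the subtlety that $\psi$ itself is unbounded, growing logarithmically at the endpoints of $\Ed$, so no additional care is needed here. Everything else is bookkeeping: recording that the $\factorLyap$ factors out, that linearity of $\cL$ splits $\Phi$, and that the nonzero constant $\factorLyap$ can later be chosen (using $\oldbarlambda\neq 0$ from Assumption~\ref{req:Lyap-of-edge}) to make $\cL\Phi$ strictly negative near $\Ed$, as announced before the statement and exploited in Corollaries~\ref{cor:mean-H-process} and beyond.
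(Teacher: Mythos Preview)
Your proof is correct and follows essentially the same approach as the paper's own proof: verify that $g$ from~\eqref{eq:g_is_centered_Lambda} meets the hypotheses of Lemma~\ref{lem:psi}, then combine Lemma~\ref{lem:Lyap-edge-pre} and Lemma~\ref{lem:psi} via linearity of~$\cL$ so that the $\oldlambda(x_1,0)$ terms cancel. Your write-up is in fact more detailed than the paper's two-sentence version, and your remarks about the regularity of~$g$ and the unboundedness of~$\psi$ correctly identify where the real work lies (in the proof of Lemma~\ref{lem:psi} in Appendix~\ref{app:fk-function}), not in this corollary.
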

\begin{proof}
    Due to the definition of~$\oldbarlambda$ and Assumptions~\ref{req:smoothness-2d} and~\ref{req:invar-2d}, the function $g$ defined by~\eqref{eq:g_is_centered_Lambda} satisfies the conditions of Lemma~\ref{lem:psi}. 
    The estimate~\eqref{eq:pre-mean-H-process} follows from Lemmas~\ref{lem:Lyap-edge-pre} and Lemma~\ref{lem:psi}. 
\end{proof}

\begin{corollary}
\label{cor:mean-H-process}
    Suppose Assumptions~\ref{req:smoothness-2d},~\ref{req:invar-2d},~\ref{req:irreducibility} and \ref{req:hyperbolicity-2d}--\ref{req:Lyap-of-edge} 
    hold.
    If $\Ed \in \Supt$ and $\Phi$ is defined 
    by~\eqref{eq:Lyap-for-edge-attr-rep} and~\eqref{eq:def-psi} for 
    $g$ given in \eqref{eq:g_is_centered_Lambda}, then there is $r\in(0,\tfrac 14)$, independent of the choice of~$\factorLyap$ in~\eqref{eq:Lyap-for-edge-attr-rep}, such that
    \begin{align*}
        d(x,\Ed) < r 
        \qquad\Rightarrow\qquad 
        |\cL\Phi(x) - \factorLyap\oldbarlambda| \leq \tfrac 12 {|\factorLyap|}|\oldbarlambda|.
    \end{align*}
\end{corollary}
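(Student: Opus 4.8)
The plan is to combine Corollary~\ref{cor:pre-mean-H-process} with Assumption~\ref{req:Lyap-of-edge} in a completely elementary way; there is essentially no real obstacle here, since the hard analytic work has already been done in Lemma~\ref{lem:psi} and Corollary~\ref{cor:pre-mean-H-process}. First I would note that the hypotheses of the present corollary are a superset of those of Corollary~\ref{cor:pre-mean-H-process} (we have additionally assumed Assumption~\ref{req:Lyap-of-edge}, i.e., $\oldbarlambda \neq 0$), so~\eqref{eq:pre-mean-H-process} holds: $\cL\Phi(x) = \factorLyap\oldbarlambda + o(1)$ as $d(x,\Ed)\to 0$. The point is then just to unpack the little-$o$ into an explicit neighborhood of $\Ed$, and to check that the resulting radius $r$ can be chosen independently of $\factorLyap$.

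The key observation making the $\factorLyap$-independence work is that $\Phi = \factorLyap(\ln x_2 + \psi(x_1))$ depends on $\factorLyap$ only through the overall multiplicative constant $\factorLyap$, and $\cL$ is linear, so $\cL\Phi = \factorLyap\,\cL(\ln x_2 + \psi)$. Thus the statement to be proved, $|\cL\Phi(x) - \factorLyap\oldbarlambda| \leq \tfrac12|\factorLyap||\oldbarlambda|$, is equivalent (for $\factorLyap \neq 0$; the case $\factorLyap = 0$ is trivial since both sides vanish) to
\[
    |\cL(\ln{}\circ x_2 + \varphi)(x) - \oldbarlambda| \leq \tfrac12|\oldbarlambda|,
\]
where $\varphi(x) = \psi(x_1)$ as in Lemma~\ref{lem:psi}. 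This inequality no longer involves $\factorLyap$ at all. Now apply Corollary~\ref{cor:pre-mean-H-process} with $\factorLyap = 1$ (or equivalently Lemmas~\ref{lem:Lyap-edge-pre} and~\ref{lem:psi} directly): $\cL(\ln{}\circ x_2 + \varphi)(x) \to \oldbarlambda$ as $d(x,\Ed)\to 0$. By the definition of this limit, there is $r_0 > 0$ such that $d(x,\Ed) < r_0$ implies $|\cL(\ln{}\circ x_2 + \varphi)(x) - \oldbarlambda| \leq \tfrac12|\oldbarlambda|$, using here that $|\oldbarlambda| > 0$ by Assumption~\ref{req:Lyap-of-edge} so that $\tfrac12|\oldbarlambda|$ is a genuine positive tolerance. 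Finally, set $r = \min\{r_0, \tfrac14\}$ to meet the requirement $r \in (0,\tfrac14)$; this $r$ was chosen with no reference to $\factorLyap$, so the claimed uniformity holds.

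The only thing to be mildly careful about is the trivial case $\factorLyap = 0$, where $\Phi \equiv 0$, $\cL\Phi \equiv 0$, and the asserted bound reads $0 \le 0$; it is worth a one-line mention so the ``independent of the choice of $\factorLyap$'' phrasing is literally correct. Beyond that, the proof is a two-step affair: reduce to the $\factorLyap = 1$ case by linearity of $\cL$, then quote Corollary~\ref{cor:pre-mean-H-process} (or its ingredients) to turn the $o(1)$ into an explicit radius, intersecting with $(0,\tfrac14)$ at the end. I do not anticipate any genuine obstacle; if anything, the subtlety is purely bookkeeping, namely making sure the dependence of $\Phi$ on $\factorLyap$ really is just the outer scalar and that the radius extracted from the $o(1)$ statement does not secretly depend on $\factorLyap$ — both of which are immediate from the form~\eqref{eq:Lyap-for-edge-attr-rep} of $\Phi$.
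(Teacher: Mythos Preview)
Your proposal is correct and follows the same approach as the paper, which simply writes ``This follows from Corollary~\ref{cor:pre-mean-H-process} since, under Assumption~\ref{req:Lyap-of-edge}, the mean~$\oldbarlambda$ is nonzero.'' Your version is just more explicit about why the radius $r$ is independent of~$\factorLyap$ --- via the linearity of $\cL$ and the factoring $\Phi = \factorLyap(\ln x_2 + \psi)$ --- which the paper leaves to the reader.
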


\begin{proof}
    This follows from Corollary~\ref{cor:pre-mean-H-process} since, under Assumption~\ref{req:Lyap-of-edge}, the mean~$\oldbarlambda$ is nonzero.
\end{proof} 

On a different note, let us now make more precise our remark that Assumption~\ref{req:irreducibility} allows us to control exit times from compact sets. 
We begin with a lemma on the transitions from compact sets to open sets, of which exits from compact sets are a special case.

\begin{lemma}
\label{lem:uniform-irred}
    Suppose that Assumptions~\ref{req:smoothness-2d} and~\ref{req:irreducibility} hold. 
    Then, for every compact set $R \subset \bXo$ and every open set $V \subset \bXo$, there exists $T \geq 0$ such that the stopping times 
    \begin{align*}
        \hit_V^x = \inf\{t \geq 0 : X^x(t) \in V\},\quad x\in R
    \end{align*}
    satisfy
    \begin{equation*}
        \inf_{x \in R} \pp\{\hit_V^x \leq T \} > 0.
    \end{equation*} 
\end{lemma}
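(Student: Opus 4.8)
The plan is to cover the compact set $R$ by finitely many open balls along which one can reach $V$ with positive probability, then use lower semicontinuity of transition probabilities (the Feller property recorded after Assumption~\ref{req:irreducibility}) to make these probabilities uniform, and finally paste the pieces together with a fixed horizon.

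\emph{Step 1: pointwise reachability with a uniform time.} For each $x \in R$, Assumption~\ref{req:irreducibility} provides some $t_x > 0$ with $\pp\{X^x(t_x) \in V\} > 0$. Since $V$ is open and $t \mapsto \pp\{X^x(t) \in V\}$ is jointly lower semicontinuous in $(x,t)$, there is a neighborhood $O_x \ni x$ and a time $T_x \geq t_x$ (indeed an open time-interval around $t_x$, but a single value suffices) and a constant $c_x > 0$ such that $\pp\{X^{x'}(t_x) \in V\} \geq c_x$ for all $x' \in O_x$. Then for $x' \in O_x$ we have $\pp\{\hit_V^{x'} \leq t_x\} \geq \pp\{X^{x'}(t_x) \in V\} \geq c_x$, so on the whole neighborhood $O_x$ the hitting time of $V$ is at most $t_x$ with probability at least $c_x$.

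\emph{Step 2: compactness.} The family $\{O_x : x \in R\}$ is an open cover of the compact set $R$, so finitely many of them, say $O_{x_1}, \dotsc, O_{x_n}$, already cover $R$. Set $T = \max\{t_{x_1}, \dotsc, t_{x_n}\}$ and $c = \min\{c_{x_1}, \dotsc, c_{x_n}\} > 0$. Given any $x \in R$, pick $j$ with $x \in O_{x_j}$; then
\begin{equation*}
    \pp\{\hit_V^x \leq T\} \geq \pp\{\hit_V^x \leq t_{x_j}\} \geq c_{x_j} \geq c > 0,
\end{equation*}
using that $\hit_V^x \leq t_{x_j} \leq T$ whenever $X^x(t_{x_j}) \in V$. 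Taking the infimum over $x \in R$ gives $\inf_{x \in R} \pp\{\hit_V^x \leq T\} \geq c > 0$, as required.

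\emph{Main obstacle.} There is no deep obstacle here; the only point that requires a little care is that Assumption~\ref{req:irreducibility} supplies a time $t_x$ that \emph{depends on $x$}, so one cannot directly take an infimum over the (possibly unbounded) collection of times. The compactness argument in Step~2 is exactly what converts a pointwise-in-$x$, time-dependent statement into a uniform one with a single finite horizon $T$; the lower semicontinuity from the Feller property in Step~1 is what lets the positive probability survive passage from a single point to a neighborhood. One should also note that Assumption~\ref{req:smoothness-2d} (together with boundary invariance, implicitly in force) is only used to guarantee that the strong solutions $X^x$ and hence the semigroup and its Feller/lower-semicontinuity properties are well defined.
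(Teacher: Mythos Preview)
Your proof is correct and follows essentially the same approach as the paper's: use Assumption~\ref{req:irreducibility} to get a pointwise time $t_x$, invoke lower semicontinuity of $x' \mapsto \pp\{X^{x'}(t_x) \in V\}$ (from the Feller property) to extend the positive probability to a neighborhood, extract a finite subcover by compactness, and take $T$ to be the maximum of the finitely many times. The only cosmetic differences are that the paper uses the explicit constant $\tfrac12 q(x)$ where you use $c_x$, and that the paper uses lower semicontinuity in $x$ alone rather than jointly in $(x,t)$.
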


\begin{proof}
    By Assumption~\ref{req:irreducibility}, for every $x \in R$, there is $t_x \geq 0$ such that 
    $q(x)>0$, where  
    $q(x)=\pp\{X^x(t_x) \in V\}$. Recall that we have the Feller property and thus that, for each $x$, the map $x' \mapsto \pp\{X^{x'}(t_x) \in V\}$ is lower semicontinuous, and there is an open set $U_x\subset \bXo$ such that if~$x' \in U_x$, then $\pp^{x'}\{X(t_x) \in V\} > \tfrac 12 q(x) > 0$. These sets form an open cover of the compact set $R$, so we can find a finite set $\{x^{(j)}\}_{j=1}^M$ such that $R\subset \bigcup_{j=1}^M U_{x^{(j)}} $. Setting $T = \max_{j=1,\dotsc,M} t_{x^{(j)}}$, we have
    \begin{align*}
        \pp \{\hit_V^{x'} \leq T \} 
        &\geq \max_{j=1,\dotsc,M}  \pp\{X^{x'}(t_{x^{(j)}}) \in V \} 
        > \min_{j=1,\dotsc,M} \tfrac 12 q(x^{(j)}) > 0,\quad x' \in R,
    \end{align*}
    as desired.
\end{proof}

\begin{lemma}
\label{lem:exit-compact}
    Suppose that Assumptions~\ref{req:smoothness-2d} and~\ref{req:irreducibility} hold. Then, for every compact set $R \subset \bXo$ and every $x\in\bXo$, the exit time 
    \begin{equation*}
        \exit_{R}^x = \inf\{ t \geq 0 : X^x(t) \notin R \}
    \end{equation*}
    is almost surely finite, and has a finite exponential moment that is bounded uniformly in $x$.
\end{lemma}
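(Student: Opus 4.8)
The plan is to deduce the claim from the uniform accessibility estimate of Lemma~\ref{lem:uniform-irred} by a routine iteration of the Markov property at deterministic times, followed by a geometric-series bound.

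Since $R$ is compact and $\bXo=(0,1)^2$ is open (hence not compact), $R$ is a proper subset of $\bXo$, so $\bXo\setminus R$ is nonempty and open; I would fix any nonempty open set $V\subset\bXo\setminus R$. Applying Lemma~\ref{lem:uniform-irred} to the pair $(R,V)$ produces a time $T\ge 0$ together with a constant
\[
  p_0:=\inf_{x\in R}\pp\{\hit^x_V\le T\}>0 ,
\]
which forces $T>0$. Because $V$ is disjoint from $R$, any trajectory reaching $V$ must already have exited $R$, i.e.\ $\exit_R^x\le\hit_V^x$; hence $\sup_{x\in R}\pp\{\exit_R^x> T\}\le 1-p_0=:q<1$. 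For $x\in\bXo\setminus R$ we have $\exit_R^x=0$, so all the bounds below are trivially true there and it suffices to treat $x\in R$.

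Next I would iterate at the times $nT$. On the event $\{\exit_R^x> nT\}$, continuity of paths together with closedness of $R$ gives $X^x(nT)\in R$, so the Markov property at the deterministic time $nT$ yields, almost surely,
\[
  \pp\{\exit_R^x>(n+1)T\mid\cF_{nT}\}\le q\,\one\{\exit_R^x>nT\} .
\]
By induction this gives $\pp\{\exit_R^x> nT\}\le q^{\,n}$ for all $n\in\nn$ and all $x\in\bXo$, with a bound independent of $x$; in particular $\exit_R^x<\infty$ almost surely. Finally, for any $\gamma\in\bigl(0,-T^{-1}\ln q\bigr)$, splitting $[0,\infty)$ into the slabs $[nT,(n+1)T)$ and using $\pp\{\exit_R^x\ge nT\}\le q^{\,n}$ gives
\[
  \ee\bigl[\Exp{\gamma\exit_R^x}\bigr]
  \le\sum_{n\ge 0}\Exp{\gamma(n+1)T}\,\pp\{\exit_R^x\ge nT\}
  \le\Exp{\gamma T}\sum_{n\ge 0}\bigl(q\,\Exp{\gamma T}\bigr)^{n}<\infty ,
\]
and this upper bound does not depend on $x$, which is the desired uniform exponential moment.

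I do not expect a serious obstacle here. The only points that need a little care are the existence of the open target set $V$ disjoint from $R$ inside $\bXo$ (immediate from compactness of $R$ and non-compactness of $\bXo$), and the bookkeeping of the Markov iteration — where it is worth noting that only the Markov property at the fixed times $nT$ is used, so the strong Markov property is not needed.
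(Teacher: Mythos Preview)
Your proof is correct and follows essentially the same route as the paper's: invoke Lemma~\ref{lem:uniform-irred} to obtain a uniform positive probability of exit within a fixed time $T$, then iterate the Markov property at the times $nT$ to get geometric tail decay and the uniform exponential moment. Your version is slightly more explicit in identifying the target set $V\subset\bXo\setminus R$ and in spelling out the exponential-moment computation, and your observation that only the ordinary Markov property at deterministic times is needed (the paper phrases it as the strong Markov property) is accurate.
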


\begin{proof}
    By Lemma~\ref{lem:uniform-irred}, there exists $\delta > 0$ and $T > 0$ such that
    $
        \sup_{x \in R} \pp\{\exit_R^x > T \} < 1-\delta.
    $
    Since $X^x(t) \in R$ while $t < \exit_R^x$, we may use the strong Markov property repeatedly 
    to deduce that  
    $
        \sup_{x \in R} \pp\{\exit_R^x > nT \} < (1-\delta)^n
    $ 
    for all $n\in\nn$.
    We conclude that $\exit_R^x$ is almost surely finite and has a finite exponential moment, uniformly in~$x \in R$. It is clear however that the same property also holds for $x \notin R$.
\end{proof}

We end these preliminaries with a more abstract, measure-theoretic lemma that will be used in a variety of situations.

\begin{lemma}
\label{lem:levy-conseq}
    Let $(\eta_n)_{n\in\nn}$ be a sequence of almost surely finite stopping times, and let $(\cF_{\eta_n})_{n\in\nn}$ be the filtration of $\sigma$-algebras containing information up to and including time~$\eta_n$. If the event $A$ is measurable with respect to the  $\sigma$-algebra $\sigma(\cF_{\eta_n} : {n\in\nn})$, and if there exists $\delta > 0$ such that the almost sure inequality $\ee[\one_A|\cF_{\eta_n}]\geq\delta$ holds for all~$n$, then $\pp(A) = 1$.
\end{lemma}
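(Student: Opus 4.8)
The plan is to identify this as an instance of L\'evy's zero--one law. Since $(\cF_{\eta_n})_{n\in\nn}$ is a filtration\,---\,an increasing sequence of $\sigma$-algebras\,---\,I would set $\cF_\infty := \sigma(\cF_{\eta_n}:n\in\nn)=\sigma\bigl(\bigcup_{n\in\nn}\cF_{\eta_n}\bigr)$ and note that the hypothesis $A\in\sigma(\cF_{\eta_n}:n\in\nn)$ means precisely that $A\in\cF_\infty$, so that $\ee[\one_A\mid\cF_\infty]=\one_A$ almost surely.

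Next I would apply L\'evy's upward martingale convergence theorem to the uniformly bounded martingale $M_n:=\ee[\one_A\mid\cF_{\eta_n}]$, $n\in\nn$: it converges, almost surely and in $L^1$, to $\ee[\one_A\mid\cF_\infty]=\one_A$. Passing to the limit $n\to\infty$ in the standing almost sure inequality $M_n\geq\delta$ then gives $\one_A=\lim_{n}M_n\geq\delta$ almost surely. Since $\one_A$ is $\{0,1\}$-valued and $\delta>0$, this forces $\one_A=1$ almost surely, i.e.\ $\pp(A)=1$, as claimed.

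The argument is very short, so there is no serious computational obstacle; the only conceptual point worth flagging is that monotonicity of the $\sigma$-algebras $\cF_{\eta_n}$ is genuinely needed (without it one can have $\ee[\one_A\mid\cF_{\eta_n}]\geq\tfrac12$ for every $n$ while $\pp(A)=\tfrac12$, e.g.\ taking $A$ to be the event that two independent fair coins agree and $\cF_{\eta_1},\cF_{\eta_2}$ the $\sigma$-algebras they generate). In the places where we invoke the lemma the relevant stopping times are always constructed to be nondecreasing, so $(\cF_{\eta_n})_{n\in\nn}$ is indeed a filtration and L\'evy's theorem applies.
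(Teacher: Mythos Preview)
Your proof is correct and follows essentially the same route as the paper's: apply L\'evy's zero--one law (equivalently, Doob's upward martingale convergence) to the martingale $\ee[\one_A\mid\cF_{\eta_n}]$, use $A\in\cF_\infty$ to identify the limit as $\one_A$, and conclude from $\one_A\ge\delta>0$ that $\one_A=1$ almost surely. Your side remark on the necessity of monotonicity is a nice addition, though note that the lemma already assumes $(\cF_{\eta_n})_{n\in\nn}$ is a filtration.
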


\begin{proof}
    By L\'evy's 0-1 law, we almost surely have $\ee[\one_A|\sigma(\cF_{\eta_n} : {n\in\nn})] = \lim_{n\to\infty} \ee[\one_A|\cF_{\eta_n}] \geq \delta$. This also follows from Doob's martingale convergence theorem. By the measurability assumption, we have the almost sure identity $\ee[\one_A|\sigma(\cF_{\eta_n} : {n\in\nn})] = \one_A$. To conclude, note that the almost sure inequality $\one_A\geq\delta$ implies the almost sure identity $\one_A = 1$ since~$\one_A$ takes values in~$\{0,1\}$.
\end{proof}

\subsection{Conditional convergence results}
\label{ssec:local-abs}

Here, under Assumptions~\ref{req:smoothness-2d}--\ref{req:irreducibility} and \ref{req:hyperbolicity-2d}--\ref{req:Lyap-of-edge}, 
we state and prove local results near attracting edges or vertices. 
Those are at the heart of Theorem~\ref{thm:2d-attr} (and thus case~\ref{it:attractor-case} of Theorem~\ref{thm:main2d}), whose proof will only be concluded in Section~\ref{ssec:conclu-main-attr}.
Recall that, for $x\in \bXo$ and $A\in\Attr$, the event $B^x_A$ and its probability~$p^x_A$ were defined in~\eqref{eq:def-conv-prob}. 
We also define 
\begin{align*}
    \tilde{B}^x_A = \left\{ d(X^x(t), \overline{A}) \to 0 \right\}
    \qquad\text{and}\qquad
    \tilde{p}^x_A = \pp(\tilde{B}^x_A).
\end{align*}
Since $\convEvent^x_A \subset \tilde{\convEvent}^x_A$, we have $0 \leq p^x_A \leq \tilde{p}^x_A \leq 1$. When convenient, we will use the notation~$\pp^x$ for the probability measure describing the joint law of $W_1 = (W_1(t))_{t\geq 0}$, $W_2 = (W_2(t))_{t\geq 0}$ and $X = (X(t))_{t \geq 0}$, where the latter is the solution to~\eqref{eq:main-eq-vec} with deterministic initial condition $X(0) = x$.

\begin{lemma}
\label{lem:corners-attract}
    Suppose that $\Or \in \Attr$. 
    Then, 
    \begin{equation}
    \label{eq:attr-corner}
        \lim_{r \downarrow 0 }\,\inf_{ x\in \overline{U_{r,r}}} \tilde{p}^x_{\Or} = 1,
    \end{equation}
    where 
    \[ 
        U_{r,s} = (0,r) \times (0,s).
    \]
\end{lemma}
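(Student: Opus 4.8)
The corner $\Or$ is a sink, so both linearized eigenvalues $\eivalh = \partial_1 b_1(0,0)$ and $\eivalv = \partial_2 b_2(0,0)$ are strictly negative by Assumption~\ref{req:hyperbolicity-2d}. The plan is to use $\Phi(x) = -(\ln x_1 + \ln x_2) = -\ln(x_1 x_2)$ as a Lyapunov function near~$\Or$. By Corollary~\ref{cor:Lyap-corner-pre} applied with $\alpha = \beta = -1$, there is $r_0 \in (0,\tfrac 14)$ such that, on $U_{r_0,r_0}$, we have $\cL\Phi(x) \leq -(\eivalh + \eivalv) + \tfrac 12(|\eivalh| + |\eivalv|) = \tfrac 12(\eivalh + \eivalv) < 0$ (using $\eivalh, \eivalv < 0$), so $\Phi$ is a bounded-drift supermartingale-type function whose value tends to $+\infty$ precisely as $X$ approaches $\Or$. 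Note $\Phi \geq -2\ln r_0 > 0$ on $U_{r_0,r_0}$, and $\Phi(x) \to +\infty$ as $d(x,\Or) \to 0$.

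\textbf{Key steps.} First, fix $r_0$ as above and let $\hit^x$ be the exit time of $X^x$ from $U_{r_0,r_0}$. On the event $\{t < \hit^x\}$ the process $\Phi(X^x(t)) - \int_0^t \cL\Phi(X^x(s))\,\dd s$ is a local martingale (by~\eqref{eq:first-explicit-Ito} applied in the two logarithmic coordinates, which are $C^2$ on $\bXo$), and since $\cL\Phi \leq \tfrac12(\eivalh+\eivalv) < 0$ inside $U_{r_0,r_0}$, the stopped process $\Phi(X^x(t \wedge \hit^x))$ is a nonnegative supermartingale up to a bounded-variation correction with the right sign. Second, I would make this quantitative by working in the $Y$-coordinates of Lemma~\ref{lem:Y}: in the region $\{y_1 \wedge y_2 < y_*\}$ the drifts $\ell_1, \ell_2$ are within $\tfrac12|\eivalh|$, $\tfrac12|\eivalv|$ of $\eivalh, \eivalv$ respectively, and the diffusion coefficients $s_{mi}$ are bounded. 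So each coordinate $Y_i(t)$, while it stays below $y_*$, is dominated above by a process of the form $\tfrac12\eivalv\, t + (\text{martingale with bounded quadratic variation})$ — i.e.\ one of the processes $Z(t) = -ct + M(t)$ with $M \in \cM$ from Appendix~\ref{app:martingale}. Starting from $y$ with both coordinates very negative (i.e.\ $x$ very close to $\Or$), the probability that such a process ever returns above a fixed threshold is small — this is exactly the ``drift wins over martingale'' estimate (Lemma~\ref{lem:drift-wins-over-mart-bound} / Corollary~\ref{cor:drift-wins-over-mart-div}), which also gives $Z(t) \to -\infty$ a.s.\ on the non-return event. Applying this to both coordinates simultaneously: for any $\eps > 0$ there is $r_\eps \leq r_0$ such that, uniformly over $x \in \overline{U_{r_\eps, r_\eps}}$, with probability at least $1 - \eps$ the process never exits $U_{r_0,r_0}$ and both $Y_1^x(t), Y_2^x(t) \to -\infty$, i.e.\ $d(X^x(t),\Or) \to 0$. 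Hence $\tilde p^x_{\Or} \geq 1 - \eps$ for all $x \in \overline{U_{r_\eps, r_\eps}}$, which is the claim once we note $r_\eps \downarrow 0$ can be arranged as $\eps \downarrow 0$.

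\textbf{Main obstacle.} The subtle point is handling the two coordinates \emph{jointly} and uniformly in the starting point: I need the ``never escapes the square-neighborhood'' event and the ``both coordinates diverge to $-\infty$'' event to have total probability close to $1$, not just each individually. Cleanly, one does this by iterating: let $\hit$ be the first time $X^x$ exits $U_{r_0,r_0}$; on $\{\hit = \infty\}$ one must still show $d(X^x(t),\Or) \to 0$ rather than, say, the process drifting along near an edge. This is where the strict negativity of \emph{both} $\eivalh$ and $\eivalv$ is essential (contrast with the edge case): each $Y_i$ has strictly negative drift throughout $U_{r_0,r_0}$, so one applies the martingale comparison to whichever coordinate one is tracking and, via Corollary~\ref{cor:drift-wins-over-mart-div}, both tend to $-\infty$ on the no-escape event. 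Carefully, I would run the comparison argument of Lemma~\ref{lem:1d-aux}\eqref{it:attracting-endpoint0} in each coordinate and combine via a union bound over the two coordinates plus the escape event, choosing $r_\eps$ small enough (using $\Phi(x) \to \infty$ as $x \to \Or$) that each of the three bad events has probability at most $\eps/3$ uniformly over $\overline{U_{r_\eps,r_\eps}}$. The uniformity over the compact neighborhood is automatic because the comparison bounds from Appendix~\ref{app:martingale} depend only on the drift lower bound $c$ and the noise bound, not on the precise starting point, and they improve monotonically as the starting point moves deeper into the corner.
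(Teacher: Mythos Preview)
Your proposal is correct and follows essentially the same route as the paper: pass to the logarithmic coordinates $Y_1,Y_2$ via Lemma~\ref{lem:Y}, use the drift bounds there to dominate each $Y_i(t)-y_i(x)$ by a process of the form $\tfrac12\lambda_i t + M_i(t)$ with $M_i\in\cM$ while the process stays in the corner neighborhood, and then apply Lemma~\ref{lem:drift-wins-over-mart-bound} and Corollary~\ref{cor:drift-wins-over-mart-div} to each coordinate to conclude that, with probability $\ge 1-\delta$, the process never escapes and both $Y_i(t)\to-\infty$. The paper phrases the joint control via $Y_1\vee Y_2$ rather than an explicit union bound, but this is the same argument; your preliminary discussion of the Lyapunov function $\Phi=-\ln(x_1x_2)$ is not needed and the paper omits it.
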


\begin{proof}
    By Lemma~\ref{lem:Y}, there exists $y^*\in(-\infty,-1)$ with the following property: for every initial condition~$x$ with $y_1(x) \vee y_2(x) \leq y^*$ for the process~\eqref{eq:maineq}, there are martingales $M,N\in\cM$ (see the definition of $\cM$ in Appendix~\ref{app:martingale})    
    such that, while $Y_1\vee Y_2< y^*$, we have
    \begin{align*}
        Y_1(t)-y_1 &\leq \tfrac 12 \lambda_1 t + M(t), \\
        Y_2(t)-y_2 &\leq\tfrac 12 \lambda_2 t + N(t).
    \end{align*}
    Although the precise martingales depend on the initial condition~$x$, an upper bound on the growth of quadratic variation can be chosen uniformly $x$. Since $\eivalh<0$ and $\eivalv<0$ due to $\Or \in \Attr$, 
    Lemma~\ref{lem:drift-wins-over-mart-bound} and Corollary~\ref{cor:drift-wins-over-mart-div} imply that, for every~$\delta>0$, there is $y_* < y^* $ with the following property:
    if $y_1(x) \vee y_2(x) \leq y_*$, then 
    \[
        \pp^x\Big\{\lim_{t\to\infty}Y_1(t) \vee Y_2(t) = -\infty \text{ and } \sup_{t\ge 0} Y_1(t) \vee Y_2(t) <y^*\Big\}\ge 1-\delta.
    \]
    Since $\lim_{r\to 0}\sup_{x\in \overline{U_{r,r}}} y_1(x) \vee y_2(x) = -\infty$, our claim follows.
\end{proof}    

An analogous result holds for attracting edges, albeit with a slightly more involved proof.
\begin{lemma}
\label{lem:edge-abrsob-prob}
    Suppose $\Ed \in \Attr$. 
    Then, for every $r\in(0,\tfrac 12)$,
    \begin{align}
    \label{eq:attr-edge}
        \lim_{s \downarrow 0 } \inf_{x \in \overline{G_{r,s}}} \tilde{p}^x_{\Ed}  = 1.
    \end{align}
    where
    $$
        G_{r,s} = (r,1-r)\times (0,s).
    $$ 
    Also, for all $r\in(0,\tfrac 12)$ and $s\in(0,1)$, $\inf_{x \in \overline{G_{r,s}}} \tilde{p}^x_{\Ed}>0$.
\end{lemma}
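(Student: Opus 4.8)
The plan is to deduce both assertions from the single limit
\begin{equation*}
  \lim_{s\downarrow 0}\ \inf_{x\in\overline{G_{r,s}}}\ \tilde{p}^x_{\Ed}\ =\ 1 ,\qquad r\in(0,\tfrac12),
\end{equation*}
and then to obtain the ``positivity for every $s$'' claim by a routine irreducibility argument. Granting the limit: if $x\in\overline{G_{r,s}}$ has $x_2=0$ then $x\in\Ed$ and $\tilde{p}^x_{\Ed}=1$ by invariance of $\overline{\Ed}$; if $0<x_2<s_0/2$ then $x\in\overline{G_{r,s_0}}$ and the limit (with $s_0$ small) gives $\tilde{p}^x_{\Ed}\ge\tfrac12$; and if $s_0/2\le x_2\le s$ then $x$ lies in the compact set $[r,1-r]\times[s_0/2,s]\subset\bXo$, so Lemma~\ref{lem:uniform-irred} with target $V=G_{r,s_0}$, together with the strong Markov property, bounds $\tilde{p}^x_{\Ed}$ below by a positive constant times $\inf_{x'\in\overline{G_{r,s_0}}}\tilde{p}^{x'}_{\Ed}\ge\tfrac12$.

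For the limit the engine is the Lyapunov function $\Phi=\factorLyap(\ln x_2+\psi(x_1))$ of~\eqref{eq:Lyap-for-edge-attr-rep}, with $g$ as in~\eqref{eq:g_is_centered_Lambda} and $\factorLyap>0$. Since $\Ed\in\Attr$ we have $\oldbarlambda<0$, so Corollary~\ref{cor:mean-H-process} provides $r_0\in(0,\tfrac14)$ with $\cL\Phi\le-c_0<0$ on the strip $\{0<x_2<r_0\}$. The structural point that makes $\Phi$ usable on that whole strip is that invariance of the four edges forces $\sigma_{m2}(\,\cdot\,,0)\equiv 0$ and $\sigma_{m1}(0,\,\cdot\,)\equiv\sigma_{m1}(1,\,\cdot\,)\equiv 0$, so $x_2^{-1}\sigma_{m2}(x)$ and $\psi'(x_1)\,\sigma_{m1}(x)$ stay bounded on the strip (for the latter, using $\psi'(x_1)\,x_1(1-x_1)=O(1)$, which reflects the logarithmic growth of $\psi$ at the endpoints of $\Ed$). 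Hence, by the It\^o formula~\eqref{eq:first-explicit-Ito}, while $X^x$ remains in the strip one has $\Phi(X^x(t))\le\Phi(x)-c_0 t+M(t)$ with $M\in\cM$ a martingale whose quadratic variation grows at most linearly, at a rate not depending on $x$. Exactly as in the proof of Lemma~\ref{lem:corners-attract}, Lemma~\ref{lem:drift-wins-over-mart-bound} and Corollary~\ref{cor:drift-wins-over-mart-div} then give: whenever $\Phi(x)$ is sufficiently negative\,---\,which, since $\psi$ is bounded on the compact range $[r,1-r]$ of $x_1$, holds uniformly over $x\in\overline{G_{r,s}}$ once $s$ is small\,---\,with probability at least $1-\delta$ the trajectory never leaves the strip and $\Phi(X^x(t))\to-\infty$. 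Because $\factorLyap>0$, this forces $X^x_2(t)\to 0$, i.e.\ the event $\tilde{\convEvent}^x_{\Ed}$, provided $\psi(X^x_1(t))$ stays bounded below along the trajectory.

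This last proviso is the crux, and it is precisely where the non-compactness of $\Ed$ makes the argument harder than in~\cite{FK22}: the corrector $\psi$ grows logarithmically at the endpoints of $\Ed$, with $\psi\to-\infty$ at an endpoint $\Or^k$ exactly when the transversal rate $\mu_k=\partial_2 b_2(\Or^k)$ satisfies $\mu_k<\oldbarlambda\,(<0)$, and for such a corner $\Phi$ ceases to be proper near $\Or^k$, so a priori the trajectory could drift toward $\Or^k$ with $x_2$ bounded away from~$0$. To rule this out I would graft onto $\Phi$ the corner estimates of Lemma~\ref{lem:Lyap-corner-pre}: near such an $\Or^k$ one has $\cL(\ln x_2)=\mu_k+o(1)<0$, so $\ln x_2$ is itself a local Lyapunov function pushing $x_2$ down, while $\cL(\ln x_1)=\eivalh+o(1)>0$\,---\,the positivity $\eivalh>0$ being forced by $\Ed\in\Supt$ via Theorem~\ref{thm:1d} applied to the edge diffusion\,---\,so the trajectory is ejected from $\Or^k$ in the edge direction. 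Fixing $0<r_1<\tfrac12 r$ and decomposing the strip into $\{r_1<x_1<1-r_1\}$, where $\psi$ is bounded and $\Phi$ already confines $x_2$, and the corner regions $\{d(x,\Or^k)<2r_1\}$, where $\ln x_2$ confines $x_2$ and $\ln x_1$ (resp.\ $\ln(1-x_1)$) ejects the trajectory, one patches the two regimes with the strong Markov property and the martingale-comparison estimates of Appendix~\ref{app:martingale}: a trajectory starting with $x_2$ tiny stays in the strip forever with probability close to~$1$, can enter a corner region only while $x_2$ is still tiny, and from there either returns to $\{r_1<x_1<1-r_1\}$ with $x_2$ still small or converges to $\Or^k\in\overline{\Ed}$; in either case $X^x_2(t)\to 0$, which is the desired limit. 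The main obstacle is making this corner bookkeeping uniform in $x$\,---\,in particular showing that the event ``$x_2$ climbs back up while $x_1$ escapes the corner'' is negligible, with a bound independent of the starting point; the remaining steps follow the pattern of the one-dimensional analysis and of the proof of Lemma~\ref{lem:corners-attract}.
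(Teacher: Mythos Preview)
Your setup is essentially the paper's: use $\Phi=\factorLyap(\ln x_2+\psi(x_1))$, get $\cL\Phi\le\tfrac12\factorLyap\oldbarlambda<0$ on a thin strip via Corollary~\ref{cor:mean-H-process}, bound the martingale part uniformly (your observation that $\psi'(x_1)\sigma_{m1}(x)$ and $x_2^{-1}\sigma_{m2}(x)$ stay bounded on the strip is correct and exactly what is needed), and then derive the positivity claim for arbitrary $s$ from the limit plus Lemma~\ref{lem:uniform-irred}. You also correctly isolate the real difficulty: when $g(0)<0$ or $g(1)<0$, the corrector $\psi$ is unbounded below, so $\Phi\to-\infty$ does not force $x_2\to 0$, and the sublevel sets of $\Phi$ are not contained in the strip.

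Where your proposal diverges from the paper is in the remedy. You propose to partition the strip into a middle piece, where $\psi$ is bounded, and corner pieces, where you would use $\ln x_2$ and $\ln x_1$ separately as local Lyapunov functions, patched together by strong Markov; you then acknowledge that making this bookkeeping uniform is the ``main obstacle'' and leave it unresolved. The paper sidesteps this entirely with a single modification of the integrand: replace $\oldlambda$ by a function $\oldlambda^{+}\ge\oldlambda$ that agrees with $\oldlambda$ except on small neighborhoods of $x_1=0,1$, where it is lifted enough that $g^{+}(0),g^{+}(1)>0$, while keeping $\oldbarlambdastar=\int\oldlambda^{+}\,\dd\invm_{\Ed}<0$ (possible since $\invm_{\Ed}$ assigns arbitrarily small mass to such neighborhoods). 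By Lemmas~\ref{lem:psi-properties-i} and~\ref{lem:psi-as-time} the resulting corrector $\psi^{+}$ is bounded below, so with $\Phi^{+}=\ln x_2+\psi^{+}(x_1)$ one has both $\Phi^{+}\to-\infty\Rightarrow x_2\to 0$ and, since $\cL(\ln x_2)\approx\oldlambda$ while $\cL\psi^{+}\approx-\oldlambda^{+}+\oldbarlambdastar$ and $\oldlambda\le\oldlambda^{+}$, the drift bound $\cL\Phi^{+}\le\tfrac12\oldbarlambdastar<0$ on the strip. Now sublevel sets of $\Phi^{+}$ \emph{are} contained in the strip, and the martingale comparison (Lemma~\ref{lem:drift-wins-over-mart-bound}, Corollary~\ref{cor:drift-wins-over-mart-div}) applies directly with no corner analysis at all.

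Your corner-patching route is plausible and the ingredients you cite ($\eivalh>0$ because $\Ed\in\Supt$, and $\mu_k<\oldbarlambda<0$ at the bad corners) are correct, but it is left as a sketch at precisely the delicate point, and it is simply not needed: the paper's device of modifying $g$ near the endpoints converts the problem back to the ``easy'' case in one line.
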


\begin{proof}
    We first prove~\eqref{eq:attr-edge} under the extra assumption that the function~$g = \oldlambda-\oldbarlambda$ satisfies $g(0)> 0$ and $ g(1)> 0$. 
    Under this assumption, Lemmas~\ref{lem:psi-properties-i} and \ref{lem:psi-as-time} imply that
    $\psi$ is bounded below on~$(0,1)$.
    Therefore,
    if $\Phi(x)\to-\infty$, where 
    $\Phi$ is given in \eqref{eq:Lyap-for-edge-attr-rep} with $\factorLyap=1$, then $x_2 \to 0$. 
    Thus, using Corollary~\ref{cor:mean-H-process}, we can choose $h^*$ such that the following is true for the process defined by $H(t) = \Phi(X(t))$: for every initial condition $x$ with $\Phi(x) < h^*$ for the process~\eqref{eq:maineq}, there exists a martingale $M\in\cM$ such that, while $H(t) < h^*$, we have
    \begin{align*}
        H(t)- \Phi(x) &\leq \tfrac 12 \oldbarlambda t + M(t)
    \end{align*}
    Although the precise martingale depends on the initial condition~$x$, an upper bound on the growth of quadratic variation can be chosen uniformly in $x$. Since $\oldbarlambda < 0$ due to~$\Ed \in \Attr$, Lemma~\ref{lem:drift-wins-over-mart-bound} and Corollary~\ref{cor:drift-wins-over-mart-div} imply that, for every~$\delta>0$, there is $h_*<h^*$ with the following property:
    if $\Phi(x) \leq h_*$, then 
    \[
        \pp^x\Big\{\lim_{t\to\infty}H(t) = -\infty;\ \sup_{t\ge 0} H(t)<h^* \Big\}\ge 1-\delta.
    \]
    Since $\lim_{s\to 0}\sup_{x\in \overline{G_{r,s}}} \Phi(x) =-\infty$, our claim follows.

    In the general case, 
    one can, without changing the sign of the mean with respect to~$\invm_{\Ed}$, increase~$\oldlambda(x_1)$ near $x_1=0$ and $x_1=1$ in order to obtain a new function $g^+$ that can be used in place of~$g$ in \eqref{eq:def-psi} to construct a function~$\psi^+$ that is bounded from below, and then define $\Phi^+$ accordingly.
    We can then introduce 
    $H^+(t) = \Phi^+(X(t))$ and rely on the inequality
    \begin{align*}
        H^+(t)-\Phi^+(x)
        &\leq \tfrac 12 \oldbarlambdastar t + M(t),
    \end{align*}
    where $\oldbarlambdastar$ is the\,---\,greater but still negative\,---\,mean of this modification of $\oldlambda$.

    The second part of the lemma follows from the first one, Lemma~\ref{lem:uniform-irred}, and the strong Markov property.
\end{proof}

\begin{remark}\label{rem:Lambda_as_attr_rate}
    This proof shows that the (signed) exponential rate of attraction to $\Ed$ is bounded by $\tfrac12 \oldbarlambda$. It is easy to modify this proof to show that this Lyapunov exponent actually equals $\oldbarlambda$.  
\end{remark}

We now turn to conditional characterization of the limit points and convergence of empirical distributions.
We begin with an obvious statement  for the case $\Or \in \Attr$.

\begin{lemma}
\label{cor:empi-sink} 
    Suppose $\Or\in\Attr$.
    For all $x\in\bXo$, on the event~$\tilde{\convEvent}^x_\Or$, we have $X^x(t) \to \Or$ and $\empi{x}_t \to \delta_{\Or}$ as ${t\to\infty}$.
    In particular, the same conclusions hold on $\convEvent^x_\Or$ and $p^x_\Or = \tilde{p}^x_\Or$.
\end{lemma}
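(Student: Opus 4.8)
The plan is to show that, on the event $\tilde{\convEvent}^x_\Or = \{d(X^x(t),\Or)\to 0\}$, we actually have the stronger set-of-limit-points statement $X^x(t)\to\Or$, and that this forces the empirical distributions to converge to the Dirac mass $\delta_\Or$. The first implication is where there is genuinely something (if little) to say; the second is a soft deterministic fact about empirical measures.

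First I would recall that $\Or$ is a single point, and that for a single point $d(X^x(t),\Or)\to 0$ is \emph{equivalent} to $X^x(t)\to\Or$ in the sense of convergence in $\bX$, which in turn is equivalent to the set of limit points $\bigcap_{t>0}\overline{\{X^x(s):s\ge t\}}$ being exactly $\{\Or\}=\overline{\{\Or\}}$. (The subtlety flagged after the definition of the ``$X^x(t)\to S$'' notation\,---\,that $d(X^x(t),S)\to 0$ need not imply $X^x(t)\to S$\,---\,only arises when $S$ has more than one point, e.g. an edge; for the vertex $\Or$ the two notions coincide.) Hence on $\tilde{\convEvent}^x_\Or$ we automatically have $X^x(t)\to\Or$, so the first claim of the lemma holds.

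Next, for the convergence of empirical measures, I would argue pathwise: fix $\omega$ in $\tilde{\convEvent}^x_\Or$, let $f\in C(\bX)$, and show $\int f\,\dd\empi{x}_t = \frac1t\int_0^t f(X^x_\omega(s))\,\dd s \to f(\Or)$. Given $\eta>0$, use continuity of $f$ at $\Or$ to pick a neighborhood $V$ of $\Or$ with $|f-f(\Or)|<\eta$ on $V$, and use $X^x_\omega(t)\to\Or$ to pick $t_0$ with $X^x_\omega(s)\in V$ for all $s\ge t_0$. Then
\begin{align*}
    \Bigl|\tfrac1t\!\int_0^t f(X^x_\omega(s))\,\dd s - f(\Or)\Bigr|
    \le \tfrac{2\|f\|_\infty t_0}{t} + \eta,
\end{align*}
and letting $t\to\infty$ and then $\eta\downarrow 0$ gives the limit $f(\Or)$. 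Since this holds for every $f\in C(\bX)$, we get $\empi{x}_t\to\delta_\Or$ weakly on $\tilde{\convEvent}^x_\Or$.

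Finally, the ``in particular'' clause is immediate: the definition $\convEvent^x_\Or=\{X^x(t)\to\overline{\{\Or\}}\}=\{X^x(t)\to\{\Or\}\}$ and the equivalence noted above give $\convEvent^x_\Or=\tilde{\convEvent}^x_\Or$ as events, hence $p^x_\Or=\tilde p^x_\Or$, and the conclusions on $\tilde{\convEvent}^x_\Or$ hold verbatim on $\convEvent^x_\Or$. There is essentially no obstacle here\,---\,as the authors themselves note, this is an ``obvious statement''; the only point requiring a moment's care is the distinction between $d(\cdot,\Or)\to 0$ and convergence to the set $\{\Or\}$, which evaporates because $\Or$ is a single point, so that none of the machinery needed for edges (where the corrector $\psi$ and Corollary~\ref{cor:mean-H-process} come into play) is required.
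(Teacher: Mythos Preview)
Your proposal is correct and matches the paper's approach: the paper itself does not give a proof, introducing the lemma as ``an obvious statement for the case $\Or \in \Attr$,'' and your argument spells out exactly the trivial reason behind that remark. The only content is that for a singleton $\{\Or\}$ the notions $d(X^x(t),\Or)\to 0$ and $X^x(t)\to\{\Or\}$ coincide, which you correctly identify and handle.
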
 

Let us now prove a similar statement for $\Ed\in\Attr$, with $\invm_\Ed$ the probability measure as in part~\ref{it:1d-averaging} of Theorem~\ref{thm:1d} applied to the restriction of~\eqref{eq:main-eq-vec} to~$\overline{\Ed}$.

\begin{lemma}
\label{lem:attr-edge-empir}
    Suppose $\Ed \in \Attr$.  For all $x\in\bXo$, on the event~$\tilde{\convEvent}^x_\Ed$, we almost surely have 
    $
        X^x(t) \to \overline{\Ed}
    $
    and
    $
    \empi{x}_t \to \invm_\Ed
    $
    as $t\to\infty$.
    In particular, the same conclusions hold on $\convEvent^x_\Ed$ and $p^x_\Ed = \tilde{p}^x_\Ed$.
\end{lemma}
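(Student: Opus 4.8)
The plan is to bootstrap the statement from the one-dimensional analysis of the edge diffusion. First, a reduction: since $\convEvent^x_\Ed\subseteq\tilde{\convEvent}^x_\Ed$ and, by part~\ref{it:1d-averaging} of Theorem~\ref{thm:1d} applied to the restriction of~\eqref{eq:main-eq-vec} to $\overline{\Ed}$ (legitimate as $\Ed\in\Supt$), the measure $\invm_\Ed$ has a positive Lebesgue density on $\Ed$ and hence full support $\overline{\Ed}$, the convergence $\empi{x}_t\to\invm_\Ed$ together with $d(X^x(t),\overline{\Ed})\to 0$ forces the limit-point set of $X^x(t)$ to be exactly $\overline{\Ed}$. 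Thus $\tilde{\convEvent}^x_\Ed=\convEvent^x_\Ed$ up to a null set and $p^x_\Ed=\tilde{p}^x_\Ed$, and it remains to show that, on $\tilde{\convEvent}^x_\Ed$, almost surely $\empi{x}_t\to\invm_\Ed$; testing against $f\in C(\bX)$ and using $X_2^x(t)\to 0$ there, this reduces to showing that the horizontal occupation measure $\empi{x,1}_t:=\tfrac1t\int_0^t\delta_{X_1^x(s)}\,\dd s$ converges weakly on $[0,1]$ to $\invm_\Ed$ (viewed as a law on $[0,1]$), almost surely on $\tilde{\convEvent}^x_\Ed$.

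The argument rests on two inputs. (i) By part~\ref{it:1d-averaging} of Theorem~\ref{thm:1d} and the exponential total-variation mixing of Remark~\ref{rem:mixing}, the edge diffusion $\bar X_1$ is \emph{uniformly ergodic over any fixed compact $R_0\subset(0,1)$}: for $f\in C([0,1])$ and $\eps,\delta>0$ there is $T_0$ with $\pp\{|\tfrac1T\int_0^T f(\bar X_1^z(s))\,\dd s-\int f\,\dd\invm_\Ed|>\eps\}<\delta$ for all $T\ge T_0$ and all $z\in R_0$ (uniform-over-compacts stationary variance bound plus Chebyshev's inequality). (ii) Since $\Ed\in\Supt$, Theorem~\ref{thm:1d} rules out a sink endpoint of $\Ed$, so by hyperbolicity (Assumption~\ref{req:hyperbolicity-2d}) both $\Or^0$ and $\Or^1$ are sources along $\Ed$, i.e.\ $\eivalh=\partial_1 b_1(0,0)>0$ and its analogue at $\Or^1$ is positive; by Lemma~\ref{lem:Lyap-corner-pre} there are $x'\in(0,\tfrac12)$, $s>0$ with $\cL(\ln{}\circ x_1)\ge\tfrac12\eivalh$ on $(0,x')\times(0,s)$, so, comparing $\ln X_1^x$ with a positive-drift-plus-martingale process as in the proof of Lemma~\ref{lem:1d-aux} (via the martingale estimates of Appendix~\ref{app:martingale}, e.g.\ Corollary~\ref{cor:new-exp-moment}), the hitting time of $\{x_1=x'\}$ from any point of $(0,x')\times(0,s)$ has exponential tails, uniformly; symmetrically near $\Or^1$. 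Hence, while the process stays in a thin strip $\{x_2<s\}$, the hitting time of $R_0:=[x',1-x']$ from an arbitrary horizontal position has a uniformly bounded exponential moment.

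The two inputs are combined on a renewal structure anchored in $R_0$. Work on $\tilde{\convEvent}^x_\Ed$ and fix a threshold $\eta\in(0,s)$; since $X_2^x(t)\to 0$, the process eventually stays in $\{x_2<\eta\}$. Set $\sigma_0=0$ and, recursively, let $\tau_j$ be the first entrance of $X_1^x$ into $R_0$ at or after $\sigma_{j-1}$ and $\sigma_j=\tau_j+T$; on each \emph{window} $[\tau_j,\sigma_j]$ couple $X_1^x$ with the edge diffusion $\bar X_1^{(j)}$ started at $X_1^x(\tau_j)\in R_0$ and driven by the same Brownian motions, truncating the comparison at the first exit of $\{x_2<\eta\}$ (ultimately vacuous on $\tilde{\convEvent}^x_\Ed$). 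An It\^o--Gronwall bound $\ee[\sup_{[\tau_j,\sigma_j]}|X_1^x-\bar X_1^{(j)}|^2\mid\cF_{\tau_j}]\le C_Te^{C_TT}\eta^2$, together with input (i), makes each window \emph{typical}\,---\,meaning $\tfrac1T\int_{\tau_j}^{\sigma_j}f(X_1^x(u))\,\dd u$ lies within $\eps+\omega_f(\eps)$ of $\int f\,\dd\invm_\Ed$, with $\omega_f$ a modulus of continuity of $f$\,---\,with conditional probability $\ge 1-2\delta$, uniformly in $j$ (for $\eta$ small given $T\ge T_0$, $\eps$, $\delta$), while the \emph{gaps} $[\sigma_{j-1},\tau_j]$ have conditionally exponential tails by input (ii). A strong law of large numbers for the associated bounded martingale differences (or Lemma~\ref{lem:levy-conseq}) then yields: almost surely on $\tilde{\convEvent}^x_\Ed$, once $T$ is large relative to the gap-length bound, the asymptotic fraction of time spent in typical windows exceeds $1-3\delta$, so $\limsup_{t\to\infty}|\int f\,\dd\empi{x,1}_t-\int f\,\dd\invm_\Ed|\le\eps+\omega_f(\eps)+6\delta\|f\|_\infty$. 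Letting $\eps,\delta\downarrow0$ along a sequence and running over a countable convergence-determining family of $f$ finishes the proof.

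The main obstacle is the infinite-horizon bookkeeping linking these pieces: the edge comparison is effective only where $X_2^x$ is small \emph{and} $X_1^x$ stays away from $\Or^0,\Or^1$, which forces the renewal windows to be anchored in the central compact $R_0$ over which mixing is uniform\,---\,mixing genuinely degrades near the endpoints of $\Ed$, which is exactly why naive fixed-length windows fail\,---\,the near-endpoint excursions must be shown to occupy a vanishing fraction of time (where the source structure at $\Or^0,\Or^1$ and the Foster--Lyapunov-type estimates of Section~\ref{sec:YM} and Appendix~\ref{app:martingale} enter), and the truncation at the first exit of $\{x_2<\eta\}$ must be absorbed using $X_2^x(t)\to0$. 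The Gronwall comparison, the uniform ergodic averaging over $R_0$, and the martingale strong laws are each routine once this scaffolding is set up.
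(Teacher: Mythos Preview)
Your plan is sound and would work, but it takes a substantially longer and more hands-on route than the paper. The paper's proof is essentially three lines: (1) by Theorem~\ref{thm:benaim} (Bena\"im), every weak limit point of $(\empi{x}_t)_{t>0}$ is invariant; (2) on $\tilde\convEvent^x_\Ed$ such a limit point must be concentrated on $\overline{\Ed}$, hence is a convex combination of $\delta_{\Or^0}$, $\delta_{\Or^1}$, and $\invm_\Ed$; (3) the weights on $\delta_{\Or^0}$ and $\delta_{\Or^1}$ vanish because the asymptotic fraction of time spent in a radius-$r$ neighborhood of each corner goes to~$0$ as $r\to 0$, which follows from the positive drift of $Y_1=\ln X_1$ near $\Or^0$ (and symmetrically near $\Or^1$) via Lemma~\ref{lem:ee-ap-bound}. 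No coupling to the edge diffusion, no renewal windows, no Gronwall, and no LLN are needed.

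The difference in philosophy is that you reprove, in this specific instance, the content of Bena\"im's invariance principle by an explicit coupling-and-averaging construction, whereas the paper invokes that principle as a black box and then only has to exclude the two parasitic ergodic components $\delta_{\Or^0},\delta_{\Or^1}$\,---\,a task for which your input~(ii) alone (the corner source structure and the martingale estimates of Appendix~\ref{app:martingale}) already suffices. Your approach does buy something: it is constructive and in principle yields a rate of convergence of $\empi{x}_t$ to $\invm_\Ed$ on $\tilde\convEvent^x_\Ed$, which the paper's soft argument does not. But for the statement as written, the paper's route is dramatically shorter and avoids the delicate bookkeeping you correctly flag (anchoring windows in $R_0$, absorbing the truncation at $\{x_2<\eta\}$, controlling atypical windows).
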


\begin{proof}
    Fix $x\in\bXo$. First, note that $\empi{x}_t \to \invm_\Ed$ implies $X^x(t) \to \overline{\Ed}$ since $\operatorname{supp} \invm_\Ed = \overline{\Ed}$.
    Thus, by tightness, it suffices to show that, on $\tilde{\convEvent}^x_\Ed$, almost surely, the empirical measures $(\empi{x}_t)_{t > 0}$ have no weak limit point other than~$\invm_\Ed$.
    By Theorem~\ref{thm:benaim}, almost surely, every such limit point is invariant. The only invariant probability measures concentrated on $\overline{\Ed}$ are convex combinations of $\delta_{\Or^0}$, $\delta_{\Or^1}$ and~$\invm_{\Ed}$. In order to rule out contributions from $\delta_{\Or^0}$ or $\delta_{\Or^1}$, it suffices to note that the asymptotic relative time spent in a neighborhood of $\Or^0$ and $\Or^1$ of radius $r$ converges to $0$ as $r\to 0$. 
    The latter  
    follows from Lemma~\ref{lem:ee-ap-bound} since, say near $\Or^0$, the increments of the process $Y_1(t)=\ln X_1(t)$ are bounded below by the increments of $Z(t)=\nu t + M(t)$ for some $\nu>0$ and a martingale $M\in\cM$.
\end{proof}

\section{Heteroclinic behavior in dimension 2}
\label{sec:main-heteroc-proof}

Our goal for this section is to prove Theorem~\ref{thm:2d-cycle} and Proposition~\ref{prop:quadri}.  The theorem describes the cycling that the system goes through visiting vertices and traveling along edges sequentially. The evolution naturally decomposes into epochs spent near the vertices and epochs of transitions between small neighborhoods of the vertices. Thus our analysis naturally consists of several parts.

The first part is a study of the behavior of the system in a small neighborhood of a single saddle vertex. 
The second part studies transitions between such neighborhoods connected by an edge.   These results describe the typical behavior during the respective epochs and estimate how improbable the deviations from the typical scenario are.  
Namely, we need to control the direction of exit, the times that the associated epochs take, and how the distance from $X$ to $\partial \X$ changes over those epochs.  The third part of the analysis combines the first two and describes what happens over the combined epoch of a visit to a corner and a transition to the next corner. 
The last part sequentially integrates the combined epochs of the third part into an iterative process of cycling along the boundary accompanied by asymptotic absorption.

In the next four subsections, we describe the main four stages of analysis and give the corresponding results, assuming throughout that Assumptions~\ref{req:smoothness-2d}--\ref{req:irreducibility} and~\ref{req:hyperbolicity-2d} hold.
The proofs are given in Section~\ref{sec:proof-heteroc}.

Our description of the typical and exceptional scenarios is given in terms of high-probability events and rare events. These notions are most succinctly introduced in terms of the probability measure~$\pp^x$ describing the joint law of $W_1 = (W_1(t))_{t\geq 0}$, $W_2 = (W_2(t))_{t\geq 0}$ and $X = (X(t))_{t \geq 0}$, where the latter is the solution to~\eqref{eq:main-eq-vec} with deterministic initial condition $X(0) = x$.
Given events $A(z)$ and initial conditions $x(z)\in\X$ depending on a parameter $z\in(-\infty,z_0)$ for some~$z_0$, 
we will say that $A(z)$ is {\it rare} under $\pp^{x(z)}$ as $z\to-\infty$
if there are
constants $c_1,c_2,c_3>0$ such that
\[
    \pp^{x(z)}(A(z))\le c_1\Exp{-c_2 |z|^{c_3}}.
\]
If the complement of $A(z)$ is rare, we 
say that  $A(z)$ is a {\it high-probability event} (as~$z\to-\infty$) and  write: $A(z)$ w.h.p.  We say that $B(z)$ holds on $A(z)$  w.h.p.\ if $A(z)\setminus B(z)$ is a rare event.
Finally, we say that $B(z)$ is a rare event conditionally on $A(z)$ 
if 
\[
  \pp^{x(z)}(B(z)|A(z))\le c_1\Exp{-c_2 |z|^{c_3}}.
\]
The role of $z$ is usually played by the logarithm of the distance to the boundary expressed in an appropriate chart.

\subsection{Evolution near a single saddle} 
\label{sec:near-a-saddle-results}

We begin with the behavior near a single positively oriented saddle point, which we place at the origin~$\Or$ without loss of generality.
We analyze exits from the corner neighborhood $U_{r,2r} = (0,r) \times (0,2r)$ using the segments
\begin{align*}
    \gamma_{0,r}  &= (0,r)\times\{2r\},
    &
    \gamma_{2,r}  &=\{r\}\times(0,r), 
    \\
    \gamma_{1,r}  &= (0,r)\times\{r\}, 
    &
    \gamma'_{2,r}  &=\{r\}\times(0,2r),
\end{align*}    
see Figure~\ref{fig:U_r2r}. 
We will start the process on $\gamma_{1,r}$ and consider 
stopping times
\begin{align*}
    \exituv & =\inf\{t\ge 0: X(t)\notin U_{r,2r} \},
    &
    \tau_{0,r} & = \inf \{ t
    \ge 0 : X(t) \in \gamma_{0,r} \}, 
    \\ 
    && \tau_{2,r} & = \inf \{ t \ge 0 : X(t) \in \gamma'_{2,r} \}.
\end{align*}
While the segment $\gamma'_{2,r}\setminus \gamma_{2,r}$ will rarely be of concern, it is convenient to rely on the almost sure identity $\exituv = \tau_{0,r} \wedge \tau_{2,r}$.

\begin{figure}
    \centering
    \footnotesize
    \hfill
    \begin{tikzpicture}\begin{axis}[
            axis lines=none,
            xmin=-.15, xmax=2.2*\err,
            ymin=-.15, ymax=2.15*\err,
            zmax=12,
            unit vector ratio=1 1,
            unit rescale keep size=false,
            view={0}{90},
            scale=1
        ]
       
        \addplot3[mark=none] coordinates {(0,0,0)};
    
        \draw (0,2.5*\err) -- (0,0) -- (2.5*\err,0);

        \draw (0,2*\err) -- (\err,2*\err);
        \node[anchor=north] at (axis cs: 0.5*\err,2*\err) {$\gamma_0$};
        
        \draw (0,\err) -- (\err,\err);
        \node[anchor=north] at (axis cs: 0.5*\err,\err) {$\gamma_1$};
    
        \draw (\err,0) -- (\err,\err-0.00301);
        \node[anchor=east] at (axis cs: \err,0.5*\err) {$\gamma_2$};
    
        \draw (\err+.00301,0) -- (\err+.00301,2*\err);
        \node[anchor=west] at (axis cs: \err,\err) {$\gamma'_2$};

        \node[anchor=north] at (axis cs: 0,0) {$0$};
        \node[anchor=north] at (axis cs: \err,0) {$r$};
        \node[anchor=east] at (axis cs: 0,\err) {$r$};
        \node[anchor=east] at (axis cs: 0,2*\err) {$2r$};

        \swsaddlesmall
    
        \end{axis}
    \end{tikzpicture}
    \hfill
    \begin{tikzpicture}
        \begin{axis}[
            axis lines=none,
            xmin=-.15, xmax=2.2*\err,
            ymin=-.15, ymax=2.15*\err,
            zmax=12,
            unit vector ratio=1 1,
            unit rescale keep size=false,
            view={0}{90},
            scale=1
        ]
        
        \addplot3[mark=none] coordinates {(0,0,0)};

        \draw (0,2.5*\err) -- (0,0) -- (2.5*\err,0);

        \draw (0,\err) -- (\err-.00301,\err);
        \node[anchor=north] at (axis cs: 0.5*\err,\err) {$\gamma_1$};

        \draw (\err-.00301,0) -- (\err-.00301,\err-0.00301);
        \node[anchor=east] at (axis cs: \err,0.5*\err) {$\gamma_2$};
        
        \draw (2*\err+.00301,0) -- (2*\err+.00301,\err);
        \node[anchor=west] at (axis cs: 2*\err,0.5*\err) {$\gamma_3$};

        \draw (0,\err+.00301) -- (2*\err,\err+.00301);
        \node[anchor=south] at (axis cs: \err,\err) {$\gamma'_1$};

        \node[anchor=north] at (axis cs: 0,0) {$0$};
        \node[anchor=north] at (axis cs: \err,0) {$r$};
        \node[anchor=north] at (axis cs: 2*\err,0) {$2r$};
        \node[anchor=east] at (axis cs: 0,\err) {$r$};

        \swsaddlesmall

        \end{axis}
    \end{tikzpicture}
    \hfill{}
    \caption{The setup for Propositions~\ref{prop:corner} and~\ref{lem:2-->1,3}: neighborhoods $U_{r,2r}$ and $U_{2r,r}$ of the origin~$\Or$ with segments introduced to define stopping times. Some subscripts have been omitted for readability.}
    \label{fig:U_r2r}
    \label{fig:U_2rr}
\end{figure}
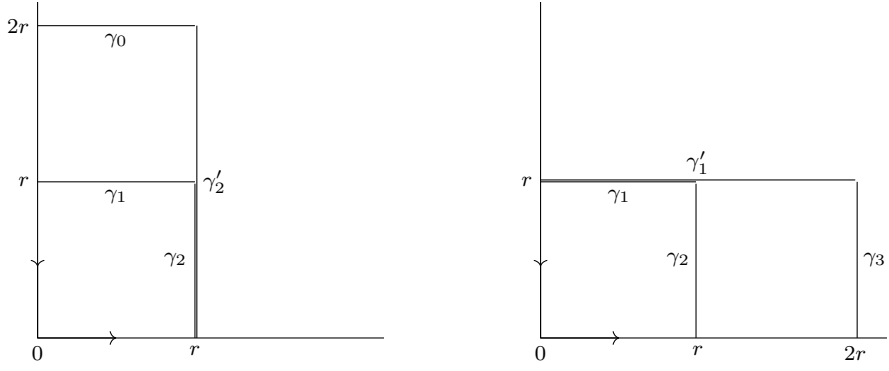

\begin{lemma}
\label{lem:proto-corner-prob}
    Suppose that there is a positively oriented saddle at the vertex~$\Or$. Then, there exists $p>0$ such that, for all sufficiently small $r>0$, we have
    \begin{equation}
        \inf_{x \in \gamma_{1,r}} \pp^{x} \{ \exituv = \tau_{2,r} \} \ge  p.
    \end{equation}
\end{lemma}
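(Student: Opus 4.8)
The plan is to transport the problem to the logarithmic coordinates $Y=y(X)$ of Lemma~\ref{lem:Y} and to exploit the sign structure of the linearization of $b$ at a positively oriented saddle. By Assumption~\ref{req:invar-2d} the drift is tangent to the two edges through $\Or$, so $b_1$ vanishes on $\{x_1=0\}$ and $b_2$ on $\{x_2=0\}$; hence $Db(\Or)=\mathrm{diag}(\eivalh,\eivalv)$, with eigenvectors $e_1,e_2$ pointing along the edges toward the neighbouring vertices. Taking $v_+$ and $v_-$ to be these edge directions, positive orientation ($\det(v_+,v_-)=\det(e_1,e_2)=1>0$) together with the saddle condition forces $\eivalh>0>\eivalv$: the $x_1$-direction is repelling, the $x_2$-direction attracting, as in Figure~\ref{fig:U_r2r}. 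Fix $r$ small enough that $2r<\tfrac14$ and $\ln r<y_*$, so that on $U_{r,2r}$ one has $y_i(x_i)=\ln x_i$ and $y_1\wedge y_2<y_*$; then, on the time interval while $X$ stays in $U_{r,2r}$, estimate~\eqref{eq:exp-near-infinities} gives the a.s.\ bounds
\[
Y_1(t)-Y_1(0)\ \geq\ \tfrac12\eivalh\,t+M_1(t),\qquad Y_2(t)-Y_2(0)\ \leq\ \tfrac12\eivalv\,t+M_2(t),
\]
where $M_1,M_2\in\cM$ are martingales whose quadratic variations grow at a rate bounded by a constant $C$ independent of $r$ and of the starting point (the $s_{mi}$ are bounded).

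First I would record that $\exituv<\infty$ almost surely: otherwise the first bound, together with the ``drift beats martingale'' estimate (Corollary~\ref{cor:drift-wins-over-mart-div}), would force $Y_1(t)\to\infty$ on the event $\{\exituv=\infty\}$, contradicting $Y_1(t)<\ln r$ throughout $U_{r,2r}$. Next, since the bottom and left edges of the square are invariant and $X$ starts in $\bXo$, the process can leave $U_{r,2r}$ only through $\{x_1=r\}$ or $\{x_2=2r\}$; this is the a.s.\ identity $\exituv=\tau_{0,r}\wedge\tau_{2,r}$, and, writing $\theta=\exituv$, on $\{\theta<\infty\}$ one has $\{\exituv=\tau_{2,r}\}=\{X_2(\theta)<2r\}$. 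It thus suffices to bound from below, uniformly over $x\in\gamma_{1,r}$ and over small $r$, the probability that $X_2$ does not reach level $2r$ before time $\theta$; since $Y_2(0)=\ln r$ on $\gamma_{1,r}$, this is the event that $Y_2(\,\cdot\wedge\theta)$ never rises by $\ln 2$ above its initial value.

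To estimate this, set $Z(t)=Y_2(t\wedge\theta)-\ln r$ and compare it with $\bar Z(t)=\tfrac12\eivalv\,t+M_2(t\wedge\theta)$: using $\ell_2\leq\tfrac12\eivalv$ on $U_{r,2r}$ gives $Z(t)\leq\bar Z(t)$ for $t\leq\theta$, and since $Z$ is frozen past $\theta$ we get $\sup_{t\geq0}Z(t)=\sup_{t\leq\theta}Z(t)\leq\sup_{t\geq0}\bar Z(t)$. The process $\bar Z$ is of the form $-ct+M(t)$ with $c=\tfrac12|\eivalv|>0$ and $M=M_2(\,\cdot\wedge\theta)\in\cM$ of quadratic-variation rate $\leq C$, so the martingale estimates of Appendix~\ref{app:martingale} (Lemma~\ref{lem:drift-wins-over-mart-bound} and its corollaries) yield a constant $p>0$, depending only on $|\eivalv|$, on $C$, and on the fixed barrier $\ln 2$, with $\pp\{\sup_{t\geq0}\bar Z(t)<\ln 2\}\geq p$. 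On that event, using $\theta<\infty$, the process exits $U_{r,2r}$ through $\{x_1=r\}$, i.e.\ $\exituv=\tau_{2,r}$. Since $p$ depends on neither $r$ nor the starting point, this gives $\inf_{x\in\gamma_{1,r}}\pp^x\{\exituv=\tau_{2,r}\}\geq p$ for all sufficiently small $r$. This mirrors the proof of Lemma~\ref{lem:corners-attract}, except that here we need a fixed positive lower bound against a fixed barrier rather than a probability tending to $1$ as the barrier recedes.

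The two delicate points are: (i) the drift signs of $\ell_1,\ell_2$ are controlled only inside $U_{r,2r}$, so the comparison must be done with the process stopped at $\theta$, and one must check that freezing past $\theta$ does not affect the relevant suprema (it does not, since $-ct$ only decreases and $Z$ is constant after $\theta$); and (ii) — what the statement really hinges on — the lower bound $p$ must be uniform in $r$, which works precisely because the target rise $\ln 2=\ln(2r)-\ln r$ and all the constants ($\eivalv$ and the quadratic-variation bound $C$) are $r$-independent, so the ``drift-beats-martingale across a fixed barrier'' estimate applies with $r$-free parameters.
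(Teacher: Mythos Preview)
Your argument is correct and follows essentially the same route as the paper's proof: pass to logarithmic coordinates via Lemma~\ref{lem:Y}, use the positive drift of $Y_1$ to obtain $\exituv<\infty$ a.s., and use the negative drift of $Y_2$ together with Lemma~\ref{lem:drift-wins-over-mart-bound} to bound from below the probability that $Y_2$ never rises by the fixed amount $\ln 2$, the uniformity in $r$ coming from the $r$-independence of this barrier and of the martingale constants. Your treatment is in fact somewhat more explicit than the paper's about the stopping issue and about why positive orientation forces $\eivalh>0>\eivalv$.
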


To further describe $\exituv$ and $X(\exituv)$, it is convenient to use notation from Section~\ref{sec:coord-prelim} such as
the transformed coordinates $(y_1,y_2)$ and transformed process $(Y_1,Y_2)$. For $r$ small enough, the coordinate change in $U_{r,2r}$ is given by $y_1 = \ln x_1$ and $y_2 = \ln x_2$, or equivalently $x_1 = \Exp{y_1}$ and $x_2 = \Exp{y_2}$.

\begin{proposition} 
\label{prop:corner}
    Suppose that there is a positively oriented saddle at the vertex~$\Or$.
    Then, for every $\eps > 0$ and $r_0 > 0$, there exists $r\in(0,r_0)$ such that
    \begin{enumerate}[(a)]
        \item 
        conditionally on $\exituv = \tau_{2,r}$, the following holds  w.h.p.\ under~$\pp^{(\Exp{y_1},r)}$ as $y_1\to-\infty$ : 
        \[ 
            \exituv \in \left[ \left(\frac{1}{\eivalplus} - \eps\right) |y_1|, \left(\frac{1}{\eivalplus} + \eps\right) |y_1| \right]
            \text{\rm\quad and\quad } 
            Y_2 (\exituv) \in \left[(\rho + \eps)
            y_1, (\rho - \eps) y_1\right].
        \]

        \item
        conditionally on $\exituv \ne \tau_{2,r}$
        , the following holds  w.h.p.\ under~$\pp^{(\Exp{y_1},r)}$ as $y_1\to-\infty$:
        \[
            \exituv \le | y_1 |^{1 / 3} \quad 
            \text{\rm and\quad} 
            Y_1 (\exituv) \in \big [y_1 - | y_1 |^{1 / 2}, y_1 + | y_1 |^{1 / 2}\big].
        \]
    \end{enumerate}
\end{proposition}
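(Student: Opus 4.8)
\textbf{Proof plan for Proposition~\ref{prop:corner}.}

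The plan is to work in the transformed coordinates $(Y_1,Y_2)$ provided by Lemma~\ref{lem:Y}, which in $U_{r,2r}$ (for $r<\tfrac14$) are simply $Y_i=\ln X_i$. There, the equations~\eqref{eq:alt-SDE-first} have bounded $C^2$ coefficients and, by~\eqref{eq:exp-near-infinities}, whenever $Y_1\vee Y_2<y_*$ the drift $\ell_1$ is close to $\eivalplus>0$ and $\ell_2$ is close to $\eivalmoins<0$. Starting on $\gamma_{1,r}$ means $Y_1=y_1\in(-\infty,\ln r)$ and $Y_2=\ln r$; the exit event $\exituv=\tau_{2,r}$ corresponds to $Y_1$ reaching $\ln r$ before $Y_2$ reaches $\ln(2r)$, i.e.\ the horizontal coordinate travels a distance $|y_1-\ln r|=|y_1|+O(1)$ upward while the vertical coordinate never climbs by more than $\ln 2$. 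The whole excursion therefore stays in the region $\{Y_1\vee Y_2<y_*\}$ once $r$ (hence $\ln r$) is small enough, so the good drift bounds apply throughout.

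First I would handle part (a). Conditionally on $\exituv=\tau_{2,r}$, the coordinate $Y_1$ performs a motion with drift pinned in $[\tfrac12\eivalplus,\tfrac32\eivalplus]$ — more precisely, within $\eps'$ of $\eivalplus$ for any prescribed $\eps'$ once $r$ is small — plus a martingale of controlled quadratic variation. Writing $Y_1(t)-y_1$ as $\eivalplus t$ plus a bounded-drift-error term plus a martingale, the standard martingale concentration estimates for processes of the form $Z(t)=\nu t+M(t)$, $M\in\cM$, collected in Appendix~\ref{app:martingale} (the same tools invoked in Lemmas~\ref{lem:corners-attract} and~\ref{lem:edge-abrsob-prob}), give that the hitting time of level $\ln r$ by $Y_1$ lies in $[(\tfrac1{\eivalplus}-\eps)|y_1|,(\tfrac1{\eivalplus}+\eps)|y_1|]$ with probability $1-c_1\Exp{-c_2|y_1|^{c_3}}$; this is the ``w.h.p.'' statement. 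On that same high-probability time window, $Y_2$ has drift within $\eps'$ of $\eivalmoins<0$, so $Y_2(\exituv)-\ln r=\eivalmoins\exituv+(\text{small})+M_2(\exituv)$, and plugging in $\exituv\approx|y_1|/\eivalplus$ gives $Y_2(\exituv)\approx \eivalmoins|y_1|/\eivalplus=\rho\, y_1$ (recall $\rho=|\eivalmoins|/\eivalplus$ and $y_1<0$), again up to a martingale fluctuation absorbed into the $\eps|y_1|$ slack by the same concentration bounds. The conditioning on $\{\exituv=\tau_{2,r}\}$ is dealt with by noting this event has probability bounded below by a constant $p>0$ (Lemma~\ref{lem:proto-corner-prob}), so dividing by $\pp^x\{\exituv=\tau_{2,r}\}\ge p$ only inflates the rare-event bounds by the constant $p^{-1}$, preserving the stretched-exponential form.

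For part (b), conditionally on $\exituv\ne\tau_{2,r}$, the exit happens through $\gamma_{0,r}$, i.e.\ $Y_2$ reaches $\ln(2r)$ while $Y_1$ is still below $\ln r$. Since $Y_2$ starts at $\ln r$ and must climb only $\ln 2$, and since its drift $\ell_2\approx\eivalmoins<0$ actively opposes this climb, such an event requires an atypical martingale excursion; a reflection/exponential-martingale argument shows that conditionally on it, the climb happens very fast — within time $|y_1|^{1/3}$ — with the complementary probability being rare. During a time window of length at most $|y_1|^{1/3}$, the displacement of $Y_1$ from $y_1$ is the sum of a drift term $O(|y_1|^{1/3})$ and a martingale increment which, by the quadratic-variation bound and a maximal inequality, stays below $|y_1|^{1/2}$ except on a rare event; hence $Y_1(\exituv)\in[y_1-|y_1|^{1/2},y_1+|y_1|^{1/2}]$ w.h.p. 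I expect the main obstacle to be the second clause of (b): getting the sharp $|y_1|^{1/3}$ time bound on the conditioned-to-be-rare upward excursion of $Y_2$ requires a careful decomposition — conditioning a diffusion on an atypical event and still controlling the time scale is delicate, and one must set things up (e.g.\ via a Girsanov-type change of measure or a direct first-passage estimate for $Z(t)=\nu t+M(t)$ tilted to reach a level against its drift) so that the exponents $c_1,c_2,c_3$ come out uniformly in $y_1$ and $r$. The quantitative input is exactly the collection of martingale lemmas in Appendix~\ref{app:martingale}; the bookkeeping of which $\eps'$ and which $r$ forces the drift into the required window, and checking the excursion never leaves $\{Y_1\vee Y_2<y_*\}$, is routine once that framework is in place.
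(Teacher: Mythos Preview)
Your plan for part~(a) is the paper's: sandwich $Y_1(t)-y_1$ between $(\eivalplus\pm\delta)t+M(t)$, apply Lemmas~\ref{lem:lb-tauplus-prob} and~\ref{lem:ub-tauplus-prob} to pin $\exituv$ in the required window, feed that window into the sandwich for $Y_2$ and control $N(\exituv)$ via Theorem~\ref{thm:Bass-martingale}; the conditioning costs only the factor $p^{-1}$ from Lemma~\ref{lem:proto-corner-prob}.

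For part~(b), however, you over-anticipate the difficulty. You flag the conditional time bound as the main obstacle and reach for Girsanov or a tilted first-passage analysis, but none of that is needed. The paper's argument is a one-line set inclusion: on $\{\exituv=\tau_{0,r}\}\cap\{\exituv>T\}$ we have $Y_2(\exituv)=\ln(2r)$ at a time exceeding $T$, and since $Y_2(t)\le\ln r+(\eivalmoins+\delta)t+N(t)$ throughout $U_{r,2r}$, this forces
\[
(\eivalmoins+\delta)t+N(t)\ \ge\ \ln 2\quad\text{for some }t>T.
\]
That is simply the event that a negative-drift process exceeds a fixed level after time $T$; splitting on $\{N^*(T)>-\tfrac12(\eivalmoins+\delta)T\}$ and applying Theorem~\ref{thm:Bass-martingale} together with Lemma~\ref{lem:drift-wins-over-mart-bound} gives decay exponential in $T$, hence stretched-exponential in $|y_1|$ once $T=|y_1|^{1/3}$. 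The $Y_1$ bound over the short window then follows exactly as you say, via the maximal inequality. So the paper never analyses the conditioned law: it bounds the \emph{intersection} $\{\exituv=\tau_{0,r}\}\cap\{\exituv>T\}$ directly, and that is all the phrase ``holds on $A$ w.h.p.'' (i.e., $A\setminus B$ is rare) requires. The ``direct first-passage estimate'' you mention in passing is in fact the entire story, and the argument ends up shorter than part~(a).
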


This proposition describes two possible scenarios of exit from $U_{r,2r}$ starting on~$\gamma_{1,r}$, with probabilities given some a priori control in the lemma. 
If the process exits through $\gamma'_{2,r}$, the exit time is typically about $|y_1|/\eivalplus$, and the logarithm of the distance to the boundary typically changes by a factor close to~$\rho$, implying in particular that the exit is actually through~$\gamma_{2,r}$. 
In comparison, the alternative typically happens fast and leads only to a small change in the distance to the boundary.
The proofs are postponed to Section~\ref{ssec:proof-heteroc-aux}.

Next, we analyze exits from $U_{2r,r} = (0,2r) \times (0,r)$ using, in addition to $\gamma_{1,r}$ and $\gamma_{2,r}$, the segments
\begin{align*}
  \gamma_{3,r}  = \{ 2 r \} \times (0, r)\qquad\text{ and }\qquad
  \gamma'_{1,r} =(0,2r)\times\{r\},
\end{align*} 
see Figure~\ref{fig:U_2rr}.
We are particularly interested in starting points on $\gamma_{2,r}$ (i.e., what happens after case~(a) in Proposition~\ref{prop:corner}),
but we will consider more general starting points in our statements. 
We define $\exituh$ to be the first exit time 
from~$U_{2r,r}$. We then have the almost sure identity $\exituh = \tau_{1,r} \wedge \tau_{3,r}$, where $\tau_{1,r}$ and $\tau_{3,r}$ are the hitting times for $\gamma'_{1,r}$ and $\gamma_{3,r}$, respectively.

\begin{proposition}
\label{lem:2-->1,3}
    Suppose that there is a positively oriented saddle at the vertex~$\Or$. 
    For sufficiently small $r > 0$, the following holds w.h.p.~under $\pp^{(x_1,\Exp{y_2})}$ as $y_2\to-\infty$, uniformly in $x_1\in[r,2r)$: 
    \[
        \exituh = \tau_{3,r}\le |y_2|^{1/2} 
        \quad\text{\rm and}\quad 
        Y_2 (\exituh) \in \big [y_2 - | y_2 |^{1 / 2}, y_2 + | y_2 |^{1 / 2}\big]
    \]
\end{proposition}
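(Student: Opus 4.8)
The plan is to work with the transformed process $Y = (Y_1, Y_2)$ from Lemma~\ref{lem:Y}, which in the relevant region satisfies $y_1 = \ln x_1$, $y_2 = \ln x_2$, with bounded drift and diffusion coefficients that are in $C^2$. The key structural fact is that near the positively oriented saddle at $\Or$, the $x_1$-direction is the unstable one ($\eivalh = \eivalplus^0 > 0$) and the $x_2$-direction is the stable one ($\eivalv = \eivalmoins^0 < 0$). Concretely, by Corollary~\ref{cor:Lyap-corner-pre} (equivalently, by the first line of~\eqref{eq:exp-near-infinities}), for $r$ small enough so that $U_{2r,r}$ sits inside the region where the estimates hold, while the process remains in $U_{2r,r}$ we have a uniform lower bound $\ell_1(Y) \geq \tfrac 12 \eivalh > 0$ and a uniform control $|\ell_2(Y) - \eivalv| \leq \tfrac 12 |\eivalv|$, so $\ell_2(Y) \leq \tfrac 12 \eivalv < 0$.

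I would then carry out three steps. First, \emph{exit is through $\gamma_{3,r}$ and is fast.} Since $\ell_1 \geq \tfrac 12 \eivalh$ while in $U_{2r,r}$, we can write $Y_1(t) - Y_1(0) \geq \tfrac 12 \eivalh t + M(t)$ for a martingale $M \in \cM$ whose quadratic variation grows at a bounded rate uniformly in the starting point; the starting value is $Y_1(0) = \ln x_1 \in [\ln r, \ln 2r]$, which is $O(1)$ as $y_2 \to -\infty$. Because $Y_1$ must travel only an $O(1)$ distance to reach $\ln 2r$ (the $x_1 = 2r$ line, i.e.\ $\gamma_{3,r}$), the drift-dominated martingale estimates from Appendix~\ref{app:martingale} (Corollary~\ref{cor:drift-wins-over-mart-div} and the exponential-moment bounds, as used in Lemma~\ref{lem:corners-attract}) give that $\tau_{3,r} \leq |y_2|^{1/2}$ w.h.p.\ as $y_2 \to -\infty$: the probability that a positively-drifting process with bounded-rate martingale part fails to advance an $O(1)$ amount within time $|y_2|^{1/2}$ is stretched-exponentially small in $|y_2|$. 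Simultaneously I must rule out exit through $\gamma'_{1,r}$ (the top, $x_2 = r$): while in $U_{2r,r}$, $Y_2(t) - Y_2(0) \leq \tfrac 12 \eivalv t + N(t)$ with $\eivalv < 0$, so $Y_2$ has a negative drift and cannot reach $\ln r$ from $Y_2(0) = y_2$ except via the martingale; since $|y_2 - \ln r|$ is $O(1)$ but the time available is only $|y_2|^{1/2}$ and the drift pushes $Y_2$ downward, the event $\{\tau_{1,r} \leq \tau_{3,r} \wedge |y_2|^{1/2}\}$ is rare by the same martingale deviation bounds. Combining, $\exituh = \tau_{3,r} \leq |y_2|^{1/2}$ w.h.p., uniformly in $x_1 \in [r, 2r)$ (the uniformity is immediate since all the drift bounds and quadratic-variation bounds are uniform over $U_{2r,r}$).

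Second, \emph{the $x_2$-coordinate barely moves.} On the high-probability event that $\exituh = \tau_{3,r} \leq |y_2|^{1/2}$, we control the displacement $Y_2(\exituh) - y_2$. We have $Y_2(t) - y_2 = \int_0^t \ell_2(Y(s))\,\dd s + \tilde N(t)$ where $\tilde N$ is a martingale with quadratic variation $\leq C t$. Over a time window of length at most $|y_2|^{1/2}$, the drift contributes at most $C' |y_2|^{1/2}$ in absolute value, and by the reflection/maximal inequalities for $\cM$-martingales (as in the proof of Proposition~\ref{prop:corner}), $\sup_{t \leq |y_2|^{1/2}} |\tilde N(t)|$ exceeds $|y_2|^{1/2}$ with only stretched-exponentially small probability. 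Hence $|Y_2(\exituh) - y_2| \leq |y_2|^{1/2}$ w.h.p., giving the claimed inclusion. One must be slightly careful that the bound $|\ell_2(Y) - \eivalv| \leq \tfrac 12 |\eivalv|$ requires $Y_2 < y_*$ throughout, but this is guaranteed: $Y_2$ starts at $y_2 \to -\infty$, the drift keeps pushing it down, and w.h.p.\ the martingale excursion is $o(|y_2|)$, so $Y_2$ stays well below $y_*$ on the whole excursion; on the complementary rare event we simply absorb into the error.

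The main obstacle I anticipate is not any single estimate but the \emph{bookkeeping of conditioning and uniformity}: one wants all the high-probability statements (fast exit, exit through the correct side, small $Y_2$-displacement, $Y_2$ staying below $y_*$) to hold simultaneously w.h.p.\ and uniformly over $x_1 \in [r,2r)$, and to intersect cleanly so that the conclusion is stated on one high-probability event. This is handled by taking a union bound over finitely many rare events, each with a stretched-exponential bound of the form $c_1 \Exp{-c_2 |y_2|^{c_3}}$, and noting the class is closed under such unions. The other point requiring care is that the coordinate change $y_i = \ln x_i$ is only valid for $x_i < \tfrac 14$, so $r$ must be chosen smaller than $\tfrac 18$ (so that $2r < \tfrac 14$) and small enough for Corollary~\ref{cor:Lyap-corner-pre} to apply on $U_{2r,r}$; both are finitely many smallness constraints, consistent with the ``for sufficiently small $r$'' in the statement. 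None of this is conceptually new beyond what was already done for Lemma~\ref{lem:corners-attract} and Proposition~\ref{prop:corner}(b) — indeed this proposition is essentially the ``stable-direction'' analogue of Proposition~\ref{prop:corner}(b), with the roles of the two coordinates swapped and with the starting point now at $O(1)$ distance (in $y$) from the exit facet rather than at distance $\to \infty$.
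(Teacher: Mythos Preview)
Your approach is essentially the paper's: it, too, simply invokes the drift bounds \eqref{eq:upper-Y1}--\eqref{eq:upper-Y2} adapted to the exit from $U_{2r,r}$ and repeats the strategy of Proposition~\ref{prop:corner}. Two slips are worth flagging, though neither is fatal.

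First, you write that ``$|y_2-\ln r|$ is $O(1)$'' when ruling out exit through $\gamma'_{1,r}$. This is false: as $y_2\to-\infty$ the gap $|y_2-\ln r|\sim|y_2|$ is \emph{large}. Fortunately this only helps you---Lemma~\ref{lem:drift-wins-over-mart-bound} applied to $Y_2-y_2$ with $\zplus=\ln r - y_2\sim|y_2|$ immediately gives that reaching the top at all is rare.

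Second, your displacement step does not close as written. On the event $\{\exituh\le|y_2|^{1/2}\}$ you bound the drift contribution by $C'|y_2|^{1/2}$ and the martingale by $|y_2|^{1/2}$, but then you only get $|Y_2(\exituh)-y_2|\le(C'+1)|y_2|^{1/2}$, not $|y_2|^{1/2}$, and $C'$ need not be small. The fix (exactly what the paper does in Proposition~\ref{prop:corner}(b)) is to first establish the sharper high-probability bound $\exituh\le|y_2|^{1/3}$---which you can, since $Y_1$ must travel only an $O(1)$ distance with positive drift, so $\exituh$ has uniformly bounded exponential moments (Corollary~\ref{cor:new-exp-moment})---and then run the displacement estimate on that smaller window: drift $\le C'|y_2|^{1/3}=o(|y_2|^{1/2})$, and by Theorem~\ref{thm:Bass-martingale} the martingale over time $|y_2|^{1/3}$ exceeds $\tfrac12|y_2|^{1/2}$ with probability at most $2\exp(-c|y_2|^{2/3})$.
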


This result says that the process typically exits $U_{2r,r}$ through~$\gamma_{3,r}$ (it is a rare event to go against the drift near the saddle), this does not take too long, and the distance to the boundary does not change much. Although the process is still close to the saddle, the typical behavior here is to move away from the saddle along the unstable manifold. The proof is postponed to Section~\ref{ssec:proof-heteroc-aux}.

\subsection{Evolution along a single edge}

We now analyze exits from~$G_{r,r} = (r,1-r) \times (0,r)$, using the additional notation
\begin{align*}
  G'_{r,r'} & = [2r,1-r) \times (0, r'),
  & 
  \tilde\gamma_{1,r} & = \rotat\gamma_{1,r} = \{ 1-r \}\times (0, r),\\
  &
  &
  \gamma_{4,r} & = (r,1-r)\times \{r \},
\end{align*}
see Figure~\ref{fig:FG}. We are mostly interested in starting points on $\gamma_{3,r}$ (i.e., what happens after the typical scenario in Proposition~\ref{lem:2-->1,3}), but we will consider more general starting points in our statements.
We define $\exitg$ as the exit time from $G_{r,r}$. We then have the almost sure identity
$\exitg = \tau_{2,r} \wedge \tilde \tau_{1,r} \wedge \tau_{4,r}$, where $\tau_{2,r}$, $\tilde \tau_{1,r}$ and $\tau_{4,r}$ are hitting times for $\gamma_{2,r}$, $\tilde \gamma_{1,r}$, and $\gamma_{4,r}$, respectively.

\begin{figure}
    \centering
    \footnotesize
    \begin{tikzpicture}

    \begin{axis}[
        axis lines=none,
        xmin=-.15, xmax=\xmaximum+.05,
        ymin=-.15, ymax=1.25*\errbutsmaller,
        zmax=12,
        legend style={at={(axis cs: \xmaximum,\ymaximum, 0)},anchor=north west},
        unit vector ratio=1 1,
        unit rescale keep size=false,
        view={0}{90},
        scale=2
    ]

    \fill[cpatternthree] (\errbutsmaller,0) rectangle (1-\errbutsmaller,\errbutsmaller);
    \addlegendimage{area legend, cpatternthree}

    \fill[cpatterntwo] (2*\errbutsmaller,0) rectangle (1-\errbutsmaller,.5*\errbutsmaller);
    \addlegendimage{area legend, cpatterntwo}

    \legend{$G_{r,r}$, $G'_{r,r'}$}

    \addplot3[mark=none] coordinates {(0,0,0)};

    \draw (0,\ymaximum) -- (0,0) -- (1,0) -- (1,\ymaximum);    

    \draw (\errbutsmaller-.00301,0) -- (\errbutsmaller-.00301,\errbutsmaller-0.00301);
    \node[anchor=east] at (axis cs: \errbutsmaller,0.5*\errbutsmaller) {$\gamma_2$};

    \draw (1-\errbutsmaller+.00301,0) -- (1-\errbutsmaller+.00301,\errbutsmaller);
    \node[anchor=west] at (axis cs: 1-\errbutsmaller,0.5*\errbutsmaller) {$\tilde{\gamma}_1$};

    \draw (\errbutsmaller,\errbutsmaller+.00301) -- (1-\errbutsmaller,\errbutsmaller+.00301);
    \node[anchor=south] at (axis cs: 0.5,\errbutsmaller) {$\gamma_4$};

    \node[anchor=north] at (axis cs: 0,0) {$0$};
    \node[anchor=north] at (axis cs: \errbutsmaller,0) {$r$};
    \node[anchor=north] at (axis cs: 2*\errbutsmaller,0) {$2 r$};
    \node[anchor=north] at (axis cs: 1-\errbutsmaller,0) {$1-r$};
    \node[anchor=north] at (axis cs: 1,0) {$1$};
    \node[anchor=east] at (axis cs: 0,\errbutsmaller) {$r$};
    \node[anchor=east] at (axis cs: 0,0.5*\errbutsmaller) {$r'$};

    \swsaddlesmall
    \swsaddletildesmall

    \end{axis}
    \end{tikzpicture}
    \caption{The setup for Proposition~\ref{lem:exit-from-G}. Some subscripts have been omitted for readability.}
    \label{fig:FG}
\end{figure}
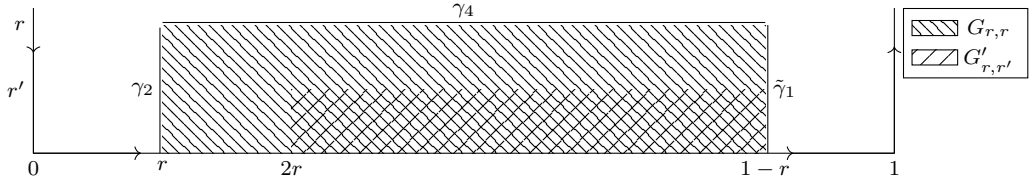

\begin{lemma}
\label{lem:proto-exit-from-G-time}
    For every $r>0$ small enough, there are $c_1, c_2 > 0$ such that 
    \[ \pp^x \{ \exitg > t \} \le c_1 \Exp{- c_2 t} \]
    for all $x \in G_{r,r}$ and all $t > 0$.
\end{lemma}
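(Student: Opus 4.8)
The plan is to reduce the claim to a uniform one-step estimate and then iterate. Precisely, I would first show that for $r$ small there are $T>0$ and $\delta\in(0,1]$ (both depending on~$r$) with
\[
    \inf_{x\in G_{r,r}}\pp^x\{\exitg\le T\}\ge\delta ;
\]
applying the strong Markov property at times $T,2T,\dots$, exactly as in the proof of Lemma~\ref{lem:exit-compact}, then yields $\pp^x\{\exitg>nT\}\le(1-\delta)^n$, hence the asserted bound with $c_2=-T^{-1}\log(1-\delta)$ and a suitable~$c_1$. To obtain the one-step estimate I would split $G_{r,r}$ into a thin slab along the edge, $G_{r,\eps_0}=(r,1-r)\times(0,\eps_0)$, and a compact core $\tilde K=[r,1-r]\times[\eps_0,r]\subset\bXo$, for a small $\eps_0\in(0,r)$ fixed in the next step.

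The key ingredient is a \emph{horizontal} ellipticity estimate near $[r,1-r]\times\{0\}$. The restriction of~\eqref{eq:main-eq-vec} to $\overline{\Ed}$ is a one-dimensional diffusion whose It\^o quadratic-variation rate is $\sigma_{11}(x_1,0)^2+\sigma_{21}(x_1,0)^2$, and this is strictly positive on $(0,1)$: at a common zero $x_*$ the controlled ODE $\dot x = b_1(x,0)+u_1\sigma_{11}(x,0)+u_2\sigma_{21}(x,0)$ cannot cross $x_*$, so the Stroock--Varadhan support theorem would contradict the edge part of Assumption~\ref{req:irreducibility}, exactly as in the Remark following Assumption~\ref{req:ellip-1d}. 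By Assumption~\ref{req:smoothness-2d} and compactness of $[r,1-r]$, there are $c_r>0$ and $\eps_0\in(0,r)$ with $\sigma_{11}(x)^2+\sigma_{21}(x)^2\ge c_r$ on $R_0:=[r,1-r]\times[0,\eps_0]$.

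With this in hand I would estimate the exit from the slab. Writing the $X_1$-equation in It\^o form, $\dd X_1 = \tilde b_1(X)\,\dd t + \sigma_{11}(X)\,\dd W_1 + \sigma_{21}(X)\,\dd W_2$, the drift $\tilde b_1$ is bounded on $R_0$ and $\dd\langle X_1\rangle\ge c_r\,\dd t$ there. Taking $h(x_1)=C_0-\cosh(\kappa(x_1-\tfrac12))$ with $\kappa$ large and $C_0$ chosen so that $1\le h\le C_0$ on $[r,1-r]$, one gets $\tilde b_1 h'+\tfrac12(\sigma_{11}^2+\sigma_{21}^2)h''\le-\lambda h$ on $R_0$ for some $\lambda>0$, so $e^{\lambda(t\wedge\sigma)}h(X_1(t\wedge\sigma))$ is a nonnegative supermartingale, where $\sigma$ is the first time $X_1\notin(r,1-r)$ or $X_2\ge\eps_0$ (before $\sigma$ the path stays in $R_0$). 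This gives $\pp^x\{\sigma>t\}\le C_0\,e^{-\lambda t}$ uniformly in $x\in G_{r,\eps_0}$; in particular $\inf_{x\in G_{r,\eps_0}}\pp^x\{\sigma\le T_1\}\ge\delta_1>0$ for some $T_1$. At time $\sigma$ the process is either on $\{r,1-r\}\times(0,\eps_0)$, i.e.\ it has exited $G_{r,r}$ through $\gamma_{2,r}$ or $\tilde\gamma_{1,r}$ (here $\eps_0<r$ is used), or on $(r,1-r)\times\{\eps_0\}\subset\tilde K$.

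Finally I would handle the core and assemble. Since $\tilde K$ is compact in $\bXo$, fixing an open $V\subset\bXo$ with $\overline V\cap\overline{G_{r,r}}=\emptyset$ (for instance $V=(r,1-r)\times(r+\eta,2r)$ with $0<\eta<r$ and $r<\tfrac12$), Lemma~\ref{lem:uniform-irred} gives $T_2$ and $\delta_2>0$ with $\inf_{x\in\tilde K}\pp^x\{\hit_V^x\le T_2\}\ge\delta_2$, and since any path from $\tilde K$ to $V$ must leave $G_{r,r}$, also $\inf_{x\in\tilde K}\pp^x\{\exitg\le T_2\}\ge\delta_2$. Combining: for $x\in\tilde K$ this is already the desired estimate, while for $x\in G_{r,\eps_0}$, with probability at least $\delta_1$ within time $T_1$ the path either exits $G_{r,r}$ or lands in $\tilde K$, and in the latter case the strong Markov property contributes a further factor $\delta_2$; hence $\inf_{x\in G_{r,r}}\pp^x\{\exitg\le T_1+T_2\}\ge\delta_1\delta_2$. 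The main obstacle is the slab estimate: turning the mere positivity of $\sigma_{11}^2+\sigma_{21}^2$ on $\Ed^\circ$ (all that irreducibility provides, since no ellipticity of the two-dimensional diffusion is assumed) into a \emph{uniform} exponential tail for the exit from $G_{r,\eps_0}$; once that is available, the rest is routine bookkeeping with the strong Markov property and Lemma~\ref{lem:uniform-irred}.
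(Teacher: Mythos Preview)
Your proof is correct. The paper takes a shorter route: after the coordinate change $Y_1=y_1(X_1)$ of Lemma~\ref{lem:Y}, it views the process as one of the form~\eqref{eq:alt-SDE} and invokes Lemma~\ref{lem:finite-2d-mean} directly; that lemma's own proof obtains the one-step exit estimate via the DDS representation (Lemma~\ref{lem:positive-prob-to-escape-right}) rather than a Lyapunov function, and then iterates with the strong Markov property exactly as you do.

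Both arguments rest on the same underlying fact\,---\,edge irreducibility forces $\sigma_{11}^2+\sigma_{21}^2>0$ on $(0,1)\times\{0\}$\,---\,and both conclude by geometric iteration. Your approach differs in two respects: you stay in the original $x_1$-coordinate and use a $\cosh$-type Lyapunov function instead of the martingale/DDS argument, and you insert an explicit slab/core decomposition. The latter is a genuine refinement: since the paper only assumes irreducibility and one-point hypoellipticity in $\bXo$ (not ellipticity), $\sigma_{11}^2+\sigma_{21}^2$ need not be positive on all of $\overline{G_{r,r}}$, only on a neighbourhood of $[r,1-r]\times\{0\}$; your split handles this explicitly, whereas the paper's appeal to Lemma~\ref{lem:finite-2d-mean} tacitly uses the relative-ellipticity hypothesis of Appendix~\ref{app:martingales-further} on the whole region. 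The price is a longer argument; the payoff is that the dependence on horizontal ellipticity is localized to the thin slab $G_{r,\eps_0}$, with the compact core $\tilde K\subset\bXo$ handled by irreducibility alone via Lemma~\ref{lem:uniform-irred}.
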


\begin{lemma}
\label{lem:proto-exit-from-G-prob-easier}
    For every $r>0$, there exists $q>0$ such that, if $r' \in (0,r)$ is small enough, then $\pp^x\{\exitg=\tilde\tau_{1,r}\}\ge q$ for all $x\in G'_{r,r'}$.
\end{lemma}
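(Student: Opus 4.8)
The plan is to exploit the one-dimensional irreducibility along the bottom edge $\Ed^0=(0,1)\times\{0\}$. Since $r'$ will be taken small, every starting point of $G'_{r,r'}$ lies in a thin strip along $\Ed^0$, and the planar process there should shadow the one-dimensional diffusion $\hat X_1$ obtained by restricting~\eqref{eq:maineq} to $\overline{\Ed^0}$; the latter is a $1$-dimensional diffusion on $[0,1]$ for which Assumptions~\ref{req:smoothness-1d}--\ref{req:ellip-1d} hold (the vector fields vanish at the vertices $\Or^0,\Or^1$), and by the Remark following Assumption~\ref{req:ellip-1d} it is non-degenerate on $(0,1)$. Hence, starting from any abscissa in the compact set $[2r,1-r]$, $\hat X_1$ exits the interval $(r,1-r/2)$ through its right end with probability bounded away from $0$; since this forces $\hat X_1$ to pass through the level $1-r$ before reaching $r$, and since the point $1-r$ is the foot of $\tilde\gamma_{1,r}=\{1-r\}\times(0,r)$, a planar trajectory that shadows this event while staying at small height $x_2\in(0,r)$ exits $G_{r,r}$ precisely through $\tilde\gamma_{1,r}$.

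Concretely, for $\xi\in[r,1-r/2]$ set $h(\xi)=\pp^{(\xi,0)}\{\hat X_1\text{ exits }(r,1-r/2)\text{ through }1-r/2\}$. By standard one-dimensional diffusion theory (scale functions; see \cite{KaSh}), $h$ is continuous and strictly increasing, so $h(\xi)\ge h(2r)>0$ for all $\xi\in[2r,1-r]$. By part~\ref{it:exit-from-middle} of Lemma~\ref{lem:1d-aux} the exit time of $\hat X_1$ from $(r,1-r/2)$ is a.s. finite, so $h(\xi,T):=\pp^{(\xi,0)}\{\hat X_1\text{ exits }(r,1-r/2)\text{ through }1-r/2\text{ before time }T\}\uparrow h(\xi)$ as $T\to\infty$; since $h(\cdot,T)$ and $h$ are continuous, Dini's theorem upgrades this to uniform convergence on $[2r,1-r]$, and I fix $T>0$ with $h(\xi,T)\ge\tfrac12 h(2r)=:h_0$ for all $\xi\in[2r,1-r]$.

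I would then argue by contradiction. If the assertion fails, there are $(\xi_n,\eta_n)\in[2r,1-r)\times(0,1)$ with $\eta_n\downarrow 0$ and $\pp^{(\xi_n,\eta_n)}\{\exitg=\tilde\tau_{1,r}\}\to 0$; pass to a subsequence with $\xi_n\to\xi^*\in[2r,1-r]$. Since the coefficients of~\eqref{eq:maineq} are smooth and $\Ed^0$ is invariant, the strong solution depends continuously on its initial condition, so $X^{(\xi_n,\eta_n)}$ converges in distribution — indeed in probability — in $C([0,T],\bX)$ to $(\hat X_1^{\xi^*},0)$. Consider the Borel subset of $C([0,T],\bX)$
\[
  E_T=\bigl\{\text{$1$st coordinate exits $(r,1-r/2)$ through $1-r/2$ before time $T$}\bigr\}\cap\bigl\{\textstyle\sup_{[0,T]}(\text{$2$nd coordinate})<r\bigr\}.
\]
On $E_T$, the process $X^{(\xi_n,\eta_n)}$ exits $G_{r,r}$ through $\tilde\gamma_{1,r}$ before time $T$: its second coordinate stays in $(0,r)$ (it is positive since $\bXo$ is invariant and $\eta_n>0$), while its first coordinate reaches $1-r$ before reaching $r$. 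Moreover $E_T$ is a continuity set for the law of $(\hat X_1^{\xi^*},0)$: the second-coordinate constraint has limiting probability $1$, and for the non-degenerate diffusion $\hat X_1^{\xi^*}$ — which starts strictly inside $(r,1-r/2)$ — the exit time from $(r,1-r/2)$ has no atom at $T$ and the process crosses the levels $r$ and $1-r/2$ cleanly, so the first-coordinate event has topological boundary of limiting measure zero. Hence $\pp^{(\xi_n,\eta_n)}\{\exitg=\tilde\tau_{1,r}\}\ge\pp^{(\xi_n,\eta_n)}(E_T)\to h(\xi^*,T)\ge h_0>0$, contradicting the choice of the sequence. This yields the assertion with $q:=h_0$ (one in fact obtains the bound uniformly over $x\in[2r,1-r]\times(0,r')\supseteq G'_{r,r'}$).

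The main obstacle is the passage $\eta_n\downarrow 0$: the limiting process lives on the invariant edge, so the target event $\{\exitg=\tilde\tau_{1,r}\}$ has limiting probability $0$ and is not itself a continuity set; one must isolate the essentially one-dimensional event $E_T$, to which convergence of probabilities does apply, and verify both that $E_T$ already forces an exit through the open segment $\tilde\gamma_{1,r}$ (here the cheap auxiliary constraint $\sup_{[0,T]}X_2<r$, of limiting probability $1$, rules out the competing exit through the top $\gamma_{4,r}$) and that $E_T$ is a genuine continuity set for the limiting law, i.e., that the non-degenerate edge diffusion crosses the relevant levels cleanly and has no exit-time atom at the fixed horizon $T$. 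Everything else — continuity and monotonicity of $h$, Dini's theorem, a.s. finiteness of the edge exit time, continuous dependence of the SDE on initial data — is standard.
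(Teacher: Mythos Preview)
Your argument is correct and follows a genuinely different route from the paper's. The paper works directly in the transformed coordinates of Lemma~\ref{lem:Y} and combines two martingale estimates from Appendix~\ref{app:martingale}: by Lemma~\ref{lem:lb-tauplus-prob} applied to $Y_2$ one has $\sup_{x\in G'_{r,r'}}\pp^x\{\tau_{4,r}\le 2\}\to 0$ as $r'\to 0$, while by Lemma~\ref{lem:positive-prob-to-escape-right} applied to $Y_1$ one has $\inf_{x\in G'_{r,r'}}\pp^x\{\tilde\tau_{1,r}\le\tau_{2,r}\wedge 1\}\ge c>0$; intersecting the two events yields the claim with $q=c/2$. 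You instead pass to the one-dimensional edge diffusion $\hat X_1$ via continuous dependence of strong solutions on initial data and a continuity-set (Portmanteau) argument, with scale-function monotonicity and Dini's theorem used to fix a uniform time horizon~$T$. The paper's route is quicker inside its own martingale framework; your route is more self-contained (standard SDE continuity, one-dimensional diffusion theory, weak-convergence tools) and has the structural advantage of relying only on the one-dimensional nondegeneracy along $\overline{\Ed}$, whereas the paper's invocation of Lemma~\ref{lem:positive-prob-to-escape-right} presupposes a uniform lower bound on the quadratic variation of $Y_1$ while the process is in $G_{r,r}$.
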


\begin{proposition}
\label{lem:exit-from-G} 
    For every $r>0$ small enough, the following holds w.h.p.\ under $\pp^{(x_1,\Exp{y_2})}$ as $y_2\to-\infty$, uniformly in $x_1\in[r,1-r]$:
    \[
        \exitg = \tilde\tau_{1,r}\wedge \tau_{2,r} \le |y_2|^{1/2}
        \quad\text{\rm and}\quad 
        Y_2 (\exitg) \in \big[y_2 - | y_2
        |^{1 / 2}, y_2 + |  y_2 | ^{1 / 2}\big]
        . 
    \]
\end{proposition}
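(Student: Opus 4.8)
The plan is to work in the transformed coordinates of Lemma~\ref{lem:Y} and to combine the exponential exit-time tail of Lemma~\ref{lem:proto-exit-from-G-time} with a crude martingale bound for the vertical coordinate on a carefully chosen short time scale. Set $Y(t)=y(X(t))$, so that $Y_2$ solves, globally on $\rr^2$, an It\^o SDE $\dd Y_2(t)=\ell_2(Y(t))\dd t+\sum_m s_{m2}(Y(t))\dd W_m(t)$ with $\ell_2$ and $s_{m2}$ bounded; write $C=\sup|\ell_2|$ and note that $M_2(t):=\sum_m\int_0^t s_{m2}(Y(s))\dd W_m(s)$ is a continuous martingale with quadratic variation growing at a bounded rate (so $M_2\in\cM$). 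Since each initial condition is $(x_1,\Exp{y_2})$ with $x_1\in[r,1-r]$ and $\Exp{y_2}<r<\tfrac14$, near $\overline{G_{r,r}}$ the chart acts by $y_i=\ln x_i$; in particular the process starts from $Y_2(0)=y_2\to-\infty$, and exiting $G_{r,r}$ through the top face $\gamma_{4,r}$ amounts to $Y_2$ reaching $\ln r$. (The endpoint cases $x_1\in\{r,1-r\}$ are trivial since then $\exitg=0$, so we may assume $x_1\in(r,1-r)$.)

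For the core estimate I would fix $c:=1/(2C)$ and $T_0:=c\,|y_2|^{1/2}$. Lemma~\ref{lem:proto-exit-from-G-time} at time $T_0$ gives $\pp^{(x_1,\Exp{y_2})}\{\exitg>T_0\}\le c_1\Exp{-c_2 c|y_2|^{1/2}}$, a rare event (uniformly in $x_1$), on whose complement $\exitg\le T_0\le|y_2|^{1/2}$. On $\{\exitg\le T_0\}$ one has, for every $t\le\exitg$,
\[
    |Y_2(t)-y_2|\;\le\;\int_0^t|\ell_2(Y(s))|\dd s+|M_2(t)|\;\le\;CT_0+\sup_{s\le T_0}|M_2(s)|\;=\;\tfrac12|y_2|^{1/2}+\sup_{s\le T_0}|M_2(s)|,
\]
so it remains to check that $\big\{\sup_{s\le T_0}|M_2(s)|\ge\tfrac12|y_2|^{1/2}\big\}$ is rare. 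This is a standard maximal inequality for $M_2\in\cM$ (time-change to Brownian motion plus the reflection principle; the relevant estimates for the class $\cM$ are collected in Appendix~\ref{app:martingale}): the probability is at most a constant times $\Exp{-c'|y_2|^{1/2}}$ for some $c'>0$, again uniformly in $x_1$.

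Finally, I would intersect the complements of these two rare events to conclude that, w.h.p.\ and uniformly in $x_1\in[r,1-r]$, we have both $\exitg\le|y_2|^{1/2}$ and $\sup_{t\le\exitg}|Y_2(t)-y_2|<|y_2|^{1/2}$; the latter gives $Y_2(\exitg)\in[y_2-|y_2|^{1/2},y_2+|y_2|^{1/2}]$. Moreover, since $y_2+|y_2|^{1/2}\to-\infty$ as $y_2\to-\infty$, for $|y_2|$ large we have $y_2+|y_2|^{1/2}<\ln r$, hence $Y_2(t)<\ln r$, i.e.\ $X_2(t)<r$, for all $t\le\exitg$; the process therefore never meets $\gamma_{4,r}$ before exiting, so $\exitg=\tilde\tau_{1,r}\wedge\tau_{2,r}$, completing the proof. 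Given Lemma~\ref{lem:proto-exit-from-G-time}, the only (minor) subtlety is the calibration of the intermediate scale $T_0$: it must be long enough that the exit has typically already occurred, yet a small enough multiple of $|y_2|^{1/2}$ that the bounded drift of $Y_2$ uses up only a fixed fraction of the $|y_2|^{1/2}$ budget, leaving room for the martingale fluctuations.
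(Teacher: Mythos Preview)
Your proof is correct and follows essentially the same approach as the paper: bound $|Y_2(t)-y_2|\le Ct+M_2^*(t)$, choose an intermediate time scale, use Lemma~\ref{lem:proto-exit-from-G-time} to make $\{\exitg>T\}$ rare, and use the exponential martingale inequality (Theorem~\ref{thm:Bass-martingale}) to control the fluctuation term. The paper uses $T=|y_2|^{1/3}$ rather than your $T_0=c|y_2|^{1/2}$, which makes the drift contribution $C|y_2|^{1/3}$ automatically negligible compared to $|y_2|^{1/2}$ and avoids the calibration of $c$; your choice works too, but note that $c=1/(2C)$ may exceed $1$ when $C<\tfrac12$, so you should take $c=\min(1,1/(2C))$ (or simply replace $C$ by $\max(C,1)$) to ensure $T_0\le|y_2|^{1/2}$.
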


Lemma~\ref{lem:proto-exit-from-G-time} and Proposition~\ref{lem:exit-from-G} say that the exit time from $G_{r,r}$ has exponential tails and single out a typical scenario in which the process exits towards one of the two adjacent corners and not into the bulk, spending a short time near an edge but away from the corners, leading to small changes in the distance to the boundary compared to what happens when traversing a corner, i.e., compared to case~(a) in Proposition~\ref{prop:corner}. 
The proofs, which are postponed to Section~\ref{ssec:proof-heteroc-aux}, do not use any assumptions on the local behavior of the drift near any vertices, so the vertices do not have to be saddles for this result to hold true.

\subsection{Integrating the behavior near a vertex and an incident edge}
\label{ssec:corner-with-edge}

In this section, we state the result combining our studies of  behaviors near one saddle and between two saddles.
In the next section, this central result will be applied iteratively to a sequence of saddles to describe the scenario of cycling near the boundary accompanied by asymptotic absorption.

We need more notation: let us define $\rotat$ to be the rotation of the square sending $\Or^k$ to~$\Or^{k+1}$, and 
\begin{align}
\label{eq:F_r}
\begin{aligned}
  U_r&=U_{r,r}
  &\qquad\qquad& & 
  \tilde U_r & = \rotat U_r =(1-r,r)\times(0,r),\\
  F_r & = (0,1)\times(0,r),
  &\qquad\qquad& & 
  \tilde\gamma_{2,r} & =\rotat\gamma_{2,r}=(1-r,1)\times\{r\},
\end{aligned}
\end{align}
see Figure~\ref{fig:FGU}. We are mostly interested analyzing the transition from $\gamma_{2,r}$ to~$\tilde{\gamma}_{2,r}$ (suitable for iteration),
but will consider a more general setup in our statement.
We will also use $\tilde {\Or}=\rotat \Or=\Or^1$,

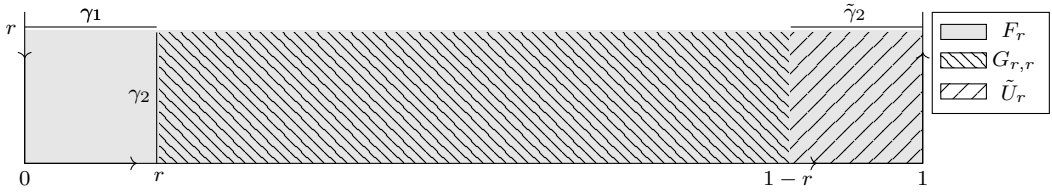
\begin{figure}
    \centering 
    \footnotesize
    \begin{tikzpicture}

    \begin{axis}[
        axis lines=none,
        xmin=-.15, xmax=\xmaximum,
        ymin=-.15, ymax=1.25*\errbutsmaller,
        zmax=12,
        legend style={at={(axis cs: \xmaximum,\ymaximum, 0)},anchor=north west},
        unit vector ratio=1 1,
        unit rescale keep size=false,
        view={0}{90},
        scale=2.
    ]

    \fill[cpatternone] (0,0) rectangle (1,\errbutsmaller);
    \addlegendimage{area legend, cpatternone}

    \fill[cpatternthree] (\errbutsmaller,0) rectangle (1-\errbutsmaller,\errbutsmaller-0.00301);
    \addlegendimage{area legend, cpatternthree}

    \fill[cpatterntwo] (1-\errbutsmaller+0.00301,0) rectangle (1,\errbutsmaller-0.00301);
    \addlegendimage{area legend, cpatterntwo}

    \legend{$F_r$, $G_{r,r}$, $\tilde{U}_r$}
    
    \addplot3[mark=none] coordinates {(0,0,0)};

    \draw (0,\ymaximum) -- (0,0) -- (1,0) -- (1,\ymaximum);    

    \draw (0,\errbutsmaller+0.00301) -- (\errbutsmaller-0.00301,\errbutsmaller+0.00301);
    \node[anchor=south] at (axis cs: 0.5*\errbutsmaller,\errbutsmaller) {$\gamma_1$};

    \node[anchor=south] at (axis cs: 0.5*\errbutsmaller,\errbutsmaller) {$\gamma_1$};

    \draw (\errbutsmaller-0.00301,0) -- (\errbutsmaller-0.00301,\errbutsmaller-0.00301);
    \node[anchor=east] at (axis cs: \errbutsmaller,0.5*\errbutsmaller) {$\gamma_2$};

    \draw (1-\errbutsmaller+0.00301,\errbutsmaller+0.00301) -- (1,\errbutsmaller+0.00301);
    \node[anchor=south] at (1-0.5*\errbutsmaller,\errbutsmaller) {$\tilde{\gamma}_2$};

    \node[anchor=north] at (axis cs: 0,0) {$0$};
    \node[anchor=north] at (axis cs: \errbutsmaller,0) {$r$};
    \node[anchor=north] at (axis cs: 1-\errbutsmaller,0) {$1-r$};
    \node[anchor=north] at (axis cs: 1,0) {$1$};
    \node[anchor=east] at (axis cs: 0,\errbutsmaller) {$r$};

    \swsaddlesmall
    \swsaddletildesmall

    \end{axis}
    \end{tikzpicture}
    \caption{Visual reference for Proposition~\ref{lem:highprob-events}.  Some subscripts have been omitted for readability.}
    \label{fig:FGU}
\end{figure}

\begin{proposition}
\label{lem:highprob-events}
    Suppose that  vertices~$\Or$ and $\tilde{\Or}$ are positively oriented saddles, and let 
    \begin{eqnarray*}
        \eta_{r}  = \inf \{ t \ge  0 : X(t)  \in \tilde\gamma_{2,r} \}.
    \end{eqnarray*} 
    Then, for every~$\eps > 0$ and $r_0>0$, there exists $r\in(0,r_0)$ such that the following hold w.h.p.\ under $\pp^{(x_1,\Exp{y_2})}$ as $y_2\to\infty$, uniformly over $x_1\in [r,1-r]$:
    $X(t) \in F_r$ for all $t \in [0,\eta_r)$, and the approximations
    \begin{align*}
        \int^{\eta_r}_0 \one_{\{ X (t) \notin \tilde U_r
        \}} \dd t &\approx 0,
        & 
        \int^{\eta_r}_0 \one_{\{ X (t) \in \tilde U_r\}} \dd t &\approx \frac{|y_2|}{\eivalplustilde},
        & 
        \ln \tilde X_2 (\eta_r) &\approx \rho y_2 
    \end{align*}
    hold with absolute error at most~$\eps|y_2|$.
\end{proposition}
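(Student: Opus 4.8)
The plan is to describe the motion of $X$ up to $\eta_r$ as a \emph{controlled number of fast excursions} near $\Ed$ followed by a \emph{single slow traversal} of the saddle $\tilde{\Or}$, assembling the local estimates of Propositions~\ref{prop:corner}, \ref{lem:2-->1,3}, \ref{lem:exit-from-G} and the a priori probability bounds of Lemmas~\ref{lem:proto-corner-prob}, \ref{lem:proto-exit-from-G-prob-easier} and~\ref{lem:proto-exit-from-G-time}. The common small parameter is $y_2\to-\infty$, the current log-distance of $X$ to $\Ed$; it changes by only $o(|y_2|)$ over the whole episode, and the single large contribution --- to both $\eta_r$ and to $\ln\tilde X_2(\eta_r)$ --- comes from the traversal, which by Proposition~\ref{prop:corner} applied at the positively oriented saddle $\tilde{\Or}$ (with $\gamma_{0,r},\gamma_{1,r},\gamma_{2,r}$ replaced by their $\rotat$-images $\rotat\gamma_{0,r},\tilde{\gamma}_{1,r},\tilde{\gamma}_{2,r}$) takes a time $|y_2|/\eivalplustilde+o(|y_2|)$, all but $o(|y_2|)$ of which is spent in $\tilde{U}_r$, and multiplies the log-distance to $\Ed$ by $\rho$.

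First I would fix $r\in(0,\tfrac14)$ small enough that all the cited estimates hold for the given $\eps$ and small enough to validate the drift estimate on the ``neck'' $\rotat U_{r,2r}\setminus\tilde{U}_r$ used below. I then introduce stopping times recording the successive visits of $X$ to $\gamma_{2,r}$, $\gamma_{3,r}$, $\rotat\gamma_{0,r}$ and $\tilde{\gamma}_{1,r}$; between two consecutive such times $X$ makes one of the following moves, each of which keeps $X$ in $F_r$ (all regions involved have $x_2<r$, and w.h.p.\ the exits avoid the ``top'' faces) and which, for the current log-height $y_2$, is w.h.p.\ \emph{fast} (duration $\le|y_2|^{1/2}$) with $Y_2=\ln X_2$ changing by $\le|y_2|^{1/2}$: from $\gamma_{2,r}$, exit $U_{2r,r}$ through $\gamma_{3,r}$ (Proposition~\ref{lem:2-->1,3}); from a point of $G'_{r,r'}$ --- in particular $\gamma_{3,r}$ or $\rotat\gamma_{0,r}$ (at the small height $X_2\approx\Exp{y_2}$) --- exit $G_{r,r}$ through $\tilde{\gamma}_{1,r}$ or $\gamma_{2,r}$ (Proposition~\ref{lem:exit-from-G}); or from $\tilde{\gamma}_{1,r}$, a \emph{failed attempt}, exit $\rotat U_{r,2r}$ through $\rotat\gamma_{0,r}$ (the ``case~(b)'' alternative of Proposition~\ref{prop:corner} at $\tilde{\Or}$). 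The episode ends when, from $\tilde{\gamma}_{1,r}$, $X$ makes a \emph{successful attempt} --- the ``case~(a)'' event of Proposition~\ref{prop:corner} at $\tilde{\Or}$, which w.h.p.\ ends at $\tilde{\gamma}_{2,r}$, hence at $\eta_r$. (If the initial abscissa lies in $(r,2r)$ this costs one extra fast excursion before $X$ reaches $G'_{r,r'}$ or $\tilde{\gamma}_{1,r}$.)

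By Lemma~\ref{lem:proto-corner-prob} (at $\tilde{\Or}$) an attempt succeeds with probability $\ge p>0$ regardless of the log-height, and by Lemma~\ref{lem:proto-exit-from-G-prob-easier} a $G_{r,r}$-exit started in $G'_{r,r'}$ reaches $\tilde{\gamma}_{1,r}$ with probability $\ge q>0$; hence the number $N$ of attempts before the successful one, and the number of moves in each inter-attempt block, are each stochastically dominated by geometric variables, so with $K=\lceil|y_2|^{1/5}\rceil$ the events $\{N>K\}$ and ``some block has more than $K$ moves'' are rare. On the complement there are at most $CK^2$ fast excursions before the successful attempt, so that phase lasts $\le CK^2|y_2|^{1/2}=o(|y_2|)$ and shifts $Y_2$ by $o(|y_2|)$ (a one-line bootstrap keeps the running $|Y_2|$ within a factor $2$ of $|y_2|$, which is all the per-excursion bounds need), and $X$ stays in $F_r$; thus the log-height $y_2'$ at the start of the successful attempt has $|y_2'-y_2|=o(|y_2|)$. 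Conditioning on the ``case~(a)'' event with parameter $y_2'$ --- legitimate by the strong Markov property at the start of each attempt --- Proposition~\ref{prop:corner} at $\tilde{\Or}$ gives, w.h.p., that the traversal lasts a time in $\bigl[(\tfrac1{\eivalplustilde}-\eps')|y_2'|,\,(\tfrac1{\eivalplustilde}+\eps')|y_2'|\bigr]$, ends at $\tilde{\gamma}_{2,r}$ with $\ln\tilde X_2(\eta_r)\in[(\rho+\eps')y_2',(\rho-\eps')y_2']$, and apart from $o(|y_2|)$ time in the neck $\rotat U_{r,2r}\setminus\tilde{U}_r\subset\overline{G_{r,r}}$ stays inside $\tilde{U}_r$; the neck estimate uses that, since $\tilde{\Or}$ is a positively oriented saddle and $r$ is small, the stable log-coordinate $\ln(1-X_1)$ has a strictly negative drift on the neck, so the neck is exited quickly into $\tilde{U}_r$ and not re-entered during the traversal (and the time each \emph{failed} attempt spends in the neck is at most its short duration $\le|y_2|^{1/3}$). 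Since failed attempts and the pre-traversal excursions otherwise stay in $G_{r,r}\cup U_{2r,r}$, disjoint from $\tilde{U}_r$, they contribute only $o(|y_2|)$ time to $\tilde{U}_r$; combining everything and taking $\eps'$ small and $|y_2|$ large yields $\int_0^{\eta_r}\one_{\{X(t)\notin\tilde{U}_r\}}\dd t\le\eps|y_2|$, $\bigl|\int_0^{\eta_r}\one_{\{X(t)\in\tilde{U}_r\}}\dd t-|y_2|/\eivalplustilde\bigr|\le\eps|y_2|$ and $|\ln\tilde X_2(\eta_r)-\rho y_2|\le\eps|y_2|$, while $X(t)\in F_r$ for all $t<\eta_r$ since every move keeps $X$ in $F_r$ and $\tilde{\gamma}_{2,r}\subset\partial F_r$.

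Finally, every exceptional event used --- a slow or large-displacement excursion, too many attempts or too long a block, a corner attempt violating Proposition~\ref{prop:corner} --- is rare, and there are only $O(K^2)=O(|y_2|^{2/5})$ of them, so a union bound (absorbing the polynomial factor into a slightly smaller exponent) makes the whole bad event rare; this is uniform over $x_1\in[r,1-r]$ because all cited estimates are uniform in their starting points and $(x_1,\Exp{y_2})$ falls into one of three cases ($x_1\in(r,1-r)$: in $G_{r,r}$; $x_1=r$: on $\gamma_{2,r}$; $x_1=1-r$: on $\tilde{\gamma}_{1,r}$). I expect the main obstacle to be precisely this bookkeeping: calibrating $K$ to be sub-polynomial so that $CK^2$ fast excursions add $o(|y_2|)$ to both the elapsed time and the drift of $Y_2$, yet large enough that the ``more than $K$'' events stay rare; feeding the random, slowly varying log-height back into Proposition~\ref{prop:corner} as its parameter; and verifying that the time spent in the neck during the traversal is genuinely negligible.
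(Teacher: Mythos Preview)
Your proposal is correct and follows essentially the same approach as the paper: decompose the motion up to $\eta_r$ into a bounded (geometrically controlled) number of fast excursions among the segments $\gamma_{2,r},\gamma_{3,r},\tilde\gamma_{0,r},\tilde\gamma_{1,r}$, each of duration and $Y_2$-displacement at most $|y_2|^{1/2}$, followed by the single slow ``case~(a)'' traversal of the saddle $\tilde\Or$ described by Proposition~\ref{prop:corner}; the paper uses the same ingredients (Propositions~\ref{prop:corner}, \ref{lem:2-->1,3}, \ref{lem:exit-from-G} and Lemmas~\ref{lem:proto-corner-prob}, \ref{lem:proto-exit-from-G-prob-easier}) and the same union-bound-over-polynomially-many-transitions strategy, differing only in bookkeeping (it bounds the number of transitions by $|y_2|^{1/8}$ and controls the cumulative $Y_2$-drift via a separate iteration lemma rather than your $K^2$-with-bootstrap argument). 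Your explicit treatment of the neck $\rotat U_{r,2r}\setminus\tilde U_r$ during the traversal is a point the paper leaves implicit.
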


This proposition shows that once the process $X$ is in $G_{r,r}$, it typically spends not too much time outside of $\tilde U_{r}$ before spending a long  time in~$\tilde U_{r}$ and exiting from it along the next edge. In addition, the logarithmic distances from the changes approximately by a factor of $\rho$. 
The proof of this proposition, which combines all previous results from this section, is given in Section~\ref{sec:proof-combined-saddle-edge}.

\subsection{Proof of  Theorem~\ref{thm:2d-cycle}}

The results below rely on Proposition~\ref{lem:highprob-events} to exhaust all claims in Theorem~\ref{thm:2d-cycle} and Proposition~\ref{prop:quadri}.
We will need several auxiliary coordinate systems or charts. One way to introduce them is to start with $(x^0_1,x^0_2) = (x_1,x_2)$ and iteratively define rotations $(x_1^{k}, x_2^{k})= (x_2^{k-1},1-x_1^{k-1})$ for $k\in\N$. Due to cyclicity, it often makes sense to use $x^k$ restricting $k$ to $\zz_4$=\{0,1,2,3\}. Viewing~$x^k$ as a function of $x\in\X$, we define the representation of our diffusion in the $k$-th chart by $X^k(t)=x^k(X(t))$. Then, $Y^k(t)$ is defined as in Section~\ref{sec:coord-prelim}. In particular, close enough to $\Ed^k$, we have $Y_2^k(t) = \ln X_2^k(t)$.
The assumption of positive orientation of the stochastic cycle means that, for each $k\in\{0,1,2,3\}$,  one can choose vectors $(1,0)$ and $(0,1)$ in coordinates $(x^k_1,x^k_2)$ as eigenvectors of the linearization of~$b$ near~$\Or^k$ associated with eigenvalues~$\eivalplus^k > 0$ and~$\eivalmoins^k < 0$, respectively.
We also recall that the stability index~$\Index$ is defined in~\eqref{eq:product-of-rhos} as the product of saddle stability indices~$\rho^k$ defined in~\eqref{eq:rhos}, and assumed to satisfy
\begin{equation}
\label{eq:stabiity-assumption1}
    \Index  > 1.
\end{equation}

Some statements rely on sequences of stopping times defined recursively as follows:
\begin{subequations}    
\label{eq:def-eta-heteroclinic}
\begin{align}
    \eta_{0,r,r'}
    &= \inf \{ t > 0 : X(t) \in F^*_{r'}\}, \\
    \eta_{n,r,r'} ,
    &= \inf \{ t > \eta_{n-1,r,r'} : \text{ there exists } k \text{ such that } X(t) \in \gamma_{2,r}^k \text{ and } X(\eta_{n-1,r,r'})\notin \gamma_{2,r}^k\}
\end{align}
\end{subequations}
for $n\in\nn$. Here, $F^*_{r'}$ is used as a shorthand for $\bigcup_{k=0}^3 F_{r'}^k$ where $F_{r'}^k = \rotat^k F_{r'}$.

\begin{lemma}
\label{lem:proto-eventual-cycling}
    For every initial condition $x \in \bXo$ and small enough parameters $0 < r' < r $, the stopping time $\eta_{n,r,r'}$ is almost surely finite for every~$n\in\nn$.
\end{lemma}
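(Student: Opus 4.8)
The plan is to prove the claim by induction on $n$, using the strong Markov property together with the exit/hitting time results from Sections~\ref{sec:local-attr} and~\ref{sec:main-heteroc-proof}. The base case $n=0$ requires showing that, starting from an arbitrary $x\in\bXo$, the process almost surely hits $F^*_{r'}=\bigcup_{k=0}^3 F_{r'}^k$, i.e., it reaches the $r'$-neighborhood of the boundary. Since $\bX\setminus F^*_{r'}$ is a compact subset $R$ of $\bXo$, this follows directly from Lemma~\ref{lem:exit-compact}: the exit time $\exit_R^x$ from $R$ is almost surely finite (indeed has a finite exponential moment uniformly in $x$), and at that exit time the process lies in $F^*_{r'}$ (modulo the trivial remark that one chooses $r'$ small enough that the relevant neighborhoods are disjoint and $R$ has nonempty interior). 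So $\eta_{0,r,r'}<\infty$ a.s.

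For the inductive step, assume $\eta_{n-1,r,r'}<\infty$ a.s.\ and condition on $\cF_{\eta_{n-1,r,r'}}$; by the strong Markov property it suffices to show that from any starting point $x'=X(\eta_{n-1,r,r'})$, which lies in some $F_{r'}^k$ (or on $\gamma_{2,r}^k$), the process almost surely later reaches $\gamma_{2,r}^j$ for some $j$ with $\gamma_{2,r}^j\neq\gamma_{2,r}^k$ — concretely, it suffices to reach $\tilde\gamma_{2,r}^k = \gamma_{2,r}^{k+1}$ (the next corner-exit segment along the cycle). The natural route is: first the process leaves the thin slab $F_{r'}^k$ through $\gamma_{2,r}^k$ or through $\tilde\gamma_{2,r}^k$ in the appropriate chart, or else it enters one of the corner neighborhoods $U_r^k$ or $\tilde U_r^k$. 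The cleanest way to get an honest almost-sure statement (rather than merely a high-probability one) is to combine the a priori lower bounds that hold for \emph{all} small $y_2$ with Lemma~\ref{lem:levy-conseq}: set up a sequence of stopping times $\eta_m'$ at which the process returns to a fixed reference segment (e.g.\ $\gamma_{1,r}^k$ at a bounded height) having not yet reached $\gamma_{2,r}^{k+1}$, show that at each such time there is a uniform $\delta>0$ lower bound on the conditional probability of reaching $\gamma_{2,r}^{k+1}$ before the next return — using Lemma~\ref{lem:proto-corner-prob} for the corner crossing $\gamma_{1,r}^k\to\gamma_{2,r}^k$, Lemma~\ref{lem:proto-exit-from-G-prob-easier} (with Lemma~\ref{lem:proto-exit-from-G-time} for finiteness) for crossing the edge to the next corner neighborhood, and Lemma~\ref{lem:proto-corner-prob} applied in the $(k+1)$st chart to exit that corner neighborhood through $\gamma_{2,r}^{k+1}$ — and then conclude via Lemma~\ref{lem:levy-conseq} that $\gamma_{2,r}^{k+1}$ is reached almost surely. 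One also needs that each of these intermediate stopping times is a.s.\ finite and that the process cannot ``escape'' to the interior forever without returning: that is handled by another application of Lemma~\ref{lem:exit-compact} to the relevant compact set whenever the process wanders out of the union of slabs and corner neighborhoods, together with Lemma~\ref{lem:proto-exit-from-G-time} for the exponential exit tails from $G_{r,r}$.

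The main obstacle I expect is bookkeeping rather than any deep difficulty: one must carefully set up the nested/alternating sequence of reference stopping times so that (i) the events are arranged in a way that Lemma~\ref{lem:levy-conseq} applies — i.e.\ the uniform conditional lower bound $\ee[\one_A\mid\cF_{\eta_m'}]\ge\delta$ genuinely holds \emph{at every stage}, not just asymptotically — and (ii) no mass is lost to the possibility that the process drifts into $\bXo$ and takes arbitrarily long to come back, which requires interleaving the "reach the boundary slab" step (Lemma~\ref{lem:exit-compact}) with the "cross a corner/edge" steps. A secondary subtlety is that the a priori probability bounds in Lemma~\ref{lem:proto-corner-prob} and Lemma~\ref{lem:proto-exit-from-G-prob-easier} are stated for the \emph{fixed small} $r$ (and $r'$) chosen once and for all; since $\eta_{n,r,r'}$ is defined with fixed parameters, this is exactly the regime we are in, so no iteration of "choose $r$ smaller" is needed here — the high-probability refinements of Propositions~\ref{prop:corner}, \ref{lem:2-->1,3}, and \ref{lem:exit-from-G}, which \emph{do} require shrinking $r$, are not needed for this qualitative finiteness claim and should be avoided to keep the argument self-contained.
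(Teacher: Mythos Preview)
Your proposal is correct and shares the paper's core strategy: the base case $n=0$ via Lemma~\ref{lem:exit-compact}, the inductive step via the strong Markov property and Lemma~\ref{lem:levy-conseq} applied along a sequence of reference stopping times with a uniform conditional lower bound on the probability of eventually reaching a new $\gamma_{2,r}^j$. The differences are in the implementation, and they are worth noting.

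First, the paper's reference sequence is simpler than yours: it sets $\zeta_{0,r'}=\eta_{0,r,r'}$ and $\zeta_{m,r'}=\inf\{t>\zeta_{m-1,r'}+1:\ X(t)\in F^*_{r'}\}$, i.e., successive re-entries into the slab union after a unit time delay. This sequence is automatically divergent and almost surely finite (by Lemma~\ref{lem:exit-compact} applied to $\bXo\setminus F^*_{r'}$ at each step), which sidesteps exactly the bookkeeping you flag as the main obstacle\,---\,no need to interleave ``return to the reference segment'' with ``escape to the interior'' or to worry about the sequence terminating when the target is hit. Second, for the uniform lower bound $\ee^x[\one_A\mid\cF_{\zeta_{m,r'}}]\ge\delta$, the paper invokes Lemma~\ref{lem:proto-corner-prob} together with Proposition~\ref{lem:highprob-events}, which already packages the corner-plus-edge transition; so, contrary to your closing remark, the paper \emph{does} rely on the high-probability machinery (indirectly, through Proposition~\ref{lem:highprob-events}). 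Your plan to assemble the lower bound from the elementary Lemmas~\ref{lem:proto-corner-prob} and~\ref{lem:proto-exit-from-G-prob-easier} alone is viable and more self-contained, but buys that economy at the cost of the extra bookkeeping you anticipate. The paper trades that bookkeeping for a dependence on Proposition~\ref{lem:highprob-events}, which is already available at this point in the argument.
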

We give the proof of this lemma in Section~\ref{sec:proof-heteroc}.
It allows us to conclude that there is a full-measure set
on which we can introduce an infinite sequence $(K_{n,r})_{n\in\nn}$ of random variables so that
\begin{align*}
    K_{n,r,r'} &= k  \qquad\Leftrightarrow\qquad  
    X(\eta_{n,r,r'}) \in \gamma_{2,r}^k.
\end{align*}
The following proposition relies on iterated applications of Proposition~\ref{lem:highprob-events} in order to yield an almost sure statement.

\begin{proposition}
\label{prop:eventual-cycling}
    Suppose that there are positively oriented saddles at all vertices and that~\eqref{eq:stabiity-assumption1} holds.
    For every $x \in \bXo$, $\eps > 0$ and $r_0 \in (0,\tfrac 12)$, there exist $r$ and $r'$ such that $0 < r' < r < r_0$ and  
    \begin{align*}
        \pp^x\left(\bigcup_{N=1}^\infty \bigcap_{n=N}^\infty E_{n,r,r',\eps} \right) = 1,
    \end{align*}
    where\,---\,with some subscripts omitted for readability\,---
    \begin{align*}
    E_{n,r,r',\eps} 
        &= \left\{K_{n+1} = K_{n} + 1  \pmod{4}\right\} 
        \cap \left\{ \max_{t \in [\eta_n, \eta_{n+1}]} Y_2^{K_n}(t) = \ln r \right\}
        \\
        &\qquad \cap \left\{ \left|\int_{\eta_{n}}^{\eta_{n+1}} \one_{\{ X (t) \notin U_r^{K_{n}+1}
        \}} \dd t\right| < \eps |Y^{K_{n}}_2(\eta_{n})|\right\} \\
        &\qquad \cap \left\{\left|\int_{\eta_{n}}^{\eta_{n+1}} \one_{\{ X (t) \in U_r^{K_{n}+1}
        \}} \dd t - \frac{| Y^{K_{n}}_2(\eta_{n}) |}{\eivalplus^{K_{n}+1}} \right| <  \eps | Y^{K_{n}}_2(\eta_{n}) | 
        \right\} \\
        &\qquad \cap \left\{\left|Y_2^{K_{n}+1}(\eta_{n+1}) - \rho^{K_{n+1}} Y^{K_{n}}_2(\eta_{n})\right| < \eps |Y^{K_{n}}_2(\eta_{n})|\right\}.
    \end{align*}
\end{proposition}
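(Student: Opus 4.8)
The plan is to bootstrap the single-step high-probability statement of Proposition~\ref{lem:highprob-events} into an almost-sure eventual statement using the Borel--Cantelli lemma, with the crucial point being that the "depth" parameter $|Y_2^{K_n}(\eta_n)|$ grows without bound along the sequence, making the single-step error probabilities summable. First I would fix $\eps > 0$ small and $r_0 \in (0,\tfrac12)$, and use Proposition~\ref{lem:highprob-events} to choose $r \in (0,r_0)$ so that, for a starting point on $\gamma_{2,r}^k$ (which lies on an incident edge of the next saddle after a rotation by $\rotat$), the event $E_{n,r,r',\eps}$ — written in the $K_n$-th chart as a transition from $\gamma_{2,r}^{K_n}$ to $\tilde\gamma_{2,r}^{K_n} = \gamma_{2,r}^{K_n+1}$ — fails with probability at most $c_1 \Exp{-c_2 |y_2|^{c_3}}$ when started at depth $y_2 = Y_2^{K_n}(\eta_n) \to -\infty$, using the strong Markov property at $\eta_n$ and the fact that $\eta_{n+1}$ coincides with the hitting time $\eta_r$ of Proposition~\ref{lem:highprob-events} in the appropriate chart (this needs the claim $X(t) \in F_r^{K_n}$ for $t \in [\eta_n, \eta_{n+1}]$, which is part of that proposition's conclusion and guarantees no detour through the bulk or a wrong corner). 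Then choose $r' \in (0,r)$ small enough that Lemma~\ref{lem:proto-eventual-cycling} applies so all $\eta_{n,r,r'}$ are almost surely finite, and so that the process started in $F^*_{r'}$ is deep enough (i.e., $\sup_{F^*_{r'}} Y_2 \le y_*$ with $y_*$ small) that the very first transition already enjoys the high-probability estimate.

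The heart of the argument is the growth of the depths $D_n := |Y_2^{K_n}(\eta_n)|$. On the event $\bigcap_{m=N}^n E_{m}$, the last displayed inclusion in the definition of $E_m$ gives $D_{m+1} \ge (\rho^{K_{m+1}} - \eps) D_m$ for $N \le m \le n$, so that $D_n \ge \big(\prod_{k} (\rho^k - \eps)\big)^{\lfloor (n-N)/4\rfloor} \cdot (\min_k(\rho^k-\eps))^3 \, D_N$. Since $\Pi = \rho^0\rho^1\rho^2\rho^3 > 1$ by~\eqref{eq:stabiity-assumption1}, for $\eps$ small enough we have $\prod_k(\rho^k - \eps) > 1$, hence $D_n$ grows at least geometrically in $n$ on the event that all preceding steps are "good". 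More precisely, I would run the following conditional Borel--Cantelli scheme: let $G_N = \bigcap_{m \ge N} E_m$ be the "all good from $N$ on" event and show $\pp^x(\bigcup_N G_N) = 1$. Fix the smallest $N_0$ such that $D_{N_0} \ge d_*$ for a threshold $d_*$ large enough that $c_1 \Exp{-c_2 d^{c_3}}$ is summable over the geometric sequence $d = d_*, \kappa d_*, \kappa^2 d_*, \dots$ with $\kappa = \min_k(\rho^k-\eps)^4 \wedge (\prod_k(\rho^k-\eps))$ (shrinking to get a genuine ratio $>1$ across four steps). On the event that $E_N, \dots, E_{n-1}$ all hold, $D_n$ dominates this geometric sequence, so by the strong Markov property at $\eta_n$ and the single-step estimate, $\pp^x(E_n^c \mid \cF_{\eta_n}) \le c_1\Exp{-c_2 D_n^{c_3}} \le c_1 \Exp{-c_2 (\kappa^{n-N}d_*)^{c_3}}$ on that event; summing a telescoping bound $\pp^x\big(G_{N}^c \cap \{D_N \ge d_*\}\big) \le \sum_{n\ge N} c_1\Exp{-c_2(\kappa^{n-N}d_*)^{c_3}}$, which can be made arbitrarily small by taking $d_*$ large. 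Since $D_N \to \infty$ is not a priori guaranteed (a bad step could push the process shallow again), I would instead iterate: on $\{D_N \ge d_*\}$ either $G_N$ holds, or some first step $m \ge N$ is bad, after which Lemma~\ref{lem:proto-eventual-cycling} and the strong Markov property return the process to $F^*_{r'}$ at a later finite stopping time, reaching depth $\ge d_*$ again at some $\eta_{N'}$; a standard argument (each "attempt" succeeds with probability $\ge 1 - \text{(small)} > 0$ uniformly, and attempts are conditionally independent given the restart times) then forces success in finitely many attempts almost surely, giving $\pp^x(\bigcup_N G_N) = 1$.

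The main obstacle is the bookkeeping around restarts and the non-monotonicity of the depths: a single failed transition need not be catastrophic, but to run Borel--Cantelli cleanly one wants the depth to be increasing along the relevant subsequence, which only holds while every step is good. The way around this is the two-layer structure just described — an outer layer of independent "attempts" to enter a good run (each with uniformly positive success probability because the first step from $F^*_{r'}$ is already high-probability and Lemma~\ref{lem:proto-eventual-cycling} guarantees re-entry), and an inner Borel--Cantelli layer exploiting geometric growth of $D_n$ once a run begins. One must also verify measurability: the events $E_{n,r,r',\eps}$ are $\cF_{\eta_{n+1}}$-measurable, $\bigcup_N\bigcap_{n\ge N} E_n$ is measurable with respect to $\sigma(\cF_{\eta_n} : n \in \nn)$, and Lemma~\ref{lem:levy-conseq} can be invoked in the form that if the conditional success probability of the tail event is bounded below along the restart times, the event has probability one — this is precisely the clean way to package the "finitely many attempts" argument. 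The remaining ingredients (identifying $\eta_{n+1}$ with the hitting time in Proposition~\ref{lem:highprob-events}'s chart, translating its three approximate identities into the four conjuncts of $E_{n,r,r',\eps}$, and the elementary estimate $\max_{[\eta_n,\eta_{n+1}]} Y_2^{K_n} = \ln r$ from the fact that the process stays in $F_r^{K_n}$ and exits via $\gamma_{2,r}^{K_n+1}$) are routine given the earlier results.
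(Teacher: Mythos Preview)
Your proposal is correct and follows essentially the same approach as the paper: reduce $\eps$ so that $\prod_k(\rho^k-\eps)>1$, use Proposition~\ref{lem:highprob-events} at each step to get stretched-exponential failure probabilities in the depth, exploit the geometric growth of the depths on a run of good steps to make these summable, and then upgrade the resulting uniform positive probability of $\bigcap_{n\ge 1} E_n$ to probability one for the tail event via the strong Markov property and Lemma~\ref{lem:levy-conseq}. The paper's presentation is slightly more direct than your two-layer ``restart'' narrative: it introduces an initial depth event $A_0=\{Y_2^{K_1}(\eta_1)\le R\ln r'\}$ with $\pp^x(A_0)\ge p>0$ uniformly in $x$ (for $r'$ small), shows that on $A_0\cap E_1\cap\dots\cap E_n$ the depth dominates $a_n|\ln r'|$ for an exponentially growing deterministic sequence $(a_n)$, and hence $\pp^x(\bigcap_{n\ge 1}E_n)\ge p\prod_n(1-c_1\Exp{-c_2 a_n^{c_3}|\ln r'|^{c_3}})\ge\delta>0$ uniformly in $x$; this uniform bound is then fed directly into Lemma~\ref{lem:levy-conseq} along the filtration $(\cF_{\eta_n})$, bypassing any explicit restart bookkeeping.
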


\begin{proof}
    Fix $\eps > 0$ and $r_0\in(0,\tfrac 12)$, let $r \in (0,r_0)$ be as in Proposition~\ref{lem:highprob-events} and let $r' \in (0,r)$ be arbitrary for the time being. 
    Due to monotonicity of $E_{n,r,r',\eps}$ with respect to $\eps$, there is no loss of generality in assuming that $\eps$ is small enough to ensure the following perturbation of the condition~\eqref{eq:stabiity-assumption1}:
    \begin{equation}
    \label{eq:perturbed-cycle-stability}
        \prod_{k=0}^3 (\rho^k - \eps) > 1.
    \end{equation}
    With $R>0$, let
    \begin{align*}
        A_{0,r,r',\eps}
        &=
        \left\{
            Y_2^{K_{1,r,r'}}(\eta_{1,r,r'}) \leq 
            R \ln r'
        \right\}.
    \end{align*}
    The arguments used in the proofs of Lemma~\ref{lem:proto-corner-prob} and~\ref{lem:proto-exit-from-G-prob-easier} can be adapted to show that, for a suitably chosen~$R>0$, there exists~$p > 0$ such that $\pp^x(A_{0,r,r',\eps}) \geq p$ for all $r'$ small enough, independently of $x \in \bXo$.
    
    In view of~\eqref{eq:perturbed-cycle-stability} and the definition of~$A_{0,r,r',\eps}$ and $E_{n,r,r',\eps}$, there exists a positive sequence $(a_n)_{n\in\nn}$, also independent of~$x$ and $r'$, such that 
    \begin{equation}
    \label{eq:growth-an}
        \liminf_{n\to\infty} \frac{\ln a_n}{n} > 0    
    \end{equation}
    and
    \begin{align}
    \label{eq:a-priori-bound-on-An}
        Y_2^{K_n,r}(\eta_{n,r,r'}) &\leq  a_n \ln r' &&\text{on}& 
        A_{n,r,r',\eps} = A_{0,r,r',\eps} \cap E_{1,r,r',\eps} \cap \dotsb \cap E_{n,r,r',\eps}
    \end{align}
    for all $n\in\nn$.
    Combining~\eqref{eq:a-priori-bound-on-An} and the fact that $r$ is chosen as in Proposition~\ref{lem:highprob-events}, we deduce that
    \begin{align*}
        \pp^x(A_{n,r,r',\eps}|A_{n-1,r,r',\eps})
        &\geq 1 - c_1 \Exp{-c_2 |a_n|^{c_3} |{\ln r'}|^{c_3}}
    \end{align*}
    for all $n$, where the right-hand side is positive for $r'$ small enough, independently of~$x$.
    Therefore, for $r'$ small enough, we have
    \[ 
        \pp^x(A_{N,r,r',\eps}) \geq p \prod_{n=1}^N (1 - c_1 \Exp{-c_2 |a_n|^{c_3} |{\ln r'}|^{c_3}})
    \]
    for all $N\in\nn$ and~$x\in\bXo$.
    Since the product on the right-hand side is convergent as $N\to\infty$ thanks to~\eqref{eq:growth-an}, we conclude that there exists $\delta > 0$, independent of~$x$ and $r'$ small enough, such that 
    \[ 
        \pp^x\left( \bigcap_{n=1}^\infty E_{n,r,r',\eps}\right) 
        \geq 
        \lim_{N\to\infty} \pp^x\left(  A_{N,r,r',\eps}\right)
        \geq \delta.
    \]
    By the strong Markov property, this in turn implies
    \begin{align*}
        \ee^x\left[\one_{\bigcup_{M=1}^\infty \bigcap_{m=M}^\infty  E_{m,r,r',\eps}} \middle| \cF_{\eta_{n,r,r'}}\right] &= \ee^x\left[\one_{\bigcup_{M=n}^\infty \bigcap_{m=M}^\infty  E_{m,r,r',\eps}} \middle| \cF_{\eta_{n,r,r'}}\right] \\
        &\geq \inf_{\tilde{x}\in\bXo} \pp^{\tilde{x}}\left( \bigcap_{m=1}^\infty E_{m,r,r',\eps}\right) \\
        &\geq \delta
    \end{align*}
    for all $n$. Thus, we can conclude using Lemma~\ref{lem:levy-conseq}.
\end{proof}

With this proposition at hand, we can complete the proof of Theorem~\ref{thm:2d-cycle}.

\begin{proof}[Proof of Theorem~\ref{thm:2d-cycle}]
    First, we note that for every $x \in \bXo$,
    with probability $1$,
    $
       d(X(t),\partial\bX) \to 0
    $ as  $t\to\infty$. This   follows  
    since $E_{n,r,r',\eps} \subset \big\{\max_{t \in [\eta_n, \eta_{n+1}]} d(X(t),\partial\bX) \leq r_0\big\}$, and we can take $r_0\to 0$ in Proposition~\ref{prop:eventual-cycling}.
    In addition, Proposition~\ref{prop:eventual-cycling} implies that the diffusion makes transitions along the edges cyclically, thus every point on each edge is a limit point, and part~\ref{it:conv-to-bdry} is proved. 
    
    To prove parts~\ref{it:mostly-corners} and \ref{it:recent-corner-is-essential}, we 
    fix $x \in \bXo$ and $r_0$ and let $\eps > 0$ be arbitrary. In view of Proposition~\ref{prop:eventual-cycling}, there are $0 < r' < r < r_0$ for which there is a measurable partition $\Omega_{0,r,r', \eps}, \dotsc, \Omega_{3,r,r', \eps}$ of the full-measure set $\bigcup_{m=1}^\infty \bigcap_{n=m}^\infty E_{n,r,r',\eps}$, with the property that, almost surely on~$\Omega_{\ell,r,r', \eps}$, we have $K_{4n+\ell,r,r'} = 0$ for all $n$ large enough.
    For each $k$, almost surely on $\Omega_{\ell,r,r', \eps}$, we have that (with some subscripts omitted for readability) 
    $|Y_2^{4n+\ell+k}(\eta_{4n+\ell+k})|$ grows geometrically in~$n$ and so does
    $\eta_{4n+\ell+k+1} - \eta_{4n+\ell+k}$, and in addition we have
    \begin{align*}
       \liminf_{n\to\infty} \frac{\int_{\eta_{4n+\ell+k}}^{\eta_{4n+\ell+k+1}}  \one_{\{ X (t) \in U_{r_0}^{k+1}\}} \dd t}{\eta_{4n+\ell+k+1} - \eta_{4n+\ell+k}} > 1 - O(\eps)
    \end{align*}
    and 
    \begin{align*}
        \liminf_{n\to\infty} \frac{\int_{\eta_{4(n-1)+\ell}}^{\eta_{4n+\ell}}  \one_{\{ X (t) \in U_{r_0}^*\}} \dd t}{\eta_{4n+\ell} - \eta_{4(n-1)+\ell}} 
        &\geq 
        \sum_{k=-3}^0 \liminf_{n\to\infty}
        \frac{\int_{\eta_{4n+\ell+k}}^{\eta_{4n+\ell+k+1}}  \one_{\{ X (t) \in U_{r_0}^{k+1}\}} \dd t}{\eta_{4n+\ell+1} - \eta_{4(n-1)+\ell+1}} 
        > 1 - O(\eps).
    \end{align*}
    Considering all $\ell$ and taking $\eps \to 0$ we complete the proof.
\end{proof}

\subsection{Proof of Proposition~\ref{prop:quadri}}
We already know that every limit point of $(\empi{x}_t)_{t > 0}$ (at least one exists by compactness) must be invariant, and thus, in the present situation, a convex combination of $\delta_{\Or^0}$, $\delta_{\Or^1}$, $\delta_{\Or^2}$ and $\delta_{\Or^3}$. Proposition~\ref{prop:eventual-cycling} allows us to be more precise.

Let us define
\begin{align*}
    f_{k,j}
    = 
    \frac{\prod_{\ell = k-3}^{j}\rho^{\ell}}
    {\eivalplus^{j}}>0,
\end{align*}
where indices $k \in \{0,1,2,3\}$ and $j = \{k-3,k-2,k-1,k\}$ are understood modulo 4.

\begin{corollary}
\label{cor:empirical-limit-points-cycle}
    Under the assumptions of Proposition~\ref{prop:eventual-cycling},  
 for every $x\in\bXo$, almost surely, the measure
    \begin{equation}
    \label{eq:empirical-limit-points-cycle}
        \overline{\mu}_k = \frac{\sum_{j=k-3}^{k} f_{k,j} \delta_{\Or^j}}{\sum_{j'=k-3}^{k} f_{k,j'}} 
    \end{equation}
    is a limit point of $(\empi{x}_t)_{t >0}$. 
\end{corollary}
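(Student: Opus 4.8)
The plan is to exhibit $\overline{\mu}_k$ as the weak limit of the occupation measures $\empi{x}_{T}$ along a well-chosen sequence of times $T\to\infty$, namely the exit times from a neighbourhood of the corner~$\Or^k$, using the almost-sure cycling picture of Proposition~\ref{prop:eventual-cycling} together with a geometric-series computation.

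Fix $x\in\bXo$ and $k\in\{0,1,2,3\}$. First I would record the consequences of Proposition~\ref{prop:eventual-cycling}: choosing a sequence $\eps_i\downarrow 0$ and, for each $i$, parameters $r_i<\eps_i$ and $r_i'<r_i$ as provided by that proposition (with the free radius there taken to equal $\eps_i$), we get a $\pp^x$-full-measure event on which $E_{n,r_i,r_i',\eps_i}$ holds for all large~$n$. Abbreviating $a_n=|Y_2^{K_n}(\eta_n)|$ (suppressing the dependence on $r_i,r_i'$), the defining conditions of $E_{n,r_i,r_i',\eps_i}$ give, for all large $n$ and with all errors multiplicative in $a_n$ of relative size $O(\eps_i)$: (i) $K$ advances by one each epoch, so $K=k$ along an infinite set of indices; (ii) $a_{n+1}=(\rho^{K_{n+1}}+O(\eps_i))a_n$, hence over a full cycle $a_{n+4}=(\Index+O(\eps_i))a_n$ with $\Index>1$, so the $a_n$ grow geometrically along cycles; (iii) the time spent in $U_{r_i}^{K_{n+1}}$ during $[\eta_n,\eta_{n+1}]$ is $(\eivalplus^{K_{n+1}})^{-1}a_n(1+O(\eps_i))$, while the time spent off $U_{r_i}^{K_{n+1}}$ during that epoch is $O(\eps_i)a_n$.

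Now fix $i$ (and drop it from the notation), let $n_1<n_2<\cdots$ enumerate the indices with $K_{n}=k$, and set $T_N=\eta_{n_N}$; since the dwell times diverge, $T_N\to\infty$. Decomposing $[0,T_N]$ into the bounded segment $[0,\eta_0]$ and the epochs $[\eta_m,\eta_{m+1}]$, $m<n_N$, fact (iii) shows that $\empi{x}_{T_N}$ differs, in total variation, by $O(\eps)+O(r)$ from the atomic measure assigning mass $t_j(N)/\sum_{j'}t_{j'}(N)$ to $\Or^j$, where $t_j(N)$ is the total time spent in $U_r^{j}$ up to $T_N$. Each $t_j(N)$ is a sum, over the epochs that visit $\Or^j$ (spaced four apart), of quantities $(\eivalplus^{j})^{-1}a_m(1+O(\eps))$; expressing the relevant $a_m$ through $a_{n_N}$ by iterating (ii) and summing the resulting geometric series, whose ratio per cycle is $\Index^{-1}(1+O(\eps))$ and hence summable, one gets $t_j(N)=c_N\bigl(f_{k,j}+O(\eps)\bigr)$ with a normalizing factor $c_N$ independent of $j$ (this is where the cyclic products $\prod\rho^\ell$ over $\zz_4$ enter, and where one checks the bookkeeping reproduces the constants in~\eqref{eq:empirical-limit-points-cycle}). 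Dividing, $\empi{x}_{T_N}$ converges as $N\to\infty$, up to an error $O(\eps)+O(r)$, to $\overline{\mu}_k$ as defined in~\eqref{eq:empirical-limit-points-cycle}.

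Finally, to remove the auxiliary parameters: for each $i$, the previous step shows that $\pp^x$-almost surely the (closed, nonempty by compactness) set of weak limit points of $(\empi{x}_t)_{t>0}$ meets the ball of radius $C(\eps_i+r_i)$ around $\overline{\mu}_k$ in a metric inducing weak convergence on $\PP(\bX)$. Intersecting these full-measure events over $i\in\nn$, using $\eps_i+r_i\to 0$ and closedness of the set of limit points, we conclude that $\overline{\mu}_k$ is itself almost surely a limit point; intersecting over $k\in\{0,1,2,3\}$ completes the proof. The main obstacle I expect is the geometric-series bookkeeping in the third paragraph — tracking the $\zz_4$-cyclic products of the $\rho^\ell$, checking that the off-corner time and the within-corner spread contribute only $O(\eps)+O(r)$, and confirming the normalized weights are exactly $f_{k,j}$ — with a secondary subtlety that $r,r'$ depend on $\eps$, so one must pass to the limit through the parameter sequence rather than letting $\eps\to 0$ with a fixed subsequence.
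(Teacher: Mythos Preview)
Your proposal is correct and follows essentially the same route as the paper: both fix $\eps$ and $r$, use the cycling events from Proposition~\ref{prop:eventual-cycling} to compute the asymptotic fraction of time at each corner along the subsequence of cycle endpoints $\eta_{n_N}$ with $K_{n_N}=k$, and then let $\eps,r\to 0$ over a countable family of full-measure events. The only organizational difference is that the paper observes each full cycle already has the correct proportions $f_{k,j}/\sum_{j'}f_{k,j'}$ (so the cumulative average does too), whereas you sum the geometric series over all past cycles; both computations yield the same normalized weights.

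One minor slip: in your third paragraph you say $\empi{x}_{T_N}$ differs from the atomic measure ``in total variation'' by $O(\eps)+O(r)$, but this is false since $\empi{x}_{T_N}$ has no atoms and the total-variation distance to any purely atomic measure is maximal. What you need (and what the paper uses explicitly) is the bounded-Lipschitz metric metrizing weak convergence: the $O(r)$ error comes from the diameter of $U_r^j$ when testing against $1$-Lipschitz functions, not from any total-variation bound. Your final paragraph already reverts to the correct metric, so this does not affect the argument.
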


\begin{proof}
    Fix $k \in \{0,1,2,3\}$. Now, fix $x \in \bXo$ and let $\eps > 0$ and $r_0 > 0$ be arbitrary. We use the same partition of a  full-measure set using Proposition~\ref{prop:eventual-cycling} as in the proof of Theorem~\ref{thm:2d-cycle} at the end of the preceding subsection. 
    We first restrict our attention to the first element of this partition. To wit, on~$\Omega_{0,r,r',\eps}$, the intervals 
    \begin{align*}
        I_{n,k} = \left[{\eta_{4n+k}}, {\eta_{4(n+1)+k}}\right]
    \end{align*}
    are such that, with some subscripts omitted for readability, 
    $\liminf_{n\to\infty} |I_{n,k}| > 0$ and
    \begin{align*}
      \frac{|Y_2^{k}(\eta_{4n+k})|(f_{k,j} - O(\eps))}{\sum_{j'= k-3}^{k} |Y_2^{k}(\eta_{4n+k})|(f_{k,j'} + O(\eps))}  
        &< \liminf_{n\to\infty} \frac{\int_{I_{n,k}}  \one_{\{ X (t) \in U_{r_0}^{j}\}} \dd t}{|I_{n,k}|}  \\
        &\leq \limsup_{n\to\infty} \frac{\int_{I_{n,k}}  \one_{\{ X (t) \in U_{r_0}^{j}\}} \dd t}{|I_{n,k}|} \\
        &< \frac{|Y_2^{k}(\eta_{4n+k})|(f_{k,j} + O(\eps))}{\sum_{j'= k-3}^{k} |Y_2^{k}(\eta_{4n+k})|(f_{k,j'} - O(\eps))}  
    \end{align*}
    for every~$j$, where the times estimates follow from the definition of sets $E_{n,r,r',\eps}$.
    Using the standard metrization of weak convergence of measures via tests against 1-Lipschitz functions, we conclude that, almost surely on~$\Omega_{0,r,r'\eps}$, at all but finitely many times of the form~$\eta_{4n+k}$, the empirical measure is $[O(\eps)+O(r_0)]$-close to $\overline{\mu}_k$.
    Since this is easily adapted to other $\Omega_{\ell,r,r',\eps}$ by merely shifting $k$ by $\ell$ in the appropriate places, the same is true on a full-measure set. And since $\eps$ and $r_0$ were arbitrary, we can conclude that, almost surely, the measure $\overline{\mu}_k$ is a limit point.
\end{proof}

\begin{corollary}
\label{cor:dyn-conseq-of-high-probab}
    Under the assumptions of Proposition~\ref{prop:eventual-cycling}, for every $x \in \bXo$ and $r>0$, the empirical measures $(\empi{x}_t)_{t > 0}$ almost surely fail to converge.
\end{corollary}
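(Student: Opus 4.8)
The plan is to read the non-convergence off directly from Corollary~\ref{cor:empirical-limit-points-cycle}, which already exhibits four candidate weak limit points $\overline{\mu}_0,\overline{\mu}_1,\overline{\mu}_2,\overline{\mu}_3$ of $(\empi{x}_t)_{t>0}$. Weak convergence of measures on the compact metric space $\bX$ is metrizable, hence Hausdorff, so a sequence that admits two distinct weak limit points cannot converge. Therefore it suffices to check that at least two of the $\overline{\mu}_k$ differ; I will show $\overline{\mu}_0\neq\overline{\mu}_1$. (The parameter $r$ in the statement plays no role.)

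First I would fix $x\in\bXo$ and intersect the two full-measure events furnished by Corollary~\ref{cor:empirical-limit-points-cycle} for $k=0$ and $k=1$: on the resulting full-measure event both $\overline{\mu}_0$ and $\overline{\mu}_1$ are weak limit points of $(\empi{x}_t)_{t>0}$, so convergence along this event would force $\overline{\mu}_0=\overline{\mu}_1$. It then remains to rule this out, which is a finite computation. Unravelling the modulo~$4$ conventions in the definition of $f_{k,j}$, the measure $\overline{\mu}_k$ assigns to $\Or^j$ the mass $f_{k,j}/\sum_{j'}f_{k,j'}$, so $\overline{\mu}_0=\overline{\mu}_1$ would require the ratio $f_{0,j}/f_{1,j}$ to be independent of $j$; in particular it would force $f_{0,1}/f_{1,1}=f_{0,2}/f_{1,2}$. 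But $f_{0,1}=\rho^1/\eivalplus^1$, $f_{1,1}=\Index/\eivalplus^1$, $f_{0,2}=\rho^1\rho^2/\eivalplus^2$ and $f_{1,2}=\rho^2/\eivalplus^2$, so these two ratios equal $\rho^1/\Index$ and $\rho^1$ respectively; since $\rho^1>0$, equality holds only if $\Index=1$, contradicting the standing hypothesis $\Index>1$ in \eqref{eq:stabiity-assumption1} (cf.\ Assumption~\ref{req:index-stab-ne-1} and case~\ref{it:cycle-case} of Theorem~\ref{thm:2d-trich}). Hence $\overline{\mu}_0\neq\overline{\mu}_1$, so on a full-measure event the sequence $(\empi{x}_t)_{t>0}$ has at least two distinct weak limit points and therefore fails to converge.

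There is no real difficulty here beyond keeping the cyclic indexing straight; the one point to verify carefully is that the eigenvalue factors $\eivalplus^j$ cancel in the relevant ratios, so that the single inequality $\Index\neq 1$ suffices. The very same computation carried out for each pair $k\neq k'$ would give the stronger fact that $\overline{\mu}_0,\overline{\mu}_1,\overline{\mu}_2,\overline{\mu}_3$ are pairwise distinct—which is what is ultimately needed for Proposition~\ref{prop:quadri}—but for the present corollary two distinct limit points already do the job.
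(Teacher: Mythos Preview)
Your proof is correct and follows essentially the same approach as the paper: both invoke Corollary~\ref{cor:empirical-limit-points-cycle} and then verify $\overline{\mu}_k\neq\overline{\mu}_{k+1}$ by a direct computation with the $f_{k,j}$ that reduces to $\Index\neq 1$. The paper carries out the computation for general $k$ and records the scaling relation $\rho^{k-3}f_{k+1,j}=f_{k,j}$ for $j\neq k+1$ and $\rho^{k-3}f_{k+1,k+1}=\Index f_{k,k+1}$, which is reused in the next corollary, but for the present statement your specialization to $k=0,1$ is entirely adequate.
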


\begin{proof}
    Due to Lemma~\ref{cor:empirical-limit-points-cycle}, it suffices to pick any $k$ and show that $\overline{\mu}_k \neq \overline{\mu}_{k+1}$.
    Since 
    \begin{align*}
        f_{k+1,j} &= \frac{\prod_{\ell = k-2}^{j}  
        \rho^{\ell}}{\eivalplus^{j}}
        = \frac{\frac{1}{\rho^{k-3}} \prod_{\ell = k-3}^{j}  
        \rho^{\ell}}{\eivalplus^{j}}
        = \frac{1}{\rho^{k-3}} f_{k,j}
    \end{align*}
    for $j \neq k+1$, and 
    \begin{align*}
        f_{k+1,k+1} &= \frac{\prod_{\ell = k-2}^{k+1}  
        \rho^{\ell}}{\eivalplus^{k+1}}
        = \frac{ \frac{\prod_{\ell = k-2}^{k+1}  
        \rho^{\ell}}{\rho^{k-3}} \rho^{k-3}}{\eivalplus^{k+1}}
        = \frac{\Index}{\rho^{k-3}} f_{k,k-3}
    \end{align*}
    we have
    \begin{equation}
    \label{eq:increase-k-empi}
        \rho^{k-3}f_{k+1,j}
        = 
        \begin{cases}
            f_{k,j} & \text{for } j\neq k+1 \\
            \Index f_{k,j} & \text{for } j = k+1.
        \end{cases}
    \end{equation}
    Since $\Index > 1$, we obtain the desired relation $\overline{\mu}_k \neq \overline{\mu}_{k+1}$.
\end{proof}

The following corollary directly implies Proposition~\ref{prop:quadri} and provides a precise description of the set of limit points for empirical measures.

\begin{corollary}
    Under the assumptions of Proposition~\ref{prop:eventual-cycling}, for every $x \in \bXo$, the set of limit points of the empirical measures $(\empi{x}_t)_{t > 0}$ is the nonplanar quadrilateral cyclically connecting the measures  
    $(\overline{\mu}_k)_{k=0}^3$ given by \eqref{eq:empirical-limit-points-cycle}.
\end{corollary}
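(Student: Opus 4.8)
The plan is to prove the two inclusions separately. Write $\Gamma:=\bigcup_{k\in\zz_4}[\overline\mu_k,\overline\mu_{k+1}]$, where $[\mu,\nu]=\{(1-s)\mu+s\nu:s\in[0,1]\}$ denotes the line segment in $\PP(\X)$. Since the set of weak limit points of $(\empi{x}_t)_{t>0}$ is closed and, by Corollary~\ref{cor:empirical-limit-points-cycle}, already contains each $\overline\mu_k$, it suffices to establish: (i) almost surely $\mathrm{dist}(\empi{x}_t,\Gamma)\to0$ as $t\to\infty$, for a fixed metric $\mathrm{dist}$ inducing the weak topology (say the one via $1$-Lipschitz test functions used in the proof of Corollary~\ref{cor:empirical-limit-points-cycle}); and (ii) for each $\nu$ in some fixed countable dense subset of $\Gamma$, almost surely $\nu$ is a limit point. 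The geometric assertions (four distinct nondegenerate segments, nonplanar, inside $\operatorname{relint}\Delta$) will then be read off from~\eqref{eq:empirical-limit-points-cycle} and~\eqref{eq:increase-k-empi}.

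The engine for both inclusions is a description of $t\mapsto\empi{x}_t$ over a single epoch $[\eta_n,\eta_{n+1}]$. Fix $x\in\bXo$ and small $\eps,r_0>0$, take $r,r'$ as in Proposition~\ref{prop:eventual-cycling}, and work on the full-measure event on which, for all large $n$, the event $E_{n,r,r',\eps}$ holds and (by the argument proving Corollary~\ref{cor:empirical-limit-points-cycle}) $\empi{x}_{\eta_n}$ lies within $O(\eps)+O(r_0)$ of $\overline\mu_{K_n}$. For $t\in(\eta_n,\eta_{n+1}]$ I would decompose
\[
    \empi{x}_t=\frac{\eta_n}{t}\,\empi{x}_{\eta_n}+\frac{t-\eta_n}{t}\,\nu_n^{(t)},\qquad
    \nu_n^{(t)}:=\frac{1}{t-\eta_n}\int_{\eta_n}^t\delta_{X(s)}\,\dd s .
\]
From the estimates in $E_{n,r,r',\eps}$\,---\,the process spends time comparable to $|Y_2^{K_n}(\eta_n)|$ in $U_r^{K_n+1}$ but less than $\eps|Y_2^{K_n}(\eta_n)|$ outside it, and $U_r^{K_n+1}$ sits in an $O(r_0)$-neighbourhood of $\Or^{K_n+1}$\,---\,together with the bound $\eta_n\ge c\,|Y_2^{K_n}(\eta_n)|$ for a fixed $c>0$ (a consequence of $E_{n-1,r,r',\eps}$, valid once $\eps$ is small), one obtains a dichotomy: if $t-\eta_n\le\sqrt\eps\,|Y_2^{K_n}(\eta_n)|$ then $\empi{x}_t$ is within $O(\sqrt\eps)+O(r_0)$ of $\overline\mu_{K_n}$, and if $t-\eta_n\ge\sqrt\eps\,|Y_2^{K_n}(\eta_n)|$ then $\nu_n^{(t)}$ is within $O(\sqrt\eps)+O(r_0)$ of $\delta_{\Or^{K_n+1}}$. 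Hence, for all $t\in[\eta_n,\eta_{n+1}]$, $\empi{x}_t$ stays within an error vanishing as $\eps,r_0\to0$ of the straight segment $s\mapsto\frac{\eta_n}{s}\overline\mu_{K_n}+\frac{s-\eta_n}{s}\delta_{\Or^{K_n+1}}$, $s\in[\eta_n,\eta_{n+1}]$; its two endpoints are $\overline\mu_{K_n}$ and\,---\,because $\empi{x}_{\eta_{n+1}}$ is within $O(\eps)+O(r_0)$ of $\overline\mu_{K_n+1}$\,---\,a point within the same tolerance of $\overline\mu_{K_n+1}$, so the segment is $[\overline\mu_{K_n},\overline\mu_{K_n+1}]$ up to that tolerance and is swept monotonically as $t$ increases.

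Inclusion~(i) is then immediate: on the event above $\mathrm{dist}(\empi{x}_t,\Gamma)$ is bounded for all large $t$ by a quantity tending to $0$ with $\eps,r_0$, so intersecting over a sequence $\eps_i,r_{0,i}\downarrow0$ yields $\mathrm{dist}(\empi{x}_t,\Gamma)\to0$ almost surely. For~(ii) I would first record the algebraic identity $\overline\mu_{k+1}=\lambda_k\overline\mu_k+(1-\lambda_k)\delta_{\Or^{k+1}}$ with a constant $\lambda_k\in(0,1)$: by~\eqref{eq:increase-k-empi} the measures $\overline\mu_k$ and $\overline\mu_{k+1}$ have proportional masses on the three vertices other than $\Or^{k+1}$, which forces this representation, and a short computation gives $1-\lambda_k$ proportional to $f_{k,k-3}(\Index-1)$, hence positive since $\Index>1$; uniqueness of the representation moreover forces $\eta_n/\eta_{n+1}\to\lambda_k$. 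Given $\nu$ interior to $[\overline\mu_k,\overline\mu_{k+1}]$, write $\nu=\mu^\ast\overline\mu_k+(1-\mu^\ast)\delta_{\Or^{k+1}}$ with $\mu^\ast\in(\lambda_k,1)$; for $\eps$ small and $n$ large with $K_n=k$, the time $t_n^\ast:=\eta_n/\mu^\ast$ lies in $(\eta_n,\eta_{n+1})$ and satisfies $t_n^\ast-\eta_n\ge\sqrt\eps\,|Y_2^{K_n}(\eta_n)|$, so by the epoch description $\empi{x}_{t_n^\ast}$ is within a vanishing error of $\nu$; a diagonal extraction along $\eps_i,r_{0,i}\downarrow0$ produces times tending to $+\infty$ along which $\empi{x}_t\to\nu$. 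Thus a countable dense subset of $\Gamma$ consists of limit points, so $\Gamma\subseteq$ limit set, and with~(i) the limit set equals $\Gamma$.

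It remains to record the geometry of $\Gamma$. Each $\overline\mu_k$ gives positive mass to every vertex since $f_{k,j}>0$, so $\overline\mu_k\in\operatorname{relint}\Delta$ and hence $\Gamma\subset\operatorname{relint}\Delta$. If $\overline\mu_0,\dots,\overline\mu_3$ lay in a common $2$-dimensional affine plane $P$, then $\delta_{\Or^{j+1}}=(\overline\mu_{j+1}-\lambda_j\overline\mu_j)/(1-\lambda_j)$, being an affine combination of $\overline\mu_j$ and $\overline\mu_{j+1}$, would lie in $P$ for every $j$, contradicting that $\delta_{\Or^0},\dots,\delta_{\Or^3}$ affinely span the $3$-simplex $\Delta$; hence the $\overline\mu_k$ are affinely independent, so the four nondegenerate segments $[\overline\mu_k,\overline\mu_{k+1}]$ form a skew, non-self-intersecting quadrilateral. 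The step I expect to be hardest is the single-epoch description: one must make the error bounds uniform in $n$ and in the entry point on $\gamma_{2,r}^{K_n}$, treat separately the brief edge-crossing phase and the long corner-sojourn phase of each epoch, and then organise the limit $\eps,r_0\to0$ through a countable intersection of the full-measure events supplied by Proposition~\ref{prop:eventual-cycling} and Corollary~\ref{cor:empirical-limit-points-cycle}.
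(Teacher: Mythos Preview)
Your argument is correct and takes a genuinely different route from the paper's. Both proofs rest on Proposition~\ref{prop:eventual-cycling}, but they parametrise the intermediate times differently. The paper slides a full-cycle window $I_{n,k}(s)$ by a fraction $s\in[0,1)$ of the first step and computes the resulting vertex weights explicitly as $\tfrac{f_{k,j}+\one_{\{j=k+1\}}s(\Index-1)}{\sum_{j'}f_{k,j'}+s(\Index-1)}$, so that varying $s$ sweeps the segment $[\overline\mu_k,\overline\mu_{k+1}]$ and the covering of late times by right endpoints of the $I_{n,k}(s)$ rules out other limit points. You instead split $\empi{x}_t=\tfrac{\eta_n}{t}\,\empi{x}_{\eta_n}+\tfrac{t-\eta_n}{t}\,\nu_n^{(t)}$ and combine it with the algebraic identity $\overline\mu_{k+1}=\lambda_k\overline\mu_k+(1-\lambda_k)\delta_{\Or^{k+1}}$ (a clean consequence of~\eqref{eq:increase-k-empi}), which places $\overline\mu_{k+1}$ on the chord from $\overline\mu_k$ to $\delta_{\Or^{k+1}}$ and makes the segment structure transparent without recomputing the $s$-dependent weights. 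Your deduction of $\eta_n/\eta_{n+1}\to\lambda_k$ from uniqueness of that convex representation, and your nonplanarity argument (coplanarity of the $\overline\mu_k$ would force the $\delta_{\Or^j}$ into the same plane), are more explicit than the paper's one-line appeal to~\eqref{eq:increase-k-empi}. The trade-off: the paper avoids the small/large dichotomy on $t-\eta_n$ and the diagonal extraction over $\eps_i,r_{0,i}\downarrow 0$, while your approach isolates the reusable geometric fact $\overline\mu_{k+1}\in[\overline\mu_k,\delta_{\Or^{k+1}}]$ and keeps the weight bookkeeping to a minimum. One point to handle with care in a full write-up is that the stopping times $\eta_{n,r,r'}$ themselves depend on $\eps,r_0$ through $r,r'$, so the diagonal extraction must be organised over sequences of full-measure events indexed by~$i$; your sketch already anticipates this.
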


\begin{proof}
    We resume with the setup and notation of the proof of Corollary~\ref{cor:empirical-limit-points-cycle} for a fixed~$x\in\bXo$, which, we recall, relied on Proposition~\ref{prop:eventual-cycling}.
    However, we now use a partition into time intervals of the form 
    \begin{align*}
        I_{n,k}(s) = \left[{\eta_{4n+k}} + \frac{s}{\eta_{4n+k+1}-\eta_{4n+k}} ,{\eta_{4(n+1)+k}} + \frac{s}{\eta_{4(n+1)+k+1}- \eta_{4(n+1)+k}}\right].
    \end{align*}
    with $s \in [0,1)$. At fixed $k$ and $s$, varying $n$ induces a partition of~$[\eta_{5},\infty)$.
    Now, almost surely on $\Omega_{0,r,r',\eps}$,
    \begin{align*}
        \frac{f_{k,j} - O(\eps)}{\sum_{j'= 0}^{3} f_{k,j'} + s(\Index-1) + O(\eps)}
        &< \liminf_{n\to\infty} \frac{\int_{I_{n,k}(s)}  \one_{\{ X (t) \in U_{r_0}^{j}\}} \dd t}{|I_{n,k}(s)|}  \\
        &\leq \limsup_{n\to\infty} \frac{\int_{I_{n,k}(s)}  \one_{\{ X (t) \in U_{r_0}^{j}\}} \dd t}{|I_{n,k}(s)|} 
        < \frac{f_{k,j} + O(\eps)}{\sum_{j'= 0}^{3} f_{k,j'}+ s(\Index-1)- O(\eps)}
    \end{align*}
    for $j\neq k+1$,
    and
    \begin{align*}
    \frac{f_{k,k+1} + s(\Index-1)}{\sum_{j'= 0}^{3} f_{k,j'} + s(\Index-1)} - O(\eps) 
        &< \liminf_{n\to\infty} \frac{\int_{I_{n,k}(s)}  \one_{\{ X (t) \in U_{r_0}^{k+1}\}} \dd t}{|I_{n,k}(s)|}  \\
        &\leq \limsup_{n\to\infty} \frac{\int_{I_{n,k}(s)}  \one_{\{ X (t) \in U_{r_0}^{k+1}\}} \dd t}{|I_{n,k}(s)|} 
        < \frac{f_{k,k+1} + s(\Index-1)}{\sum_{j'= 0}^{3} f_{k,j'}+ s(\Index-1)} + O(\eps).
    \end{align*}
    In view of~\eqref{eq:increase-k-empi}, we conclude that, almost surely $\Omega_{0,r,r',\eps}$, for all but finitely many~$n$, independently of~$k$ and~$s$, the empirical measure at the end of $I_{n,k}(s)$ is $[O(\eps)+O(r_0)]$-close to a convex combination of $\overline{\mu}_k$ and $\overline{\mu}_{k+1}$ when tested against a 1-Lipschitz function. 
    As in the proof of Corollary~\ref{cor:empirical-limit-points-cycle}, we can use the fact that $r_0$ and $\eps$ are arbitrary and adapt the argument to other $\Omega_{\ell,r,r',\eps}$ in order to conclude that varying $s \in [0,1)$ indeed corresponds to generating all convex combinations of $\overline{\mu}_k$ and $\overline{\mu}_{k+1}$ as the limit points. Since every late enough time is the right endpoint of one and only one interval of the form $I_{n,k}(s)$ (varying $k$, $s$ and $n$), no other limit point is possible. Nonplanarity can be deduced from~\eqref{eq:increase-k-empi}.
\end{proof}

\subsection{Proof details}
\label{sec:proof-heteroc}

\label{ssec:proof-heteroc-aux}

\begin{proof}[Proof of Lemma~\ref{lem:proto-corner-prob}]
    Let us use the shorthand $\exit_r$ for the exit time from $U_{r,2r}$.
    A straightforward modification of Lemma~\ref{lem:Y}
    implies that, given any $\delta > 0$, the parameter $r$ can be taken
    sufficiently small that the following property holds: for every $x \in \gamma_{1,r}$, there are martingales~$M,N\in\cM$ such that
    \begin{align}
        (\eivalplus - \delta) t + M (t) &\le Y_1 (t) - y_1(x) \le (\eivalplus + \delta) t + M (t), 
        \label{eq:upper-Y1} \\
        (\eivalmoins - \delta) t + N (t) &\le Y_2 (t) - \ln r \le (\eivalmoins + \delta) t + N (t) 
        \label{eq:upper-Y2}
    \end{align}
    for all $t \leq \exit_r$. Throughout this proof, we will only consider $\delta < (|\eivalmoins|\wedge  \eivalplus)/2$. In particular, $\lambda_+ - \delta$ is still positive and $\lambda_- + \delta$ is still negative. Although the precise martingales depend on the initial condition~$x$, an upper bound on the growth of quadratic variation can be chosen uniformly in $x$.

    The lower bound in~\eqref{eq:upper-Y1} and Corollary~\ref{cor:finite-mean} imply that $\exit_r$ is almost surely finite. The upper bound in~\eqref{eq:upper-Y2} and Lemma~\ref{lem:drift-wins-over-mart-bound} imply that $\tau_{0,r} > \exit_r$ with a probability that has a positive lower bound depending only on~$\delta$ and the quadratic variation of~$N$.
\end{proof}

\begin{proof}[Proof of Proposition~\ref{prop:corner}]
    We continue with the setup and notation of the proof of Lemma~\ref{lem:proto-corner-prob}.
    If $r$ is small enough to ensure~\eqref{eq:upper-Y1}, then Lemmas~\ref{lem:lb-tauplus-prob} and~\ref{lem:ub-tauplus-prob} applied to $Y_1 - y_1$ show that the events
    \begin{align*}
        \exit_r < (1-\delta)\frac{\ln r - y_1}{\lambda_+ + \delta}
        \quad\text{and}\quad
        \exit_r > \frac{1}{1-\delta}\cdot\frac{\ln r - y_1}{\lambda_+ - \delta}
    \end{align*}
    are both rare conditioned on $\exit_r=\tau_{2,r}$. Hence, given $\eps>0$, both
    \begin{align*}
        \exit_r < \left(\frac{1}{\lambda_+} - \eps \right)|y_1|
        \quad\text{and}\quad
        \exit_r > \left(\frac{1}{\lambda_+} + \eps \right)|y_1|
    \end{align*}
    are rare conditioned on $\exit_r=\tau_{2,r}$, provided that~$r$ small enough. 
    Applying Theorem~\ref{thm:Bass-martingale} to the martingale $N$, we see that on the complement to these rare events,
    $$ 
        Y_2(\exit_r)
        < \ln r + (\lambda_- - \delta) \left(-\frac{1 +
        \delta}{\eivalplus} y_1\right) +\delta y_1
    $$
    and 
    $$ 
        Y_2(\exit_r)
        > \ln r + (\lambda_-+\delta) \left(-\frac{1 -
        \delta}{\eivalplus} y_1\right) - \delta y_1.
    $$
    Hence, given $\eps>0$, both
    $$ 
        Y_2(\exit_r) < \frac{-\eivalmoins}{\eivalplus} y_1 + \eps y_1
    \qquad\text{ and }\qquad
        Y_2(\exit_r) > \frac{-\eivalmoins}{\eivalplus} y_1 - \eps y_1
    $$
    are rare for all $r$ small enough. This proves the claims in~(a).

    Now note that, for every $T>0$, we have 
    \begin{align*}
        &\{\exit_r = \tau_{0,r} \text{ and } \exit_r > T \} \\
        & \qquad \subset \big\{ \ln r + (\eivalmoins +  \delta ) t + N
        (t) \ge \ln 2r \text{ for some } t > T \big\}
        \\
        & \qquad \subset \Big\{ (\eivalmoins +  \delta ) s + [N (T+s) - N(T)] \ge  \ln 2 - (\eivalmoins+\delta)T - N^*(T)  \text{ for some } s > 0 \Big\} \\
        & \qquad \subset \big\{N^*(T) > -\tfrac 12 (\eivalmoins+\delta)T \big\} \\
            &\qquad\qquad \cup \Big\{ (\eivalmoins +  \delta ) s + [N (T+s) - N(T)] \ge \ln2 - \tfrac12(\eivalmoins+\delta)T \text{ for some } s > 0 \Big\}
    \end{align*}
    The right-hand side decays exponentially in~$T$ (Theorem~\ref{thm:Bass-martingale} and Lemma~\ref{lem:drift-wins-over-mart-bound}), i.e., as a stretched exponential in $|y_1|$ if $T=|y_1|^{1/3}$.  Finally, in view of~\eqref{eq:upper-Y1}, if $\exit_r\leq|y_1|^{1/3}$, then $Y_1 (\exit_r) \notin [y_1 - | y_1 |^{1/2}, y_1 + | y_1 |^{1/2}]$ is rare (Theorem~\ref{thm:Bass-martingale} again). This proves the claims in~(b).
\end{proof}

\begin{proof}[Proof of Proposition~\ref{lem:2-->1,3}]
    Albeit with $\exit_r$ defined in a way that is different from that in Propositions~\ref{prop:corner}, we still have bounds of the form~\eqref{eq:upper-Y1}--\eqref{eq:upper-Y2} for $t \leq \exit_r$ and $r$ small enough.
    We can thus follow a strategy similar to that in the proof of Proposition~\ref{prop:corner}.
\end{proof}

\begin{proof}[Proof of Lemma~\ref{lem:proto-exit-from-G-time}]
    Thanks to Lemma~\ref{lem:Y} (and Remark~\ref{rem:strat-ito-combo}), we have a process of the form~\eqref{eq:alt-SDE}, and the proposed bound follows from Lemma~\ref{lem:finite-2d-mean}. 
\end{proof}

\begin{proof}[Proof of Lemma~\ref{lem:proto-exit-from-G-prob-easier}]
    Fix $r>0$.
    On the one hand, thanks to Lemmas~\ref{lem:Y} and~\ref{lem:lb-tauplus-prob} (applied to $Y_2$), 
    \[ 
    \lim_{r' \to 0} \sup_{x \in G'_{r,r'}} \pp^x\{\tau_{4,r} \leq 2 \} = 0.
    \]
    On the other hand, thanks to Lemmas~\ref{lem:Y} and~\ref{lem:positive-prob-to-escape-right} (applied to $Y_1$), there exists $c>0$ such that  
    \begin{align*}
        \inf_{x \in G'_{r,r'}} \pp^x\{ \tilde{\tau}_{1,r} \leq \tau_{2,r} \text{ and } \tilde{\tau}_{1,r} \leq 1 \} \geq c
    \end{align*}
    for all $r' \in (0,r)$. Hence, the proposed bound holds with $q = \tfrac 12 c$ if $r'$ is small enough.
\end{proof}

\begin{proof}[Proof of Proposition~\ref{lem:exit-from-G}]
    Let us show that, with $\exit_r$ as a shorthand for the exit time from $G_{r,r}$,
    we have $\{ \exit_r = \tau_{4,r} \}$ is a rare event under $\pp^{(x_1,\Exp{y_2})}$
    as $y_2\to-\infty$, uniformly in $x_1\in[r,1-r]$. We will in some places omit keeping explicit track of the dependence on $r$ to lighten the notation.
    By Lemma~\ref{lem:Y}, there is a constant $C > 0$
    and a martingale $M\in\cM$ such that
    \[ 
        |Y_2 (t) - y_2| \le Ct + M^* (t) 
    \]
    for $t<\exit$. Although the precise martingale depends on the initial condition $(x_1,\Exp{y_2})$, a constant $C$ and an upper bound on the growth of quadratic variation can be chosen uniformly in $(x_1,\Exp{y_2}) \in G$.
    We then see that
    \[ \{ \exit = \tau_4 \} \subset \{ \exit > t \} \cup \{ M^{\ast} (t) > \ln r - y_2 -
        C t \}. \]
    Choosing $t = | y_2 |^{1 / 3}$, we use Lemma~\ref{lem:proto-exit-from-G-time} to obtain
    \[ \pp^{(x_1,\Exp{y_2})} \{ \exit = \tau_4 \}\le c_1 \Exp{- c_2  | y_2 |^{1 / 3}} +
        \pp^{(x_1,\Exp{y_2})} \left\{ M^* (| y_2 |^{1 / 3}) > \ln R - y_2 - C | y_2 |^{1 /
        3}  \right\}, \]
    and the desired estimate follows from the exponential martingale inequality (Lemma~\ref{thm:Bass-martingale}).

    The fact that $\exit > |y_2|^{1/2}$ is rare under $\pp^{(x_1,\Exp{y_2})}$,  uniformly in $x_1\in[r,1-r]$, is a direct consequence of Lemma~\ref{lem:proto-exit-from-G-time}, so it remains only to show that 
    \[B(y_2) = \big\{ \exit  = \tilde\tau_{1} \wedge \tau_{2} \text{ and }|Y_2 (\exit)-y_2|>
    |  y_2 | ^{1 / 2} \big\}\]      
    is rare under $\pp^{(x_1,\Exp{y_2})}$,  uniformly in $x_1\in[r,1-r]$.
    To this end, note that, for arbitrary $t>0$,
    \begin{align*}
        \pp^{(x_1,\Exp{y_2})} (B(y_2)) \le \pp^{(x_1,\Exp{y_2})} \{ \exit > t \} +
        \pp^{(x_1,\Exp{y_2})} \left\{ Ct + M^* (t) \ge | y_2  |^{1 / 2} \right\}.
    \end{align*}
    Plugging in $t = | {y_2} |^{1 / 3}$, applying Lemma~\ref{lem:proto-exit-from-G-time} to the first term, and applying the exponential martingale inequality (Lemma~\ref{thm:Bass-martingale}) to the second term,
    we complete the proof.
\end{proof}

\label{sec:proof-combined-saddle-edge}
The typical scenario at the heart of the proof of Proposition~\ref{lem:highprob-events} is the following: starting in~$G_{r,r}$, the process may make several short transitions between the curves $\gamma_{2,r},\gamma_{3,r},\tilde\gamma_{0,r},\tilde\gamma_{1,r}$ before it makes a transition from $\tilde\gamma_{1,r}$ to $\tilde\gamma_{2,r}$; see Figure~\ref{fig:FGU-proof}. We know that with the exception of this last transition, the coordinate $y_2$ changes at most by $|y_2|^{1/2}$ over each transition. Before we delve into the proof of Proposition~\ref{lem:highprob-events}, we provide an estimate used to control the cumulative effect of those transitions. The proof is straightforward and therefore omitted. 

\begin{figure}
  \centering 
  \footnotesize
    \begin{tikzpicture}
    \begin{axis}[
        axis lines=none,
        xmin=-.15, xmax=\xmaximum,
        ymin=-.15, ymax=1.25*\errbutsmaller,
        zmax=12,
        legend style={at={(axis cs: \xmaximum,\ymaximum, 0)},anchor=north west},
        unit vector ratio=1 1,
        unit rescale keep size=false,
        view={0}{90},
        scale=2.
    ]

    \fill[cpatternone] (0,0) rectangle (1,\errbutsmaller);
    \addlegendimage{area legend, cpatternone}

    \fill[cpatternthree] (\errbutsmaller,0) rectangle (1-\errbutsmaller,\errbutsmaller-0.00301);
    \addlegendimage{area legend, cpatternthree}

    \legend{$F$, $G$}

    \addplot3[fill=black!25, opacity=.75, mark=none] 
    coordinates {(0,0,0)};

    \draw (0,\ymaximum) -- (0,0) -- (1,0) -- (1,\ymaximum);    

    \draw (0,\errbutsmaller+0.00301) -- (\errbutsmaller-0.00301,\errbutsmaller+0.00301);
    \node[anchor=south] at (axis cs: 0.5*\errbutsmaller,\errbutsmaller) {$\gamma_1$};

    \draw (\errbutsmaller,\errbutsmaller+0.00301) -- (1-\errbutsmaller,\errbutsmaller+0.00301);
    \node[anchor=south] at (axis cs: 0.5,\errbutsmaller) {$\gamma_4$};

    \node[anchor=south] at (axis cs: 0.5*\errbutsmaller,\errbutsmaller) {$\gamma_1$};

    \draw (\errbutsmaller-0.00301,0) -- (\errbutsmaller-0.00301,\errbutsmaller-0.00301);
    \node[anchor=east] at (axis cs: \errbutsmaller,0.5*\errbutsmaller) {$\gamma_2$};

    \draw (1-\errbutsmaller+0.00301,\errbutsmaller+0.00301) -- (1,\errbutsmaller+0.00301);
    \node[anchor=south] at (1-0.5*\errbutsmaller,\errbutsmaller) {$\tilde{\gamma}_2$};

    \node[circle, inner sep=0pt, anchor=west, fill=black!10, opacity=.6] at (axis cs: 2.05*\errbutsmaller,0.5*\errbutsmaller) {\phantom{$\gamma_3$}};
    \node[anchor=west] at (axis cs: 2*\errbutsmaller,0.5*\errbutsmaller) {$\gamma_3$};
    \draw (2*\errbutsmaller+.00301,0) -- (2*\errbutsmaller+.00301,\errbutsmaller-0.00301);

    \node[circle, inner sep=0pt, anchor=west, fill=black!10, opacity=.6] at (axis cs: 1-1.95*\errbutsmaller,0.5*\errbutsmaller) {\phantom{${\gamma}_0$}};
    \node[anchor=west] at (axis cs: 1-2*\errbutsmaller,0.5*\errbutsmaller) {$\tilde{\gamma}_0$};
    \draw (1-2*\errbutsmaller+.00301,0) -- (1-2*\errbutsmaller+.00301,\errbutsmaller-.00301);

    \draw (1-\errbutsmaller+.00301,0) -- (1-\errbutsmaller+.00301,\errbutsmaller-0.00301);
    \node[anchor=west] at (axis cs: 1-\errbutsmaller,0.5*\errbutsmaller) {$\tilde{\gamma}_1$};

    \node[anchor=north] at (axis cs: 0,0) {$0$};
    \node[anchor=north] at (axis cs: \errbutsmaller,0) {$r$};
    \node[anchor=north] at (axis cs: 2*\errbutsmaller,0) {$2 r$};
    \node[anchor=north] at (axis cs: 1-2*\errbutsmaller,0) {$1-2r$};
    \node[anchor=north] at (axis cs: 1-\errbutsmaller,0) {$1-r$};
    \node[anchor=north] at (axis cs: 1,0) {$1$};
    \node[anchor=east] at (axis cs: 0,\errbutsmaller) {$r$};

    \swsaddlesmall
    \swsaddletildesmall
    
    \end{axis}
    \end{tikzpicture}
    \caption{The setup for the proof of Proposition~\ref{lem:highprob-events}. }
    \label{fig:FGU-proof}
\end{figure}
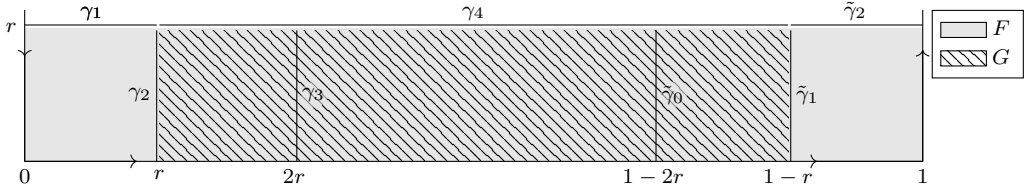

\begin{lemma}
\label{lem:cumulative-displacement}
    With $f^n$ denoting the $n$-th iteration of a function $f$, i.e.\ $f^n (y) = f \circ f \circ \cdots \circ f (y)$, and with 
    \begin{align*}
        f : (0, + \infty) &\rightarrow (0, + \infty) 
        & 
        g : (0, + \infty) &\rightarrow \mathbb{R} 
        \\
        y &\mapsto y + y^{1 / 2} 
        & 
        y &\mapsto y - y^{1 / 2},
    \end{align*}
    we have
    \begin{enumerate}
        \item There is $y_0 > 0$ such that for all $y > y_0$
        and for all $n \in \mathbb{N}$, 
        \[ f^n (y) \le y + n^2 y^{2 / 3} . \]

        \item There is \ $y_0 > 0$ such that for all $y > y_0$ and all $n \in \mathbb{N}$ satisfying $n < y^{1 / 6} - 1$, $g^n$ is
        well-defined and
        \[ g^n (y) \ge y - n^2 y^{2 / 3}. \]
    \end{enumerate}
\end{lemma}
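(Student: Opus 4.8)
The plan is to exploit that $f$ and $g$ are the time-one maps of crude Euler discretizations of the autonomous ODEs $\dot u=+\sqrt u$ and $\dot u=-\sqrt u$, whose solutions are the explicit parabolas $t\mapsto(\sqrt y+t/2)^2$ and $t\mapsto(\sqrt y-t/2)^2$. I would therefore bound the orbits $(f^n(y))_{n}$ and $(g^n(y))_{n}$ by a monotone comparison with these parabolas sampled at integer times, keeping track of the $O(1)$-per-step discretization error. Since $f$ is strictly increasing on $(0,+\infty)$ and $g$ is strictly increasing on $(\tfrac14,+\infty)$, such a comparison propagates along the iteration by a one-line induction, so the only substantive points are the elementary arithmetic at the end and, for $g$, checking that the orbit stays inside the region where $g$ is increasing.

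For the first estimate, I would set $v_k=(\sqrt y+k/2)^2=y+k\sqrt y+k^2/4$, note that $v_0=y=f^0(y)$, and expand the square to get $v_{k+1}=v_k+\sqrt{v_k}+\tfrac14=f(v_k)+\tfrac14\ge f(v_k)$; monotonicity of $f$ then gives $f^n(y)\le v_n$ for all $n$ by induction. It remains to check $n\sqrt y+n^2/4\le n^2 y^{2/3}$ for $y>y_0$ and $n\ge 1$, which after dividing by $n$ reads $\sqrt y\le n(y^{2/3}-\tfrac14)$ and holds because $n\ge 1$ and $y^{2/3}-\tfrac14\ge\sqrt y$ once $y_0$ is large (as $y^{2/3}/\sqrt y=y^{1/6}\to\infty$). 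There is no delicate point here.

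For the second estimate, I would set $w_k=(\sqrt y-k/2)^2$. The naive comparison $g^k(y)\ge w_k$ fails, since expanding the square gives $w_{k+1}=w_k-\sqrt{w_k}+\tfrac14=g(w_k)+\tfrac14$, i.e., the parabola is a \emph{super}-solution for $g$. The remedy is to prove the shifted bound $g^k(y)\ge w_k-k/4$ for $0\le k\le n$ by induction: granting it at step $k$ and using that $g$ is increasing on $[w_k-k/4,+\infty)$, one gets $g^{k+1}(y)\ge g(w_k-\tfrac k4)=(w_k-\tfrac k4)-\sqrt{w_k-\tfrac k4}\ge w_k-\tfrac k4-\sqrt{w_k}=w_{k+1}-\tfrac{k+1}{4}$. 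This is exactly where the hypothesis $n<y^{1/6}-1$ is needed and is essential: it forces $k/2<\tfrac12 y^{1/6}\le\tfrac12\sqrt y$ for $0\le k\le n$ when $y_0$ is large, hence $\sqrt y-k/2\ge\tfrac12\sqrt y$, which legitimizes $\sqrt{w_k}=\sqrt y-k/2$ and guarantees $w_k-k/4\ge\tfrac14 y-\tfrac14 y^{1/6}>\tfrac14$; thus every iterate stays in the increasing regime of $g$, in particular in $(0,+\infty)$, so $g^n$ is well-defined. Finally $g^n(y)\ge w_n-n/4=y-n\sqrt y+n^2/4-n/4\ge y-n\sqrt y\ge y-n^2 y^{2/3}$, the last step using $n y^{1/2}\le n^2 y^{2/3}$ for $n\ge 1$, $y\ge 1$ (the case $n=0$ being trivial). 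The only mild obstacle in the whole argument is this bookkeeping: choosing the shift $k/4$ so that the discretization error does not compound, and recognizing that the window $n<y^{1/6}-1$ is precisely what keeps the orbit of $g$ away from the boundary of its domain.
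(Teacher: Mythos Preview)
Your argument is correct. The paper omits the proof entirely, calling it ``straightforward'', so there is no specific approach to compare against; your comparison with the explicit parabolas $(\sqrt y\pm k/2)^2$ together with the $k/4$-shift to absorb the discretization error is a clean way to supply the omitted details, and every step checks out.
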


\begin{proof}[Proof of Proposition~\ref{lem:highprob-events}]
    Let $\eps > 0$ and $r_0 \in (0,\tfrac 12)$ be arbitrary and let $r \in (0,r_0)$ be small enough for applications of Proposition~\ref{lem:2-->1,3} near~$\Or$, Lemmas~\ref{lem:proto-exit-from-G-time} and~\ref{lem:proto-exit-from-G-prob-easier} and Proposition~\ref{lem:exit-from-G} between~$\Or$ and~$\tilde{\Or}$, and Lemma~\ref{lem:proto-corner-prob} and Proposition~\ref{prop:corner} near~$\tilde{\Or}$. 
    In order to lighten the notation, we omit references to~$r$ for the rest of the argument.
    We define 
    \begin{align*}
    \Gamma &= \gamma_1\cup\gamma_2\cup \gamma_3 \cup \gamma_4\cup \tilde\gamma_0\cup\tilde\gamma_1\cup\tilde\gamma_2
    &&\text{and}&
    \Gamma' &= \gamma_2\cup\gamma_3\cup\tilde\gamma_0\cup \tilde\gamma_1,
    \end{align*}
    see Figure~\ref{fig:FGU-proof}.
    We also define the following stopping times: $\chi_0 = 0$, and then,
    inductively, 
    \begin{align*}
        \chi_n = \inf\{t > \chi_{n-1} : &{} \text{ there exists } \gamma \in \{\gamma_1,\gamma_2,\gamma_3,\gamma_4, \tilde\gamma_0,\tilde\gamma_1,\tilde\gamma_2\} \\
        &{} \text{ such that } X(t) \in \gamma \text{ and } X(\chi_{n-1})\notin \gamma\}.
    \end{align*}
    We let $\chi = \inf \{ t > 0 : X (t) \in \Gamma \setminus \Gamma' \}$ and introduce the random variable $\nu=\min\{n: \chi_{n} = \chi\}$.
    These stopping times describe the
    following high-probability scenario: starting in $G$,
    the process visits intermittently different curves constituting~$\Gamma'$ at times $\chi_1, \chi_2, \dotsc$ and, eventually, upon a visit to~$\tilde\gamma_1$, exits the
    rectangle~$F$ through $\tilde\gamma_2$ at time~$\chi = \chi_\nu$. 

    A transition from $\gamma_2$
    to $\gamma_1 \cup\gamma_4$ is a rare event due to Proposition~\ref{lem:2-->1,3};
    a transition from $\gamma_3\cup\tilde\gamma_0$ to $\gamma_4$, due to  Proposition~\ref{lem:exit-from-G}; a transition from $\tilde\gamma_1$ to $\gamma_4$, due to Proposition~\ref{prop:corner}. 
    Each of the other types of transition between the curves of $\Gamma'$
    rarely takes long due to Propositions~\ref{lem:2-->1,3},~\ref{lem:exit-from-G} and~\ref{prop:corner}, and Lemma~\ref{lem:proto-exit-from-G-prob-easier} implies an
    exponential estimate on the tail of the number of those transitions. Propositions~\ref{lem:2-->1,3},~\ref{lem:exit-from-G} and~\ref{prop:corner} also guarantee that  
    the distance to the boundary does not change much during each of those
    transitions, and the cumulative change of that distance is estimated by Lemma~\ref{lem:cumulative-displacement}.
    The terminal transition from~$\tilde\gamma_1$ to~$\tilde\gamma_2$ is described by case~(a) of Proposition~\ref{prop:corner}.

    Let us be slightly more precise. 
    Using Propositions~\ref{prop:corner}, \ref{lem:2-->1,3},~\ref{lem:exit-from-G} and Lemma~\ref{lem:cumulative-displacement}, we can show
    inductively that the following happens w.h.p.\ as $y_2\to-\infty$: for each $n < |y_2|^{1 / 8}\wedge \nu$,
    \begin{subequations}
    \label{eq:before-chi_nu}
    \begin{gather}
        |Y_2 (\chi_n) - y_{2} |
        \le
        n^2 | y_2 |^{2 / 3} < | y_2 |^{11 / 12},
        \intertext{and}
        \chi_n 
        \le n (|y_2|+n^2|y_2|^{2/3})^{1/2}\le 2 |y_2|^{5/8}\le |y_2|^{2/3}.
    \end{gather}
    \end{subequations}
    However, using~\eqref{eq:before-chi_nu}, Lemmas~\ref{lem:proto-corner-prob} and~\ref{lem:proto-exit-from-G-prob-easier},
    we obtain that $\nu \le |y_2|^{1/8}$ w.h.p.\ as $y_2 \to -\infty$. 
    Therefore, the bounds~\eqref{eq:before-chi_nu} imply that
    $|Y_2 (\chi_{\nu-1})
    - y_2 | < | y_2 |^{11 / 12}$
    and 
    $\chi_{\nu-1} < | y_2 | ^{2/3}$
    w.h.p. Finally, this last transition from $ \tilde\gamma_1$ to $\tilde\gamma_2$ is described by Proposition~\ref{prop:corner} near $\tilde{\Or}$, keeping in mind that $\tilde{y}_1 = y_2$: 
    \begin{align*}
        \chi_\nu - \chi_{\nu-1} &\in \left[ \left(\frac{1}{\eivalplus} - \eps\right) (|y_2|-| y_2 |^{11 / 12}), \left(\frac{1}{\eivalplus} + \eps\right) (|y_2|+| y_2 |^{11 / 12}) \right], \\
        \tilde{Y}_2 (\chi_\nu) &\in \left[(\rho + \eps)
        (y_2-| y_2 |^{11 / 12}), (\rho - \eps) (y_2 + | y_2 |^{11 / 12})\right].
    \end{align*}
    The conclusion of Proposition~\ref{lem:highprob-events} follows, albeit with a slightly larger~$\eps$ to accommodate the lower powers of $|y_2|$.
\end{proof}

\begin{proof}[Proof of Lemma~\ref{lem:proto-eventual-cycling}]
    First, we note that since {$\bXo\setminus F^*_{r'}$ is  compact}, Lemma~\ref{lem:exit-compact} guarantees that $\eta_{0,r,r'}$ is almost surely finite.
    More generally, the stopping times $\zeta_{0,r'} = \eta_{0,r,r'}$ and 
    $$ 
        \zeta_{m,r'} = \inf\{t > \zeta_{m-1,r'} + 1 : X(t) \in F^*_{r'}\}
    $$
    are diverging as $m\to \infty$ but all almost surely finite.
    In particular, the event $A = \{\eta_{1,r,r'} < \infty\}$ is measurable with respect to the $\sigma$-algebra generated by $(\cF_{\zeta_{m,r'}})_{m\in\nn}$.
    By the strong Markov property and Lemma~\ref{lem:proto-corner-prob} and Proposition~\ref{lem:highprob-events}, provided that $r$ and $r'$ are small enough, there exists $\delta > 0$ such that
    $
        \ee^x[\one_A|\cF_{\zeta_{m,r'}}] \geq \delta
    $
    for all $m$, independently of~$x$.
    Hence, we can appeal to Lemma~\ref{lem:levy-conseq} to deduce that $\pp^x(A) = 1$, i.e., that $\eta_{1,r,r'}$ is almost surely finite. A simple induction argument using the strong Markov property then shows that~$\eta_{n,r,r'}$ is almost surely finite for every $n$.
\end{proof}

\section{A General hitting-time estimate}
\label{sec:YM}

\subsection{The Setting and main result}
\label{sapp:YM-setting}

In this section, we prove a general estimate on the time
it takes for a Markov process $X$ to hit a set~$\Rset$.
The classical Foster--Lyapunov hitting-time estimates 
assume that there is a {\it Lyapunov function} $\Phi$ such that, outside of $\Rset$, the function $\Phi$ is bounded from below and  $\Phi(X(t))$  tends to decay. 
Although our estimate, 
which is a version of a result in \cite{YM12}, 
is also based on the Foster--Lyapunov approach, it is targeted at situations where there also may be regions where the Lyapunov function is allowed to grow on average. 
However, we will assume that the process does not spend too much time in those regions, so that the decay of $\Phi(X(t))$ dominates outside of~$\Rset$. 
The resulting estimate will be used in Section~\ref{sec:proofs-Lyap} to prove recurrence of certain regions for the diffusions defined in Section~\ref{sec:2d-setup-result}.

We consider a homogeneous Markov process $X$ with continuous paths on an open state space $\bU \subset \rr^n$, with the strong Markov property; see, e.g., \cite[Section 2.6B]{KaSh}. The distribution on paths started at $x\in\bU$ will be denoted by $\pp^x$, and the expectation with respect to $\pp^x$ will be denoted by~$\ee^x$. We let $(\cF_t)_{t\ge 0}$ be the natural filtration of the process. 

To impose a regularity assumption on the process, we will need a sequence of open precompact sets $(\bU_m)_{m\in\N}$ such that $\bU_m\subset \bU_{m+1}$ for all $m$, $\bigcup_{m\in\N}\bU_m=\bU$. We define $\BB$ to be the space of functions locally bounded above on $\bU$, i.e., functions $f:\bU\to\R$ such that for every $m\in\N$, $\sup\{f(x):\, x\in\bU_m\}<\infty$.

We assume that there is an operator $\cL:C^2(\bU)\to\BB$ such that for every $f\in C^2(\bU)$ and every starting point $x\in\bU$, there is a continuous local martingale $(M(t))_{t\ge 0}$ w.r.t.\ $(\cF_t)_{t\geq 0}$ such that the following ``It\^o formula'' holds $\pp^x$-almost surely:  
\begin{equation}
    \label{eq:Ito-with-generator}
f(X(t))=f(x)+\int_{0}^t \cL f(X(s)) \dd s + M(t), \quad t\ge 0.
\end{equation} 
The local martingale property means that there is a sequence $(\eta_m)_{m\in\N}$ of stopping times such that 
(i) $\eta_m\uparrow+\infty$ and 
(ii) for each $m\in\N$, the process $M^{\eta_m}(t)=M(\eta_m\wedge t)$ is a martingale w.r.t.\ $(\cF_t)_{t\geq 0}$. 
In our setup, we can equivalently require in addition that  for all $m\in\N$ the following holds:
(iii) $\eta_m\le m$;  
(iv) $X(t)\in \bU_m$ for all $t\le \eta_m$; 
(v) $|M(t)|\le m$ for all $t\le \eta_m$.

\begin{remark}
We introduce this setup mainly with diffusion processes in mind. Namely all these properties hold for a diffusion driven by an It\^o SDE with Lipschitz coefficients under the assumption that $\bU$ is forward invariant for this diffusion (the latter can be expressed in terms of behavior of the coefficients near $\partial \bU$). In the SDE setting,
the role of $\cL$  is played by the usual second order differential operator, and \eqref{eq:Ito-with-generator} is the actual It\^o  formula with $M$ being an It\^o integral; see~\eqref{eq:def-cL-as-diff}--\eqref{eq:first-explicit-Ito}.
\end{remark}

To state the main result of this section, in addition to the setup described above, we need closed sets~$\Rset,\Ssetin\subset \bU$,  an open set  $\Ssetout\subset\bU$ containing $\Ssetin$, and $\Qset = (\Rset\, \cup\, \Ssetout)^\complement$ (throughout this section, complements are taken with respect to $\bU$, i.e., $A^\complement=\bU\setminus A$ for any $A\subset \bU$).   
Continuity of  $X$ implies that the hitting times 
$\hit_\Rset=\inf\{t \geq 0 : X_t \in \Rset\}$ and similarly defined 
$\hit_{\Ssetin \cup \Rset}$ and $\hit_{\Ssetout^\complement}$ are stopping times. 
We view $\Ssetin$ and $\Ssetout$ as an ``inner'' set  and an ``outer'' one.

\begin{theorem}
\label{prop:YM12}
    Suppose that a  $C^2$ function $\Phi  : \Ssetout \cup \Qset \to [1,\infty)$
    and constants  $K \geq 1$ and $T > 0$ satisfy the following conditions:
    \begin{enumerate}[\rm (a)]
        \item\label{it:usual-Lyap-req}
        for all $x \in \Ssetout$,
        \begin{equation}
            \label{eq:typical-Lyap-decay}
                (\cL \Phi)(x) \leq -1;
            \end{equation}
        \item\label{it:Lyap-on-Q}
        $(\cL \Phi )(x) \leq K$ for all~$x \in \Qset$;      
        \item\label{it:hit-Si-R}
        $\ee^x \hit_{\Ssetin \cup \Rset} \leq T$ for all $x \in \Qset$;
        \item\label{it:exit-So}
        $\ee^x \hit_{\Ssetout^\complement}  \geq T(2K + 1)$ for all $x \in \Ssetin$.
\end{enumerate}
    Then, for all~$x \notin \Rset$, 
    \begin{equation}
    \label{eq:YM-goal}
        \ee^x \hit_{\Rset} \leq 3\Phi (x) + 2KT.
    \end{equation} 
    
\end{theorem}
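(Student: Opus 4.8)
The idea is the standard Foster–Lyapunov strategy refined by a stopped-martingale/optional-stopping argument, but split into two regimes according to whether the process, started outside $\Rset$, first enters $\Ssetin$ or first returns to $\Rset$. Write $\hit = \hit_\Rset$ for brevity. The key auxiliary quantity will be the stopped process $\Phi(X(t \wedge \hit \wedge \exit_m))$ where $\exit_m$ are the localizing stopping times from \eqref{eq:Ito-with-generator}, so that $M^{\exit_m}$ is an honest martingale and all the quantities below are finite; at the end one lets $m \to \infty$ and invokes monotone/Fatou arguments together with the a priori finiteness $\ee^x\hit_{\Ssetin\cup\Rset}\le T$ from hypothesis~\eqref{it:hit-Si-R} to justify the limits. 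I will suppress this localization in the sketch and write the estimates as if optional stopping applied directly.

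\textbf{Step 1: a cycle decomposition.} Starting from $x \notin \Rset$, define a sequence of stopping times tracking excursions between $\Ssetin$ and $\Ssetout^\complement$: let $\tau_0 = \hit_{\Ssetin \cup \Rset}$, and as long as $X(\tau_{2j}) \in \Ssetin$ (not yet in $\Rset$), let $\tau_{2j+1} = \tau_{2j} + \hit_{\Ssetout^\complement}\circ\theta_{\tau_{2j}}$ be the time to exit $\Ssetout$, and $\tau_{2j+2} = \tau_{2j+1} + \hit_{\Ssetin\cup\Rset}\circ\theta_{\tau_{2j+1}}$ be the subsequent time to hit $\Ssetin\cup\Rset$; the construction stops when $X$ hits $\Rset$. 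On the closed set $\Rset$ nothing is required of $\Phi$, so the point is to show this alternation terminates in finite expected time. Each ``$\Qset$-leg'' (from $\Ssetout^\complement$ or from $x$, into $\Ssetin\cup\Rset$) takes expected time at most $T$ by \eqref{it:hit-Si-R}; the work is to show that the expected number of legs is finite, and this is where \eqref{it:exit-So} is used as a ``net drift down'' bookkeeping device.

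\textbf{Step 2: the Lyapunov bookkeeping on one cycle.} Apply \eqref{eq:Ito-with-generator} and optional stopping to $\Phi$ over a single cycle $[\tau_{2j}, \tau_{2j+2}]$ started from a point $z \in \Ssetin$. Split the cycle: on $[\tau_{2j},\tau_{2j+1}]$ the process stays in $\Ssetout$, where $\cL\Phi \le -1$, so $\ee^z[\Phi(X(\tau_{2j+1}))] \le \Phi(z) - \ee^z[\tau_{2j+1}-\tau_{2j}] = \Phi(z) - \ee^z\hit_{\Ssetout^\complement} \le \Phi(z) - T(2K+1)$ by \eqref{it:exit-So}. On $[\tau_{2j+1},\tau_{2j+2}]$ the process runs in $\Qset$ until hitting $\Ssetin\cup\Rset$, where $\cL\Phi \le K$, so $\ee[\Phi(X(\tau_{2j+2}))\,|\,\cF_{\tau_{2j+1}}] \le \Phi(X(\tau_{2j+1})) + K\cdot\ee[\hit_{\Ssetin\cup\Rset}\circ\theta_{\tau_{2j+1}}\,|\,\cF_{\tau_{2j+1}}] \le \Phi(X(\tau_{2j+1})) + KT$ (here one must be a little careful: $\Phi$ is only defined on $\Ssetout\cup\Qset$, but $X(\tau_{2j+2})\in\Ssetin\cup\Rset$; the clean fix is to stop the martingale argument at $\hit_{\Ssetin}\wedge\hit_\Rset$ and note that $\Phi\ge 1$ so dropping the $\Rset$-contribution only helps, while on $\{X(\tau_{2j+2})\in\Ssetin\}$ we keep $\Phi(X(\tau_{2j+2}))\ge 1$). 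Chaining the two: on the event that the cycle does not end in $\Rset$, $\ee[\Phi(X(\tau_{2j+2}))\,|\,\cF_{\tau_{2j}}] \le \Phi(X(\tau_{2j})) - T(2K+1) + KT = \Phi(X(\tau_{2j})) - T(K+1) \le \Phi(X(\tau_{2j})) - T$. So each full cycle not ending in $\Rset$ decreases $\ee\Phi$ by at least $T$, while $\Phi \ge 1$; hence the expected number $N$ of cycles satisfies $T\cdot\ee^x[N] \le \Phi(X(\tau_0)) \le \Phi(x) + KT$ (the $+KT$ coming from the initial $\Qset$-leg $[0,\tau_0]$, estimated exactly as the second half of a cycle). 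In particular $\ee^x[N] < \infty$, so the alternation a.s.\ terminates.

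\textbf{Step 3: assembling the time bound.} The total time is $\hit = \tau_0 + \sum_{j\ge 1}(\tau_{2j}-\tau_{2(j-1)})\one_{\{\text{cycle }j\text{ occurs}\}}$. Each occurring cycle contributes, in expectation, the $\Ssetout$-sojourn plus a $\Qset$-leg; but rather than bound the $\Ssetout$-sojourn by $\ee\hit_{\Ssetout^\complement}$ directly (which could be large), bound it via the Lyapunov drop: on $[\tau_{2j},\tau_{2j+1}]$, $\cL\Phi\le-1$ gives $\ee^z[\tau_{2j+1}-\tau_{2j}] \le \Phi(z) - \ee^z[\Phi(X(\tau_{2j+1}))] \le \Phi(z)$, and then the $\Qset$-leg adds at most $T$. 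Combining with the drop estimate $\ee[\Phi(X(\tau_{2j}))]$ telescoping down from $\Phi(x)+KT$ by steps of $\ge T$, a Wald-type summation over $j$ gives $\ee^x[\sum (\tau_{2j}-\tau_{2(j-1)})] \le \sum_{j} \ee[\Phi(X(\tau_{2(j-1)}))\one_{\{\ge j\text{ cycles}\}}] + T\ee^x[N] \le 2(\Phi(x)+KT) + (\Phi(x)+KT) = \ldots$; tuning the constants (the factor $3$ and the additive $2KT$ in \eqref{eq:YM-goal} come precisely from adding the initial leg $\ee^x\tau_0 \le T$, the telescoped sojourn sum, and the $\Qset$-legs) yields $\ee^x\hit_\Rset \le 3\Phi(x) + 2KT$. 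For $x \in \Ssetout$ the same argument applies verbatim starting the cycle structure at $\tau_{-1}=0$ in ``$\Ssetout$-mode''; for $x\in\Qset$ one starts with the initial $\Qset$-leg as above.

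\textbf{Main obstacle.} The delicate point is not any single inequality but the bookkeeping that keeps the constants at exactly $3$ and $2KT$: one must be careful (i) that $\Phi$ is only defined off $\Rset\cup(\Ssetout\setminus\Ssetin$-type regions$)$ so every optional-stopping application is stopped before $X$ can reach a point where $\Phi$ is undefined, using $\Phi\ge 1$ to absorb the dropped terms in the right direction; and (ii) that the martingale $M$ is only local, so all identities are first written with the localizing sequence $\exit_m$, and passage to the limit uses hypothesis~\eqref{it:hit-Si-R} (and its iterates) to control $\ee^x[t\wedge\hit]$ uniformly and invoke dominated/monotone convergence. Getting (ii) fully rigorous — in particular exchanging $\ee^x$ with the infinite sum over cycles — is the part that needs the most care, and is presumably where the referenced argument of \cite{YM12} does the real work.
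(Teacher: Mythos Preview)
Your overall strategy---cycle decomposition between $\Ssetin$ and $\Ssetout^\complement$, Lyapunov bookkeeping over each cycle, and a supermartingale argument---is exactly the route the paper takes. Steps~1 and~2 are correct and coincide with the paper's Lemmas~6.4--6.5. The gap is in Step~3.

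In Step~3 you bound the $\Ssetout$-sojourn by $\ee^z[\tau_{2j+1}-\tau_{2j}] \le \Phi(z)$, \emph{dropping} the term $-\ee^z[\Phi(X(\tau_{2j+1}))]$, and then claim
\[
\sum_{j}\ee\big[\Phi(X(\tau_{2(j-1)}))\one_{\{\ge j\text{ cycles}\}}\big]\ \le\ 2\big(\Phi(x)+KT\big).
\]
This does not follow from a per-cycle drop of only $\ge T$ (or even $\ge T(K+1)$). Indeed, if $\ee\Phi_j$ decreases merely linearly in $j$, then $\sum_j\ee\Phi_j$ is of order $\Phi(x)^2/T$, not $\Phi(x)$. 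Your ``Wald-type summation'' does not cure this: the summands $\Phi_j$ are not i.i.d.\ and there is no reason their sum over $N$ terms should be $O(\ee\Phi_0)$ rather than $O(\ee N\cdot\ee\Phi_0)$.

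The paper's remedy is a sharper per-cycle estimate (its Lemma~6.6): for $z\in\Ssetin$,
\[
\ee^z\Phi(X(\tau_1))\ \le\ \Phi(z)-\tfrac12\,\ee^z\tau_1,
\]
i.e.\ the Lyapunov drop over one full cycle is at least \emph{half the expected cycle time}. This follows from exactly your Step~2 ingredients: write the drop as $E_1-KE_2$ and the cycle time as $E_1+E_2$, with $E_1\ge T(2K+1)$ and $E_2\le T$; then $E_1-KE_2\ge\tfrac12(E_1+E_2)$ iff $\tfrac12 E_1\ge(K+\tfrac12)E_2$, which holds with equality at the extremes. With this in hand, $M_n=\Phi(X(\tau_n))+\tfrac12\sum_{k<n}\ee[\tau_{k+1}-\tau_k\mid\cF_{\tau_k}]$ is a nonnegative supermartingale, and optional stopping gives $\ee^x\tau_\nu\le 2\Phi(x)$ directly---no separate summation of $\ee\Phi_j$ is needed.

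Equivalently, you can repair Step~3 by \emph{not} discarding the telescoping term: keep $\ee[\tau_{2j+2}-\tau_{2j}\mid\cF_{\tau_{2j}}]\le\Phi_j-\ee[\Phi_{j+1}\mid\cF_{\tau_{2j}}]+(K{+}1)T$, so that summing over $j$ yields $\ee\Phi_0+(K{+}1)T\,\ee N$; then your Step~2 drop $\ge T(K{+}1)$ gives $\ee N\le\ee\Phi_0/((K{+}1)T)$, and the total is $\le 2\ee\Phi_0$. The crucial point either way is that the time spent in $\Ssetout$ must be charged against the \emph{difference} $\Phi_j-\Phi_{j+1}$, not against $\Phi_j$ alone.
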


\begin{remark}
\label{rem:usual-Lyap}
    If \eqref{eq:typical-Lyap-decay} holds on the entire $\Rset^\complement$, then the classical Foster--Lyapunov estimate mentioned in the beginning of this section holds: 
    \begin{equation}
    \label{eq:typical-Lyap-conclu}
        \ee^x \hit_\Rset \leq \Phi(x),\quad x\in \Rset^\complement,
    \end{equation}
    see, e.g.,~\cite[Section~{3.7}]{Kha}.  We will recall a simple derivation of  this estimate after stating Corollary~\ref{cor:Dynkin}. In Theorem~\ref{prop:YM12}, imposing less restrictive assumptions on $\cL$ and $\Phi$ on $\Rset^\complement$, we still can control the expected return time to $\Rset$.
\end{remark}

\begin{remark}
    In applications of this result, the set $\Ssetout$
    may have several connected components each supplied 
    with a separate Lyapunov function satisfying~\eqref{eq:typical-Lyap-decay} just on that component. Thus, the main issue we will need to solve in this case may be viewed as the problem of a common extension of these separate Lyapunov functions into $\Ssetout \cup \Qset$ satisfying the remaining three requirements.
\end{remark}

\subsection{Proof of Theorem~\ref{prop:YM12}.}

We begin with several auxiliary lemmas.
Our setup (including the requirements (i)--(v) on the localizing stopping time sequence $(\eta_m)_{m\in\nn}$), the optional stopping theorem, and Fatou's lemma 
imply the following Dynkin formula:
\begin{lemma}\label{lem:Dynkin} For all $x\in\bU$, all $f\in C^2(\bU)$  and every stopping time $\tau$,  all $m\in\N$,
\begin{equation}
    \label{eq:Dynkin1}
        \ee^x f(X(\tau^m))= f(x)+\ee^x \int_0^{\tau^m} \cL f(X(s)) \dd s,
\end{equation} 
where $\tau^m=\tau\wedge\eta_m$.
If, in addition, $f\ge 0$,  then
\begin{equation}
    \label{eq:Dynkin2}
       \ee^x f(X(\tau))\le  f(x)+\liminf_{m\to\infty} \ee^x \int_0^{\tau^m} \cL f(X(s)) \dd s.
\end{equation}
\end{lemma}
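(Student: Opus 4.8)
The plan is to evaluate the almost sure identity~\eqref{eq:Ito-with-generator} at the stopping time $\tau^m=\tau\wedge\eta_m$, take $\ee^x$, and show that the martingale contribution disappears. First I observe that, $\pp^x$-almost surely, both sides of~\eqref{eq:Ito-with-generator} are continuous functions of $t$ (continuity of the path $s\mapsto X(s)$ and of $f$, continuity of $t\mapsto\int_0^t\cL f(X(s))\,\dd s$, and continuity of $M$), so the identity holds for all $t\ge 0$ simultaneously and may be specialized to the random time $t=\tau^m(\omega)$:
\[
f(X(\tau^m)) = f(x) + \int_0^{\tau^m}\cL f(X(s))\,\dd s + M(\tau^m).
\]

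Next I verify that the three terms on the right are $\pp^x$-integrable, using properties (iii)--(v) of the localizing sequence. By~(iv), $X(s)$ stays in the precompact set $\bU_m$ for all $s\le\eta_m$, hence for $s\le\tau^m$; since $\cL f\in\BB$ is bounded above on $\bU_m$ and $\tau^m\le\eta_m\le m$ by~(iii), the term $\int_0^{\tau^m}(\cL f(X(s)))^+\,\dd s$ is bounded, and, $f$ being continuous (hence bounded on the compact set $\overline{\bU_m}$ under the usual choice of exhaustion), so is $f(X(\tau^m))$; by~(v), $|M(\tau^m)|\le m$. Hence all terms are integrable and I may take expectations termwise. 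To kill the martingale term, set $M^{\eta_m}(t)=M(\eta_m\wedge t)$, which by~(ii) is a genuine martingale with $M^{\eta_m}(0)=M(0)=0$ (evaluate~\eqref{eq:Ito-with-generator} at $t=0$ and use $X(0)=x$). Since $\tau\wedge m$ is a \emph{bounded} stopping time, the optional stopping theorem gives $\ee^x M^{\eta_m}(\tau\wedge m)=0$, and $M^{\eta_m}(\tau\wedge m)=M(\eta_m\wedge\tau)=M(\tau^m)$ because $\eta_m\le m$. Therefore $\ee^x M(\tau^m)=0$, which yields~\eqref{eq:Dynkin1}.

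For~\eqref{eq:Dynkin2} I pass to the limit $m\to\infty$ in~\eqref{eq:Dynkin1}. Because $\eta_m\uparrow\infty$, we have $\tau^m\uparrow\tau$, so on $\{\tau<\infty\}$ continuity of the path and of $f$ give $f(X(\tau^m))\to f(X(\tau))$; as $f\ge 0$, Fatou's lemma gives $\ee^x f(X(\tau))\le\liminf_{m\to\infty}\ee^x f(X(\tau^m))$, and inserting~\eqref{eq:Dynkin1} — the constant $f(x)$ passing freely through the $\liminf$ — produces~\eqref{eq:Dynkin2}. (On the event $\{\tau=\infty\}$, if it has positive probability, one reads $f(X(\tau))$ as $\liminf_m f(X(\tau^m))$, which does not affect the argument.)

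I do not anticipate a genuine obstacle: the argument is pure bookkeeping resting on the ``It\^o formula'' hypothesis~\eqref{eq:Ito-with-generator}. The one step that truly uses the structure of the setup — rather than just the bare local-martingale property of $M$ — is showing $\ee^x M(\tau^m)=0$: it is exactly properties (iii) and~(v), which make $\tau^m$ a bounded stopping time at which $M$ is bounded, that let us apply ordinary optional stopping without invoking any uniform-integrability criterion, while property~(iv) secures integrability of the remaining terms.
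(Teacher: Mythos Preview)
Your proof is correct and follows precisely the approach the paper sketches: use properties (iii)--(v) of the localizing sequence so that all terms in~\eqref{eq:Ito-with-generator} evaluated at $\tau^m$ are bounded, apply optional stopping to the bounded martingale $M^{\eta_m}$ at the bounded stopping time $\tau\wedge m$ to obtain~\eqref{eq:Dynkin1}, and then invoke Fatou's lemma for~\eqref{eq:Dynkin2}. The paper gives no further detail beyond naming ``the optional stopping theorem, and Fatou's lemma'', so your write-up is in fact more explicit than the original.
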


\begin{corollary}
    \label{cor:Dynkin}
     Suppose that $x\in\bU$ and $\Phi\in C^2(\bU)$ is nonnegative. Assume that a stopping time $\tau$ and a constant $c\in\R$ satisfy
    \[
       \pp^x\bigg\{\sup_{s\in[0,\tau)} \cL \Phi(X(s)) \le c \bigg\}=1.
    \]  
  Then, 
    \begin{equation}
        \label{eq:main_appl_of_Lyapunov}
        \ee^x \Phi(X(\tau))\le \Phi(x)+ c \ee^x \tau.
    \end{equation}
\end{corollary}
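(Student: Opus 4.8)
The plan is to obtain Corollary~\ref{cor:Dynkin} directly from the second Dynkin inequality~\eqref{eq:Dynkin2} in Lemma~\ref{lem:Dynkin}, which applies since $\Phi$ is nonnegative. Writing $\tau^m=\tau\wedge\eta_m$ as in that lemma, this inequality reads
\[
    \ee^x \Phi(X(\tau)) \le \Phi(x) + \liminf_{m\to\infty}\ee^x\int_0^{\tau^m}\cL\Phi(X(s))\,\dd s,
\]
so the whole task reduces to bounding the $\liminf$ on the right-hand side by $c\,\ee^x\tau$.

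First I would insert the hypothesis: on the $\pp^x$-full-measure event $\{\sup_{s\in[0,\tau)}\cL\Phi(X(s))\le c\}$, and because $\tau^m\le\tau$, we have $\cL\Phi(X(s))\le c$ for every $s<\tau^m$, hence the pathwise bound $\int_0^{\tau^m}\cL\Phi(X(s))\,\dd s\le c\,\tau^m$. Taking $\ee^x$ gives $\ee^x\int_0^{\tau^m}\cL\Phi(X(s))\,\dd s\le c\,\ee^x\tau^m$. Then I would let $m\to\infty$: since $\eta_m\uparrow+\infty$, we have $\tau^m\uparrow\tau$ almost surely, so by monotone convergence $\ee^x\tau^m\uparrow\ee^x\tau$ in $[0,\infty]$ and therefore $c\,\ee^x\tau^m\to c\,\ee^x\tau$ (monotonically, with the usual conventions in $[-\infty,\infty]$). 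Combining these, $\liminf_{m\to\infty}\ee^x\int_0^{\tau^m}\cL\Phi(X(s))\,\dd s\le c\,\ee^x\tau$, which is exactly~\eqref{eq:main_appl_of_Lyapunov}.

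The argument is essentially bookkeeping of Fatou/monotone-convergence limits layered on top of Lemma~\ref{lem:Dynkin}, so I do not expect a genuine obstacle; the only point deserving a word of care is the degenerate case $c<0$ with $\ee^x\tau=+\infty$, where the claimed bound would read $\ee^x\Phi(X(\tau))\le-\infty$. I would dispatch this by noting it cannot occur: the exact Dynkin identity~\eqref{eq:Dynkin1} yields $0\le\ee^x\Phi(X(\tau^m))=\Phi(x)+\ee^x\int_0^{\tau^m}\cL\Phi(X(s))\,\dd s\le\Phi(x)+c\,\ee^x\tau^m$, so $\ee^x\tau^m\le\Phi(x)/|c|$ for all $m$ and hence $\ee^x\tau<\infty$ by monotone convergence, keeping every quantity finite.
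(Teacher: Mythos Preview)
Your argument is correct and is precisely the derivation the paper has in mind: the corollary is stated without proof as an immediate consequence of Lemma~\ref{lem:Dynkin}, and your use of~\eqref{eq:Dynkin2} together with monotone convergence for $\tau^m\uparrow\tau$ is the standard way to obtain it. Your handling of the degenerate case $c<0$, $\ee^x\tau=+\infty$ via~\eqref{eq:Dynkin1} is a nice explicit touch that the paper leaves implicit.
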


If one assumes that~\eqref{eq:typical-Lyap-decay} holds for ll $x\in\Rset^\complement$, this corollary with $c=-1$ and $\tau=\hit_R$ directly implies~\eqref{eq:typical-Lyap-conclu}. Our situation is more complex and requires further analysis.

\begin{lemma}
\label{lem:zeroth-transit}
    Under assumptions \eqref{it:usual-Lyap-req}--\eqref{it:hit-Si-R}, the inequality
    \begin{align*}
        \ee^x \Phi (X(\hit_{\Ssetin \cup \Rset})) \leq \Phi (x) + KT
    \end{align*}
    holds for every $x \notin \Ssetin$.
\end{lemma}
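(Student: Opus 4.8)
The plan is to reduce the whole statement to the elementary consequence of Dynkin's formula recorded in Corollary~\ref{cor:Dynkin}, after making one structural observation: since $\Rset^\complement = \Ssetout \cup \Qset$, assumptions~\eqref{it:usual-Lyap-req} and~\eqref{it:Lyap-on-Q} together give the \emph{uniform} bound $(\cL\Phi)(x) \le K$ for every $x \notin \Rset$, while on $\Ssetout$ one has the stronger $(\cL\Phi)(x) \le -1$. Since the path $X(s)$ stays in $\Rset^\complement$ (where $\Phi$ is defined and $C^2$) for all $s < \hit_{\Ssetin\cup\Rset}$, these two bounds are exactly what is needed to feed Corollary~\ref{cor:Dynkin}. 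I would split the argument according to where $x$ lies; the case $x\in\Rset$ is trivial (then $\hit_{\Ssetin\cup\Rset}=0$), so the content is in $x\in\Qset$ and $x\in\Ssetout\setminus\Ssetin$.

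First I would handle $x\in\Qset$. Here $\cL\Phi(X(s)) \le K$ for all $s < \hit_{\Ssetin\cup\Rset}$, and $\ee^x\hit_{\Ssetin\cup\Rset}\le T$ by~\eqref{it:hit-Si-R}, so Corollary~\ref{cor:Dynkin} with $c=K$ and $\tau=\hit_{\Ssetin\cup\Rset}$ yields directly
\[
    \ee^x\Phi(X(\hit_{\Ssetin\cup\Rset})) \le \Phi(x) + K\,\ee^x\hit_{\Ssetin\cup\Rset} \le \Phi(x) + KT .
\]
Next, for $x\in D:=\Ssetout\setminus(\Ssetin\cup\Rset)$, let $\sigma=\hit_{D^\complement}$, and note that $D^\complement=\Qset\cup\Ssetin\cup\Rset$ and that $X(s)\in D\subset\Ssetout$ for $s<\sigma$. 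Applying the localized Dynkin formula of Lemma~\ref{lem:Dynkin} with $\tau^m=\sigma\wedge\eta_m$ and $c=-1$ and using $\Phi\ge 1$ gives $\ee^x(\sigma\wedge\eta_m)\le\Phi(x)$, hence $\ee^x\sigma\le\Phi(x)<\infty$ and in particular $\sigma<\infty$ a.s.; then Corollary~\ref{cor:Dynkin} (or Fatou via~\eqref{eq:Dynkin2}) gives $\ee^x\Phi(X(\sigma))\le\Phi(x)-\ee^x\sigma\le\Phi(x)$. Because the path is in $D$ (so off $\Ssetin\cup\Rset$) before $\sigma$, on $\{X(\sigma)\in\Ssetin\cup\Rset\}$ we have $\hit_{\Ssetin\cup\Rset}=\sigma$, while on $\{X(\sigma)\in\Qset\}$ the strong Markov property at $\sigma$ together with the already-proven $\Qset$-case gives $\ee^x[\Phi(X(\hit_{\Ssetin\cup\Rset}))\mid\cF_\sigma] \le \Phi(X(\sigma)) + KT$ on that event. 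Summing the two contributions,
\[
    \ee^x\Phi(X(\hit_{\Ssetin\cup\Rset})) \le \ee^x\Phi(X(\sigma)) + KT\,\pp^x\{X(\sigma)\in\Qset\} \le \Phi(x) + KT ,
\]
which completes all cases.

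The step requiring the most care — the one I would expect a referee to scrutinize — is the strong Markov patching at the exit time $\sigma$ from $\Ssetout\setminus(\Ssetin\cup\Rset)$: one must verify that $\sigma$ is a.s.\ finite (this is precisely where $\cL\Phi\le-1$ on $\Ssetout$ and $\Phi\ge1$ are used, through the localizing sequence $(\eta_m)$ and Fatou in Lemma~\ref{lem:Dynkin}), that $\hit_{\Ssetin\cup\Rset}$ genuinely ``restarts'' at $X(\sigma)$ because the path has not yet visited $\Ssetin\cup\Rset$, and that $X(\sigma)$ lands only in $\Qset$ or in $\Ssetin\cup\Rset$ (using $D^\complement=\Qset\cup\Ssetin\cup\Rset$). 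A minor bookkeeping point is that $\Phi$ is only given as $C^2$ on $\Ssetout\cup\Qset$; since every stopping time used here is dominated by $\hit_\Rset$ and the evaluation points $X(\sigma),X(\hit_{\Ssetin\cup\Rset})$ lie in $\Ssetin\cup\Rset\cup\Qset$, one may freely apply Corollary~\ref{cor:Dynkin} as stated, if needed after a harmless $C^2$ extension of $\Phi$ near the relevant trajectory, which does not affect any of the inequalities because the path avoids $\Rset$ strictly before the stopping time.
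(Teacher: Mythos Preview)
Your proof is correct and follows essentially the same route as the paper: first treat $x\in\Qset$ (the paper's ``$x\notin\Ssetout$'') via Corollary~\ref{cor:Dynkin} with $c=K$ and assumption~\eqref{it:hit-Si-R}, then for $x\in\Ssetout\setminus\Ssetin$ apply Corollary~\ref{cor:Dynkin} with $c=-1$ up to the exit time from $\Ssetout\setminus(\Ssetin\cup\Rset)$ (which coincides with the paper's $\hit_{\Ssetin\cup\Rset}\wedge\hit_{\Ssetout^\complement}$) and patch with the strong Markov property. Your explicit verification that $\sigma<\infty$ a.s.\ and your remark on the $C^2$ extension of $\Phi$ are points the paper leaves implicit, but otherwise the arguments match.
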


\begin{proof}
    If $x \notin \Ssetout$, then 
    Corollary~\ref{cor:Dynkin} with $c=K$ and assumptions~\eqref{it:Lyap-on-Q},\eqref{it:hit-Si-R} 
    imply
    \begin{equation}
        \label{eq:outside_So}
        \ee^x \Phi (X(\hit_{{\Ssetin \cup \Rset}})) 
        \le \Phi (x) + KT.
    \end{equation}
    If $x \in \Ssetout \setminus \Ssetin$, then, similarly,  Corollary~\ref{cor:Dynkin} with $c=-1$ and assumption~\eqref{it:usual-Lyap-req}  yield
    \begin{align*}
        \ee^x \Phi (X(\hit_{\Ssetin \cup \Rset} \wedge \hit_{\Ssetout^\complement})) 
        \le \Phi (x).
    \end{align*}
    Combining this with~\eqref{eq:outside_So} and the strong Markov property, we complete the proof.
\end{proof}

To prove Proposition~\ref{prop:YM12}, we define  $\tau_0 = 0$ and, recursively, for $n\in\N$,
\begin{align}
\label{eq:taus}
    \tau_{n+1} &= 
        \begin{cases}
            \inf\Big\{t \geq \tau_{n} : X(t) \in \Ssetin \cup \Rset \text{ and}\  X(t') \notin \Ssetout  \text{ for some}\  t' \in [\tau_{n},t] \Big\} 
               &\text{if } X(\tau_n) \in \Ssetin \setminus \Rset, \\
            \inf\big\{t \geq \tau_{n} : X(t) \in \Ssetin \cup \Rset\big\} 
                &\text{otherwise.}
        \end{cases}
\end{align}
By definition, 
\begin{equation}
\label{eq:hitting-triviality}
    \hit_R \leq \tau_\nu,
\end{equation}
where 
\begin{equation}
\label{eq:first-visit-to-R}
    \nu= \inf\{n \geq 0 : X(\cT_n) \in \Rset\}.
\end{equation}

\begin{lemma}
\label{lem:pre-YM12-b}
    Under the assumptions of Proposition~\ref{prop:YM12},   
    \begin{align*}
            \ee^x {\tau_{1}} \leq \Phi (x) + T,\quad x \notin \Ssetin.
    \end{align*}
\end{lemma}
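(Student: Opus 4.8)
The plan is to first unwind the definition \eqref{eq:taus}: when $x\notin\Ssetin$ we have $X(\tau_0)=x\notin\Ssetin\setminus\Rset$, so the recursion starts in its second (``otherwise'') branch and $\tau_1=\hit_{\Ssetin\cup\Rset}$. The statement thus reduces to proving $\ee^x\hit_{\Ssetin\cup\Rset}\le\Phi(x)+T$. Two cases are then immediate: if $x\in\Rset$ then $\hit_{\Ssetin\cup\Rset}=0$; and if $x\in\Qset$, assumption~\eqref{it:hit-Si-R} already gives $\ee^x\hit_{\Ssetin\cup\Rset}\le T\le\Phi(x)+T$, using $\Phi\ge1$. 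So the only case needing work is $x\in\Ssetout\setminus(\Ssetin\cup\Rset)$, and the rest of the argument addresses it.

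For such $x$ I would introduce the stopping time $\sigma=\hit_{\Ssetin\cup\Rset}\wedge\hit_{\Ssetout^\complement}$, i.e.\ the first exit from the open region $\Ssetout\setminus(\Ssetin\cup\Rset)$. Since $X(t)\in\Ssetout$ for all $t<\sigma$, assumption~\eqref{it:usual-Lyap-req} gives $\cL\Phi(X(t))\le-1$ on $[0,\sigma)$, so Corollary~\ref{cor:Dynkin} (with $c=-1$ and $\tau=\sigma$) yields
\[
0\le\ee^x\Phi(X(\sigma))\le\Phi(x)-\ee^x\sigma,
\]
whence $\ee^x\sigma\le\Phi(x)$ (and in particular $\sigma<\infty$ $\pp^x$-a.s.). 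The only mild technicality here is ensuring $\Phi(X(\sigma))\ge 0$ is a meaningful quantity: the process stays in the domain $\Ssetout$ of $\Phi$ up to time $\sigma$, and by path-continuity $X(\sigma)$ lies in $\Ssetin\cup\Qset$ on the relevant events, so the value is legitimate (alternatively, one simply extends $\Phi$ to a nonnegative $C^2$ function on $\bU$).

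It remains to estimate the overshoot $\hit_{\Ssetin\cup\Rset}-\sigma$. On $\{\sigma=\hit_{\Ssetin\cup\Rset}\}$ it vanishes. On the complement $\{\sigma<\hit_{\Ssetin\cup\Rset}\}$ we must have $\sigma=\hit_{\Ssetout^\complement}$, so $X(\sigma)\in\Ssetout^\complement$; moreover $X(\sigma)\notin\Rset$, because $X(\sigma)\in\Rset$ would force $\hit_{\Ssetin\cup\Rset}\le\sigma$, a contradiction. Hence $X(\sigma)\in\Ssetout^\complement\setminus\Rset=\Qset$ on this event, and the strong Markov property together with assumption~\eqref{it:hit-Si-R} gives $\ee^x\big[(\hit_{\Ssetin\cup\Rset}-\sigma)\,\one_{\{\sigma<\hit_{\Ssetin\cup\Rset}\}}\big]=\ee^x\big[\one_{\{\sigma<\hit_{\Ssetin\cup\Rset}\}}\,\ee^{X(\sigma)}\hit_{\Ssetin\cup\Rset}\big]\le T$. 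Adding this to $\ee^x\sigma\le\Phi(x)$ gives $\ee^x\hit_{\Ssetin\cup\Rset}\le\Phi(x)+T$, as claimed.

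I do not anticipate a real obstacle: this is a one-step ``peeling'' argument, and the only thing to be careful about is the bookkeeping that makes the hypotheses fit together — note that only \eqref{it:usual-Lyap-req} (Lyapunov decay inside $\Ssetout$ up to $\sigma$) and \eqref{it:hit-Si-R} (absorbing the post-exit excursion through $\Qset$) are used here, while \eqref{it:Lyap-on-Q} and \eqref{it:exit-So} will only enter when this lemma is iterated over the sequence $(\tau_n)$ to reach \eqref{eq:YM-goal}. The small technical nuisance is the well-definedness of $\Phi(X(\sigma))$ when $X(\sigma)$ sits on $\partial\Ssetout$, which is dispatched by the continuity/containment remark above.
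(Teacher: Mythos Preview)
Your proof is correct and follows essentially the same route as the paper's: your stopping time $\sigma$ coincides with the paper's $\tau=\inf\{t\ge0:X(t)\in\Qset\cup\Rset\cup\Ssetin\}$ (both are first exits from $\Ssetout\setminus(\Ssetin\cup\Rset)$), and both arguments use Corollary~\ref{cor:Dynkin} with $c=-1$ to bound $\ee^x\sigma\le\Phi(x)$ and then the strong Markov property together with assumption~\eqref{it:hit-Si-R} to control the overshoot by $T$. The only difference is that you spell out the trivial cases $x\in\Rset$ and $x\in\Qset$ explicitly, whereas the paper absorbs them into a single formulation where $\tau=0$.
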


\begin{proof}
    Let
    $  
        \tau  = \inf\{t \geq 0 : X(t) \in \Qset \cup \Rset \cup \Ssetin \}
    $.
    Using Corollary~\ref{cor:Dynkin} with $c=-1$ and the fact that $\ee^x \Phi(X(\tau)) \geq 0$, we obtain $\ee^x \tau \leq \Phi(x)$. Our claim now follows from $\ee^x[\tau_1 - \tau] \leq T$ which can be derived
    from assumption~(c) and the strong Markov property applied on $\{\tau_1 - \tau> 0\}$.
\end{proof}

\begin{lemma}
\label{lem:pre-YM12}
    Under the assumptions of Proposition~\ref{prop:YM12},   for all~$x \in \Ssetin$,
    \begin{align}
        \label{eq:pre-YM12}
            \ee^x \Phi (X({\tau_{1}})) \leq \Phi (x) - \frac 12 \ee^x \tau_{1}.
    \end{align}
\end{lemma}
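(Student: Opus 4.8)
The plan is to split the interval $[0,\tau_1)$ at the first exit time from $\Ssetout$, and to play the forced decay of $\Phi$ inside $\Ssetout$ against the at-most-linear growth it may undergo after leaving $\Ssetout$. First dispose of the degenerate case $x\in\Rset\cap\Ssetin$, in which the second branch of~\eqref{eq:taus} gives $\tau_1=0$ and the inequality is an equality; so assume $x\in\Ssetin\setminus\Rset$ and set $\sigma=\hit_{\Ssetout^\complement}$. Since $X(0)=x\in\Ssetin\setminus\Rset$, the first branch of~\eqref{eq:taus} applies with $\tau_0=0$, and the excursion condition ``$X(t')\notin\Ssetout$ for some $t'\in[0,t]$'' is equivalent to $t\ge\sigma$ (because, by definition of $\sigma$, $X$ remains in the open set $\Ssetout$ on $[0,\sigma)$ while $X(\sigma)\notin\Ssetout$, using $\Ssetin\subset\Ssetout$ so that $\sigma>0$). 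Hence $\tau_1$ equals $\sigma$ plus the hitting time of $\Ssetin\cup\Rset$ for the process restarted at time $\sigma$; moreover $X$ stays in $\Ssetout$ on $[0,\sigma)$, and on $[\sigma,\tau_1)$ it stays in $(\Ssetin\cup\Rset)^\complement$, hence — inside the domain of $\Phi$ — in a region where $\cL\Phi\le K$ by hypotheses~\eqref{it:usual-Lyap-req} and~\eqref{it:Lyap-on-Q} together with $K\ge1$.

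The two main estimates then come from the tools already in place. On $[0,\sigma)$ we have $\cL\Phi(X(s))\le-1$ by~\eqref{it:usual-Lyap-req}, so Corollary~\ref{cor:Dynkin} with $c=-1$ gives $\ee^x\Phi(X(\sigma))\le\Phi(x)-\ee^x\sigma$; since $\Phi\ge1$ this also shows $\ee^x\sigma\le\Phi(x)-1<\infty$. From $X(\sigma)\in\Ssetout^\complement\subset\Ssetin^\complement$, Lemma~\ref{lem:zeroth-transit} gives $\ee^{X(\sigma)}\Phi(X(\hit_{\Ssetin\cup\Rset}))\le\Phi(X(\sigma))+KT$. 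Feeding the second estimate into the first through the strong Markov property at $\sigma$ yields
\[
\ee^x\Phi(X(\tau_1))\le\ee^x\Phi(X(\sigma))+KT\le\Phi(x)-\ee^x\sigma+KT .
\]
A parallel use of the strong Markov property and hypothesis~\eqref{it:hit-Si-R} bounds the post-$\sigma$ duration, $\ee^x[\tau_1-\sigma]=\ee^x\big[\ee^{X(\sigma)}\hit_{\Ssetin\cup\Rset}\big]\le T$, so $\ee^x\tau_1\le\ee^x\sigma+T<\infty$.

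It remains to turn $-\ee^x\sigma+KT$ into $-\tfrac12\ee^x\tau_1$, and this is precisely where hypothesis~\eqref{it:exit-So} is used in its sharp form $\ee^x\sigma=\ee^x\hit_{\Ssetout^\complement}\ge T(2K+1)$. Writing $\ee^x\sigma=\tfrac12\ee^x\sigma+\tfrac12\ee^x\sigma$, bounding the first copy below by $\ee^x\tau_1-T$ and the second below by $T(2K+1)$, one obtains $\ee^x\sigma\ge\tfrac12\ee^x\tau_1+KT$, hence $-\ee^x\sigma+KT\le-\tfrac12\ee^x\tau_1$, which combined with the displayed inequality gives the claim. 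The only genuinely delicate point I foresee is bookkeeping: justifying the strong Markov splitting of $\tau_1$ and of the occupation time of $\Ssetout$, keeping track of the case $X(\sigma)\in\Rset$ (where $\tau_1=\sigma$ and the intermediate inequalities degenerate harmlessly), and observing that the constant $2K+1$ in~\eqref{it:exit-So} — rather than mere positivity of $\ee^x\sigma$ — is exactly what absorbs both the $KT$ coming from the possible growth of $\Phi$ outside $\Ssetout$ and the extra $T$ coming from the return time to $\Ssetin\cup\Rset$.
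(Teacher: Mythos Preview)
Your proof is correct and follows essentially the same route as the paper's: split at $\sigma=\hit_{\Ssetout^\complement}$, use assumption~\eqref{it:usual-Lyap-req} via Corollary~\ref{cor:Dynkin} on $[0,\sigma)$ to get $\ee^x\Phi(X(\sigma))\le\Phi(x)-\ee^x\sigma$, bound the post-$\sigma$ increment of $\Phi$ by $KT$, and then combine assumptions~\eqref{it:hit-Si-R} and~\eqref{it:exit-So} to convert $-\ee^x\sigma+KT$ into $-\tfrac12\ee^x\tau_1$. The only cosmetic differences are that you invoke Lemma~\ref{lem:zeroth-transit} for the $KT$ bound whereas the paper re-derives it inline (keeping the sharper $K\ee^x E_2(X(\sigma))$ until the last step), and your final arithmetic splits $\ee^x\sigma$ as $\tfrac12\ee^x\sigma+\tfrac12\ee^x\sigma$ while the paper rearranges the same inequality differently; both yield the identical cancellation $-\tfrac12 T(2K+1)+(K+\tfrac12)T=0$.
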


\begin{proof}
    If $x \in \Rset \cap \Ssetin$, then the definition~\eqref{eq:taus} specifies that $\tau_1=0$, and~\eqref{eq:pre-YM12} trivially holds. From now on we consider a point $x \in \Ssetin \setminus R$.
    Let     $\theta_1 = \hit_{\Ssetout^\complement}$ and $\theta_2=\hit_{\Ssetin \cup \Rset}$.
    Denoting $E_1(x)=\ee^x \theta_1$ and $E_2(y)=\ee^y\theta_2$,  $\Psi(y)= \ee^{y}\Phi(X(\theta_2))$
    for $y\in \Ssetout^\complement$,
    we obtain 
    \begin{align}
        \label{eq:E_Phi_strong_Markov}
        \ee^x \Phi(X(\tau_1)) &= \ee^x \Psi(X(\theta_1)),
    \\
    \ee^x \tau_1 &= E_1(x)+ \ee^x E_2(X(\theta_1)). 
    \label{eq:strong_Markov_for_Etau1}
    \end{align}    
    Corollary~\ref{cor:Dynkin} and assumption~\eqref{it:usual-Lyap-req} imply 
        \begin{equation}
            \label{eq:bound_on_Phi_X_theta_1}
            \ee^{x}[\Phi(X(\theta_1))] \leq \Phi(x) - E_1(x). 
        \end{equation}
        In particular, $E_1(x)< \infty$.
    Corollary~\ref{cor:Dynkin} and assumption~\eqref{it:Lyap-on-Q} imply
    \begin{equation}
        \label{eq:bound_on_Psi_y}
        \Psi(y)\leq \Phi(y) + KE_2(y),\quad y \in \Ssetout^\complement,
    \end{equation}where $E_2(y) \leq T$ according to assumption~\eqref{it:hit-Si-R}.
    Therefore all the expectations in the following chain of inequalities are finite:
    \begin{align*}
        \ee^x \Psi(X(\theta_1))
        &\leq \ee^x [\Phi(X(\theta_1)) + KE_2(X(\theta_1))] \\
        &\leq \Phi(x) - E_1(x) + K\ee^x E_2(X(\theta_1)),\\
        &\le   \Phi(x) - \tfrac 12 (E_1(x) + \ee^x E_2(x)) - \tfrac 12 E_1(x)+ (K+\tfrac 12)\ee^x E_2(X(\theta_1)),
        \\ 
        &\le  \Phi(x) - \tfrac 12 (E_1(x) + \ee^x E_2(x)) 
        \\
        &\le \Phi(x)-\tfrac{1}{2}\ee^x \tau_1.
    \end{align*}
    We used~\eqref{eq:bound_on_Psi_y} in the first inequality, \eqref{eq:bound_on_Phi_X_theta_1} in the second one.   The third line is just a rearrangement, the fourth one  follows from  assumptions~\eqref{it:hit-Si-R} and~\eqref{it:exit-So}, the fifth one follows from \eqref{eq:strong_Markov_for_Etau1}. Using \eqref{eq:E_Phi_strong_Markov}, we complete the proof of the lemma.
\end{proof}

\begin{proof}[Proof of Theorem~\ref{prop:YM12}]
    We denote $H(x)=\ee^x \cT_\nu$.
    We obviously have
    \begin{equation}
        \label{eq:if_x_in_R}
        H(x)=0,\quad x\in R,
    \end{equation}    
    so we only need to focus on the case $x\notin\Rset$.
    First note that, in view of~\eqref{eq:taus}--\eqref{eq:first-visit-to-R},  it suffices to show $H(x) \leq 3\Phi (x) + 2KT$.
    The latter follows from 
    \begin{equation}
    \label{eq:sigma-bound-R}
        H(x) \leq 2\Phi(x), \quad x \in \Ssetin.
    \end{equation}
    To see this, consider $x \notin \Ssetin$ and recall that, by Lemma~\ref{lem:pre-YM12-b}, we have $\ee^x \cT_1 \leq \Phi(x)$. Hence, using the strong Markov property, we have 
    \begin{align*}
        H(x)
            &= \ee^x\cT_1 + \ee^x[\cT_\nu - \cT_1] 
            \leq \Phi(x) + \ee^x H(X(\cT_1)),
    \end{align*}
    Now, since $X(\cT_1) \in \Ssetin \cup \Rset$ by definition, \eqref{eq:if_x_in_R},  \eqref{eq:sigma-bound-R}, and Lemma~\ref{lem:zeroth-transit} imply
    \begin{align*}
        H(x)
            &\leq \Phi(x) + \ee^x[2\Phi(X(\cT_1))]
            \leq \Phi(x) + 2(\Phi (x) + KT) 
            = 3\Phi(x) + 2KT.
    \end{align*}

    Let us now prove that~\eqref{eq:sigma-bound-R} holds following the strategy of~\cite[Section~{V.A}]{YM12}.
    We set
    \begin{align*}
        M_0 &= \Phi (x) = \Phi (X(\cT_0)),\\   
        M_{n} &= \Phi (X(\cT_{n})) + \frac 12 \sum_{n' = 0}^{n-1}\ee^x [\cT_{n'+1} - \cT_{n'}|\cF_{\cT_{n'}}],
        \quad  n\in\nn.
    \end{align*}
    Lemma~\ref{lem:pre-YM12} and the strong Markov property imply the following supermartingale property:
    \begin{align*}
        \ee^x[M_{n+1}|\cF_{\tau_n}] \leq M_n.
    \end{align*}
    Now, for all $m\in\nn$, we define $\nu^m  = \nu \wedge m$, where $\nu$ has been defined in~\eqref{eq:first-visit-to-R}. Then,
    \begin{align*}
        \ee^x \cT_{\nu^m} 
            = \ee^x  
                \sum_{n=0}^{\nu^m - 1}\ee^x[\cT_{n+1} - \cT_{n}|\cF_{\cT_{n}}]  
            = 
            2\ee^x\left[ M_{\nu^m} - \Phi (X(\cT_{\nu^m}))\right] \leq 2\ee M_{\nu^m} 
            \leq 2M_0=2\Phi(x).
    \end{align*}
    We have used $\Phi\ge 0$ in the first inequality and Doob's optional stopping theorem in the second one. Taking $m\to\infty$ and using Lebesgue monotone convergence, we deduce~\eqref{eq:sigma-bound-R}, as desired.
\end{proof}

\section{Completing the proof of Theorem~\ref{thm:main2d}}
\label{sec:proofs-Lyap}

\subsection{A Case study} 
\label{ssec:conclu-main-attr}

For concreteness, let us first consider the following situation, which falls within case~\ref{it:attractor-case}: $\Or^0 \in \Attr$ is the only attracting vertex, $\Or^1$ and~$\Or^3$ are positively oriented saddles and~$\Or^2$ is a source; $\Ed^1 \in \Supt \cap \Attr$ is the only attracting edge and $\Ed^2 \in \Supt \setminus \Attr$ is the only other edge on which an invariant probability measure is concentrated.
In such a situation, our claim is that, starting at $x\in\bXo$, almost every trajectory eventually converges to either~$\Or^0$ or $\Ed^1$, with a positive probability for both: 
\begin{equation}
\label{eq:particular-case-1-preconclu}
    p^x_{\Or^0}>0
    \qquad\text{ and }\qquad
    p^x_{\Ed^1}>0,
\end{equation}
and
\begin{equation}
\label{eq:particular-case-1-conclu}
    p^x_{\Or^0} + p^x_{\Ed^1} = 1.
\end{equation}

Recall that, by Lemmas~\ref{lem:corners-attract} and~\ref{lem:edge-abrsob-prob}, we already have
\begin{align}
\label{eq:local-abs-prob}
    \inf_{x \in \overline{U_r^0}} p^x_{\Or^0} > 0
    \qquad\text{ and }\qquad 
    \inf_{x \in \overline{G_r^1}} p^x_{\Ed^1} > 0
\end{align}
for all $r>0$ small enough; see Figure~\ref{fig:basic-split-attr} for a visual reminder of the shapes of the sets~$\overline{U_r^0}=\overline{U_{r,r}^0}$ and~$\overline{G_r^1}=\overline{G_{r,r}^1}$ used throughout Sections~\ref{sec:local-attr} and~\ref{sec:main-heteroc-proof}. 
Assumption~\ref{req:irreducibility}, the Markov property and~\eqref{eq:local-abs-prob} readily ensure that~\eqref{eq:particular-case-1-preconclu} holds for all $x \in \bXo$, and we will momentarily see how adding a large enough central compact to the closures of~${U_r^0}$ and ${G_r^1}$ yields a set~$\Rset$ such that 
\begin{equation}
\label{eq:uniform-abs-p}
    \inf_{x \in R} \left(p^x_{\Or^0} + p^x_{\Ed^1} \right) > 0.
\end{equation}
It is this set~$\Rset$ for which we want to establish the recurrence property in Theorem~\ref{prop:YM12}, making it straightforward to obtain~\eqref{eq:particular-case-1-conclu} from~\eqref{eq:uniform-abs-p}. The idea is to construct the nested sets $\Ssetin \subsetneq \Ssetout$ near $\Or^1, \Or^2, \Or^3$ and $\Ed^2$.
    Because the drift and diffusion are well controlled, nesting the inner set deeply enough into the outer set gives a lower bound as in condition~(\ref{it:exit-So}).
     Because $\Or^1, \Or^2, \Or^3$ and $\Ed^2$ are not attracting, we can construct, on each connected component of~$\Ssetout$, a positive Lyapunov function as in condition~(\ref{it:usual-Lyap-req}).
The set $\Qset$ will then consist of what is left out of $\Ssetout$ and $\Rset$. 
    Because the drift is consistent enough, we can give an upper bound on exit times from $\Qset$, which is an important step towards condition~(\ref{it:hit-Si-R}) if $\Ssetin$ is not too deeply nested into~$\Ssetout$.
    We will want to arrange things so that there is sufficient room for us to extend the different local definitions of Lyapunov functions in components of~$\Ssetout$ in such a way that condition~(\ref{it:Lyap-on-Q}) holds.

\medskip

Now, let us fix a value of~$r > 0$ as in~\eqref{eq:local-abs-prob}, with the additional constraint that the smallness of~$r$ warrants applications of Corollaries~\ref{cor:Lyap-corner-pre} and~\ref{cor:mean-H-process} (and all their rotated counterparts). 
Consider the set $(F^*_{r'})^\complement=\bXo\setminus F^*_{r'}$ obtained by removing strips of small width $r'>0$ about each edge from~$\bXo$. 
Since it is a compact subset of $\bXo$, 
Lemma~\ref{lem:uniform-irred} yields that, for every $r'\in(0,\tfrac 12)$, there exists $T \geq 0$ such that 
\begin{equation}
\label{eq:intermediate-trans}
    \inf_{x \in (F^*_{r'})^\complement} \pp^x \left\{ \hit_{{U_r^0} \cup {G_r^1}} \leq T \right\} > 0.
\end{equation}
Then, yet another standard argument using the strong Markov property shows that, indeed, for every $r'\in(0,\tfrac 12)$, the lower bounds~\eqref{eq:local-abs-prob} and~\eqref{eq:intermediate-trans} imply the existence of $\delta \in (0,1]$ such that 
\begin{equation}
\label{eq:conv-lb-from-Rset}
    x \in \Rset := \overline{U_r^0} \cup \overline{G_r^1} \cup (F^*_{r'})^\complement \qquad\Rightarrow\qquad p^x_{\Or^0} + p^x_{\Ed^1} \geq \delta.
\end{equation}

\begin{figure}
    \centering
    \footnotesize
    \begin{subcaptionblock}{.49\textwidth}
        \centering
        \begin{tikzpicture}
            \begin{axis}[
                xmin=0, xmax=1,
                xtick=\empty,
                ymin=0, ymax=1,
                ytick=\empty,
                clip=false,
                unit vector ratio=1 1,
                ]
        
                \fill[cpatternone] (0,0) rectangle (\err,\err);
                    \node[anchor=north west] at (\err,\err) {$\overline{U^0_r}$};
                \fill[cpatternone] (1-\err,\err) rectangle (1,1-\err);
                    \node[anchor=east] at (1-\err,.5) {$\overline{G^1_r}$};
        
                \draw[->] (.5,0) -- (\near,0);
                \draw[->] (0,.5) -- (0,\near);
                \draw[->] (0,.5) -- (0,1-\near);
                \draw[->] (.5,0) -- (1-\near,0);
        
                \draw[->] (1,1) -- (1,1-\near);
                \draw[->] (1,1) -- (1-\near,1);
                \draw[->] (0,1) -- (\near,1);
                \draw[->] (1,0) -- (1,\near);
        
                \node[below] at (.5,1) {$\downarrow\downarrow$};
                \node[left] at (1,.5) {\rotatebox{90}{$\downarrow\downarrow$}};

                \node[below] at (0.2*\err,1) {$F^*_{r'}$};
                \draw[dashed] (1.05*\err*\errplus,1.05*\err*\errplus) rectangle (1-1.05*\err*\errplus,1-1.05*\err*\errplus);
                \draw (0,0) rectangle (1,1);
            \end{axis}
        \end{tikzpicture}
        \caption{There are regions near attracting vertices and edges in which we have already proved positive probability of convergence to said attracting object in Lemmas~\ref{lem:corners-attract} and~\ref{lem:edge-abrsob-prob}.}
        \label{fig:basic-split-attr}
    \end{subcaptionblock}
    \hfill
    \begin{subcaptionblock}{.49\textwidth}
        \centering
        \begin{tikzpicture}
            \begin{axis}[
                xmin=0, xmax=1,
                xtick=\empty,
                ymin=0, ymax=1,
                ytick=\empty,
                clip=false,
                unit vector ratio=1 1,
                ]
        
                \fill[cpatterntwo] (1,0) rectangle (1-\err,\err);
                    \node[anchor=south west] at (1-\err,\err) {$U^1$};
                \fill[cpatterntwo] (1,1) rectangle (1-\err*\errmoins*\errmoins, 1-\err);
                    \node[anchor=north west] at (1-\err*\errmoins*\errmoins, 1-\err) {$U^2$};
                \fill[cpatterntwo] (\err*\errmoins,1) rectangle (1-\err*\errmoins, 1-\err);
                    \node[anchor=north] at (0.5, 1-\err) {$G^2$};
                \fill[cpatterntwo] (0,1) rectangle (\err*\errmoins*\errmoins,1-\err);
                    \node[anchor=north east] at (\err*\errmoins*\errmoins,1-\err) {$U^3$};
        
                \draw[->] (.5,0) -- (\near,0);
                \draw[->] (0,.5) -- (0,\near);
                \draw[->] (0,.5) -- (0,1-\near);
                \draw[->] (.5,0) -- (1-\near,0);
        
                \draw[->] (1,1) -- (1,1-\near);
                \draw[->] (1,1) -- (1-\near,1);
                \draw[->] (0,1) -- (\near,1);
                \draw[->] (1,0) -- (1,\near);
        
                \node[below] at (.5,1) {$\downarrow\downarrow$};
                \node[left] at (1,.5) {\rotatebox{90}{$\downarrow\downarrow$}};
            \end{axis}
        \end{tikzpicture}
        \caption{There are regions in which it is straightforward to construct, locally, a Lyapunov function with suitable decay using Corollaries~\ref{cor:Lyap-corner-pre} and~\ref{cor:mean-H-process}.}
        \label{fig:basic-split-non-attr}
    \end{subcaptionblock}
    \caption{Vertices and edges are separated into two categories: those that are attracting are associated with closed sets that will become part of~$\Rset$ (panel~\ref{fig:basic-split-attr}); the others, to open sets that will become part of~$\Ssetout$ (panel~\ref{fig:basic-split-non-attr}). The arrows near the vertices encode the signs of the linearization coefficients.}
\end{figure}

Building on Corollaries~\ref{cor:Lyap-corner-pre} and~\ref{cor:mean-H-process}, we start by making local definitions of the form
\begin{subequations}
\label{eq:local-def-ABC}
\begin{align}
    \Phi &= -\alpha_k \ln{}\circ x_1^k - \beta_k \ln{}\circ x_2^k 
        & \text{ on } U^k & \text{ with } \Or^k \notin \Attr, \\
    \Phi &= -\gamma_k\, \psi^k \circ x^k -\gamma_k \ln{}\circ x_2^k 
        & \text{ on } G^k & \text{ with } \Ed^k \in \Supt \setminus \Attr,
\end{align}
\end{subequations}
where $\alpha_k$, $\beta_k$ and $\gamma_k$ are scalar parameters, and~$U^k$ and~$G^k$ are open sets of the same form as was used throughout Sections~\ref{sec:local-attr} and~\ref{sec:main-heteroc-proof}; see Figure~\ref{fig:basic-split-non-attr}.
The following is a 
direct consequence of Corollaries~\ref{cor:Lyap-corner-pre} and~\ref{cor:mean-H-process}: provided that these sets are small enough, the local definitions~\eqref{eq:local-def-ABC} ensure that $\cL \Phi \leq -1$ whenever 
\begin{subequations}
\label{eq:decay-constraints-ABC}
\begin{align}
    \alpha_k\eivalh^k + \beta_k\eivalv^k &> 2 
    & \text{ for } k & \text{ with } \Or^k \notin \Attr, \\
    \gamma_k \oldlambda^k &> 2
    & \text{ for } k & \text{ with } \Ed^k \in \Supt \setminus \Attr.
\end{align}
\end{subequations}

\begin{figure}
    \centering 
    \footnotesize
    \begin{subcaptionblock}[T]{.49\textwidth}
        \centering
        \begin{tikzpicture}[scale=1]
            \footnotesize
            \begin{axis}[
                xmin=0, xmax=1,
                xtick={\err,1-\err},
                xticklabels={{},{}},
                ymin=0, ymax=1,
                ytick={\err,1-\err},
                yticklabels={{},{}},
                clip=false,
                unit vector ratio=1 1,
                ]
        
                \fill[cpatternone] (\err*\errplus*\errmoins*\errmoins,\err*\errplus*\errmoins*\errmoins) rectangle (1-\err*\errplus*\errmoins*\errmoins,1-\err*\errmoins*\errmoins*\errplus);
                \fill[cpatternone] (0,0) rectangle (\err,\err);
                \fill[cpatternone] (1-\err,\err) rectangle (1,1-\err);

                \fill[cpatterntwo] (1,0) rectangle (1-\err,\err);
                \fill[cpatterntwo] (1,1) rectangle (1-\err*\errmoins*\errmoins, 1-\err);
                \fill[cpatterntwo] (\err*\errmoins,1) rectangle (1-\err*\errmoins, 1-\err);
                \fill[cpatterntwo] (0,1) rectangle (\err*\errmoins*\errmoins,1-\err);

                \node[rotate=90,right] at (\err*\errmoins,1) {$\rmoins r$};
                \node[rotate=90,right] at (0.9*\err*\errmoins*\errmoins,1) {$\rmoins^2 r$};
                \node[rotate=90,right] at (\err*\errmoins*\errmoins*\errplus,1) {\phantom{$\rplus \rmoins^2 r$}};

                \node[right] at (1,\err) {$r$};
                \node[right] at (1,\err*\errplus*\errmoins*\errmoins) {$r'$};

                \draw[->] (.5,0) -- (\near,0);
                \draw[->] (0,.5) -- (0,\near);
                \draw[->] (0,.5) -- (0,1-\near);
                \draw[->] (.5,0) -- (1-\near,0);
        
                \draw[->] (1,1) -- (1,1-\near);
                \draw[->] (1,1) -- (1-\near,1);
                \draw[->] (0,1) -- (\near,1);
                \draw[->] (1,0) -- (1,\near);

                \draw (0,0) rectangle (1,1);
            \end{axis}
        \end{tikzpicture}
        \caption{Between adjacent edge and corner regions where we have made local definitions of the Lyapunov function, we keep a buffer region corresponding to a logarithmic increment of~$|{\ln\rmoins}|$ along the edge. These buffer regions will be part of the set~$\Qset$ and support the derivatives of transition functions.}
        \label{fig:buffers}
    \end{subcaptionblock}
    \hfill
    \begin{subcaptionblock}[T]{.49\textwidth}
        \centering
        \begin{tikzpicture}[scale=1]
            \begin{axis}[
                xmin=0, xmax=1,
                xtick={\err,1-\err},
                xticklabels={{},{}},
                ymin=0, ymax=1,
                ytick={\err,1-\err},
                yticklabels={{},{}},
                clip=false,
                unit vector ratio=1 1,
                legend pos=outer north east,legend style={xshift=0 pt}
                ]

                \addlegendimage{area legend, cpatternthree}
                \addlegendimage{area legend, cpatterntwo}
                \addlegendimage{area legend, cpatternone}
                \addlegendimage{area legend, fill=white}

                \legend{$\Ssetin$, $\Ssetout$,$\Rset$,$\Qset$}
        
                \fill[cpatternone] (\err*\errplus*\errmoins*\errmoins,\err*\errplus*\errmoins*\errmoins) rectangle (1-\err*\errplus*\errmoins*\errmoins,1-\err*\errplus*\errmoins*\errmoins);
                \fill[cpatternone] (0,0) rectangle (\err,\err);
                \fill[cpatternone] (1-\err,\err) rectangle (1,1-\err);
                
                \fill[cpatterntwo] (1,0) rectangle (1-\err,\err);
                \fill[cpatternthree] (1,0) rectangle (1-\err*\errmoins,\err*\errplus);
        
                \fill[cpatterntwo] (1,1) rectangle (1-\err*\errmoins*\errmoins, 1-\err);
                \fill[cpatternthree] (1,1) rectangle (1-\err*\errmoins*\errmoins*\errplus, 1-\err*\errplus);
        
                \fill[cpatterntwo] (\err*\errmoins,1) rectangle (1-\err*\errmoins, 1-\err);
                \fill[cpatternthree] (\err,1) rectangle (1-\err, 1-\err*\errplus);
        
                \fill[cpatterntwo] (0,1) rectangle (\err*\errmoins*\errmoins,1-\err);
                \fill[cpatternthree]  (0,1) rectangle (\err*\errmoins*\errmoins*\errplus,1-\err*\errmoins);

                \node[rotate=90,right] at (\err,1) {$r$};
                \node[rotate=90,right] at (\err*\errmoins,1) {$\rmoins r$};
                \node[rotate=90,right] at (0.9*\err*\errmoins*\errmoins,1) {$\rmoins^2 r$};
                \node[rotate=90,right] at (\err*\errmoins*\errmoins*\errplus,1) {$\rplus \rmoins^2 r$};

                \node[right] at (1,\err) {$r$};
                \node[right] at (1,\err*\errplus) {$\rplus r$};

                \draw[->] (.5,0) -- (\near,0);
                \draw[->] (0,.5) -- (0,\near);
                \draw[->] (0,.5) -- (0,1-\near);
                \draw[->] (.5,0) -- (1-\near,0);
        
                \draw[->] (1,1) -- (1,1-\near);
                \draw[->] (1,1) -- (1-\near,1);
                \draw[->] (0,1) -- (\near,1);
                \draw[->] (1,0) -- (1,\near);

                \draw[stealth-stealth, green, thick] (\err,0.3*\err*\errplus) -- (1-\err*\errmoins,0.3*\err*\errplus);

                \draw[stealth-, densely dotted, red, thick] (1-\err,0.7*\err*\errplus) -- (1-\err*\errmoins,0.7*\err*\errplus);

                \draw[stealth-,densely dotted, red, thick] (1-0.5*\err*\errplus*\errmoins*\errmoins,\err) -- (1-0.5*\err*\errplus*\errmoins*\errmoins,\err*\errplus);

                \draw[stealth-stealth, green, thick] (0.3*\err*\errplus*\errmoins*\errmoins,\err) -- (0.3*\err*\errplus*\errmoins*\errmoins,1-\err*\errmoins);

                \draw[-stealth, green, thick] (\err*\errmoins*\errmoins,1-0.3*\err*\errplus) -- (\err,1-0.3*\err*\errplus);

                \draw[stealth-, densely dotted, red, thick] (0.7*\err*\errplus*\errmoins*\errmoins,1-\err) -- (0.7*\err*\errplus*\errmoins*\errmoins,1-\err*\errmoins);
                
                \draw[stealth-,densely dotted, red, thick] (\err*\errmoins*\errmoins,1-0.5*\err*\errplus) -- (\err*\errplus*\errmoins*\errmoins,1-0.5*\err*\errplus);

                \draw[-stealth, densely dotted, red, thick] (0.5,1-\err*\errplus) -- (0.5,1-\err);

                \draw[stealth-,densely dotted, red, thick] (1-0.5*\err*\errplus*\errmoins*\errmoins,1-\err) -- (1-0.5*\err*\errplus*\errmoins*\errmoins,1-\err*\errplus);

                \draw[stealth-,densely dotted, red, thick] (1-\err*\errmoins*\errmoins,1-0.5*\err*\errplus) -- (1-\err*\errplus*\errmoins*\errmoins,1-0.5*\err*\errplus);

                \draw[-stealth, green, thick] (1-\err*\errmoins*\errmoins, 1-0.3*\err*\errplus) -- (1-\err,1-0.3*\err*\errplus);

                \draw[-stealth, red, densely dotted, thick] 
                (1-\err,1-0.7*\err*\errplus) -- (1-\err*\errmoins, 1-0.7*\err*\errplus);

                \draw[-stealth, red, densely dotted, thick] 
                (\err,1-0.7*\err*\errplus) -- (\err*\errmoins, 1-0.7*\err*\errplus);

                \draw (0,0) rectangle (1,1);
            \end{axis}
        \end{tikzpicture}
        \caption{The logarithmic increment by which $\Ssetin$ is inset into $\Ssetout$ in a given direction depends on the linearized drift: if exiting from $\Ssetout$ in that direction goes against the drift, we choose $|{\ln \rmoins}|$; if it goes along the drift, we choose $|{\ln\rplus}|$.}
        \label{fig:insets}
    \end{subcaptionblock}
    \caption{A key feature of the construction is that hitting $\Ssetin \cup \Rset$ starting in~$\Qset$ can be achieved relatively quickly, taking advantage of a moderate logarithmic increment $|{\ln\rmoins}|$ going along the drift (solid, green arrows), while exits from~$\Ssetout$ starting in~$\Ssetin$ are unlikely to be quick either because they go against the drift or because they correspond to a large logarithmic increment $|{\ln \rplus}|$ (dotted, red arrows).}
    \label{fig:buffers-and-insets}
\end{figure}

The problem would be simpler if we were able to choose parameters $\alpha_k$, $\beta_k$ and $\gamma_k$ satisfying~\eqref{eq:decay-constraints-ABC} in such a way that the local definitions~\eqref{eq:local-def-ABC} admit an extension that still satisfies $\cL \Phi \leq -1$ on all of~$\bXo\setminus\Rset$; see Remark~\ref{rem:usual-Lyap}. We are only able to make a weaker claim that is still sufficient for recurrence.  Namely, we are able to choose parameters $\alpha_k$, $\beta_k$ and $\gamma_k$ satisfying~\eqref{eq:decay-constraints-ABC} in such a way that the local definitions~\eqref{eq:local-def-ABC} admit an extension that still satisfies the hypotheses of the more subtle Theorem~\ref{prop:YM12} for some choices of sets $\Ssetin$, $\Ssetout$ and $\Qset$. 
Given the form of the differential operator~$\cL$ and the local definitions of~$\Phi$ in~\eqref{eq:local-def-ABC}, 
it is reasonable to impose an additional nonrestrictive assumption 
\begin{subequations}
\label{eq:pre-gluing-constraints-ABC}
\begin{align}
    \beta_k &= \alpha_{k+1}, \\
    \gamma_k &= \beta_k & \text{ for } k & \text{ with } \Ed^k \in \Supt \setminus \Attr
\end{align}
\end{subequations}
(recall that we enumerate vertices modulo $4$).
Although~\eqref{eq:pre-gluing-constraints-ABC} couples constraints that were independent in~\eqref{eq:decay-constraints-ABC}, we can prove that such a choice is possible.

\begin{lemma}
\label{lem:chooseing-betas-case-I}
    Parameters~$(\beta_k)_{k=0}^3$ can be chosen to ensure~\eqref{eq:decay-constraints-ABC} and~\eqref{eq:pre-gluing-constraints-ABC}.
\end{lemma}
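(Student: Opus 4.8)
The plan is to reduce the claim to solving a small system of strict linear inequalities in the four numbers $\beta_0,\beta_1,\beta_2,\beta_3$. Imposing~\eqref{eq:pre-gluing-constraints-ABC} turns every other parameter into a function of these: $\alpha_k=\beta_{k-1}$ for each $k\in\{0,1,2,3\}$ (indices mod~$4$) and $\gamma_2=\beta_2$, there being no other edge in $\Supt\setminus\Attr$ in this configuration. Substituting into~\eqref{eq:decay-constraints-ABC} leaves four inequalities: $\beta_{k-1}\eivalh^k+\beta_k\eivalv^k>2$ for the non-attracting vertices $k=1,2,3$, and $\beta_2\oldbarlambda^2>2$ for the repelling edge $\Ed^2$. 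I would first record the signs of the coefficients: at the source $\Or^2$ both $\eivalh^2$ and $\eivalv^2$ are positive; at each saddle $\Or^1,\Or^3$ exactly one of the two coefficients is positive and the other negative; and $\oldbarlambda^2>0$ because $\Ed^2\in\Supt\setminus\Attr$ is repelling. Note that the attracting vertex $\Or^0$ contributes \emph{no} inequality.

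The key observation is that the ``dependency graph'' of this system\,---\,a node for each $\beta_k$, an edge joining $\beta_{k-1}$ to $\beta_k$ whenever the constraint at $\Or^k$ couples them\,---\,is a path, not a cycle: the link that would close the loop, which would come from a constraint at $\Or^0$ coupling $\alpha_0=\beta_3$ with $\beta_0$, is absent precisely because $\Or^0\in\Attr$. Consequently the inequalities can be solved greedily. I would choose $\beta_2>0$ first, large enough that $\beta_2\oldbarlambda^2>2$, that $\beta_2\eivalv^2>2$, and (if the $\beta_3$-coefficient $\eivalv^3$ happens to be negative) that $\beta_2\eivalh^3>2$. Then the source constraint $\beta_1\eivalh^2+\beta_2\eivalv^2>2$ holds automatically for every $\beta_1>0$. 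Next, from the constraint at $\Or^3$ I would pick $\beta_3>0$: if $\eivalv^3<0$ it must lie below the threshold $(\beta_2\eivalh^3-2)/|\eivalv^3|$, which is positive by the choice of $\beta_2$; if $\eivalv^3>0$ one simply takes $\beta_3$ large. Finally the constraint at $\Or^1$, $\beta_0\eivalh^1+\beta_1\eivalv^1>2$, is met by taking the $\beta$ multiplying the positive eigenvalue large and the other small; since $\beta_1$ otherwise appears only in the now-automatic $\Or^2$ constraint, enlarging it (if needed) does no harm. The resulting $(\beta_0,\beta_1,\beta_2,\beta_3)$ are positive, whence $\alpha_k=\beta_{k-1}>0$ and $\gamma_2=\beta_2>0$.

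The whole argument is elementary, so there is no real obstacle; the one point needing a little care is the sign bookkeeping at the two saddles\,---\,verifying that each greedy step genuinely leaves a nonempty range for the next parameter. This is exactly where the tree (rather than cyclic) structure of the dependency graph is used: around a genuine loop of such inequalities one could not exclude mutual incompatibility, but here the attracting vertex breaks the loop and the greedy construction always terminates.
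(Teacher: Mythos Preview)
Your argument is correct and rests on the same structural observation as the paper's: the attracting vertex $\Or^0$ contributes no constraint, so the coupling among the $\beta_k$'s is a path rather than a cycle and can be resolved sequentially. The execution differs slightly. You attack the inhomogeneous system $\beta_{k-1}\eivalh^k+\beta_k\eivalv^k>2$ directly, starting from $\beta_2$ and doing case analysis on the saddle signs. The paper instead first homogenizes: it solves $\tilde\beta_{k-1}\eivalh^k+\tilde\beta_k\eivalv^k>0$ for positive $\tilde\beta_k$ (trivially, since at each step at least one coefficient is positive), and then sets $\beta_k=M\tilde\beta_k$ for $M$ large, which simultaneously forces all the vertex constraints and the edge constraint $\beta_2\oldbarlambda^2>2$ past the threshold~$2$. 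This scaling trick is marginally cleaner\,---\,it avoids tracking the threshold during the greedy step and handles the edge constraint for free\,---\,but your direct construction is equally valid.
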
 
\begin{proof}   
    It suffices to find positive parameters $(\tilde\beta_k)_{k=0}^3$ satisfying
    \begin{equation}
    \label{eq:decay-and-gluing-constraints-ABC}    
        \tilde{\beta}_{k-1}\eivalh^k + \tilde{\beta}_k\eivalv^k > 0
    \end{equation}
    and to set $\beta_k = M \tilde{\beta}_k$ for some large enough constant~$M$. To this end, choose $\tilde{\beta}_0 =1$, then choose a positive number $\tilde{\beta}_{1}$ satisfying $\tilde{\beta}_{0} \eivalh^{1} + \tilde{\beta}_{1}\eivalv^{1} > 0$, then choose a positive number $\tilde{\beta}_{2}$ satisfying $\tilde{\beta}_{1} \eivalh^{2} + \tilde{\beta}_{2}\eivalv^{2} > 0$, and finally choose a positive number  $\tilde{\beta}_{3}$ satisfying $\tilde{\beta}_{2} \eivalh^{3} + \tilde{\beta}_{3}\eivalv^{3} > 0$. 
\end{proof}

Let us now to extend the local definitions~\eqref{eq:local-def-ABC} to $\bXo\setminus\Rset$ and choose the sets $\Ssetin$ and $\Ssetout$ in such a way that we can apply Theorem~\ref{prop:YM12}. Recalling that thin strips $F^k$ were used throughout Section~\ref{sec:main-heteroc-proof} and that $x_1^k$ and $x_2^k$ are functions defined on $\X$, we set
\begin{subequations}
\label{eq:Lyap-ext}
\begin{align}
    \Phi
    &= (\xi\circ x_1^k)\cdot(-\beta_{k-1}\ln{}\circ x_1^k - \beta_k \ln{}\circ x_2^k) 
       \notag \\
    &\quad{} + (1-\xi \circ x_1^k) \cdot(-\beta_k \ln{}\circ x_2^k - \beta_{k+1}\ln{}\circ(1-x_1^k)) 
    & \text{on } F_{r'}^k & \text{ with } \Ed^k \notin \Supt,
    \label{eq:Lyap-ext-direct}
    \\
    \Phi
    &= 
    (\chi_k \circ x_1^k) \cdot (-\beta_{k-1}\ln{}\circ x_1^k - \beta_k \ln{}\circ x_2^k)  \notag \\
    &\quad{} + (1-\chi \circ x_1^k) \cdot (1-\chi\circ(1-x_1^k)) \cdot(- \beta_k \psi_k \circ x_1^k - \beta_k \ln{}\circ x_2^k) \notag \\
    &\quad{} + (\chi\circ(1-x_1^k)) \cdot (-\beta_{k+1}\ln{}\circ(1-x_1^k)
    -\beta_k \ln{}\circ x_2^k)
    & \text{on } F_{r'}^k & \text{ with } \Ed^k \in \Supt \setminus \Attr,
    \label{eq:Lyap-ext-indirect}
\end{align}
\end{subequations}
where $\xi:[0,1]\to[0,1]$ is a smooth, nonincreasing transition function 
satisfying $\xi(r)=1$ and $\xi(1-r)=0$, 
 and 
$\chi:[0,1]\to[0,1]$ is a smooth nonincreasing transition function  satisfying
$\chi(\rmoins^2 r)=1$ and $\chi(\rmoins r)=0$ 
for some parameter $\rmoins\in(0,1)$.
The subsets of the strips $F^k_{r'}$ where~$\xi \circ x_1^k$, $\xi \circ (1-x_1^k)$, $\chi \circ x_1^k$ or~$\chi \circ (1-x_1^k)$ is not constant make up the region~$\Qset$ in which the mean decay of $\Phi(X(t))$ may fail; see Figure~\ref{fig:buffers}. To ensure that these definitions are consistent on their intersections and indeed provide extensions of the definitions~\eqref{eq:local-def-ABC}, we require $r'\leq \rmoins^2 r$; see Figure~\ref{fig:buffers}.
The following lemma ensures that requirement~\eqref{it:Lyap-on-Q} in Theorem~\ref{prop:YM12} is still satisfied. 

\begin{lemma}
\label{lem:pre-YM-i-particular}
    There exists $K>0$ such that, for every choice of $\rmoins$ small enough and $r'\in(0,\rmoins^2 r]$,
    there is a choice of transition functions~$\xi$ and $\chi$ for which 
    the formulas~\eqref{eq:Lyap-ext} provide an extension $\Phi : \Ssetout \cup \Qset \to \rr$ satisfying $\cL\Phi \leq K$ on~$\Qset$.
\end{lemma}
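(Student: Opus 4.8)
The plan is to exploit the special structure of the extension~\eqref{eq:Lyap-ext}: on each piece of~$\Qset$ the function~$\Phi$ splits as $\Phi = -\beta_k\ln{}\circ x_2^k + h_k\circ x_1^k$, a term depending only on the distance to the edge~$\Ed^k$ plus a term depending only on the coordinate~$x_1^k$ along that edge, and then to bound the action of~$\cL$ on the two terms separately. First I would note that $\Qset$ meets each strip $F_{r'}^k$ only in the finitely many sub-strips where one of the transition functions $\xi\circ x_1^k$, $\xi\circ(1-x_1^k)$, $\chi\circ x_1^k$, $\chi\circ(1-x_1^k)$ is non-constant. On each of them, taking~$\rmoins$ small enough that $\rmoins r < \tfrac12$ makes the supports of $\chi\circ x_1^k$ and $\chi\circ(1-x_1^k)$ disjoint, so the weights in~\eqref{eq:Lyap-ext} sum to~$1$; since every summand of~\eqref{eq:Lyap-ext} carries the same coefficient $-\beta_k$ in front of~$\ln{}\circ x_2^k$ (this uses the gluing constraint $\gamma_k=\beta_k$ from~\eqref{eq:pre-gluing-constraints-ABC}), that coefficient factors out and one is left with $\Phi = -\beta_k \ln{}\circ x_2^k + h_k\circ x_1^k$, where $h_k$ is a function of~$x_1^k$ alone assembled from~$\xi$ or~$\chi$, from $\ln x_1^k$ and $\ln(1-x_1^k)$, and from the corrector~$\psi_k$.

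Since $\cL$ annihilates constants and $h_k$ depends on~$x_1^k$ only, $\cL\Phi = \cL(-\beta_k\ln{}\circ x_2^k) + \cL(h_k\circ x_1^k)$. For the first term I would invoke Lemma~\ref{lem:Lyap-edge-pre} (in the chart~$k$): on~$\{x_2^k<r'\}$ with $r'$ small, $\cL(-\beta_k\ln{}\circ x_2^k) = -\beta_k\oldlambda(x_1^k,0)+o(1)$, which is bounded above by $\beta_k\sup_{[0,1]}|\oldlambda(\cdot,0)| + 1$, uniformly in~$\rmoins$ and~$r'$ (and $r'$ is automatically small since $r'\le\rmoins^2 r$). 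So the whole problem reduces to bounding $\cL(h_k\circ x_1^k)$ from above, uniformly in~$\rmoins$ small and $r'\in(0,\rmoins^2 r]$, for a suitable choice of the transition functions.

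For a sub-strip attached to an edge $\Ed^k\notin\Supt$, the relevant transition function~$\xi$ lives on the \emph{fixed} interval $x_1^k\in[r,1-r]$, so with~$\xi$ chosen once and for all the function~$h_k$ has $C^2$-norm bounded in terms of~$r$ only, and since the coefficients of~$\cL$ are bounded on~$\bX$, so is $\cL(h_k\circ x_1^k)$. For a sub-strip attached to an edge $\Ed^k\in\Supt\setminus\Attr$ the transition interval $[\rmoins^2 r,\rmoins r]$ of~$\chi$ degenerates as~$\rmoins\downarrow0$, and here I would pass to the chart coordinate $y_1=\ln x_1^k$ of Lemma~\ref{lem:Y}. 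In that coordinate the transition interval becomes $[\ln(\rmoins^2 r),\ln(\rmoins r)]$, of length~$|\ln\rmoins|$, so $\chi$ may be taken as a fixed smooth profile rescaled to that interval, with $\partial_{y_1}\chi=O(|\ln\rmoins|^{-1})$ and $\partial^2_{y_1}\chi=O(|\ln\rmoins|^{-2})$; and $h_k$, written in~$y_1$, interpolates between $-\beta_{k-1}y_1$ and $-\beta_k\psi_k$, both of which are affine in~$y_1$ up to $C^2$-bounded corrections as $y_1\to-\infty$, thanks to the at-most-logarithmic growth of~$\psi_k$ near the endpoints of~$\Ed^k$ and its $C^2$-regularity in the variable~$y_1$ established in Appendix~\ref{app:fk-function}. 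Expanding $\partial_{y_1}h_k$ and $\partial^2_{y_1}h_k$, every term is then $O(1)$ uniformly in~$\rmoins$: the dangerous products pair a derivative of~$\chi$ against a quantity of size $O(|y_1|)=O(|\ln\rmoins|)$, producing $O(1)$ or $o(1)$. Because the generator in the coordinates~$(y_1,y_2)$ has bounded $C^2$ coefficients (Lemma~\ref{lem:Y}), $\cL(h_k\circ x_1^k)$ is thus bounded above uniformly in~$\rmoins$ and~$r'$; the mirror sub-strip near~$\Or^{k+1}$ is identical. Taking~$K$ to be the maximum over the finitely many sub-strips of the bounds so obtained finishes the argument.

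The one genuine obstacle is the uniformity in~$\rmoins$ of this last bound: in the original coordinate~$x_1^k$ the second derivative of~$\chi$ grows like~$\rmoins^{-2}$ while the ``height'' that~$h_k$ must climb across the buffer grows like~$|\ln\rmoins|$, which would be fatal were it not that the drift and diffusion degenerate near the vertex~$\Or^k$ at exactly the linear rate encoded in Lemma~\ref{lem:Y}, so that $\cL\chi$ is really only of order~$|\ln\rmoins|^{-1}$; the logarithmic chart is precisely where this cancellation is visible, which is why the buffer is designed to have logarithmic rather than fixed width. A secondary but indispensable ingredient is the $C^2$ control of the corrector~$\psi_k$ near the \emph{noncompact} edge~$\Ed^k$ — the refinement over~\cite{FK22} already carried out in Appendix~\ref{app:fk-function} — which is needed both to make sense of the splitting $\Phi=-\beta_k\ln{}\circ x_2^k+h_k\circ x_1^k$ with $\cL\Phi$ locally bounded and to bound the $C^2$-norm of~$h_k$ in the~$y_1$ variable.
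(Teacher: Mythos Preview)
Your proposal is correct and follows essentially the same approach as the paper's proof: both pass to the logarithmic chart~$y_1$ of Lemma~\ref{lem:Y} for the $\chi$-buffers, choose~$\chi$ as a fixed profile rescaled to the interval $[2\ln\rmoins+\ln r,\ln\rmoins+\ln r]$ of length~$|\ln\rmoins|$ so that its $y_1$-derivatives are $O(|\ln\rmoins|^{-1})$, and then exploit the global Lipschitz and $C^2$ control of $\psi_k\circ y_1^{-1}$, $\ln\circ y_1^{-1}$, and $\ln(1-\cdot)\circ y_1^{-1}$ established in Appendix~\ref{app:fk-function} to get a bound on $\cL\Phi$ independent of~$\rmoins$. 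Your explicit splitting $\Phi=-\beta_k\ln\circ x_2^k + h_k\circ x_1^k$ (made possible by the gluing constraint $\gamma_k=\beta_k$) and separate treatment via Lemma~\ref{lem:Lyap-edge-pre} is a slightly more transparent organization than the paper's direct appeal to Corollaries~\ref{cor:Lyap-corner-pre} and~\ref{cor:mean-H-process} together with the product rule, but the content is the same.
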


\begin{proof}
    A transition function $\xi$ with the desired properties can be constructed independently of~$\rmoins$. We can then use the product and chain rules, and Corollary~\ref{cor:Lyap-corner-pre} to estimate $\cL\Phi$ for~\eqref{eq:Lyap-ext-direct} independently of~$\rmoins$. 
    
    We now turn to the construction of~$\chi$, which, in contrast, needs to be tailored to the value of~$\rmoins$. 
    We construct the function $\chi$ by specifying $\eta = \chi \circ y_1^{-1}$, where $y_1: (0,1) \to \R$ is as in Lemma~\ref{lem:Y}.
    Take $\eta$ to be a smooth monotone functions that takes value~1 on~$(-\infty, 2\ln \rmoins + \ln r]$; takes value $0$ on~$[\ln \rmoins + \ln r, \infty)$; has first and second derivatives that are of order $|{\ln \rmoins}|^{-1}$ and supported on $[2\ln \rmoins + \ln r, \ln \rmoins + \ln r]$\,---\,for example, we can mollify a piecewise linear function. 
    Lemma~\ref{lem:psi} implies that $\psi \circ y_1^{-1}$ is globally Lipschitz and twice differentiable. 
    The same is true of $\ln{}\circ  y_1^{-1}$ and $\ln (1-\,\cdot\,) \circ y_1^{-1}$.  
    Hence, we can use the product and chain rules, and Corollaries~\ref{cor:Lyap-corner-pre} and~\ref{cor:mean-H-process} to bound $\cL\Phi$  for~\eqref{eq:Lyap-ext-indirect} independently of~$\rmoins$ small enough.
\end{proof}

We now construct $\Ssetin \subsetneq \Ssetout$ as in Figure~\ref{fig:insets} using an additional parameter $\rplus \in (0,1)$. 
The following two lemmas respectively ensure that requirements~\eqref{it:hit-Si-R} and~\eqref{it:exit-So} in Theorem~\ref{prop:YM12} are satisfied, with
\begin{equation}
\label{eq:T-choice}
    T = 2C_1 |{\ln\rmoins}|,
\end{equation}
where $C_1$ is the maximum of the constants in Lemmas~\ref{lem:ee-ap-bound},~\ref{lem:lin-ub-different-extremes} or \ref{lem:lin-ub-same-extremes} ($\nu = \min_{i,k} |\lambda_i^k|/2$, $A = \max_i \|s_i\|_\infty$, $b = -\ln r$). 

\begin{lemma}
\label{lem:return-Rset-particular-c}
    For all choices of~$\rplus$, $\rmoins$, and $r'\in(0,\rplus\rmoins^2 r]$, the sets $\Ssetin$, $\Rset$ and $\Qset$ are such that 
    $\ee^x \hit_{\Ssetin \cup \Rset} \leq T$ 
    for all $x \in \Qset$.
\end{lemma}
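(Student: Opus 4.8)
The plan is to cover $\Qset$ by the finitely many buffer regions carved out by the transition functions $\xi$ and $\chi$, and to bound $\ee^x\hit_{\Ssetin\cup\Rset}$ on each of them; by the strong Markov property this is enough. Every buffer lies in a single strip $F_{r'}^k$, so on it the coordinate $x_2^k$ transverse to $\Ed^k$ satisfies $x_2^k<r'$, and also $x_1^k<r$ whenever the buffer sits near a vertex.

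For a buffer abutting a vertex $\Or^k$ that is an endpoint of an edge $\Ed^k\in\Supt\setminus\Attr$ (the buffers coming from the $\chi$-transitions in~\eqref{eq:Lyap-ext-indirect}), I would work in the chart $y^k$ in which $\Ed^k$ is the $x_1^k$-axis. Because $\Ed^k$ carries an invariant probability measure, Theorem~\ref{thm:1d} forces $\Or^k$ to be a source or a saddle with unstable direction along $\Ed^k$, i.e., $\eivalh^k>0$; by Lemma~\ref{lem:Y}, on the relevant neighbourhood of $\Or^k$ ($x_1^k<r$, $x_2^k<r'$) the drift of $Y_1^k$ stays within $\tfrac12|\eivalh^k|$ of $\eivalh^k$ and so is bounded below by $\nu$. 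Hence, until $Y_1^k$ first reaches $\ln r$ or the process leaves this neighbourhood, $Y_1^k(t)-y_1^k(x)\ge\nu t+M(t)$ for a martingale $M\in\cM$ with quadratic-variation rate at most $A^2$ uniformly in $x$. When $Y_1^k$ reaches $\ln r$ the point lies in $\Ssetin$ (the constraint $r'\le\rplus\rmoins^2 r$ keeps $x_2^k$ below the transversal inset depth $\rplus r$), and when the process instead leaves the neighbourhood it does so through $\{x_2^k=r'\}$ with $x_1^k$ still of order at least $\rmoins^2 r>r'$, hence into $(F^*_{r'})^\complement\subset\Rset$. Since $y_1^k(x)\ge 2\ln\rmoins+\ln r$ on the buffer, the gap $\ln r-y_1^k(x)$ is at most $2|\ln\rmoins|$, so Lemma~\ref{lem:ee-ap-bound} (with Lemmas~\ref{lem:lin-ub-same-extremes} and~\ref{lem:lin-ub-different-extremes} for the comparison process) bounds the expected time by $C_1\cdot 2|\ln\rmoins|=T$. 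The buffer at the other endpoint of $\Ed^k$ is treated identically in the rotated chart, tracking $Y_2^{k+1}$.

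For a ``wide'' buffer, adjacent to an edge $\Ed^k\notin\Supt$ (the buffers coming from the $\xi$-transitions in~\eqref{eq:Lyap-ext-direct}), the argument is instead via the one-dimensional theory. Here $x$ is at transversal distance $x_2^k<r'$ from $\Ed^k$; since $\Ed^k$ carries no invariant probability measure, part~\ref{it:there-are-sinks} of Theorem~\ref{thm:1d}, applied to the diffusion restricted to $\overline{\Ed^k}$, shows that from the interior of $\Ed^k$ one reaches a neighbourhood of a sink at an endpoint in finite expected time, and for $r'$ small the two-dimensional process shadows this, so $\{x_1^k<r\}\cup\{x_1^k>1-r\}$ is hit in finite expected time. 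Reaching $\{x_1^k<r\}$ puts the process in $\overline{U_r^k}$ or $(F^*_{r'})^\complement$, hence in $\Rset$; reaching $\{x_1^k>1-r\}$, with $x_2^k<r'$ still, places it in the corner neighbourhood of $\Or^{k+1}$, from which a further logarithmic run of order $|\ln\rmoins|$ along $\Ed^k$ (driven by the drift towards $\Or^{k+1}$) brings it inside $\Ssetin$, whose inset depths $\rmoins r$ and $\rplus r$ exceed $r'$. The expected times arising are those of one-dimensional diffusions started near a source or in the compact core $[r,1-r]$ of $\overline{\Ed^k}$, all controlled by the constants absorbed into $C_1$ (with $b=-\ln r$), hence bounded by $T$ once $\rmoins$ is as small as already required.

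The main obstacle is the bookkeeping. One has to run through the (finitely many) combinatorial types of buffer piece present in this configuration and, for each, pick the right chart and favourable coordinate, read off the sign of the associated drift from the hyperbolicity data (which requires invoking Theorem~\ref{thm:1d} to pin down the behaviour of the endpoints of the $\Supt$-edges), and\,---\,most delicately\,---\,check that the face through which the process leaves $\Qset$ really belongs to $\Ssetin\cup\Rset$ and not to $\Ssetout\setminus\Ssetin$. It is exactly to make this verification go through that $\Ssetin$ is inset into $\Ssetout$ by $|\ln\rmoins|$ against the drift and by the smaller $\rplus$-amount along it, and that one demands $r'\le\rplus\rmoins^2 r$: these choices ensure that a trajectory arriving along an edge is already inside $\Ssetin$, and that the total logarithmic distance the process must cover stays at most $2|\ln\rmoins|$.
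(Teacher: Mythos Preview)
Your split of $\Qset$ into $\chi$-buffers and $\xi$-buffers coincides with the paper's two types of $\Qset$-components, and for the $\chi$-buffers your argument is correct and essentially the paper's: $\Ed^k\in\Supt$ forces the endpoint $\Or^k$ to be a source for the edge-restricted dynamics, so $\eivalh^k>0$, and the drift of $Y_1^k$ carries the process from the buffer into $G^k\cap\Ssetin$ over a logarithmic distance $2|\ln\rmoins|$, yielding the bound $T$ via Lemma~\ref{lem:ee-ap-bound}.

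For the $\xi$-buffers your argument has genuine gaps, and the paper proceeds differently. The paper reads off the sign of the drift of $Y_1$ near each endpoint directly from the eigenvalues there and applies the two-dimensional exit-time Lemma~\ref{lem:lin-ub-different-extremes} (when the drift points outward at both ends, as along $\Ed^0$, where both endpoints are one-dimensional sinks) or Lemma~\ref{lem:lin-ub-same-extremes} (when both signs agree, as along $\Ed^3$) to the interval $(\ln(\rmoins r),-\ln(\rmoins r))$ in one stroke. Your route via Theorem~\ref{thm:1d} plus ``shadowing'' is not rigorous: that theorem yields almost-sure convergence of the one-dimensional restriction to a sink, not a hitting-time bound for the two-dimensional process at positive distance from the edge; you need the two-dimensional Lemmas~\ref{lem:finite-2d-mean} and~\ref{lem:lin-ub-different-extremes}--\ref{lem:lin-ub-same-extremes} here as well. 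More seriously, your case labeling is not stable across buffers. You tacitly assume that the endpoint at $x_1^k=0$ lies in $\Attr$ (so $\overline{U_r^k}\subset\Rset$) and that the one at $x_1^k=1$ is a one-dimensional sink not in $\Attr$ (so a further drift-aligned push reaches $\Ssetin$). For $\Ed^3$ in the case study this is reversed: $\Or^3\notin\Attr$ sits at $x_1^3=0$ and is a \emph{source} for the restricted dynamics, so exiting through $\{x_1^3<r\}$ lands in $U^3\subset\Ssetout$, and any further push towards $\Or^3$ goes against the drift\,---\,there is no fast route into $\Ssetin$ on that side. The paper's direct use of Lemma~\ref{lem:lin-ub-same-extremes} sidesteps this by simply targeting exit on the drift-favored side, towards $\Or^0\in\Attr$, which already lies in $\Rset$.
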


\begin{proof}
   There are two types of components of $\Qset$: those bridging two corner sets contained in $\Rset\cup\Ssetout$ and those between a corner set and an edge set  
   contained in $\Rset\cup\Ssetout$. We treat starting points in those separately.

   Suppose the starting point~$x$ is located between two corner sets, say $\overline{U^0}$ and $U^1$  in Figure~\ref{fig:buffers-and-insets}. Using the transformed process $(Y(t))_{t\geq 0}$ from Lemma~\ref{lem:Y}, we can say that $Y_1(0)\in (\ln r, -\ln r]$ and
      to reach $\Ssetin\cup\Rset$, it suffices for
   $Y_1$ to exit $(\ln r + \ln \rmoins, -\ln r - \ln\rmoins)$\,---\,provided that $r' \leq \rplus r$; see Figure~\ref{fig:insets}. The direction of the exit through the endpoints of this interval coincides with the direction of the drift. Thus, 
    we can apply Lemma~\ref{lem:lin-ub-different-extremes} to conclude that~$\ee^x \hit_{\Ssetin \cup \Rset} \leq C_1|{\ln \rmoins}|$. This argument is easily adapted to a starting point between $U^3$ and $\overline{U^0}$\,---\,provided that $r' \leq \rplus \rmoins^2 r$; see Figure~\ref{fig:insets}.
        
    Suppose the starting point~$x$ is located between a corner set and an edge set, say~$U^2$ and~$G^2$ in Figure~\ref{fig:buffers-and-insets}. We will use the process $(Y^2(t))_{t\geq 0}$ given by $Y^2(t)=y(1-X_1(t), 1-X_2(t))$.  
    To reach $\Ssetin\cup \Rset$, it suffices for
    $Y_1^2$ to increase by $2|{\ln \rmoins}|$\,---\,provided that $r' \leq \rplus \rmoins^2 r$; see Figure~\ref{fig:insets}.
    As long as the process has not reached $\Rset$ or $\Ssetin$ via some other route, 
    this increase is aligned with the drift. Thus, we can apply  
    Lemma~\ref{lem:ee-ap-bound} to conclude that $\ee^x \hit_{\Ssetin \cup \Rset} \leq2C_1|{\ln \rmoins}|$. This argument is easily adapted to a starting point between~$G^2$ and~$U^3$.
\end{proof}

\begin{proposition}
\label{prop:return-Rset-particular-d}
    For every $K>0$,
    there exists a choice of~$\rplus$ and $\rmoins$ with the property that the sets~$\Ssetin$ and $\Ssetout$ are such that 
    $\ee^x \hit_{\Ssetout^\complement}  \geq T(2K + 1)$ 
    for all $x \in \Ssetin$.
\end{proposition}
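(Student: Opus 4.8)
The plan is to deduce the bound from a uniform lower bound on the time it takes the diffusion to leave one of the rectangular pieces making up $\Ssetout$. Since each such piece $C$ satisfies $x\in C\subset\Ssetout$ for $x$ in the corresponding piece of $\Ssetin$, we have $\hit_{\Ssetout^\complement}\ge\hit_{C^\complement}$ pointwise (here $\hit_{C^\complement}$ denotes the exit time from $C$), so it is enough to bound $\ee^x\hit_{C^\complement}$ from below. By the construction underlying Figures~\ref{fig:buffers} and~\ref{fig:insets}, in a suitable chart each $C$ is a coordinate rectangle near one vertex or along one edge, and the matching piece of $\Ssetin$ is obtained from $C$ by moving each of its (one, two or three) interior-facing faces inward; crucially, each such face is of exactly one of two kinds. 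An \emph{outgoing} face is one through which exiting runs \emph{along} the drift of the associated transformed coordinate; it is moved in by the logarithmic amount $|{\ln\rplus}|$. An \emph{ingoing} face is one through which exiting runs \emph{against} that drift on the part of $C$ adjacent to it; it is moved in only by the logarithmic amount $|{\ln\rmoins}|$. The relevant drift signs are read off from the linearizations via Lemmas~\ref{lem:Lyap-corner-pre},~\ref{lem:Lyap-edge-pre} and~\ref{lem:Y}, and are exactly those recorded in Figure~\ref{fig:insets}. It then suffices to show that, given $K$, one can choose $\rmoins$ and then $\rplus$ small enough that $\pp^x\{\hit_{C^\complement}\le 2T(2K+1)\}\le\tfrac{1}{2}$ uniformly over $C$ and over $x$ in the matching piece of $\Ssetin$, since $\ee^x\hit_{\Ssetout^\complement}\ge 2T(2K+1)\cdot\pp^x\{\hit_{C^\complement}>2T(2K+1)\}\ge T(2K+1)$.

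To obtain this, I would pass to the transformed process $Y$ of Lemma~\ref{lem:Y} in the chart adapted to $C$, whose drift coefficients are bounded by a constant $L$ and whose martingale parts have quadratic variation growing at rate at most $A^2$, with $L$ and $A$ independent of $\rmoins$, $\rplus$ and $x$. The diffusion leaves $C$ through one of its finitely many interior-facing faces, so $\{\hit_{C^\complement}\le s\}$ is contained in the union over these faces of the events of reaching a given one by time $s$. For an outgoing face the associated coordinate $Z$ must increase by at least $|{\ln\rplus}|$; from $Z(t)-Z(0)\le Lt+M(t)$ with $M\in\cM$ and $\langle M\rangle_t\le A^2 t$, reaching the face by a time $s\le|{\ln\rplus}|/(2L)$ forces $\sup_{t\le s}|M(t)|\ge|{\ln\rplus}|/2$, so the exponential martingale inequality (Lemma~\ref{thm:Bass-martingale}) bounds the probability by $2\Exp{-|{\ln\rplus}|^2/(8A^2 s)}$. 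For an ingoing face the associated coordinate must increase by at least $|{\ln\rmoins}|$ while, on the part of $C$ adjacent to it---which lies in the hyperbolic zone around the nearby vertex once $r$ is small---its drift is bounded above by a negative constant $-c_0$; comparing with $-c_0 t+M(t)$, $M\in\cM$, up to the first visit to that face, the exponential supermartingale estimates (Lemma~\ref{lem:drift-wins-over-mart-bound} and Corollary~\ref{cor:new-exp-moment}) bound the probability of \emph{ever} reaching it by $\Exp{-\kappa|{\ln\rmoins}|}$ for a constant $\kappa>0$ independent of the parameters.

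The parameters are then chosen in this order. Put $s=2T(2K+1)=4C_1(2K+1)\,|{\ln\rmoins}|$. First take $\rmoins$ so small that $N\Exp{-\kappa|{\ln\rmoins}|}\le\tfrac{1}{4}$, where $N$ bounds the total number of ingoing faces; this is possible and compatible with the other smallness requirements on $\rmoins$ (e.g.\ Lemma~\ref{lem:pre-YM-i-particular}) precisely because $C_1$, $L$, $A$ and $\kappa$ do not depend on $\rmoins$. With $\rmoins$, hence $s$, now fixed, take $\rplus$ so small that $|{\ln\rplus}|\ge 2Ls$ (so the outgoing estimate applies at time $s$) and $N'\cdot 2\Exp{-|{\ln\rplus}|^2/(8A^2 s)}\le\tfrac{1}{4}$, where $N'$ bounds the number of outgoing faces; this is possible since $s$ is now a fixed number while $|{\ln\rplus}|\to\infty$ as $\rplus\to0$. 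Summing the two contributions gives $\pp^x\{\hit_{C^\complement}\le s\}\le\tfrac{1}{2}$, as required.

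The main obstacle is the geometric verification in the first step: one must check, for every kind of piece of $\Ssetout$ occurring in the case study (a saddle vertex, a source vertex, and a repelling edge strip), that its interior-facing faces really do split into the two advertised types, and in particular that every face through which the drift points \emph{away} from the exit---the only ones that could otherwise be crossed quickly---carries the \emph{small} inset $|{\ln\rmoins}|$; this is what lets the two mechanisms (``outgoing exits are slow because $|{\ln\rplus}|$ can be made huge'' and ``ingoing exits are rare because of the adverse drift'') jointly control every way of leaving $C$. A secondary point is that for edge strips the adverse drift holds only on the portion of $C$ next to the ingoing face, so the comparison must be stopped at the first visit to that face; and one must keep careful track that the constants $C_1$, $L$, $A$, $\kappa$, $c_0$ are genuinely independent of $\rmoins$ and $\rplus$, since this is what makes the ``choose $\rmoins$, then $\rplus$'' scheme non-circular.
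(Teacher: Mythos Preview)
Your overall architecture (bound $\pp^x\{\hit_{C^\complement}\le s\}\le\tfrac12$ via a union over faces, then choose $\rmoins$ and afterwards $\rplus$) matches the paper's, and your treatment of outgoing faces and of \emph{corner} pieces is essentially the paper's use of Corollary~\ref{cor:lin-lb-tauplus-prob} and Corollary~\ref{cor:drift-wins-over-mart-bound}. The gap is in your treatment of the ingoing faces of the \emph{edge} strip~$G^2$.

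For a corner piece the adverse drift on the relevant coordinate holds throughout~$C$, so the comparison with $-c_0 t+M(t)$ is valid up to exit from~$C$ and Lemma~\ref{lem:drift-wins-over-mart-bound} legitimately bounds the probability of ever reaching the ingoing face. For the edge strip this is false: the drift of $Y_1^2$ is only adverse in the hyperbolic zone near the neighbouring vertex, while in the middle of the strip it changes sign (indeed $\Ed^2\in\Supt$ forces the restricted one-dimensional dynamics to be positively recurrent). Hence the process can leave the adjacent zone back toward the centre of the strip, return, and attempt the crossing again; the comparison you invoke is valid only during a single excursion, not ``up to the first visit to that face'', and the probability of \emph{ever} reaching the face is not $\le\Exp{-\kappa|{\ln\rmoins}|}$. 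Your closing remark recognises that the adverse drift is local but the proposed fix (``stop at the first visit to that face'') addresses the wrong endpoint of the excursion. What is needed is control on the \emph{number} of such excursions by time~$s$; this is exactly what the paper's Lemma~\ref{lem:exp-lb-suppt-case} provides, via an excursion-counting argument yielding the lower bound $\tau\ge C_3^{-1}\exp(C_3^{-1}|{\ln\rmoins}|)$ with probability at least~$\tfrac34$. The paper then chooses $\rmoins$ so that this exponential dominates $T(2K+1)$, and afterwards $\rplus$ so that $C_2^{-1}|{\ln\rplus}|$ does as well.
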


\begin{proof}
    In addition to $K$ and the constant $C_1$ identified above, we let $C_2$ be as in Corollary~\ref{cor:lin-lb-tauplus-prob} ($\nu = 2\max_{i,k} \lambda_i^k$, $A = \max_i \|s_i\|_\infty$) and $C_3$ as in Lemma~\ref{lem:exp-lb-suppt-case} ($\nu = \tfrac 12 \min_{i,k} |\lambda_i^k|$, $A = \max_i \|s_i\|_\infty$, $b= -\ln r$).
    Throughout the proof, we assume that $|{\ln \rmoins}|$ is greater than $L$ in Corollary~\ref{cor:drift-wins-over-mart-bound} and Lemma~\ref{lem:exp-lb-suppt-case} ($\nu = \min_{i,k}|\lambda_i^k|$, $A = \max_i \|s_i\|_\infty$, $b= -\ln r$). 
    There are two types of components of~$\Ssetin$: those in corners, and those along edges. We treat starting points in those separately.
    
    Suppose that the starting point~$x$ is located in a corner set, say the component of~$\Ssetin$ within $U^1$ in Figures~\ref{fig:buffers-and-insets}. We will use the process $(Y^1(t))_{t\geq 0}$ defined by $Y^1(t) = y(X_2(t),1-X_1(t))$. To exit $\Ssetout$, one of two things needs to happen: 
    either $Y_2^1$ increases by at least $|{\ln \rplus}|$ going along the drift, or $Y_1^1$ increases by at least $|{\ln \rmoins}|$ going against the drift.  
    Appealing to Corollary~\ref{cor:lin-lb-tauplus-prob} and Corollary~\ref{cor:drift-wins-over-mart-bound},
    we deduce that, with probability at least $\tfrac 12$, this takes at least $C_2^{-1} |{\ln \rplus}|$. This argument is easily adapted to a starting point in the component within~$U^2$ or the component within~$U^3$.
    
    Suppose that the starting point is located in the edge set that is the component of~$\Ssetin$ within $G^2_{r\rmoins, r}$ in Figure~\ref{fig:buffers-and-insets}. We will use the process $(Y^2(t))_{t \geq 0}$. To exit $\Ssetout$, one of two  things needs to happen: 
    either 
    $Y_2^2$ increases by $|{\ln \rplus}|$, possibly going along the drift, or $Y_1^2$ goes from being in $[\ln r, -\ln r]$ to being outside of $(\ln\rmoins + \ln r, -\ln \rmoins - \ln r)$, going against the drift. 
    By Corollary~\ref{cor:lin-lb-tauplus-prob} and Lemma~\ref{lem:exp-lb-suppt-case} and a union bound, we deduce that, with probability at least $\tfrac 12$, this takes at least $C_2^{-1} |{\ln \rplus}| \wedge C_3^{-1} \exp(C_3^{-1}|{\ln \rmoins}|)$. 

    Hence, 
    $$\ee^x \hit_{\Ssetout^\complement}  \geq \tfrac 12 C_2^{-1} |{\ln \rplus}| \wedge \tfrac 12 C_3^{-1} \exp(C_3^{-1}|{\ln \rmoins}|)$$ for all $x \in \Ssetin$.
    Therefore, it suffices to pick $\rmoins$ small enough that
    $
        \tfrac 12 C_3^{-1} \exp(C_3^{-1}|{\ln \rmoins}|) > (2K+1) 2C_1 |\ln\rmoins|,
    $
    and then
    $\rplus$ small enough that 
    $
        \tfrac 12 C_2^{-1} |{\ln \rplus}| > (2K+1) 2C_1 |\ln\rmoins|.
    $
    Referring back to the definition~\eqref{eq:T-choice} of the time~$T$, we have the desired bound.
\end{proof}

Now that we have shown that Proposition~\ref{prop:YM12} applies, and hence that the set $R$ is recurrent, we can consider the event
\[  
    {\convEvent_\Attr} = 
    \left\{ X(t) \to \Or^0  \text{ or } X(t) \to \Ed^1 \text{ as } t\to\infty \right\},
\] 
and the stopping times  $\tau_0 = 0$ and
\[ 
    \tau_n  = \{t \geq \tau_{n-1} + 1 : X(t) \in \Rset \}.
\]
Since the set $\Rset$ is recurrent, these stopping times are all almost surely finite, and the event ${\convEvent_\Attr}$ is measurable with respect to the $\sigma$-algebra~$\cF_\infty$ generated by $(\cF_{\tau_n})_{n=1}^\infty$. In view of~\eqref{eq:conv-lb-from-Rset} and the strong Markov property,  $\ee[\one_{\convEvent_\Attr}|\cF_{\tau_n}] \geq \delta$ for all $n$. 
Hence, the equality~\eqref{eq:particular-case-1-conclu} follows from Lemma~\ref{lem:levy-conseq}.

\begin{remark}
    While the intermediate step using the strong Markov property and~\eqref{eq:intermediate-trans} would be avoided by directly showing that $\overline{U_r^0} \cup \overline{G_r^1}$ is recurrent rather than showing that $R = \overline{U_r^0} \cup \overline{G_r^1} \cup (F^*_{r'})^\complement$ is recurrent, it turns out that the use of Proposition~\ref{prop:YM12} for the former is technically more tedious than for the latter. Moreover, using $R$ instead of $\overline{U_r^0} \cup \overline{G_r^1}$ parallels more closely what needs to de done in Section~\ref{ssec:proof-it-recurrent-case}. 
\end{remark}

\subsection{Proofs of cases~\ref{it:attractor-case} and~\ref{it:recurrent-case} of Theorem~\ref{thm:2d-trich}}
\label{ssec:proof-it-recurrent-case}

While we used a particular situation for illustration purposes,
the local definitions~\eqref{eq:local-def-ABC}--\eqref{eq:pre-gluing-constraints-ABC} and their extensions~\eqref{eq:Lyap-ext}, 
as well as the proofs of Lemmas~\ref{lem:pre-YM-i-particular} and~\ref{lem:return-Rset-particular-c} and Proposition~\ref{prop:return-Rset-particular-d}, 
apply to all situations that fall within case~\ref{it:attractor-case}, provided that one adapts the definition of~$\Rset$ and the proof of Lemma~\ref{lem:chooseing-betas-case-I} to whatever are the elements of~$\Attr$.

\medskip

We now turn to case~\ref{it:recurrent-case}, which we recall is the case where there are no attracting vertex or edge and there is no stable stochastic cycle. 
Our claim in this case is that there is a unique invariant probability measure with a density with respect to the 2-dimensional Lebesgue measure, to which the empirical measures converge  with probability~$1$.

As in case~\ref{it:attractor-case}, a key step is to prove positive recurrence to 
a set $\Rset$. However, now this set is simply {$\bXo\setminus F^*_{r'}$}.  We keep the local definitions~\eqref{eq:local-def-ABC} and~\eqref{eq:pre-gluing-constraints-ABC} and their extensions~\eqref{eq:Lyap-ext}. These definitions depend on parameters $({\beta}_k)_{k=0}^3$ or, equivalently, $(\tilde{\beta}_k)_{k=0}^3$ that we will choose (see below) to satisfy~\eqref{eq:decay-and-gluing-constraints-ABC} for all $k$. We also carry over the choice of parameters $\rmoins$ and $\rplus$ and sets $\Ssetout$ and $\Ssetin$ ensuring that we can apply Lemmas~\ref{lem:pre-YM-i-particular} and~\ref{lem:return-Rset-particular-c} and check the conditions of Theorem~\ref{prop:return-Rset-particular-d}  
guaranteeing
the following recurrence property of the compact set $\Rset = \bXo\setminus F^*_{r'}$: 
\begin{equation}
\label{eq:YM-goal-done}
    \ee^x \inf\{t \geq 0 : X(t) \in R \} \leq 3\Phi (x) + 2KT.
\end{equation} 
Before we make these choices precise depending on the situation, let us explain how~\eqref{eq:YM-goal-done} allows us to deduce case~\ref{it:recurrent-case} using a result of Meyn and Tweedie. To be more precise, special cases of Theorems~3.2.i, 4.1.i, 4.3.ii, 4.4 and 8.1.i in~\cite{MT93} can be combined to obtain Theorem~\ref{thm:extracted-from-MT} below. While its assumptions can be checked by adapting different arguments from the earlier literature, we will for simplicity make references to the textbook exposition in~\cite{Benaim-Hurth:MR4559704}.

\begin{theorem}[Meyn--Tweedie, 1993]
\label{thm:extracted-from-MT}
    Consider a Feller, time-homogeneous Markov process $(X_t)_{t\geq 0}$ on an open set $\bU \subset \rr^n$. Suppose the process has continuous paths and that the following two conditions hold:
    \begin{description}
        \item[Minorization] There exists a probability measure~$m$ with the property that, for every compact set $R \subset \bU$, there exists $\delta > 0$ such that 
        \begin{align*}
            \inf_{x \in R} \int_0^\infty \Exp{-t} \pp^x\{X(t) \in A\} \geq \delta m(A)
        \end{align*}
        for every Borel set $A \subset \bU$.
        \item[Strong recurrence] There exists a compact set $R$ and a positive time~$s>0$ such that
        \begin{align*}
            \ee^x \inf\{ t \geq s : X(t) \in R \} < \infty
        \end{align*}
        for every $x\in\bU$.
    \end{description}
    Then, there exists a probability measure~$\invm$ on~$\bU$ such that $m \ll \invm$ and, for every $x \in \bU$, with probability~$1$, we have
    $  
        \empi{x}_t \to \invm
    $
    as $t\to\infty$.
\end{theorem}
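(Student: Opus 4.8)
The plan is to deduce this from the standard theory of positive Harris recurrent Markov processes, essentially assembling the cited results of Meyn and Tweedie; the work lies entirely in checking that the two hypotheses place us in that framework. First I would pass from the continuous-time process to a discrete object on which the Harris machinery is cleanest, namely the resolvent kernel $R_1(x,A)=\int_0^\infty \Exp{-t}\pp^x\{X(t)\in A\}\,\dd t$ (the same kernel that appears in the \emph{Minorization} hypothesis). That hypothesis says precisely that every compact $R\subset\bU$ is \emph{petite} for $R_1$, being uniformly minorized by $\delta\,m$, and in particular that the process is $m$-irreducible. Combined with the Feller property, this makes $(X_t)_{t\ge 0}$ a $T$-process, so that compact subsets of $\bU$ are petite for the process itself and $\invm$-null sets behave well.

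Next I would upgrade recurrence. The \emph{Strong recurrence} hypothesis is exactly a Foster--Lyapunov/Kac-type drift condition relative to the petite compact set $R$: $\ee^x\inf\{t\ge s:X(t)\in R\}<\infty$ for every $x$. By the continuous-time drift criterion for positive recurrence (Meyn--Tweedie, and the textbook treatment in \cite{Benaim-Hurth:MR4559704}), this yields that $(X_t)_{t\ge 0}$ is positive Harris recurrent, hence admits a \emph{unique} invariant probability measure $\invm$, obtained from the normalized occupation measure of an excursion away from $R$ via Kac's formula. The assertion $m\ll\invm$ then follows because $\invm$ is itself an irreducibility measure: from $\invm=\invm R_1$ and $\invm(\overline{\bU_{m_0}})>0$ for some $m_0$ in the exhaustion $(\bU_m)_{m\in\N}$, one gets $\invm(A)\ge \delta\,\invm(\overline{\bU_{m_0}})\,m(A)$ for every Borel $A$, so $\invm(A)=0$ forces $m(A)=0$.

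Finally, for the convergence of empirical measures I would invoke the Harris ergodic theorem: for a positive Harris recurrent process and any $\invm$-integrable $f$, one has $\frac1t\int_0^t f(X^x_s)\,\dd s\to\int f\,\dd\invm$ almost surely, for \emph{every} starting point $x$ (no exceptional null set depending on $x$, by Harris recurrence). To promote this to weak convergence $\empi{x}_t\to\invm$ I would (i) observe that the \emph{Strong recurrence} hypothesis also gives a.s. tightness of $(\empi{x}_t)_{t>0}$, since the process keeps returning to the compact $R$ with controlled frequency and hence mass cannot escape to $\partial\bU$; and (ii) apply the ergodic theorem simultaneously along a fixed countable family of bounded continuous functions on $\bU$ that is convergence-determining. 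Combining (i) and (ii) on a single full-measure event yields a.s. weak convergence of the full measure-valued trajectory to $\invm$.

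The main obstacle I anticipate is not a single deep estimate but the careful bookkeeping of the continuous/discrete and compactness issues: verifying that Feller plus the compact-uniform minorization genuinely places us in the $T$-process setting where compact sets are petite; ensuring the passage from ``finite expected hitting time of a petite set'' to a bona fide \emph{probability} invariant measure rather than merely a $\sigma$-finite one; and establishing tightness of the empirical measures on the open, non-compact state space $\bU$ so that a.s. convergence of the time integrals of individual test functions assembles into a.s. weak convergence of $\empi{x}_t$. Since each of these steps is available in \cite{MT93} and \cite{Benaim-Hurth:MR4559704}, the proof in the paper will consist mainly of citing the appropriate special cases and checking that their hypotheses match the two displayed conditions.
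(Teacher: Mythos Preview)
Your proposal is correct and matches the paper's approach exactly: the paper does not give an independent proof of this theorem but simply states that it is obtained by combining Theorems~3.2.i, 4.1.i, 4.3.ii, 4.4 and 8.1.i of \cite{MT93}, which is precisely the assembly of the Harris machinery (petiteness of compacts via the resolvent minorization and the $T$-process structure, positive Harris recurrence from the drift condition, and the a.s.\ ergodic theorem for empirical measures) that you outline. Your sketch is in fact more detailed than what the paper provides.
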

    
By Assumption~\ref{req:Hor}, there a is a neighborhood~$V$ of~$x_*$, 
a time~$t_0>0$, a Borel probability measure~$m$ and a constant $\alpha > 0$ such that 
\begin{align*}
    \inf_{x\in V}\pp^x\{X(t_0)\in A\} \geq \alpha m(A)
\end{align*}
for every Borel measurable set $A \subset \bXo$; see, e.g., Theorem~6.37 in~\cite{Benaim-Hurth:MR4559704}.
But then, for every such~$A$ and every point $x\in\bXo$, the Markov property yields
\begin{align*}
    \int_0^\infty \Exp{-t} \pp^x\{X(t) \in A\} \dd t
    &\geq \int_{t_0}^\infty \Exp{-t} \pp^x\{X(t) \in A\} \dd t \\
    &\geq \int_0^\infty \Exp{-(t+t_0)} \pp^x\{X(t_0+t) \in A\} \dd t \\
    &\geq \int_0^\infty \Exp{-(t+t_0)} \pp^x\{X(t) \in V\}\inf_{x' \in V} \pp^{x'}\{X(t_0) \in A\} \dd t \\
    &\geq \left(\int_0^\infty \Exp{-t} \pp^x\{X(t) \in V\}\dd t\right) \Exp{-t_0}\alpha m(A).
\end{align*}
By the Feller property and Fatou's lemma, the dependence in $x$ of the integral in parentheses is lower semicontinuous. By the Feller property and Assumption~\ref{req:irreducibility}, this integral is positive for every~$x$.
This provides the minorization condition in Theorem~\ref{thm:extracted-from-MT}.

For the choice of the compact set~$R = \bXo\setminus F^*_{r'}$, by the Markov property,~the recurrence property~\eqref{eq:YM-goal-done} and Corollary~\ref{cor:Dynkin}, we have
\begin{align*}
    \sup_{x \in R} \ee^x \inf\{ t \geq s : X(t) \in R\} 
    &\leq \sup_{x \in R} \ee^x[s + 3\Phi(X(s)) + 2KT] \\
    &\leq  s + 3\left(\sup_{x \in R} \Phi(x) + s \right) + 2KT
\end{align*}
for every fixed $s \geq 0$. Since $\Phi$ is continuous, this provides the strong recurrence property in Theorem~\ref{thm:extracted-from-MT}.

With the measure~$\invm$ from the conclusion of Theorem~\ref{thm:extracted-from-MT} at hand, the absolute continuity property claimed in case~\ref{it:recurrent-case} of Theorem~\ref{thm:2d-trich} follows from Assumptions~\ref{req:irreducibility} and~\ref{req:Hor} by, e.g., Theorem~6.34.ii in~\cite{Benaim-Hurth:MR4559704}.

Let us now show how to choose $(\tilde{\beta}_k)_{k=0}^3$ so that~\eqref{eq:decay-and-gluing-constraints-ABC} holds for all $k$.
The strategy depends on whether vertices of~$\bX$ form an unstable stochastic cycle 
(no other stochastic cycles are allowed by the definition of case~\ref{it:recurrent-case} and Assumption~\ref{req:index-stab-ne-1}).

In the stochastic cycle case,  without loss of generality, we assume that $\eivalv^{k}>0>\eivalh^{k}$
for all $k$. Requirement~\eqref{eq:decay-and-gluing-constraints-ABC} can then be rewritten as
\begin{equation}
    \label{eq:betas-rewrite}
        \tilde{\beta}_{k}>\tilde{\beta}_{k-1}\rho^k. 
\end{equation} 
So, we define $\beta_0=1$, and then, recursively,
$
\tilde{\beta}_k = (\rho^k+\eps)\tilde{\beta}_{k-1}
$ for some small parameter~$\eps > 0$.
Requirement~\eqref{eq:betas-rewrite} clearly holds for $k=1,2,3$ and for all~$\eps>0$. For $k=0$, it can be written as
\[
1> \bigg(\prod_{j=1}^3 (\rho^j+\eps)\bigg) \rho^0.
\]
Since the cycle is unstable, i.e., $\Index<1$,
this inequality holds for $\eps$ small enough.

If the vertices do not form a cycle, it is convenient to define an auxiliary directed graph $\Gamma$ treating the edges $E_k$, $k\in\zz_4$, as nodes of $\Gamma$. If $\eivalh^{k}>0$ and $\eivalv^{k}<0$, we connect $E_{k-1}$ to $E_k$ by an arc (arcs are directed).  If $\eivalh^{k}<0$ and $\eivalv^{k}>0$, we connect $E_{k}$ to $E_{k-1}$ by an arc. In other words, if two edges meet at a saddle point, we connect them by an arc whose orientation coincides with the orientation of the saddle. 
We denote the set of edges $E_k$ with no incoming arcs by~$\rootEd$. Since there is no cycle, $\rootEd \ne\emptyset$. For each $E_k\in \rootEd $, we set $\tilde{\beta}_k=1$, and then we sequentially assign values of $\tilde{\beta}_k$ to all the remaining edges following directed paths in $\Gamma$ and making sure~\eqref{eq:decay-and-gluing-constraints-ABC} holds.

\section{Discussion}\label{sec:discussion}

The goal of this section is to discuss possible extensions of our theory and connections to the existing results.

\subsection{Higher dimensions}

\subsubsection{General setup and basic properties}

We begin with the extensions to arbitrary dimensions. We will assume that $\X$ is the closed unit  cube $[0,1]^d$ for some $d\in\N$ although most of the statements below apply to sets diffeomorphic to convex polytopes which may be viewed as cell complexes consisting of faces or cells of various dimensions. For the cube, an $n$-dimensional face is an $n$-dimensional cube obtained by restricting $d-n$ coordinates to be $0$ or $1$ each, and letting each of the remaining $n$ coordinates vary in $(0,1)$.  The extreme cases are $\X^\circ=(0,1)^d$ which can be viewed as a (unique) $d$-dimensional face and vertices of the cube which can be viewed as $0$-dimensional faces. The collection of all faces is denoted by $\Faces$.

The diffusion on $\X$ is given by
\begin{equation}
    \label{eq:multd-SDE}
\dd X(t)=b(X(t)) \dd t+\sum_{i=1}^m \sigma_i(X(t))\circ \dd W_i(t),
\end{equation}
where we assume that $m\in\N$, $W_1,\ldots,W_m$ are independent standard  Brownian motions and the vector fields~$b$ and $\sigma_i,$ $i=1,\ldots,m,$ belong to $C^\infty(U)$ for an open set $U\supset \X$.  

It is crucial to describe the behavior of the vector fields on the boundary.
We assume that for each $F\in \Faces$, the restrictions of $b$ and $\sigma_i$, $i=1,\ldots,m$ to $F$ are tangent to~$F$. This implies varying (from face to face) degree of degeneracy of the noise and 
ensures that 
the SDE~\eqref{eq:multd-SDE} equipped with an initial condition 
$X(0)=x\in\X$ has a unique $\X$-valued strong solution $(X^x_\omega(t))_{t \geq 0}$. Each $F\in\Faces$ (in particular, $\X^\circ$) is invariant for the resulting
(strong) Markov process. 

We will also need an assumption ensuring that there is at most one invariant measure concentrated one each face and, if a face supports such a measure, the transition probabilities and empirical measures converge to it. This may be done similarly to the present paper via an irreducibility condition on each face and relative H\"ormander's condition at one point per face. For simplicity, we could make a stronger relative ellipticity assumption and require that if $x\in F$, then  $\sigma_1(x),\ldots,\sigma_m(x)$ span the tangent space $T_x F$.  

Basic properties of the long-term behavior discussed in Section~\ref{sec:main-results} stated for $d=1$ and $d=2$ still hold in this setting for arbitrary~$d$. The interior of the cube may be recurrent or transient. With probability $1$, the set of limit points is a union of faces (perhaps the entire cube),
and every weak limit point of empirical measures is a mixture of ergodic invariant distributions supported by some of these faces. These measures are {\it observable}: they have nontrivial contribution to the long-term statistics of the diffusion with positive probability. 
Scenarios~\ref{scen:uniqe-erg-attr}--\ref{scen:nonerg-attr} in
Section~\ref{sec:summary-result}, i.e., the existence of unique ergodic attractors,
random ergodic attractors, and unique nonergodic attractors,
can all occur in higher dimensions, and ergodic attractors
can be more general; see the discussion in Section~\ref{sec:attracting-faces}.
Random ergodic and nonergodic attractors can occur in dimension $d\ge 3$ (though
not in $d < 3$); examples are given in Section~\ref{sec:stoch-cycles-high-d}. We do not
know if this is a complete classification of diffusion
dynamics under the kind of nondegeneracy and hyperbolicity
assumptions imposed in this paper.

\subsubsection{Attracting faces}
\label{sec:attracting-faces}

Let us briefly discuss the case where {\it attracting faces} resulting in relatively simple behavior are present. 
For that, we need to introduce transversal Lyapunov exponents for each face $F$ supporting a (unique) invariant probability measure~$\invm_F$. For simplicity, here we describe this notion only for the case where
the noise is diagonal, i.e., $m=d$ and  $\sigma_i(x)$ is parallel to the $i$-th coordinate axis for $i=1,\ldots,d$ in \eqref{eq:multd-SDE}.
For an $n$-dimensional face~$F=(0,1)^{n}\times\{0\}^{d-n}$, we define, similarly to \eqref{eq:average-repulsion-over-edge-diag},
\[
\bar \Lambda_i (F)=\int_{F}\partial_i b_i(x)\,\invm_F(\dd x),\quad i=d-n+1,\ldots, d.
\]  
If the noise is not diagonal, the integrand in this formula 
must be corrected by contributions from the noise involving mixed partial derivatives of diffusion coefficients, similarly to~\eqref{eq:new-def-oldlambda}.
If $\bar \Lambda_i(F)<0$ for all transversal directions $i=n-k+1,\ldots, n$, then we call $F$ an attracting face. 
Similar definitions serve other faces.
\begin{conjecture} Under broad conditions
    guaranteeing that there is at most one invariant measure concentrated on each face,  for each 
    attracting $F\in\Faces$, the following holds: for all $x\in\bXo$, with positive probability, $\bar F$ is the set of limit points of $X^x_\omega(t)$ as $t\to\infty$ and the empirical measures $\empi{x}_{\omega,t}$ converge to $\invm_F$ as $t\to\infty$.
\end{conjecture}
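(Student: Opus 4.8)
The plan is to run, at the level of strategy, the two-dimensional argument behind Theorem~\ref{thm:2d-attr}: first a \emph{local} absorption statement near the attracting face (the analog of Lemmas~\ref{lem:corners-attract} and~\ref{lem:edge-abrsob-prob} in Section~\ref{ssec:local-abs}), then a \emph{conditional} convergence statement for the empirical measures on the absorption event (the analog of Lemmas~\ref{cor:empi-sink} and~\ref{lem:attr-edge-empir}), and finally the transfer of positivity to all of $\bXo$ by irreducibility. Fix the attracting $n$-dimensional face $F$; after relabeling coordinates, take $F=(0,1)^n\times\{0\}^{d-n}$, so that the transversal directions are $i\in\{n+1,\dots,d\}$ and the hypothesis reads $\bar\Lambda_i(F)<0$ for each such $i$. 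Recall that $\bar\Lambda_i(F)$ is defined as in~\eqref{eq:average-repulsion-over-edge} (with a noise-covariation correction as in~\eqref{eq:new-def-oldlambda} when the noise is not diagonal), so its very definition already presupposes that the unique invariant measure $\invm_F$ exists. For each transversal index $i$ I would build a corrector $\psi_i$ on $F$ from the time-integral formula~\eqref{eq:def-psi}, with $g_i=\Lambda_i-\bar\Lambda_i(F)$ the centered pointwise transversal rate (the $i$-th analog of~\eqref{eq:g_is_centered_Lambda}--\eqref{eq:new-def-oldlambda}); the integral converges because the $F$-diffusion mixes exponentially (the higher-dimensional analog of Remark~\ref{rem:mixing}, available under the recursive irreducibility/H\"ormander hypotheses that also give uniqueness of $\invm_F$). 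Setting $\Phi=\sum_{i=n+1}^{d}\factorLyap_i(\ln x_i+\psi_i)$, the multidimensional versions of Lemmas~\ref{lem:Lyap-corner-pre} and~\ref{lem:Lyap-edge-pre} and of Lemma~\ref{lem:psi} give $\cL\Phi=\sum_i\factorLyap_i\bar\Lambda_i(F)+o(1)$ as $d(x,F)\to 0$; choosing all $\factorLyap_i>0$ (possible since each $\bar\Lambda_i(F)<0$) yields $\cL\Phi\le-1$ on a transversal slab $N$ around $F$, which is the analog of Corollary~\ref{cor:mean-H-process}.

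On $N$, the It\^o formula~\eqref{eq:first-explicit-Ito} dominates $H(t)=\Phi(X(t))$ by $\tfrac12\bigl(\sum_i\factorLyap_i\bar\Lambda_i(F)\bigr)t+M(t)$ for a martingale $M\in\cM$ with quadratic variation controlled uniformly near $F$, as long as $X$ stays in $N$ (and, as in the proof of Lemma~\ref{lem:edge-abrsob-prob}, after replacing the possibly unbounded $\psi_i$ near $\partial F$ by modifications $\psi_i^+$ bounded below, so that $H\to-\infty$ forces $d(X,\bar F)\to0$). The results on processes in $\cM$ (Appendix~\ref{app:martingale}, in particular Lemma~\ref{lem:drift-wins-over-mart-bound} and Corollary~\ref{cor:drift-wins-over-mart-div}) then give that $\tilde p^x_F=\pp\{d(X^x(t),\bar F)\to0\}$ is arbitrarily close to $1$ for $d(x,F)$ small enough, hence bounded below on a fixed open transversal slab $V$ near $F$. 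To upgrade $d(X^x(t),\bar F)\to0$ to $X^x(t)\to\bar F$ \emph{and} $\empi{x}_t\to\invm_F$ on the event $\tilde B^x_F$, argue as in the final step of the proof of Lemma~\ref{lem:attr-edge-empir}: by Theorem~\ref{thm:benaim} every weak limit point of $(\empi{x}_t)$ is invariant and, since $d(X,\bar F)\to0$, concentrated on $\bar F$, hence a mixture of $\invm_F$ and of invariant measures carried by the sub-faces $F'\subsetneq\bar F$; the sub-face contributions are killed by showing that the asymptotic fraction of time spent in an $\eps$-neighborhood of $\partial F$ tends to $0$ with $\eps$, which follows from Lemma~\ref{lem:ee-ap-bound} once one notes that within $N$ the logarithmic distance (inside $\bar F$) to each codimension-one sub-face of $\bar F$ has increments bounded below by $\nu t+M(t)$ with $\nu>0$ — this last repulsion being forced by the existence of $\invm_F$ together with recursive hyperbolicity of the sub-faces. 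This pins the limit points of $(\empi{x}_t)$ to $\{\invm_F\}$, so $\empi{x}_t\to\invm_F$ by tightness, and since $\operatorname{supp}\invm_F=\bar F$ the set of limit points of $X^x(t)$ is exactly $\bar F$; in particular $\tilde B^x_F$ and $B^x_F=\{X^x(t)\to\bar F\}$ coincide up to a null set.

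It remains to obtain $p^x_F>0$ for every $x\in\bXo$. Exactly as in the derivation of~\eqref{eq:particular-case-1-preconclu} in Section~\ref{ssec:conclu-main-attr}, this follows from the higher-dimensional irreducibility hypothesis and the Markov property: from any $x\in\bXo$ the process reaches the open slab $V$ in finite time with positive probability, so $p^x_F\ge\pp\{X^x(t)\in V\}\,\inf_{x'\in V}p^{x'}_F>0$. Together with the previous paragraph, this gives the conjecture. Note that, in contrast with Theorem~\ref{thm:2d-attr}, no recurrence input in the style of Theorem~\ref{prop:YM12} is needed, precisely because the conjecture only asks for a positive-probability event rather than for the probabilities over all attracting faces to sum to~$1$.

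The main obstacle is the interplay between the non-compactness of $F$, the presence of several transversal directions, and the presence of several codimension-one sub-faces. As already in the two-dimensional edge case (Appendix~\ref{app:fk-function}), the correctors $\psi_i$ typically grow only logarithmically near $\partial F$, so that after the modification $\psi_i\mapsto\psi_i^+$ one must check both that $\cL\Phi\le-1$ survives near $F$ and that the modified Lyapunov function remains compatible with the sub-face repulsion estimate used in the second paragraph — and this sub-face estimate itself must be carried out for the genuine process in the transversal slab $N$, not merely for the $F$-diffusion, which requires the collar-coordinate generator expansion near each sub-face to hold uniformly in the coordinates transversal to $F$. Making the ``broad conditions'' precise, so that recursive hyperbolicity simultaneously guarantees uniqueness of $\invm_F$ and the required repulsion from every sub-face of every codimension, together with the (routine but lengthy) multidimensional versions of the generator expansions in Lemmas~\ref{lem:Lyap-corner-pre} and~\ref{lem:Lyap-edge-pre} whose noise-covariation terms must reproduce the $\bar\Lambda_i(F)$, is where the genuine work lies.
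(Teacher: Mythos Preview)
The paper does not prove this statement: it is explicitly labeled a \emph{Conjecture}, and immediately after stating it the authors write that ``for the case where $F$ is a vertex of the cube, this conjecture is easy to prove with the methods of this paper. However, translating the analysis of the function we denoted by $\psi$ to more general situations requires more work.'' So there is no proof in the paper to compare against; your sketch is a proposal for how to attack an open problem, and its overall strategy (corrected logarithmic Lyapunov function in each transversal direction, martingale absorption, Bena\"im's theorem plus a time-fraction argument to pin down the limit) is exactly the natural extension of the $2$-dimensional machinery, in line with what the paper suggests.

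Your final paragraph is honest about where the real work is, but one step earlier deserves more scrutiny than you give it. You write that repulsion from each codimension-one sub-face of $F$ is ``forced by the existence of $\invm_F$ together with recursive hyperbolicity,'' and then invoke a $\nu t+M(t)$ lower bound on the logarithmic distance to $\partial F$. In dimension~$1$ this implication is clean (Theorem~\ref{thm:1d}: if the edge supports an invariant measure, both endpoints are sources). But for an $n$-face $F$ with $n\ge 2$, the existence and uniqueness of $\invm_F$ only tells you that the $F$-dynamics fall into the analog of case~\ref{it:recurrent-case} of Theorem~\ref{thm:2d-trich}; it does \emph{not} hand you, for each $(n-1)$-sub-face $F'$, a single positive drift constant $\nu$ for $\ln d(X,F')$ valid uniformly along $F'$. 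Near the lower-dimensional corners of $F'$ you are back in the situation of Section~\ref{sec:proofs-Lyap}, where no single-sign generator bound holds and one needs the full Foster--Lyapunov-with-exceptions machinery (Theorem~\ref{prop:YM12}) together with a corrector. So the time-fraction step that kills the sub-face contributions is not a direct application of Lemma~\ref{lem:ee-ap-bound}; it requires its own recursive Lyapunov construction on $\bar F$, intertwined with the control of the $\psi_i$ near $\partial F$. This coupling of the transversal correctors with the in-face recurrence structure is precisely the obstacle the authors flag and leave open.
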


For the case where  $F$ is a vertex of the cube,
this conjecture is easy to prove with the methods of this paper. However, translating the analysis of the function we denoted by $\psi$ to more general situations requires more work.

\subsubsection{Stochastic cycles in higher dimensions}\label{sec:stoch-cycles-high-d}.

For higher dimensional diffusions the asymptotic scenarios are not restricted to types A through~C described in this paper for dimensions $1$ and $2$. 
In this section, we discuss two examples of diffusions in the $3$-dimensional unit cube. In the first one, instead of a unique nonergodic attractor described in the $2$-dimensional scenario 
\ref{scen:nonerg-attr}, there are 
two of them, and one is chosen by the diffusion at random. In the second example, the nonergodic attractor is a union of four squares, and there are four observable measures each supported by an edge.

These examples can be introduced as product systems, where the ``base'' process $(X_1,X_2)$ is a diffusion on the square $[0,1]^2$ and the 
``height'' process $X_3$ is an independent  diffusion on $[0,1]$. 

To construct the first example, we suppose that $(X_1,X_2)$ has the stable stochastic cycle behavior (scenario \ref{scen:nonerg-attr}), and~$X_3$ is
a process on $[0,1]$ with both attracting endpoints.
The product system exhibits two nonergodic attractors, one consisting of all the edges of the square $(0,1)^2\times\{0\}$, another 
consisting of all the edges of the square $(0,1)^2\times\{1\}$. The diffusion chooses one of these attractors at random and converges to it.
Conditioned on convergence to one of the attractors,
four Dirac masses at its vertices are observable and contribute to the long-term statistical description.
This example provides a new scenario that combines features of scenarios~\ref{scen:random-erg-attr} and \ref{scen:nonerg-attr} and does not reduce to any of those. It is also easy to give similar examples (lacking the product structure) with, say, a stable stochastic cycle on one face of the cube and one or more attracting edges or vertices on the opposite face. 

Now let us modify this example keeping the behavior for the base process $(X_1,X_2)$ and assuming that both endpoints of $[0,1]$ are repelling for the height process $X_3$ implying that $X_3$ has an invariant probability measure~$\invm$ supported on the entire $[0,1]$.  In this case, for each vertex $\Or^k$, $k=0,1,2,3$ of the square, a unique invariant probability measure $\delta_{\Or^k}\times \invm$ is concentrated on the vertical edge $\Or^k\times(0,1)$, and the set of observable ergodic measures is composed of these four measures. 
During long epochs when the diffusion dwells near one of these edges, its statistics are governed by the invariant measure concentrated on that edge. During the transitions between edges, the process may touch any point on the square spanned by these edges. As a result, the only attractor is the closure of the union of these four squares. 

\medskip

Not all situations in high dimensions are this simple. In contrast with the concise classification that this paper provides in dimensions $1$ and $2$, the geometry of 
attractors and observable measures may be quite involved in higher dimensions. Attractors may overlap, i.e., one face may belong to more than one attractor. An attractor may be the closure of a union of faces of varying dimension. Also an attractor does not necessarily have to be cyclic; instead it may correspond to a scenario where the diffusion makes multiple random decisions. 
These features will be addressed in the future work.

\subsection{Nonhyperbolic cases}

In this paper, we have made several hyperbolicity assumptions. If they do not hold, new  kinds of behavior emerge.  This happens if, in dimension $1$, we allow the coefficients of linearization of the drift near endpoints to be zero. For example, suppose $W$ is a standard Brownian motion, and $x:\R\to(-1,1)$ is an odd increasing function 
such that 
$x(y)=1-\Exp{-y}$ for large $y>0$, $x(y)=-1+\Exp{y}$ for large $y<0$ and $x(0)=0$. 
Then, for every initial condition $y_0$, the process $X(t)=x(y_0+W(t))$ satisfies the Stratonovich equation~\eqref{eq:1d-SDE}
with drift $b(x)\equiv 0$:
\begin{equation}
    dX(t)=\sigma(X(t))\circ \dd W(t),
\end{equation}    
where $\sigma(x)=x'(y(x))$. Here $y:(-1,1)\to\R$ is the inverse of $x(\cdot).$ In particular, $\sigma(x)=1+x$ near $x=-1$ and $\sigma(x)=1-x$ near~$1$.
One can view the resulting system as a diffusion on $[-1,1]$. The invariant endpoints $\pm1$ correspond to $\pm\infty$ for the Brownian motion $W$.

Clearly, the linearization coefficients of the drift near $\pm1$ are zero. 

\begin{theorem}
    For the process described above and an arbitrary initial condition $x\in(-1,1)$,
    the set of weak limit points of empirical measures $\empi{x}_t$ consists of $\delta_{-1}$, $\delta_1$, and all their mixtures.
\end{theorem}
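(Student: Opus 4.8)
The plan is to exploit the representation $X(t)=\varphi(y(x)+W(t))$, where $\varphi$ denotes the odd increasing function called $x(\cdot)$ in the statement and $y=\varphi^{-1}:(-1,1)\to\rr$ is its inverse (so the initial condition $x\in(-1,1)$ of the diffusion corresponds to the initial condition $y(x)$ of the Brownian motion). Then the empirical measure is the pushforward $\empi{x}_t=\varphi_*\nu_t$ of the Brownian occupation measure $\nu_t=\tfrac1t\int_0^t\delta_{y(x)+W(s)}\,\dd s$, and since $\varphi$ extends continuously to $\bar\varphi:[-\infty,+\infty]\to[-1,1]$ with $\bar\varphi(\pm\infty)=\pm1$, the problem reduces to describing the limit points of $\nu_t$ among probability measures on $[-\infty,+\infty]$. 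Limit points exist by compactness of $\PP(\bX)$ with $\bX=[-1,1]$.

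First I would show that all mass escapes to $\{-1,+1\}$: for every bounded interval $[a,b]\subset\rr$ one has $\tfrac1t\int_0^t\one_{[a,b]}(W(s))\,\dd s\to0$ almost surely. This follows from It\^o's formula for a $C^2$ function $h$ with $h''=\one_{[a,b]}$ that is affine outside $[a,b]$ (so $h'$, $h''$ are bounded and $|h(w)|\le C(1+|w|)$): we get $\int_0^t\one_{[a,b]}(W(s))\,\dd s=2h(W(t))-2h(W(0))-2\int_0^t h'(W(s))\,\dd W(s)$, and dividing by $t$ the right-hand side tends to $0$ a.s.\ because $W(t)/t\to0$ and, by the strong law of large numbers for continuous martingales with quadratic variation at most $\|h'\|_\infty^2\,t$, also $\tfrac1t\int_0^t h'(W(s))\,\dd W(s)\to0$. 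Since $\{X(s)\in[-1+\eps,1-\eps]\}=\{y(x)+W(s)\in[y(-1+\eps),y(1-\eps)]\}$ and the latter interval is bounded, we conclude $\empi{x}_t([-1+\eps,1-\eps])\to0$ a.s.\ for each $\eps>0$, so every weak limit point $\mu$ of $(\empi{x}_t)$ is supported on $\{-1,1\}$, i.e.\ $\mu=p\delta_{-1}+(1-p)\delta_1$ for some $p\in[0,1]$.

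By the previous step a limit point is determined by $\langle\mu,\operatorname{id}\rangle$, so it suffices to find the set of limit points of $\tfrac1t\int_0^t X(s)\,\dd s$. Comparing $X(s)=\varphi(y(x)+W(s))$ with $\operatorname{sgn}(y(x)+W(s))$ and using the previous step to discard the (asymptotically negligible) times at which $y(x)+W(s)$ stays in a bounded set gives $\tfrac1t\int_0^t X(s)\,\dd s=2A_t/t-1+o(1)$ a.s., where $A_t=\int_0^t\one_{\{W(s)>0\}}\,\dd s$. The crux is then to prove that, almost surely, the set of limit points of $A_t/t$ as $t\to\infty$ is all of $[0,1]$. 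Since $t\mapsto A_t/t$ is continuous on $(0,\infty)$ with values in $[0,1]$, by the intermediate value theorem it is enough to show $\limsup_{t\to\infty}A_t/t=1$ and $\liminf_{t\to\infty}A_t/t=0$ a.s. For the first, I would note that $L:=\limsup_{t\to\infty}A_t/t$ is a.s.\ constant: for each $s>0$, the Markov property together with the occupation-time bound above (to absorb the shift by $W(s)$) shows $L$ coincides a.s.\ with the same $\limsup$ built from the post-$s$ increments $W(s+\cdot)-W(s)$, which are independent of $\mathcal F_s$; hence $L$ is independent of $\mathcal F_s$ for every $s>0$, therefore independent of $\mathcal F_\infty$, and being $\mathcal F_\infty$-measurable it equals a constant $c$ a.s. If $c<1$, then a.s.\ $A_t/t<1-\eps$ for all large $t$ for suitable $\eps>0$, so $\PP\{A_t/t\le1-\eps\}\to1$; but Brownian scaling and L\'evy's arcsine law give that $A_t/t$ has the arcsine law on $[0,1]$ for \emph{every} $t$, so $\PP\{A_t/t\le1-\eps\}$ is a fixed constant $<1$ --- a contradiction. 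Hence $c=1$, and applying the reflection $W\mapsto-W$ (which sends $A_t$ to $t-A_t$) yields $\limsup_{t\to\infty}(1-A_t/t)=1$ a.s., i.e.\ $\liminf_{t\to\infty}A_t/t=0$ a.s.

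Assembling these: every limit point of $(\empi{x}_t)$ is of the form $p\delta_{-1}+(1-p)\delta_1$ with $p\in[0,1]$; conversely, given any such $p$, pick $t_k\to\infty$ with $A_{t_k}/t_k\to1-p$, pass to a subsequence along which $\empi{x}_{t_k}$ converges to some $q\delta_{-1}+(1-q)\delta_1$, and use $\langle\mu,\operatorname{id}\rangle=1-2q=\lim\bigl(2A_{t_k}/t_k-1\bigr)=1-2p$ to get $q=p$. This identifies the set of limit points as exactly $\{p\delta_{-1}+(1-p)\delta_1:p\in[0,1]\}$, as claimed. The main obstacle is the oscillation statement for $A_t/t$: the arcsine law only controls its one-dimensional marginal, and upgrading this to the pathwise conclusion $\limsup A_t/t=1$, $\liminf A_t/t=0$ a.s.\ requires the $0$--$1$ argument sketched above (or, alternatively, citing this well-known behavior of Brownian occupation fractions directly).
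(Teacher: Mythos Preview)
Your proof is correct and follows essentially the same strategy as the paper: both rely on L\'evy's arcsine law together with a zero--one argument to show that $\delta_{-1}$ and $\delta_1$ are almost surely limit points, and then use continuity of $t\mapsto\empi{x}_t$ (respectively of $t\mapsto A_t/t$) to obtain all convex combinations by connectedness. The paper's tail-event argument for $B_\eps=\{H_n\ge 1-\eps\text{ i.o.}\}$ and your argument that $L=\limsup A_t/t$ is almost surely constant are two phrasings of the same idea.

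The one genuine methodological difference is how each argument shows that limit points are supported on $\{-1,1\}$. The paper invokes Theorem~\ref{thm:benaim} (every limit point of empirical measures is invariant) together with the observation that null recurrence rules out any invariant probability measure on $(-1,1)$. You instead prove directly, via It\^o's formula applied to a function $h$ with $h''=\one_{[a,b]}$, that $\tfrac1t\int_0^t\one_{[a,b]}(W(s))\,\dd s\to0$ almost surely. Your route is more self-contained and avoids the external reference; the paper's route is shorter given that Theorem~\ref{thm:benaim} is already in play elsewhere.
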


\begin{proof} We will argue in terms of the underlying Brownian Motion.
    Due to the null recurrence property of the Brownian motion, the only ergodic invariant measures of the diffusion on $[-1,1]$ are~$\delta_{\pm1}$.
    
    The process $(\empi{x}_t)_{t>0}$ is continuous in the space of probability measures on $[-1,1]$ equipped with the topology of weak convergence. Hence, due to compactness of this space, the set of weak limit points of~$(\empi{x}_t)_{t>0}$ as $t\to\infty$ is connected. Therefore, by Theorem~\ref{thm:benaim}, it suffices to show that both $\delta_{\pm1}$ are limit points with probability $1$. By symmetry, it suffices to prove this just for $\delta_1$. So we need to prove that      
    for every $r>0$ and $\eps>0$, with probability $1$, there is a sequence of times $\tau_k\to\infty$ such that the fraction of time spent by $y_0+W$ on $[r,+\infty)$ on the time interval $[0,\tau_k]$ is at least $1-\eps$.
    Due to recurrence and the strong Markov property, it suffices to prove this property instead for the process $r+W$.  Subtracting $r$, we can simplify this further: it suffices to prove that for every $\eps>0$,  
    \begin{equation}
        \label{eq:B_eps-prob_1}
        P(B_\eps)=1,
    \end{equation}    
    where
    \[
    B_\eps= \Big\{H_n\ge 1-\eps\quad \text{for infinitely many}\ n\in\N\Big\}.
    \]
    and $H_n$ denotes  
    the fraction of time spent by $W$ on $(0,\infty)$ during the time interval $[0,n]$. According to 
    L\'evy's Arcsine Law, for every $n$, the r.v.~$H_n$ has the same (arcsine) distribution, with positive density on $(0,1)$. Thus, there is $\delta>0$ such that 
    \begin{equation}
        \label{eq:positive-prob-of-closeness-to-delta}
    \pp(H_n\ge 1-\eps)= \delta,\quad n\in\N.
    \end{equation}
    It follows that 
    \begin{equation}
    \label{eq:i.o.-positive} 
        P(B_\eps)>0.
    \end{equation}
    Otherwise, we would have
    \[
        \pp\Big(\bigcap_{m=1}^\infty\bigcup_{n=m}^\infty \{H_n\ge 1-\eps\}\Big)=0,
    \]
    which implies $\pp\big(\bigcup_{n=m}^\infty \{H_n\ge 1-\eps\}\big)\to 0$ as $m\to\infty$ and thus contradicts~\eqref{eq:positive-prob-of-closeness-to-delta}. 
    Since $B_\eps$ is a tail event for the Brownian motion,  \eqref{eq:B_eps-prob_1} follows from \eqref{eq:i.o.-positive}, and the proof is completed.
\end{proof}    

The product of several such systems, where components of $X=(X_1,X_2,\ldots,X_d)$ are independent null-recurrent processes described above, is also null recurrent. It intermittently visits the corners 
of $[0,1]^d$, where it dwells for long random times. The result is that the set of all limit points of the empirical distribution is all mixtures of Dirac measures at the vertices. Note that although most transitions between the corners are made along the edges, this situation is different from the heteroclinic cycling studied in Theorem~\ref{thm:2d-cycle}. In fact, due to the null recurrence of the $2$-dimensional Brownian motion and transience of Brownian motion in higher dimensions, with probability~$1$, the attractor is the closure of the union of all $2$-dimensional faces of the cube.

Another family of interesting examples arises if one mixes the hyperbolic and neutral behavior, for example, when one assumes that only some of the transversal Lyapunov exponents for higher-dimensional faces are 0, or if the hyperbolicity Assumption~\ref{req:index-stab-ne-1} is broken for a stochastic cycle.

\subsection{Connections with the existing work}¸

Our work is closely related to~\cite{FK22,FK23}, where the authors consider\,---\,among other things\,---\,a situation similar to ours, but where the invariant sets on which the diffusion degenerates (boundary components) are pairwise disjoint smooth manifolds without boundary. Some elements of our analysis are adapted from these papers. However, the presence of lower-dimensional faces for our boundaries 
considerably enriches the class of possible long-term behaviors of empirical measures: for example, no heteroclinic behavior (scenario~\ref{scen:nonerg-attr}) is possible in~\cite{FK22,FK23}. We also need more detailed analysis near boundaries of invariant manifolds (invariant sets of lower dimensions).

\medskip 

Our work also naturally connects to the works~\cite{Bepreprint,FSpreprint} on the use of average Lyapunov functions in the context stochastic persistence and stochastic extinction for more general processes. Specializing their more general treatment of the topic to our setup, a key step is the analysis of a function $\Phi : \bXo \to \rr$ with the property that $\cL\Phi$ continuously extends to a function $H : \bX \to \rr$ whose integral with respect to every dynamically invariant probability measure concentrated on~$\partial\bX$ has a definite sign: negativity leads to persistence and positivity leads to extinction. While we focused on the sign of $\cL\Phi$ outside of regions that are quickly escaped rather than the integrals of $\cL\Phi$ against invariant measures, some statements about the latter can be extracted from our analysis of the former.
In our setup, {once the Lyapunov function is appropriately constructed}\,---\,as is done in Section~\ref{sec:proofs-Lyap} of the present manuscript \,---\,one could apply the techniques of Sections~6 and~7 in~\cite{FSpreprint} to prove extinction. However we are able to provide more detailed information about the behavior of the empirical measures.

\medskip

The analysis in the present paper allows to describe, under minimal assumptions, the behavior of {\it scalable} 
(see~\cite{LKYJW20}) 3-dimensional diffusions in the positive orthant and, specifically, the growth rates. The transition probabilities of those systems satisfy the relation 
\[
    \pp^x\{X(t)\in A\} = \pp^{cx}\{X(t)\in cA\}
\] 
for scaling factor $c>0$, every point~$x$ and every Borel set~$A$. Here $cA=\{cy:\ y\in A\}$. This allows to view the evolution as a two-component fibered system,
where the radial component is governed by the ``angle'' component (given by the projection, or normalization, removing the radial component) evolving in the unit 
2-dimensional simplex/triangle.  Our analysis directly applies to the dynamics on the simplex, and it means that any of the scenarios~\ref{scen:uniqe-erg-attr} through~\ref{scen:nonerg-attr} can happen. In scenario~\ref{scen:uniqe-erg-attr}, where the statistics are governed by a single invariant measure on the simplex, the radial component possesses a (almost surely) deterministic exponential growth rate given by the average of the instantaneous growth rate over the simplex with respect to the invariant measure. This is the situation studied in~\cite{LKYJW20}. In scenario~\ref{scen:random-erg-attr}, the system chooses an ergodic measure on the simplex boundary at random. The exponential growth rate is random in this case, since it is  given by averaging instantaneous rates with respect to the randomly chosen invariant measure. Finally, under scenario~\ref{scen:nonerg-attr}, the system undergoes stochastic cycling near the boundary of the simplex, and no single exponential growth rate of the radial component is well defined. Instead, there is a sequence of growing long epochs of dwelling near simplex vertices. During each epoch,  the effective growth rate equals the instantaneous growth rate at the respective vertex of the simplex.

In higher dimensions,
scenarios similar to~\ref{scen:uniqe-erg-attr} through~\ref{scen:nonerg-attr} and more complex ones are possible. In particular, it is possible to have multiple random nonergodic attractors on the simplex boundary, which corresponds to having a random family of growth rates that the system eventually chooses and starts going through at a decreasing speed. The growth rate at each epoch is given by the average of instantaneous growth rates over the ergodic measure governing the evolution over that epoch.   

\medskip 

In \cite{Hening-Nguyen:MR3809480,HNS22},
the authors provide a classification of the long-term behavior of
stochastic ecological systems (generalized Lotka--Volterra systems)
in unbounded 3-dimensional domains. Under the competitiveness assumption
they imposed on the dynamics together with the relative
nondegeneracy of the diffusion, this situation is, in essence,
that of 2-dimensional systems on bounded domains insofar as invariant
measures are concerned. In broad outline, the classification
results of our paper are similar to those in \cite{HNS22}.
Three major differences are: (i) our 
irreducibility and one-point hypoellipticity assumptions on the noise are very general;
(ii) our results give detailed, quantitative
information on the asymptotic dynamics, and (iii) our proofs
are different. Here we mention in particular our formulation
of an extension of the Foster--Lyapunov function, a general
result that we believe will be useful elsewhere.
We also note that the papers~\cite{SBA11,Hening-Nguyen:MR3809480,HNC21} contain\,---\,among other things\,---\,sufficient conditions for persistence and extinction in setting similar to that of~\cite{Hening-Nguyen:MR3809480,HNS22}, but in possibly higher dimension.

\medskip 

Speaking of systems with stochastic cycling, we must mention that their deterministic counterparts, systems with stable heteroclinic cycles, have been broadly studied. 
In fact, the failure of convergence and our description of the set of limit points for the empirical measures in Proposition~\ref{prop:quadri} can be viewed as a stochastic counterpart to known results about heteroclinic attractors in the deterministic setting; see, e.g.,~\cite{Ga92,Ta94}. Note however that we are not assuming that the deterministic part of our stochastic dynamics provides heteroclinic connections between the saddles.

Heteroclinic cycling and statistics of transitions in the presence of
elliptic (nondegenerate) white noise of vanishing amplitude has been studied in \cite{Bakhtin2010:MR2731621,Bakhtin2011,Rare-Heteroclinic}. There is a common theme between all these works and the classical metastability theory~\cite{FW}, namely, short transitions between long epochs of dwelling near metastable states.

\appendix

\section{Exit time estimates}
\label{app:martingale}

\subsection{Exit times estimates for one-dimensional processes}
\label{sec:1d-martingales}

Throughout the paper, it is often useful to compare various processes to a process of  the form 
\begin{equation}
\label{eq:drift-plus-mart}
    Z(t) = \nu t + M(t)
\end{equation}
where $\nu$ is a constant and $M = (M(t))_{t\ge 0}$  
belongs to $\cM$, a class of martingales we describe below. We work on 
a filtered probability space~$(\Omega, \cF, (\cF_t)_{t\ge 0},\pp)$ satisfying the usual conditions. We will write $M\in\cM$ if $M$ is a
continuous square-integrable martingale with respect to $(\cF_t)_{t\geq 0}$ satisfying $M(0)=0$ and whose quadratic variation $(\langle M\rangle_t)_{t\ge0}$ admits a deterministic Lipschitz constant, i.e., 
there is $A > 0$ such that, with probability 1, $\langle M\rangle _t - \langle M\rangle_s \leq A(t-s)$ for all $t \geq s \geq 0$.

We collect several facts concerning such processes. Some of these estimates depend on the Lipschitz constant $A$ introduced above.
We begin with the standard exponential martingale inequality for $M^*(t)=\sup\{|M(s)|:\ s\in[0,t]\}$; see, e.g., \cite[Section I.8]{Bass:MR1329542}.
\begin{theorem}
\label{thm:Bass-martingale}
   Let $t>0$, $z \geq 0$, $D>0$, and assume that $\langle M\rangle_t\le D$ with probability 1. Then,
    \begin{align*}
        \pp\{M^*(t) > z\} \leq 2\exp\left(-\frac{z^2}{2 D}\right),
    \end{align*} 
\end{theorem}
For $\zplus>0$ and $\zmoins<0$, we introduce the following stopping times: 
$\tauplus  = \inf\{t > 0 : Z(t) \geq \zplus\}$, $\taumoins  = \inf\{t > 0 : Z(t) \leq \zmoins\}$, and $\tauwtv  = \taumoins \wedge \tauplus$. 

\begin{lemma}
\label{lem:lb-tauplus-prob}
    If $\nu > 0$, then for every $\varkappa \in (0,1)$,
    \begin{align*}
        \pp\left\{\tauplus \leq \frac{\varkappa\zplus}{\nu}\right\} 
        &\leq 
            2\exp\left(-\frac{\nu(1-\varkappa)^2\zplus}{2A\varkappa}\right).
    \end{align*}    
\end{lemma}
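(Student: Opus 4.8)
The statement bounds the probability that the process $Z(t)=\nu t + M(t)$, with positive drift $\nu$, reaches the level $\zplus$ \emph{too early}, namely before time $\varkappa\zplus/\nu$. The key observation is that on the event $\{\tauplus \le \varkappa\zplus/\nu\}$, at the (random) time $t=\tauplus$ we have $Z(t)\ge\zplus$, hence $M(t) = Z(t) - \nu t \ge \zplus - \nu\tauplus \ge \zplus - \nu\cdot\varkappa\zplus/\nu = (1-\varkappa)\zplus$. In particular, the running maximum $M^*(\varkappa\zplus/\nu)$ must be at least $(1-\varkappa)\zplus$. So
\[
    \left\{\tauplus \le \frac{\varkappa\zplus}{\nu}\right\} \subset \left\{ M^*\!\left(\frac{\varkappa\zplus}{\nu}\right) \ge (1-\varkappa)\zplus \right\}.
\]

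Now I would simply invoke the exponential martingale inequality, Theorem~\ref{thm:Bass-martingale}, with $t = \varkappa\zplus/\nu$ and $z = (1-\varkappa)\zplus$. Since $M\in\cM$ has quadratic variation with deterministic Lipschitz constant $A$, we have $\langle M\rangle_t \le A t = A\varkappa\zplus/\nu$ almost surely, so we may take $D = A\varkappa\zplus/\nu$ in that theorem. This yields
\[
    \pp\left\{ M^*\!\left(\frac{\varkappa\zplus}{\nu}\right) \ge (1-\varkappa)\zplus \right\} \le 2\exp\left(-\frac{(1-\varkappa)^2(\zplus)^2}{2 \cdot A\varkappa\zplus/\nu}\right) = 2\exp\left(-\frac{\nu(1-\varkappa)^2\zplus}{2A\varkappa}\right),
\]
which is exactly the claimed bound. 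Combining the two displays finishes the proof.

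There is essentially no obstacle here; the only point requiring a modicum of care is the inclusion of events, specifically that the inequality $M(\tauplus)\ge(1-\varkappa)\zplus$ indeed forces the running supremum over the \emph{deterministic} interval $[0,\varkappa\zplus/\nu]$ to exceed $(1-\varkappa)\zplus$, which holds precisely because $\tauplus \le \varkappa\zplus/\nu$ on the event in question, so the time $\tauplus$ lies in that interval. (One should note $\tauplus>0$ is irrelevant since $Z(0)=0<\zplus$; and if $\varkappa\zplus/\nu$ happens to be such that the interval is a single point the bound is vacuous.) Everything else is a direct application of Theorem~\ref{thm:Bass-martingale} and the definition of the class $\cM$.
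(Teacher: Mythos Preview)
Your proof is correct and is essentially identical to the paper's own proof: both observe that $\tauplus \le \varkappa\zplus/\nu$ forces $M^*(\varkappa\zplus/\nu)\ge (1-\varkappa)\zplus$ and then apply the exponential martingale inequality (Theorem~\ref{thm:Bass-martingale}) with $D=A\varkappa\zplus/\nu$.
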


\begin{proof}
    For such an event to happen, we need $\nu t + M(t) \geq \zplus$ for some $t \leq \tfrac{\varkappa\zplus}{\nu}$. In particular, 
    \begin{align*}
        M^*\left(\frac{\varkappa\zplus}{\nu}\right) &\geq \zplus - \nu t 
        \geq (1-\varkappa)\zplus,
    \end{align*}
   and the lemma follows from the exponential martingale inequality.
\end{proof}

\begin{corollary}
 \label{cor:lin-lb-tauplus-prob}
    Suppose $\nu > 0$. Then, there is $C=C(A,\nu)>0$  such that
    $
        \pp\left\{ \tauplus \leq C^{-1}\zplus \right\} \leq \tfrac 14
    $ 
    for all~$\zplus \geq 1$.
    The same estimate holds for all processes $Y \leq Z$.
\end{corollary}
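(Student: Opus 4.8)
The plan is to deduce this directly from Lemma~\ref{lem:lb-tauplus-prob} by choosing the free parameter $\varkappa$ small, depending only on $A$ and $\nu$, and then checking the resulting bound at the worst value of $\zplus$. Concretely, I would fix $\varkappa \in (0,1)$ to be specified momentarily and set $C = \nu/\varkappa$, so that $C^{-1}\zplus = \varkappa\zplus/\nu$ and Lemma~\ref{lem:lb-tauplus-prob} yields
\begin{equation*}
    \pp\left\{\tauplus \le C^{-1}\zplus\right\} \le 2\exp\left(-\frac{\nu(1-\varkappa)^2}{2A\varkappa}\,\zplus\right).
\end{equation*}

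Since $\nu > 0$ and $\varkappa \in (0,1)$, the coefficient $\nu(1-\varkappa)^2/(2A\varkappa)$ is positive, so the right-hand side is nonincreasing in $\zplus$; hence for every $\zplus \ge 1$ it is at most its value at $\zplus = 1$, namely $2\exp\bigl(-\nu(1-\varkappa)^2/(2A\varkappa)\bigr)$. Because $\nu(1-\varkappa)^2/(2A\varkappa) \to +\infty$ as $\varkappa \downarrow 0$, I can choose $\varkappa = \varkappa(A,\nu) \in (0,1)$ small enough that this quantity exceeds $\ln 8$, and then the displayed bound is at most $\tfrac14$ for all $\zplus \ge 1$. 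The constant $C = C(A,\nu) = \nu/\varkappa$ then has the asserted property.

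For the final sentence, if $Y$ is any process with $Y(t) \le Z(t)$ for all $t \ge 0$, then whenever $Y(t) \ge \zplus$ one also has $Z(t) \ge \zplus$, so the level-$\zplus$ hitting time of $Y$ dominates $\tauplus$, giving the inclusion $\{\inf\{t > 0 : Y(t) \ge \zplus\} \le C^{-1}\zplus\} \subset \{\tauplus \le C^{-1}\zplus\}$; monotonicity of $\pp$ then transfers the bound. I do not anticipate any genuine obstacle; the only point requiring a little care is the observation that the exponential bound of Lemma~\ref{lem:lb-tauplus-prob} is monotone in $\zplus$, which is exactly what allows a single constant $C$ to work uniformly over all $\zplus \ge 1$ rather than only for $\zplus$ large.
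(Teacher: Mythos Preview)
Your proposal is correct and is exactly the intended derivation: the paper states this corollary without proof, immediately after Lemma~\ref{lem:lb-tauplus-prob}, so the implicit argument is precisely to pick $\varkappa$ small (depending only on $A,\nu$) so that the exponential bound is at most $\tfrac14$ at $\zplus=1$, and then use monotonicity in $\zplus$. Your treatment of the comparison with $Y\le Z$ via hitting-time domination is also the standard one.
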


\begin{lemma}
\label{lem:ub-tauplus-prob}
    If $\nu > 0$, then  for every $\varkappa \in (0,1)$,
    \begin{align*}
        \pp\left\{\tauplus > \frac{\zplus}{\varkappa\nu}\right\} 
        &\leq 2\exp\left(-\frac{\nu \left(1 - \varkappa \right)^2 \zplus}{2A\varkappa}\right).
    \end{align*}  
\end{lemma}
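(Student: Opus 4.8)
The plan is to establish the upper bound on $\tauplus$ by a stretched‑exponential tail estimate, using the same comparison with the exponential martingale inequality (Theorem~\ref{thm:Bass-martingale}) that was used for the lower bound in Lemma~\ref{lem:lb-tauplus-prob}. Recall $Z(t)=\nu t+M(t)$ with $\nu>0$ and $M\in\cM$, so $\langle M\rangle_t\le At$ with probability~$1$. Fix $\varkappa\in(0,1)$ and set $T=\tfrac{\zplus}{\varkappa\nu}$.

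First I would observe that on the event $\{\tauplus>T\}$ we have $Z(t)<\zplus$ for all $t\le T$; in particular $Z(T)=\nu T+M(T)<\zplus$, which forces
\[
  M(T)<\zplus-\nu T=\zplus-\frac{\zplus}{\varkappa}=-\Bigl(\frac{1}{\varkappa}-1\Bigr)\zplus=-\frac{(1-\varkappa)\zplus}{\varkappa}.
\]
Hence $M^*(T)\ge |M(T)|>\tfrac{(1-\varkappa)\zplus}{\varkappa}$, so that
\[
  \pp\Bigl\{\tauplus>\tfrac{\zplus}{\varkappa\nu}\Bigr\}\le \pp\Bigl\{M^*(T)>\tfrac{(1-\varkappa)\zplus}{\varkappa}\Bigr\}.
\]
Next I would apply Theorem~\ref{thm:Bass-martingale} with $t=T=\tfrac{\zplus}{\varkappa\nu}$, $D=AT=\tfrac{A\zplus}{\varkappa\nu}$, and $z=\tfrac{(1-\varkappa)\zplus}{\varkappa}$. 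This gives
\[
  \pp\Bigl\{M^*(T)>z\Bigr\}\le 2\exp\!\left(-\frac{z^2}{2D}\right)
  =2\exp\!\left(-\frac{(1-\varkappa)^2\zplus^2/\varkappa^2}{2A\zplus/(\varkappa\nu)}\right)
  =2\exp\!\left(-\frac{\nu(1-\varkappa)^2\zplus}{2A\varkappa}\right),
\]
which is exactly the claimed bound. I would write this as a short two‑line computation inside the proof.

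There is no real obstacle here: the only care needed is to get the algebra of the exponent right (the factors of $\varkappa$ and $\zplus$ cancel cleanly because $D$ scales linearly in $\zplus$ while $z^2$ scales quadratically, leaving a linear‑in‑$\zplus$ exponent). If one wants, the statement can also be packaged, analogously to Corollary~\ref{cor:lin-lb-tauplus-prob}, into a linear form: taking $\varkappa=\tfrac12$ yields a constant $C=C(A,\nu)>0$ with $\pp\{\tauplus>C\zplus\}\le\tfrac14$ for all $\zplus\ge1$, and the same bound holds for any process $Y\ge Z$ since then $\tauplus^Y\le\tauplus^Z$. But for the lemma as stated, the three displayed steps above constitute the complete argument.
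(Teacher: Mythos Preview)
Your proof is correct and follows exactly the same approach as the paper: evaluate $Z$ at $T=\zplus/(\varkappa\nu)$, observe that $\tauplus>T$ forces $-M(T)>(1/\varkappa-1)\zplus$, and apply the exponential martingale inequality (Theorem~\ref{thm:Bass-martingale}) with $D=AT$. The only difference is that you write out the final algebraic simplification of the exponent explicitly, which the paper omits.
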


\begin{proof}
    For such an event to happen, we need $\nu t + M(t) < \zplus$ for all $t \leq \tfrac{\zplus}{\varkappa\nu}$. In particular, we need
    \begin{align*}
        -M\left(\frac{\zplus}{\varkappa\nu}\right) 
        &> \frac{\zplus}{\varkappa\nu}\nu - \zplus 
        = \left(\frac{1}{\varkappa} - 1\right) \zplus,
    \end{align*}
    and the lemma follows from the exponential martingale inequality.
\end{proof} 

\begin{corollary}
\label{cor:new-exp-moment}
    If $\nu > 0$, then $\tauplus$ is almost surely finite, and there exists $\alpha = \alpha(\nu,A) > 0$ such that $\ee[\Exp{\alpha \tauplus}] \leq \Exp{\frac{2\alpha\zplus}{\nu}} + 2$ for all $\zplus > 0$.
\end{corollary}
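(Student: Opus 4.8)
Corollary~\ref{cor:new-exp-moment} asserts that when $\nu>0$, the hitting time $\tauplus$ is almost surely finite and possesses an exponential moment that is controlled (up to an additive constant) by $\Exp{2\alpha\zplus/\nu}$, for a suitable $\alpha=\alpha(\nu,A)>0$. The plan is to extract the existence of the exponential moment from the tail bound already proved in Lemma~\ref{lem:ub-tauplus-prob}. Almost-sure finiteness is immediate: taking, say, $\varkappa=\tfrac12$ in Lemma~\ref{lem:ub-tauplus-prob} shows $\pp\{\tauplus > 2\zplus/\nu\}<\infty$, but more to the point, the same lemma applied along the sequence $t_n = n\zplus/(\varkappa\nu)$ (or simply by sending the relevant parameter to infinity) forces $\pp\{\tauplus=\infty\}=0$; alternatively one observes that $Z(t)-\nu t = M(t)$ has at most linear fluctuations by Theorem~\ref{thm:Bass-martingale}, so $Z(t)\to\infty$ and hence $\tauplus<\infty$ a.s.

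For the exponential moment, I would use the layer-cake (tail integration) formula
\[
    \ee[\Exp{\alpha\tauplus}]
    = 1 + \alpha\int_0^\infty \Exp{\alpha t}\,\pp\{\tauplus > t\}\,\dd t.
\]
Split the integral at $t_0 = 2\zplus/\nu$. On $[0,t_0]$ one bounds $\pp\{\tauplus>t\}\le 1$, contributing at most $\Exp{\alpha t_0} - 1 = \Exp{2\alpha\zplus/\nu}-1$. On $[t_0,\infty)$, write $t = \zplus/(\varkappa\nu)$, i.e. $\varkappa = \zplus/(\nu t)\in(0,\tfrac12]$ for $t\ge t_0$, and feed this into Lemma~\ref{lem:ub-tauplus-prob}: since $\varkappa\le\tfrac12$ we have $(1-\varkappa)^2/\varkappa \ge \tfrac12 \cdot \tfrac1{\varkappa} = \tfrac{\nu t}{2\zplus} \ge \tfrac{\nu}{2}\cdot\tfrac{t}{\zplus}$, wait — more carefully, $(1-\varkappa)^2/\varkappa \ge (1-\varkappa)^2 \cdot \tfrac{\nu t}{\zplus}$, and for $\varkappa\le\tfrac12$, $(1-\varkappa)^2\ge\tfrac14$, so the exponent in the bound satisfies
\[
    \frac{\nu(1-\varkappa)^2\zplus}{2A\varkappa} \ge \frac{\nu\cdot\tfrac14\cdot\zplus}{2A}\cdot\frac{\nu t}{\zplus} = \frac{\nu^2 t}{8A}.
\]
Hence $\pp\{\tauplus>t\}\le 2\Exp{-\nu^2 t/(8A)}$ for all $t\ge t_0$. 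Choosing $\alpha = \nu^2/(16A)$ (or any $\alpha < \nu^2/(8A)$) makes $\Exp{\alpha t}\pp\{\tauplus>t\}\le 2\Exp{-\nu^2 t/(16A)}$ integrable on $[t_0,\infty)$, with integral at most $2\cdot\tfrac{16A}{\nu^2} = 32A/\nu^2$, a constant depending only on $\nu,A$. Collecting the two pieces:
\[
    \ee[\Exp{\alpha\tauplus}] \le 1 + (\Exp{2\alpha\zplus/\nu} - 1) + \alpha\cdot\frac{32A}{\nu^2} = \Exp{2\alpha\zplus/\nu} + \frac{2\alpha\cdot 16A}{\nu^2},
\]
and shrinking $\alpha$ further if necessary so that $\alpha\cdot 16A/\nu^2 \le 1$ yields the claimed $\ee[\Exp{\alpha\tauplus}]\le\Exp{2\alpha\zplus/\nu}+2$.

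The argument is entirely routine; there is no real obstacle. The one point requiring a little care is the bookkeeping of constants: one must choose $\alpha$ small enough simultaneously to (i) beat the Gaussian tail decay rate $\nu^2/(8A)$ so the far integral converges, and (ii) make the leftover additive term no larger than $2$. Both are achieved by taking $\alpha$ of order $\nu^2/A$ with a small enough absolute constant; it is worth double-checking the inequality $(1-\varkappa)^2/\varkappa \ge \tfrac14 \cdot \tfrac{\nu t}{\zplus}$ used to convert the $\varkappa$-parametrized bound of Lemma~\ref{lem:ub-tauplus-prob} into a clean exponential-in-$t$ tail, since this is where the dependence on $A$ enters. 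A minor alternative, avoiding the split, is to note that $M(t)$ has sub-Gaussian increments (Theorem~\ref{thm:Bass-martingale}), so $\{\tauplus > t\}\subset\{M^*(t) \ge \zplus\}\cup\{M^*(t)> -\zplus + \nu t\}$ and the second event has Gaussian tail $2\Exp{-(\nu t-\zplus)^2/(2At)}$ for $t>\zplus/\nu$, which again is $\le 2\Exp{-c\nu^2 t/A}$ for $t$ bounded below; this gives essentially the same estimate.
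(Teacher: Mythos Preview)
Your argument is correct and is precisely the intended derivation: the paper states the corollary without proof, immediately after Lemma~\ref{lem:ub-tauplus-prob}, signalling that it follows by a routine tail-integration from that lemma. Your layer-cake split at $t_0=2\zplus/\nu$, together with the substitution $\varkappa=\zplus/(\nu t)$ to convert the lemma's bound into a clean exponential tail $\pp\{\tauplus>t\}\le 2\Exp{-\nu^2 t/(8A)}$, is exactly this; the choice $\alpha=\nu^2/(16A)$ then gives the constant $2$ on the nose. (Your ``minor alternative'' at the end is garbled---the inclusion involving $\{M^*(t)\ge\zplus\}$ is not right---but the main argument stands.)
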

\begin{corollary}
\label{cor:finite-mean}
    If $\nu > 0$, then $\tauwtv$ is almost surely finite and has finite expectation.
    Moreover, under the same conditions,  Doob's optional stopping theorem  (see Theorem 2.13 in \cite[Section~{2.2}]{EK}) applies to~$M$ and~$\tauwtv$, and $(M(\tauwtv+s))_{s>0}$ is $(\cF_{\tauwtv+s})_{s>0}$-martingale.
\end{corollary}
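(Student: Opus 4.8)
The plan is to reduce everything to (i) an integrability estimate for $\tauwtv$ coming from Corollary~\ref{cor:new-exp-moment} and (ii) an upgrade of the bounded-time optional stopping theorem to the unbounded stopping time $\tauwtv$, via $L^2$-boundedness of the stopped martingale. First I would observe that, since $\nu>0$, Corollary~\ref{cor:new-exp-moment} (applied with the given threshold $\zplus$) already gives that $\tauplus$ is almost surely finite and has a finite exponential moment, hence $\ee\tauplus<\infty$. Because $0\le\tauwtv=\taumoins\wedge\tauplus\le\tauplus$, it follows at once that $\tauwtv$ is almost surely finite and $\ee\tauwtv\le\ee\tauplus<\infty$.

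Next I would control the stopped martingale $M^{\tauwtv}(t)=M(t\wedge\tauwtv)$. Since $M\in\cM$, the quadratic variation obeys $\langle M\rangle_t\le At$, so $\langle M^{\tauwtv}\rangle_t=\langle M\rangle_{t\wedge\tauwtv}\le A\tauwtv$. Applying the elementary (bounded-time) optional stopping theorem to the martingale $M^2-\langle M\rangle$ at the bounded times $t\wedge\tauwtv\wedge n$ and then letting $n\to\infty$ (monotone convergence on the right, Fatou on the left) yields $\ee[M(t\wedge\tauwtv)^2]=\ee[\langle M\rangle_{t\wedge\tauwtv}]\le A\,\ee\tauwtv$ for every $t>0$. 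Thus $M^{\tauwtv}$ is an $L^2$-bounded, hence uniformly integrable, martingale; by continuity of $M$ and almost sure finiteness of $\tauwtv$ it converges almost surely and in $L^2$ to $M(\tauwtv)$, which therefore lies in $L^2$. This uniform integrability is exactly the hypothesis under which the optional sampling theorem (Theorem~2.13 in~\cite[Section~2.2]{EK}) applies to $M$ and $\tauwtv$: for any stopping time $\sigma$ one gets $\ee[M(\tauwtv)\mid\cF_\sigma]=M(\tauwtv\wedge\sigma)$, and in particular $\ee M(\tauwtv)=0$.

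For the last assertion, that $(M(\tauwtv+s))_{s>0}$ is an $(\cF_{\tauwtv+s})_{s>0}$-martingale, I would note that $\{\tauwtv+s\}_{s\ge0}$ is a nondecreasing family of almost surely finite stopping times, and that $\langle M\rangle_{\tauwtv+s}\le A(\tauwtv+s)$ together with $\ee\tauwtv<\infty$ gives $\ee[M(\tauwtv+s)^2]\le A\,\ee(\tauwtv+s)<\infty$ for every $s$, by the same computation as above. The optional sampling theorem applied along this family then yields $\ee[M(\tauwtv+s)\mid\cF_{\tauwtv+u}]=M(\tauwtv+u)$ for all $0\le u\le s$; since $M(\tauwtv+s)$ is $\cF_{\tauwtv+s}$-measurable by construction, this is precisely the claimed martingale property.

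The one genuinely delicate point, and the main obstacle, is the passage from bounded to unbounded stopping times: the elementary optional stopping identity cannot be invoked directly for $\tauwtv$, so one must first establish $L^2$-boundedness of the stopped martingale. This is where the two structural ingredients combine, namely the deterministic Lipschitz bound on $\langle M\rangle$ built into the definition of the class $\cM$ and the finiteness of $\ee\tauwtv$ supplied by Corollary~\ref{cor:new-exp-moment}. Everything else is routine.
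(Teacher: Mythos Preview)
Your argument is correct and is precisely the reasoning the paper has in mind: the paper states this result as a corollary with no proof, treating it as an immediate consequence of Corollary~\ref{cor:new-exp-moment} (giving $\ee\tauplus<\infty$, hence $\ee\tauwtv<\infty$) together with the standard $L^2$-bounded/UI upgrade that the Lipschitz bound on $\langle M\rangle$ affords. You have simply spelled out those standard details; in particular, your observation that $\sup_t\ee[M(t\wedge(\tauwtv+s))^2]\le A\,\ee(\tauwtv+s)<\infty$ makes $M^{\tauwtv+s}$ uniformly integrable for each fixed $s$, so optional sampling at the pair $\tauwtv+u\le\tauwtv+s$ is justified and yields the claimed martingale property.
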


\begin{lemma}
\label{lem:ee-ap-bound}
    If $\nu > 0$, then $\ee \tauplus \leq C \zplus$ for all~$\zplus > 0$, with $C = \nu^{-1}$. The same estimate holds for all processes $Y \geq Z$.
\end{lemma}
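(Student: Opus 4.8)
Looking at Lemma~\ref{lem:ee-ap-bound}, the claim is that for $Z(t) = \nu t + M(t)$ with $\nu > 0$ and $M \in \cM$, the hitting time $\tauplus = \inf\{t > 0 : Z(t) \geq \zplus\}$ satisfies $\ee\tauplus \leq \zplus/\nu$, and the same bound holds for any process $Y \geq Z$.

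\textbf{Proposed proof.} The plan is to apply the optional stopping theorem to the martingale $M$ at the stopping time $\tauplus \wedge t$ for fixed $t > 0$, which gives $\ee[M(\tauplus \wedge t)] = 0$ since $M(0) = 0$ and $\tauplus \wedge t$ is a bounded stopping time (and $M$ is a genuine martingale). Hence $\ee[Z(\tauplus \wedge t)] = \nu \ee[\tauplus \wedge t]$. On the other hand, before time $\tauplus$ we have $Z(s) < \zplus$ for all $s < \tauplus$, and by path continuity $Z(\tauplus) = \zplus$ on $\{\tauplus < \infty\}$; in particular $Z(\tauplus \wedge t) \leq \zplus$ always. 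Combining, $\nu\ee[\tauplus \wedge t] = \ee[Z(\tauplus \wedge t)] \leq \zplus$, so $\ee[\tauplus \wedge t] \leq \zplus/\nu$ uniformly in $t$. Letting $t \to \infty$ and invoking the monotone convergence theorem yields $\ee\tauplus \leq \zplus/\nu$. (This also re-confirms $\tauplus < \infty$ almost surely, consistent with Corollary~\ref{cor:new-exp-moment}.)

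For the final sentence about $Y \geq Z$: if $Y(t) \geq Z(t)$ for all $t$, then $\inf\{t > 0 : Y(t) \geq \zplus\} \leq \inf\{t > 0 : Z(t) \geq \zplus\} = \tauplus$ pathwise, so the hitting time for $Y$ is dominated by $\tauplus$ and its expectation is at most $\zplus/\nu$ as well. One should be slightly careful that $Y$ need not have continuous paths or be of the special form, but the pathwise domination of hitting times is all that is needed, and the bound $\ee\tauplus \leq \zplus/\nu$ was already established for $Z$.

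\textbf{Main obstacle.} There is essentially no hard step here; the only point requiring a moment's care is the justification that optional stopping applies, i.e., that $\ee[M(\tauplus \wedge t)] = 0$. Since $\tauplus \wedge t$ is bounded by $t$ and $M$ is a square-integrable (in particular uniformly integrable on $[0,t]$) continuous martingale, this is immediate from the standard optional stopping theorem. The passage to the limit is routine monotone convergence. So the proof is short and the ``obstacle'' is merely bookkeeping; this is why the excerpt presumably states it as a lemma with a one-line proof.
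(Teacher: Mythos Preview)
Your proof is correct and follows essentially the same route as the paper: apply optional stopping to $M$ at the bounded time $\tauplus\wedge t$, use $Z(\tauplus\wedge t)\le \zplus$, and pass to the limit. The only cosmetic difference is that the paper invokes dominated convergence (using the a.s.\ finiteness of $\tauplus$ from Corollary~\ref{cor:new-exp-moment}) whereas you use monotone convergence, which is arguably cleaner here since $\tauplus\wedge t$ is nondecreasing in $t$.
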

\begin{proof} For every $T>0$, the optional stopping theorem implies 
    \begin{align*}
        \zplus\ge \ee[Z(\tauplus \wedge T)] &= \ee [\nu (\tauplus \wedge T) + M(\tauplus\wedge T)]
        = \nu \ee [\tauplus \wedge T].
    \end{align*}
    Due to Corollary~\ref{cor:finite-mean}, we can complete the proof of the Lemma applying the dominated convergence theorem to the right-hand side of this display.
\end{proof}

\begin{lemma}
\label{lem:tauwtv-bias}
    Let $\nu > 0$. For every $z_0>0$, there is $p=p(\nu,A,z_0)>1/2$ such that if $\zplus\ge z_0$ and  $\zmoins \leq -\zplus$, then
    \begin{align*}
        \pp\{\tauwtv = \tauplus\} \geq p.
    \end{align*} 
\end{lemma}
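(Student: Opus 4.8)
The plan is to exploit the drift toward $\zplus$ together with the deterministic Lipschitz bound $A$ on the quadratic variation of $M$, arguing that the only way to exit through the lower barrier $\zmoins$ is for the martingale to make a large excursion against the drift, which is exponentially unlikely. First I would fix a convenient time horizon, say $T = T(\nu,A,z_0) = \frac{2}{\nu} z_0$ (or, more uniformly, $T = \frac{2}{\nu}\zplus$, rescaled appropriately), and split the event $\{\tauwtv = \taumoins\}$ according to whether $\tauwtv \le T'$ for a threshold $T'$ chosen below. On $\{\taumoins \le T'\}$ we must have had $M^*(T') \ge |\zmoins| - \nu T' \ge \zplus - \nu T'$ (using $\zmoins \le -\zplus$ and $Z(s) = \nu s + M(s)$), so if $T'$ is taken to be a small enough multiple of $\zplus/\nu$, the required displacement of $M$ is at least a fixed positive fraction of $\zplus$, and Theorem~\ref{thm:Bass-martingale} with $\langle M\rangle_{T'} \le A T'$ bounds this by $2\exp(-c\zplus)$ for some $c = c(\nu,A) > 0$. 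On the complementary event $\{\tauwtv > T'\}$, I would use Lemma~\ref{lem:ub-tauplus-prob} applied with a suitable $\varkappa$: since $\tauplus \le \tauwtv$ would be needed for $\tauwtv$ to be large only if the process has not yet hit $\zplus$, the event $\{\tauplus > T'\}$ has probability at most $2\exp(-c'\zplus)$ whenever $T' \ge \zplus/(\varkappa\nu)$ for the chosen $\varkappa$. Balancing these two regimes—i.e., choosing $T' = \kappa \zplus/\nu$ with $\kappa \in (0,1)$ fixed so that both $\{\taumoins \le T'\}$ and $\{\tauplus > T'\}$ are exponentially small in $\zplus$—gives
\[
    \pp\{\tauwtv = \taumoins\} \le \pp\{\taumoins \le T'\} + \pp\{\tauwtv > T'\} \le 4\exp(-c''\zplus) \le 4\exp(-c'' z_0)
\]
for $c'' = c''(\nu,A) > 0$, using $\zplus \ge z_0$.

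From here the conclusion is immediate: set $p = p(\nu,A,z_0) = 1 - 4\exp(-c'' z_0)$. This is not yet guaranteed to exceed $1/2$ for all $z_0 > 0$, so the final step is to observe that we are free to \emph{replace} the crude bound above by a uniform one valid for all $z_0$. Concretely, I would argue that $\pp\{\tauwtv = \tauplus\} = 1 - \pp\{\tauwtv = \taumoins\}$ and that $\pp\{\tauwtv = \taumoins\}$ is a nonincreasing function of $\zplus$ on $[z_0,\infty)$ when $\zmoins$ is tied to $\zmoins \le -\zplus$—heuristically, a farther target $\zplus$ with a correspondingly farther lower barrier only makes exit through the bottom harder—so that $\sup_{\zplus \ge z_0,\ \zmoins \le -\zplus} \pp\{\tauwtv = \taumoins\}$ is attained (or approached) as $\zplus \downarrow z_0$; alternatively, and more safely, one simply notes that by taking $z_0$ itself \emph{large} the bound $4\exp(-c'' z_0) < 1/2$ holds, and for the remaining compact range of small $z_0$ a separate easy argument (or monotonicity in $\zplus$ combined with the large-$z_0$ bound, since any $\zplus \ge z_0$ also satisfies $\zplus \ge $ that large threshold once $z_0$ is past it) closes the gap. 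The cleanest phrasing: the quantity $q(\zplus) := \sup\{\pp\{\tauwtv = \taumoins\} : \zmoins \le -\zplus\}$ satisfies $q(\zplus) \to 0$ as $\zplus \to \infty$ by the estimate above, and $q$ is nonincreasing, so $p(\nu,A,z_0) := 1 - q(z_0) > 1/2$ provided we have verified $q(z_0) < 1/2$—which follows once $z_0$ is large, and the statement of the lemma only asserts existence of such a $p$ for \emph{each} $z_0$, so we may combine the exponential bound for large $z_0$ with the monotone extension $q(z_0) \le q(z_0')$ for $z_0 \ge z_0'$ to get it for all $z_0$.

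The main obstacle I anticipate is the uniformity/monotonicity bookkeeping in this last paragraph: the exponential estimate straightforwardly gives $\pp\{\tauwtv = \taumoins\} \le 4\exp(-c'' z_0)$, but this only beats $1/2$ for $z_0$ above an explicit threshold, and one must be careful that the lemma's constants $c', c''$ depend only on $\nu$ and $A$ and not on $z_0$. Tracking the dependence through Lemma~\ref{lem:ub-tauplus-prob} (where the constant in the exponent is $\frac{\nu(1-\varkappa)^2}{2A\varkappa}$) and through Theorem~\ref{thm:Bass-martingale} shows this is fine, so the only genuine care needed is the monotonicity-in-$\zplus$ claim used to transfer the large-$z_0$ bound down to small $z_0$; this can be made rigorous by a coupling argument (run the same driving martingale $M$ and note that enlarging $\zplus$ and pushing $\zmoins$ down enlarges the region where the process survives without hitting either barrier, and the event of eventual exit through the top can only grow), or simply sidestepped by noting the lemma is stated with $p$ allowed to depend on $z_0$, so one proves it for large $z_0$ via the exponential bound and for the remaining values by the trivial observation that the constraint $\zplus \ge z_0$ with $z_0$ small is \emph{weaker} information, hence one must quantify over a superset—at which point the coupling monotonicity is genuinely needed. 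I would therefore write the coupling step carefully and keep everything else as the short computation sketched above.
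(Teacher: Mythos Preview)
Your core estimate is sound: splitting $\{\tauwtv=\taumoins\}$ at a threshold $T'\sim\zplus/\nu$ and combining Theorem~\ref{thm:Bass-martingale} with Lemma~\ref{lem:ub-tauplus-prob} correctly gives $\pp\{\tauwtv=\taumoins\}\le 4\exp(-c''\zplus)\le 4\exp(-c''z_0)$ with $c''=c''(\nu,A)$. But this only yields $p>1/2$ when $z_0>\tfrac{\ln 8}{c''}$, and your attempt to close the gap for small $z_0$ is where the argument breaks down. The monotonicity you invoke runs the wrong way: if $q(\zplus)=\sup_{\zmoins\le-\zplus}\pp\{\tauwtv=\taumoins\}$ is nonincreasing, then for \emph{small} $z_0$ one has $q(z_0)\ge q(z_0')$ for any large $z_0'$, which gives no information. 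Your coupling sketch (``the event of eventual exit through the top can only grow'') is also incorrect as stated: a path that exits $(-a,a)$ through $a$ may well wander down to $-b$ before reaching $b$, so the exit-through-top event is \emph{not} monotone under enlarging the barriers. There is no way to bootstrap the large-$z_0$ exponential bound down to small $z_0$ along these lines.

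What you are missing is the optional stopping identity that the paper uses. From Corollary~\ref{cor:finite-mean} one has $\nu\,\ee\tauwtv=\ee Z(\tauwtv)=\zplus\pp\{\tauwtv=\tauplus\}+\zmoins\pp\{\tauwtv=\taumoins\}\le \zplus(2\pp\{\tauwtv=\tauplus\}-1)$, so that
\[
\pp\{\tauwtv=\tauplus\}\ \ge\ \tfrac12+\tfrac{\nu\,\ee\tauwtv}{2\zplus},
\]
which is automatically strictly above $1/2$. The remaining (easy) work is to bound $\ee\tauwtv/\zplus$ below by a constant depending only on $(\nu,A,z_0)$, and for \emph{that} step your exponential martingale estimate---showing $\pp\{\tauwtv<\varkappa\zplus/(2\nu)\}$ is small for $\varkappa$ chosen in terms of $z_0$---is exactly what is needed. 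In short, your union-bound decomposition proves a useful auxiliary estimate but cannot by itself produce the ``$>1/2$'' for every $z_0$; the optional stopping step is the missing idea.
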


\begin{proof}
    Corollary~\ref{cor:finite-mean} and our assumption  on $\zmoins$ and  $\zplus$ imply
    \begin{align*}
        \nu \ee \tauwtv=\ee[Z(\tauwtv)] &= \zplus\pp\{\tauwtv = \tauplus\} + \zmoins\pp\{\tauwtv = \taumoins\} 
        \leq 
        2\zplus\pp\{\tauwtv = \tauplus\} - \zplus.
    \end{align*}
    Rearranging and applying Markov's inequality, we obtain, for any $\varkappa>0$,
    \begin{align*}
        \pp\{\tauwtv = \tauplus\}
            \,\geq\, \frac{1}{2} + \frac{\nu \ee\tauwtv}{2\zplus}\, \ge\, \frac{1}{2} + 
            \frac{1}{4\varkappa}
           \pp \Big\{\tauwtv \geq \frac{ \varkappa\zplus}{2\nu} \Big\} \,
           =\, \frac{1}{2} + \frac{1}{4\varkappa}\Big(1-\pp \Big\{\tauwtv < \frac{ \varkappa\zplus}{2\nu} \Big\} \Big).
    \end{align*} 
    The next step is similar to the proof of Lemma~\ref{lem:lb-tauplus-prob}.
    If $\varkappa\in(0,1)$, then  the event on the r.-h.s.\ implies $M^*\left(\frac{\varkappa\zplus}{2\nu}\right)>(1-\varkappa)\zplus$. Using the exponential martingale inequality, we obtain
    \[
        \pp \Big\{\tauwtv < \frac{ \varkappa\zplus}{2\nu} \Big\}\le 2\exp\left(-\frac{\nu(1-\varkappa)^2z_0^2}{2A\varkappa}\right).
    \]
    Choosing $\varkappa$ small enough to guarantee $\exp\left(-\frac{\nu(1-\varkappa)^2z_0}{2A\varkappa}\right)<1/4$, we complete the proof.
\end{proof} 

\begin{lemma}
\label{lem:drift-wins-over-mart-bound}
    If $\nu < 0$, then for every $\zplus_0>0$, there is a constant $C=C(\nu,A,\zplus_0)>0$  such that 
    \[
        \pp\{ \tauplus < \infty\} \leq C \exp \left( -C^{-1} {\zplus}\right)
    \]
    for all~$\zplus \geq \zplus_0$.
    The same holds for all processes $Y$ satisfying $Y\leq Z$.
\end{lemma}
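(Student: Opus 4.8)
The plan is to bound $\pp\{\tauplus < \infty\} = \pp\{\sup_{t \ge 0} Z(t) \ge \zplus\}$ by comparing $Z$ with the exponential supermartingale attached to $M$, using only the deterministic Lipschitz bound $\langle M\rangle_t \le At$ built into the definition of $\cM$. Fix $\theta > 0$ and set $N_\theta(t) = \exp\bigl(\theta M(t) - \tfrac12 \theta^2 \langle M\rangle_t\bigr)$. Since $M$ is a continuous square-integrable martingale with $M(0) = 0$, the process $N_\theta$ is a nonnegative local martingale with $N_\theta(0) = 1$, hence a supermartingale. Because $\langle M\rangle_t \le At$, one has the pathwise inequality $\exp\bigl(\theta M(t) - \tfrac12 \theta^2 A t\bigr) \le N_\theta(t)$ for all $t \ge 0$.

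The one genuinely computational step is the choice of $\theta$. On the event $\{Z(t) \ge \zplus\}$ one has $\theta M(t) \ge \theta\zplus - \theta\nu t$, so $\theta M(t) - \tfrac12\theta^2 A t \ge \theta\zplus - t\bigl(\theta\nu + \tfrac12\theta^2 A\bigr)$. Since $\nu < 0$, any $\theta \in (0,\, 2|\nu|/A]$ makes $\theta\nu + \tfrac12\theta^2 A \le 0$, and then (using $t \ge 0$) we get $N_\theta(t) \ge \exp(\theta\zplus)$ whenever $Z(t) \ge \zplus$. Consequently, by continuity of the paths, $\{\tauplus \le T\} \subseteq \{\sup_{t \in [0,T]} N_\theta(t) \ge \exp(\theta\zplus)\}$, and Doob's maximal inequality for the nonnegative supermartingale $N_\theta$ with $N_\theta(0)=1$ yields $\pp\{\tauplus \le T\} \le \exp(-\theta\zplus)$. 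Letting $T \to \infty$ by monotone convergence gives $\pp\{\tauplus < \infty\} \le \exp(-\theta\zplus)$. Fixing $\theta = \min(1,\, 2|\nu|/A)$, this is of the asserted form $C\exp(-C^{-1}\zplus)$ with $C = 1/\theta$, and this works for every $\zplus > 0$, so the dependence of $C$ on $\zplus_0$ permitted by the statement is not even needed. The last claim is immediate: if $Y(t) \le Z(t)$ for all $t$, then $Y(t) \ge \zplus$ forces $Z(t) \ge \zplus$, hence $\inf\{t : Y(t) \ge \zplus\} \ge \inf\{t : Z(t) \ge \zplus\}$ and $\{\,\inf\{t : Y(t)\ge\zplus\} < \infty\,\} \subseteq \{\tauplus < \infty\}$, so the same bound applies.

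There is no serious obstacle here; this is the standard Cramér/Doob-type exponential bound. The only points needing mild care are (i) confirming that $N_\theta$ is a true supermartingale — it is the stochastic exponential of the continuous martingale $\theta M$, hence a nonnegative local martingale, hence a supermartingale by Fatou's lemma — and (ii) the passage from the finite-horizon estimate for $\pp\{\tauplus \le T\}$ to the event $\{\tauplus < \infty\} = \bigcup_{T} \{\tauplus \le T\}$ via monotone convergence. Everything else is bookkeeping of the constants $\nu$ and $A$.
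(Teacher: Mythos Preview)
Your proof is correct and takes a genuinely different, more direct route than the paper. The paper discretizes time, bounding $\pp\{\tauplus < \infty\}$ by $\sum_{k=0}^\infty \pp\{\tauplus \in [k,k+1]\}$, then applies the exponential martingale inequality (Theorem~\ref{thm:Bass-martingale}) to each term to get summands of order $\exp\bigl(-(\zplus - \nu k)^2/(2A(k+1))\bigr)$, which are then dominated by a geometric series times $\exp\bigl(-(\zplus)^2/(2A)\bigr)$. Your approach sidesteps the discretization entirely by using the stochastic exponential of $\theta M$ globally as a supermartingale and invoking Doob's maximal inequality once. This yields the cleaner bound $\exp(-\theta\zplus)$ with $\theta = \min(1,\,2|\nu|/A)$, valid for every $\zplus > 0$, so the dependence on $\zplus_0$ allowed in the statement becomes unnecessary. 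The paper's argument, by contrast, produces a Gaussian-type decay $\exp\bigl(-(\zplus)^2/(2A)\bigr)$ which, to be cast in the form $C\exp(-C^{-1}\zplus)$, needs the lower bound $\zplus \ge \zplus_0$. Both arguments ultimately rest on the same exponential martingale, but your packaging is more efficient.
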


\begin{proof}
    First, we write
    \begin{align*}
        \pp\{\tau^*<\infty\}
        &\le \sum_{k=0}^\infty \pp\{\tau^*\in[k,k+1]\} 
        \le \sum_{k=0}^{\infty}\pp\{ M^*(k+1)> \zplus-\nu k\}.
    \end{align*}
    Then, we note that, by Theorem~\ref{thm:Bass-martingale} and elementary inequalities,
    \begin{align*}
        \pp\{ M^*(k+1)> \zplus-\nu k\} &\le  2 
        \exp\left(-\frac{(\zplus-\nu k)^2}{2A(k+1)}\right)
        \le  2 
        \exp\left(-\frac{(\zplus)^2}{2A}\right)\exp\left(-\frac{\nu^2k}{4A}\right),
    \end{align*}
    so
    \begin{align*}
        \pp\{\tau^*<\infty\} 
        &\leq 2 
        \exp\left(-\frac{(\zplus)^2}{2A}\right) \sum_{k=0}^\infty \exp\left(-\frac{\nu^2k}{4A}\right),
    \end{align*}
    and the lemma follows.
\end{proof}

\begin{corollary}
\label{cor:drift-wins-over-mart-bound}
    If $\nu < 0$, then there exists $L = L(\nu,A)$  such that $\pp\{ \tauplus < \infty\} \leq \tfrac 14$
    for all~$\zplus \geq L$.
    The same holds for all processes $Y$ satisfying $Y\leq Z$.
\end{corollary}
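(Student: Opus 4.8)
This corollary is an immediate consequence of Lemma~\ref{lem:drift-wins-over-mart-bound}: the latter gives a genuine exponential decay bound on $\pp\{\tauplus < \infty\}$, and a quantity decaying exponentially in $\zplus$ is eventually below any fixed threshold, in particular below $\tfrac14$. The plan is simply to invoke Lemma~\ref{lem:drift-wins-over-mart-bound} with a concrete choice of $\zplus_0$ (say $\zplus_0 = 1$), extract the resulting constant $C = C(\nu,A,1)$, and then solve the scalar inequality $C\Exp{-C^{-1}\zplus} \le \tfrac14$ to produce the threshold $L$.

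\textbf{Steps.} First, apply Lemma~\ref{lem:drift-wins-over-mart-bound} with $\zplus_0 = 1$ to obtain $C = C(\nu,A,1) > 0$ such that $\pp\{\tauplus < \infty\} \le C\Exp{-C^{-1}\zplus}$ for all $\zplus \ge 1$; the same inequality is asserted there for any process $Y \le Z$. Second, observe that $C\Exp{-C^{-1}\zplus} \le \tfrac14$ holds as soon as $\zplus \ge C\ln(4C)$ (this is vacuous when $4C \le 1$). Third, set $L = L(\nu,A) = \max\{1,\, C\ln(4C)\}$. Then for all $\zplus \ge L$ we have both $\zplus \ge 1$ (so the lemma's bound applies) and $\zplus \ge C\ln(4C)$ (so the bound is $\le \tfrac14$), giving $\pp\{\tauplus < \infty\} \le \tfrac14$. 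The statement for $Y \le Z$ follows verbatim from the corresponding clause in Lemma~\ref{lem:drift-wins-over-mart-bound}, since the event $\{\tauplus < \infty\}$ for $Y$ is contained in the analogous event for $Z$ whenever $Y \le Z$.

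\textbf{Main obstacle.} There is essentially none: all the real work — the Markov/exponential-martingale estimate producing exponential decay, and the handling of processes dominated by $Z$ — has already been done in Lemma~\ref{lem:drift-wins-over-mart-bound}. The only point requiring a sentence of care is making $L$ depend only on $\nu$ and $A$ and not on $\zplus$, which is automatic once $\zplus_0$ is fixed at a value (like $1$) that does not scale with $\zplus$.
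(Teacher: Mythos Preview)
Your proposal is correct and matches the paper's implicit reasoning: the paper gives no explicit proof of this corollary, indicating it is meant to be an immediate consequence of Lemma~\ref{lem:drift-wins-over-mart-bound}, which is precisely what you have spelled out.
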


\begin{corollary}
\label{cor:drift-wins-over-mart-div}
    If $\nu < 0$, then
    \[
        \pp\left\{ \lim_{t\to\infty} Z(t) = -\infty \right\} = 1.
    \]
    The same holds for all processes $Y$ satisfying $Y\leq Z$.
\end{corollary}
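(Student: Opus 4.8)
The plan is to reduce everything to the strong law $M(t)/t \to 0$ almost surely, after which $Z(t)/t = \nu + M(t)/t \to \nu < 0$ forces $Z(t) \to -\infty$ almost surely.

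Concretely, I would first fix $\eps>0$ and, for each integer $n\ge 1$, invoke the exponential martingale inequality (Theorem~\ref{thm:Bass-martingale}) with $t=n$, threshold $z=\eps n$, and $D=An$ — legitimate because $\langle M\rangle_n\le An$ almost surely by the defining property of $\cM$. This yields $\pp\{M^*(n)>\eps n\}\le 2\exp(-\eps^2 n/(2A))$, which is summable in $n$, so the Borel--Cantelli lemma gives a (random) $N$ with $M^*(n)\le\eps n$ for all $n\ge N$. Bounding $M^*(t)$ by $M^*(\lceil t\rceil)$ for non-integer $t$ then gives $\limsup_{t\to\infty}|M(t)|/t\le 2\eps$ almost surely; intersecting the corresponding events over $\eps=1/j$, $j\in\nn$, produces $M(t)/t\to 0$ almost surely. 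Hence $Z(t)/t\to\nu<0$ almost surely, which gives $Z(t)\to-\infty$ almost surely. For the last sentence, note that on the full-measure event $\{Z(t)\to-\infty\}$ any process $Y$ with $Y(t)\le Z(t)$ for all $t$ also satisfies $Y(t)\to-\infty$, since for every $K$ the eventual inequality $Z(t)<-K$ forces $Y(t)<-K$.

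There is no serious obstacle here; the only care needed is the elementary interpolation between integer times and keeping track that the Lipschitz constant $A$ (hence the whole argument) is deterministic. If one prefers to stay strictly within the hitting-time estimates already developed, one can reach the same conclusion by applying a conditional version of Lemma~\ref{lem:drift-wins-over-mart-bound} to the shifted martingales $M(n+\,\cdot\,)-M(n)$ (which again lie in $\cM$, with the same constant $A$), combined with the bound $M(n)/n\to 0$ obtained above; but the direct strong-law route is the shortest.
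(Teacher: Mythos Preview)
Your proof is correct. The paper states this corollary without proof, as an immediate consequence of the surrounding estimates; your route via the strong law $M(t)/t\to 0$ (obtained from Theorem~\ref{thm:Bass-martingale} and Borel--Cantelli) is a clean and self-contained way to fill in the details, and the interpolation $M^*(t)\le M^*(\lceil t\rceil)$ together with the countable intersection over $\eps=1/j$ is handled correctly. Note that your argument bypasses Lemma~\ref{lem:drift-wins-over-mart-bound} entirely and uses only the exponential martingale inequality, which is arguably the most economical derivation.
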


\begin{lemma}
\label{lem:positive-prob-to-escape-right}
    If, in addition to our standing assumptions, 
    there exists $a>0$ such that $$\langle M \rangle_t - \langle M \rangle_s \geq a(t-s)$$ for all $0 \leq s \leq t$,
    then
    there is a constant $c=c(A,a,\nu,\zplus,\zmoins) > 0$ such that
    \begin{align*}
        \pp\{\tau = \tauplus \text{ and } \tauplus  \leq 1\} \geq c.
    \end{align*}
    The same holds for all processes $Y$ satisfying $Y\geq Z$.
\end{lemma}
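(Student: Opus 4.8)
\textbf{Proof proposal for Lemma~\ref{lem:positive-prob-to-escape-right}.}

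The plan is to show that the process $Z$ reaches $\zplus$ before time $1$ and before hitting $\zmoins$ with positive probability by producing a \emph{single} favorable event for the martingale part $M$ on the interval $[0,1]$ on which the desired behavior is forced, and then bounding the probability of this event from below. First I would fix $0<\zplus$ and $\zmoins<0$. Set $L = |\zmoins| + \nu^- + 1$ where $\nu^- = \max\{-\nu, 0\}$; more simply, pick any constant $L$ large enough that $\nu s + z \ge \zplus$ forces $z \ge \zplus$ and, for all $s\in[0,1]$, the inequality $z > |\zmoins| + |\nu|$ guarantees $\nu s + z$ stays strictly between $\zmoins$ and above, while being at least $\zplus$ at some point. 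Concretely, consider the event
\[
    E = \Big\{ \inf_{s\in[0,1]} M(s) > -\tfrac12(\zplus\wedge|\zmoins|) - |\nu| \ \text{ and }\ M(1) > \zplus + |\nu| + \tfrac12(\zplus\wedge|\zmoins|)\Big\}.
\]
On $E$, during $[0,1]$ we have $Z(s) = \nu s + M(s) > \zmoins$ for all $s\le 1$ (using the infimum bound), so $\taumoins > 1$; and $Z(1) = \nu + M(1) > \zplus$, so by continuity $\tauplus \le 1$. Hence $E \subset \{\tau = \tauplus \text{ and } \tauplus \le 1\}$, and it suffices to bound $\pp(E)$ below by a constant $c = c(A,a,\nu,\zplus,\zmoins) > 0$.

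The lower bound on $\pp(E)$ is where the two-sided control on $\langle M\rangle$ enters. By the Dambis--Dubins--Schwarz theorem, $M(t) = B(\langle M\rangle_t)$ for a standard Brownian motion $B$ (on a possibly enlarged space), and the hypotheses $a(t-s)\le \langle M\rangle_t - \langle M\rangle_s \le A(t-s)$ give $\langle M\rangle_1 \in [a, A]$ and $\langle M\rangle$ strictly increasing. Thus the event $E$ is implied by a corresponding event for $B$ on the time-interval $[0,\langle M\rangle_1]\subset[0,A]$: it suffices that $B$ stays above $-\tfrac12(\zplus\wedge|\zmoins|) - |\nu|$ on all of $[0,A]$ and that $B(\langle M\rangle_1) > \zplus + |\nu| + \tfrac12(\zplus\wedge|\zmoins|)$. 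Since $\langle M\rangle_1 \ge a$, the latter is implied by requiring $B$ to exceed that level throughout $[a, A]$ (say, $B(s) \in (\zplus + |\nu| + \tfrac12(\zplus\wedge|\zmoins|),\ \zplus + |\nu| + (\zplus\wedge|\zmoins|))$ for $s\in[a,A]$). The probability that a standard Brownian motion follows such a tube---rising from $0$, staying above a fixed negative level on $[0,a]$, then staying in a fixed open interval on $[a,A]$---is a strictly positive constant depending only on $a$, $A$, $\nu$, $\zplus$, $\zmoins$, by the support theorem for Brownian motion (or a direct computation with the Gaussian transition density and the reflection principle). This gives the desired $c>0$.

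Finally, the extension to any process $Y$ with $Y\ge Z$ is immediate: on $E$ we have $Y(s)\ge Z(s) > \zmoins$ for $s\le 1$ and $Y(1)\ge Z(1) > \zplus$, so the exit time of $Y$ from $(\zmoins,\zplus)$ through the top is at most $1$ on $E$ as well. I expect the main (minor) obstacle to be bookkeeping: choosing the constants in the definition of $E$ so that the two requirements ($Z$ not hitting $\zmoins$, and $Z(1)>\zplus$) are simultaneously consistent and translate cleanly into a Brownian tube event of positive probability; once the right buffer constants are in place, everything else is routine. One should also be slightly careful that the DDS Brownian motion is only defined up to $\langle M\rangle_\infty$, but since we only look at $[0,A]\supseteq[0,\langle M\rangle_1]$ and $\langle M\rangle_1\le A$ almost surely, this causes no issue; alternatively one can avoid DDS entirely and bound $\pp(E)$ using the exponential martingale inequality (Theorem~\ref{thm:Bass-martingale}) for the "stays above a negative level" part together with a lower bound on $\pp\{M(1) > \text{level}\}$ obtained from the central limit behavior forced by $\langle M\rangle_1 \in [a,A]$---but the DDS route is cleaner.
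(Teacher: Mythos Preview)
Your approach is essentially the paper's: represent $M$ via Dambis--Dubins--Schwarz as $M(t)=B(\langle M\rangle_t)$ and exhibit a tube event for $B$ on $[0,A]$ that has positive probability and forces $\tauplus\le 1<\taumoins$. The only issue is the specific event $E$ you wrote down. Your claim ``on $E$ we have $Z(s)>\zmoins$ for all $s\le 1$'' does not follow from the stated infimum bound: combining $M(s)>-\tfrac12(\zplus\wedge|\zmoins|)-|\nu|$ with $\nu s\ge -|\nu|$ only gives $Z(s)>-2|\nu|-\tfrac12(\zplus\wedge|\zmoins|)$, which can easily be below $\zmoins$ when $|\nu|$ is large relative to $|\zmoins|$. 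The difficulty is that a \emph{uniform} lower bound on $M$ cannot compensate a drift that accumulates over $[0,1]$; and you cannot simply raise the uniform bound because $M(0)=0$. The paper resolves exactly this bookkeeping point by centering the Brownian tube on an increasing linear function $\gamma(s)=\big(\tfrac{\zplus-\zmoins}{a}+\tfrac{|\nu|}{a}\big)s$, so that the induced lower bound on $M(t)$ grows linearly in $t$ and absorbs the drift; on the resulting event one gets $Z(t)>\zplus t+(\tfrac12-t)\zmoins$, which stays above $\zmoins$ on $[0,1]$ and exceeds $\zplus$ at $t=1$. Once you replace your flat tube by one around such a linear $\gamma$, the rest of your argument goes through verbatim.
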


\begin{proof}
    First, we represent $M(t) = B(\langle M \rangle_t)$ for some standard Brownian motion $B$; see, e.g., \cite[Section~{3.4.B}]{KaSh}. Introducing 
    \[   
        \gamma(s) = \frac{\zplus - \zmoins}{a}s+ \frac{|\nu|}{a}s,\quad s \in [0,A],
    \]
    we define an event $D=D(A,a,\nu,\zplus,\zmoins)=\big\{|B(s)-\gamma(s)|\le \frac{1}{2}\zplus \ \text{for}\ s\in [0,A] \big\}$ and 
    $c=\pp(D)>0$.
    Since $at \leq \langle M \rangle_t \leq At$, on $D$ we have that for all~$t \in [0,1]$, 
    \begin{align*}
        M(t) &= B(\langle M \rangle_t) 
            > \frac{\zplus - \zmoins}{a}\langle M \rangle_t + \frac{|\nu|}{a}\langle M \rangle_t  + \frac 12 \zmoins 
            \geq (\zplus-\zmoins) t +  |\nu| t  + \frac 12 \zmoins
    \end{align*}
    implying
    \begin{align*}
        Z(t) = \nu t + M(t) &> \nu t + {(\zplus - \zmoins)} t + |\nu| t + \frac 12 \zmoins 
        \geq \zplus t + \left(\frac 12 - t \right) \zmoins,
    \end{align*}
    for $t \in [0,1]$. Thus,  $\taumoins > 1$ and $\tauplus \leq 1$ on $D$, which completes the proof.
\end{proof}

\subsection{Exit times estimates for two-dimensional diffusions near an edge}
\label{app:martingales-further}
In this section, we extend the results of Section~\ref{sec:1d-martingales} to 2-dimensional diffusions of the form 
\begin{subequations}
\label{eq:alt-SDE}  
\begin{align}
    \label{eq:y1-ito}
    \dd Y_1(t) &= \ell_1(Y_1(t),X_2(t)) \dd t + s_{11}(Y_1(t),X_2(t)) \dd W_1(t) + s_{21}(Y_1(t),X_2(t)) \dd W_2(t) \\
    \dd X_2(t) &= b_2(Y_1(t),X_2(t)) \dd t + \sigma_{12}(Y_1(t),X_2(t)) \circ \dd W_1(t) + \sigma_{22}(Y_1(t),X_2(t)) \circ \dd W_2(t)
\end{align}
\end{subequations}
with bounded Lipschitz coefficients $\ell_1, b_2,  s_{11}, s_{21}, \sigma_{12}, \sigma_{22}: \rr\times[0,1] \to \rr$. We assume the following relative ellipticity property:
 $s_{11}(y,x)^2+s_{21}(y,x)^2>0$ for all $(y,x)\in\R\times[0,1]$.

The results of this section will be applied to the study of the joint process $(Y_1(t),X_2(t))=(y_1(X_1(t)),X_2(t))$ in the notation introduced in Section~\ref{sec:local-attr}. 
We stress that~\eqref{eq:y1-ito} is an It\^o equation.

For a starting point $(Y_1(0),X_2(0))=(y,x)$, in this section, we denote the resulting distribution on paths by~$\pp^{(y,x)}$. The associated expectation is denoted by~$\ee^{(y,x)}$. 
Throughout this section, we work with a fixed segment 
$I=(-b,b)\subset \rr$ containing $0$. 
For $\ymoins$ and $\yplus$ satisfying $\ymoins < -b < b < \yplus$, and $r\in(0,1)$, we introduce
\begin{align*}
    \tau &= \tau(\ymoins,\yplus) = \inf\{t > 0 : Y_1(t) \notin (\ymoins,\yplus) \},
    \\
    \eta&=\eta(r)=\inf\{t\ge 0:\ X_2(t)\ge r\},
    \\
    \zeta&=\zeta(\ymoins,\yplus,r)=\tau(\ymoins,\yplus)\wedge \eta(r),
\end{align*}
the latter being 
the exit time of the process $(Y_1,X_2)$ from the rectangle $(\ymoins,\yplus)\times[0,r)$.
The main goal of this section is to obtain estimates on these stopping times.
First, we remind the following well-known property:
\begin{lemma}
\label{lem:finite-2d-mean} 
   For every $t>0$, we have $$\inf_{(y,x) \in I\times[0,1]}\pp^{(y,x)}\{Y_1(t) > \yplus\}>0.$$  
   Moreover, there exists $\alpha > 0$ such that 
    \[  
        \sup_{(y,x) \in I\times[0,1]} \ee^{(y,x)} \Exp{\alpha \tau} < \infty.
    \]
\end{lemma}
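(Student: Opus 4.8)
The plan is to exploit the relative ellipticity of the $Y_1$-component, namely that $s_{11}(y,x)^2 + s_{21}(y,x)^2$ is bounded below by some $a > 0$ on the compact set $I \times [0,1]$ (and, by continuity and boundedness of the coefficients, on a slightly larger strip), together with the boundedness of the drift $\ell_1$. This means that, viewed on its own, $Y_1$ behaves like a time-changed Brownian motion with uniformly nondegenerate clock and uniformly bounded drift, so it has a genuine chance of moving a fixed distance to the right within a fixed time, regardless of where the $X_2$-component wanders.

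First I would establish the lower bound on $\pp^{(y,x)}\{Y_1(t) > \yplus\}$. Write the martingale part of $Y_1$ as $B(\langle \cdot \rangle)$ for a standard Brownian motion $B$, exactly as in the proof of Lemma~\ref{lem:positive-prob-to-escape-right}; since the quadratic variation clock satisfies $a s \leq \langle M_{Y_1}\rangle_s \leq A s$ for constants $0 < a \leq A$ coming from the bounds on the coefficients, and since $|\ell_1| \leq C$ is bounded, the event that $B$ stays in a narrow tube forcing $Y_1$ to exceed $\yplus$ by time $t$ has positive probability uniformly over the starting point $(y,x) \in I \times [0,1]$ (the relevant displacement $\yplus - y$ is bounded above by $2b + |\yplus|$, a quantity independent of the particular $(y,x)$). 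This is a direct adaptation of the construction of the event $D$ in Lemma~\ref{lem:positive-prob-to-escape-right}, now only needing a one-sided estimate and not caring about the fate of $X_2$.

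Next I would promote this to the exponential moment bound on $\tau$. Fix a time $t_0 > 0$ and, using the first part, let $p = \inf_{(y,x)} \pp^{(y,x)}\{Y_1(t_0) > \yplus\} > 0$; on this event $\tau \leq t_0$. By the Markov property, for any starting point $\pp^{(y,x)}\{\tau > n t_0\} \leq (1-p)^n$, since at each of the $n$ successive time-$t_0$ windows there is probability at least $p$ of having already exited $(\ymoins, \yplus)$ through the right endpoint (and we only need some exit, so the bound is if anything conservative). Standard comparison of the geometric tail with an exponential then gives $\sup_{(y,x) \in I \times [0,1]} \ee^{(y,x)} \Exp{\alpha \tau} < \infty$ for $\alpha$ small enough depending on $p$ and $t_0$, hence on $a$, $A$, $C$, and $b$.

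The main obstacle is a bookkeeping one rather than a conceptual one: one must be careful that $Y_1$ can start anywhere in $I = (-b,b)$ while the relevant displacement and the coefficient bounds are uniform, and that the tube-event argument truly does not depend on $X_2(t)$, which may do anything in $[0,1]$ during the window. Because the $X_2$-component only enters the coefficients of the $Y_1$-equation through their values on $\rr \times [0,1]$, where they are uniformly bounded and $Y_1$ uniformly elliptic, this causes no real difficulty — the Brownian representation argument goes through verbatim with the clock sandwiched between $a s$ and $A s$. I would remark at the end that the same reasoning, applied to $-Y_1$, gives the analogous control from the left, which is what feeds into the subsequent estimates on $\zeta$.
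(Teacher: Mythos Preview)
Your proposal is correct and follows essentially the same approach as the paper: the paper also derives the first claim from Lemma~\ref{lem:positive-prob-to-escape-right} (writing $Y_1(t)-y \ge Z(t)$ for a process $Z$ of the form~\eqref{eq:drift-plus-mart} and invoking that lemma with a time rescaling), and then obtains the exponential moment via the same Markov-property geometric-tail argument $\pp\{\tau>n\}\le\varkappa^n$. The only minor bookkeeping point is that the uniform lower bound $a$ on $s_{11}^2+s_{21}^2$ is needed not just on $I\times[0,1]$ but on the compact strip $[\ymoins,\yplus]\times[0,1]$ (or the tube) traversed before the exit; this is immediate from continuity and the assumed relative ellipticity on all of $\rr\times[0,1]$.
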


\begin{proof} 
    Our assumptions on the coefficients allow to write $Y_1(t)-y\ge Z(t)$ where $Z$ is a process of the form \eqref{eq:drift-plus-mart} satisfying the assumptions of Section~\ref{sec:1d-martingales}. Thus the first claim follows from Lemma~\ref{lem:positive-prob-to-escape-right} (and a time rescaling) and implies
    \begin{align*}
        \varkappa  = \inf_{(y,x) \in [\ymoins,\yplus]\times[0,1]} \pp^{(y,x)} \{\tau > 1\} < 1.
    \end{align*}
    The strong Markov property gives
    $
        \inf_{(y,x) \in [\ymoins,\yplus]\times[0,1]} \pp^{(y,x)} \{\tau > n\} \leq \varkappa^n
    $,
    and our second claim follows since $I \subset [\ymoins,\yplus]$.
\end{proof}

\begin{lemma}
\label{lem:exp-lb-suppt-case}
    If there exist $\nu > 0$ and $r\in(0,1)$ such that
    \begin{alignat*}{2}
        \ell_1(y,x) &\geq \nu,\quad  &(y,x)\in(-\infty,-b)\times[0,r] \\
        \ell_1(y,x) &\leq -\nu, \quad &(y,x)\in(b,\infty)\times[0,r],
    \end{alignat*}
    then there exist constants~$C,L>0$ 
    such that
    \begin{equation}
        \sup_{\substack{(y,x) \in I\times[0,r] }} \pp^{(y,x)}\Big\{\tau < C^{-1} \exp(C^{-1}(\yplus \wedge (-\ymoins)))\,\wedge\, \eta(r)\Big\} \leq \frac 14
    \end{equation}
    for all choices of $\ymoins\le -b-L$ and $\yplus\ge  b+L$ in the definition of~$\tau$.
\end{lemma}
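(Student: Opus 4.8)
The plan is to build a ``potential well'' for the $Y_1$-coordinate that grows exponentially once $Y_1$ leaves $(-b,b)$, and to exploit that the restoring-drift hypothesis on $\ell_1$ is in force precisely while $X_2\le r$, i.e.\ for times before $\eta=\eta(r)$. First I would fix $\theta=\theta(\nu,A)>0$ small enough that the wings of the well yield a strictly negative contribution to the drift (concretely $2\theta A^2<\nu$, using $s_{11}^2+s_{21}^2\le 2A^2$), and construct $h\in C^2(\rr)$ with $h\ge 1$ on $\rr$, with $h(y)=\Exp{\theta(y-b)}$ for $y\ge b+1$ and $h(y)=\Exp{\theta(-y-b)}$ for $y\le-(b+1)$, and with $h$ bounded by some $C_0=C_0(\theta,b)$ on $[-(b+1),b+1]$; such an $h$ exists by smoothly interpolating between the two exponential branches.

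Next I would check that, writing $\mathcal{L}h(y,x)=h'(y)\ell_1(y,x)+\tfrac12 h''(y)\bigl(s_{11}(y,x)^2+s_{21}(y,x)^2\bigr)$ for the drift coefficient of the It\^o differential of $h(Y_1(t))$, the drift sign hypotheses give $\mathcal{L}h(y,x)\le\theta h(y)(2\theta A^2-\nu)<0$ for $|y|\ge b+1$ and $x\le r$, while on the compact set $[-(b+1),b+1]\times[0,1]$ the quantity $\mathcal{L}h$ is bounded above by a constant $C_1$ depending only on $\theta$, $b$ and the sup-norms of the coefficients; hence $\mathcal{L}h\le C_1$ throughout $\rr\times[0,r]$. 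The core estimate then comes from It\^o's formula: for fixed $\ymoins\le-(b+1)$, $\yplus\ge b+1$, writing $m=\yplus\wedge(-\ymoins)\ge b+1$ and $\zeta=\tau(\ymoins,\yplus)\wedge\eta$, one has $X_2(s)<r$ and $Y_1(s)\in[\ymoins,\yplus]$ for $s<\zeta$, so $h$, $h'$, $h''$ are bounded along the stopped path; applying It\^o to $h(Y_1(t))$, stopping at the bounded time $t\wedge\zeta$ (the stopped stochastic integral is then a true martingale), and using $\mathcal{L}h\le C_1$ gives
\[
    \ee^{(y,x)}\bigl[h(Y_1(t\wedge\zeta))\bigr]\le h(y)+C_1 t\le C_0+C_1 t,\qquad (y,x)\in I\times[0,r].
\]
On $\{\tau\le t\}\cap\{\tau<\eta\}$ we have $t\wedge\zeta=\tau$ and $Y_1(\tau)\in\{\ymoins,\yplus\}$, hence $h(Y_1(\tau))\ge\Exp{\theta(m-b)}$; combining with the display (and $h\ge 0$) yields $\pp^{(y,x)}\{\tau\le t,\ \tau<\eta\}\le(C_0+C_1 t)\Exp{\theta b}\Exp{-\theta m}$.

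To finish I would choose $C=C(\theta,C_0,C_1)$ so large that $C^{-1}\le\theta/2$ and take $t=C^{-1}\Exp{C^{-1}m}$; then the bound becomes at most $C_0\Exp{\theta b}\Exp{-\theta m}+C_1 C^{-1}\Exp{\theta b}\Exp{-\theta m/2}$, which vanishes as $m\to\infty$, so it is $\le\tfrac14$ as soon as $m\ge b+L$ for an $L\ge 1$ depending only on $\nu$, $A$, $b$ and the coefficient bounds. Since $\{\tau<C^{-1}\Exp{C^{-1}m}\wedge\eta\}\subseteq\{\tau\le t,\ \tau<\eta\}$ and the estimate is uniform in $(y,x)\in I\times[0,r]$, this is precisely the claim. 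The main obstacle will be the explicit construction of the $C^2$ well $h$ together with the bookkeeping that makes the exponential escape time $\Exp{C^{-1}m}$ dominate the linear-in-$t$ contribution from the central region where the drift is uncontrolled --- this is exactly what forces $C^{-1}<\theta$, with $\theta$ itself small relative to $\nu/A^2$; the remaining steps are routine semimartingale estimates.
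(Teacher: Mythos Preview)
Your argument is correct and takes a genuinely different route from the paper's proof. The paper proceeds by a renewal/excursion decomposition: it introduces successive stopping times tracking visits of $Y_1$ to $\{-b-1,b+1\}$ and returns to $\{-b,b\}$, uses the one-dimensional estimate of Lemma~\ref{lem:drift-wins-over-mart-bound} to show that each excursion reaches $\{\ymoins,\yplus\}$ only with probability of order $\Exp{-C^{-1}(\yplus\wedge(-\ymoins))}$, bounds the number~$N$ of excursions before exit from below via a union bound, and then uses Lemma~\ref{lem:lb-tauplus-prob} together with a combinatorial estimate (counting how many of the first $N^*$ excursions can be short) to lower-bound the total time. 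Your Lyapunov-function approach bypasses this discrete machinery entirely: the exponential well $h$ converts the restoring drift directly into a supermartingale-type bound $\ee h(Y_1(t\wedge\zeta))\le C_0+C_1 t$, and the exponential height of $h$ at $\{\ymoins,\yplus\}$ yields the exit-probability estimate via Markov's inequality. Your route is shorter and more self-contained, relying only on It\^o's formula rather than on several earlier lemmas; the paper's approach, on the other hand, makes the probabilistic mechanism (many independent attempts, each unlikely to succeed) more transparent and reuses the one-dimensional toolbox developed in Section~\ref{sec:1d-martingales}.
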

    
\begin{proof} 
    It suffices to prove this estimate for $(y,x)\in\{-b,b\}\times[0,r]$.
     We define $\tau^{(0)}  = 0$ and, for $n\in\nn$, 
    \begin{align*}
        \sigma_{n} & = \inf\{ t > \tau_{n-1} : Y_1(t) \in \{-b-1,b+1\} \}\wedge \eta, \\
        \tau_{n} & = \inf\{t > \sigma_{n} : Y_1(t) \in \{\ymoins ,-b,b,\yplus \}\}\wedge \eta,
        \\
        \Delta_n&=\tau_n-\sigma_n.
    \end{align*}
    Introducing $F_n  = \{ Y_1(\tau_{n}) \in \{-b,b\} \}$
    for $n=0,1,2,\ldots,$ and $N  = \inf\{n : Y_1(\tau_{n}) \in \{\ymoins ,\yplus\}\}$, 
    we obtain $\tau = \tau_{N}$ and $\{N \geq n\} = \bigcap_{k=0}^{n-1} F_k$.
    
    By Lemma~\ref{lem:drift-wins-over-mart-bound}, there exist $C,K>0$ such that for all $\ymoins\le -b-L$,  $\yplus\ge  b+L$ and  $n\in\nn$,
    \begin{align*}
        \one_{F_{n-1}}\pp\big[\,F_n^\complement\cap\{\tau_n<\eta\} \,|\, \cF_{\tau_{n-1}}\big] \leq r(C,\yplus,\ymoins)\wedge \tfrac{1}{8}
    \end{align*} 
    almost surely, 
    where
    \[
    r=r(C,\yplus,\ymoins)=C \exp(-C^{-1}(\yplus \wedge (-\ymoins))).
    \]
    Letting $N_*$ be the largest even number not exceeding  $\tfrac {r^{-1}}8$, we use the union bound to obtain 
    \begin{align}
        \label{eq:N-N-star}
        \pp\{N \le  N_* \vee 1;\ \tau<\eta\} \leq \tfrac 18.
    \end{align}
    We then use Lemma~\ref{lem:lb-tauplus-prob} to choose $\delta>0$ such that with probability 1, for all $n$,
    \begin{align}
        \label{eq:choice-of-delta}
        \one_{F_{n-1}}\pp[\Delta_n < \delta;\ \tau_n<\eta|\, \cF_{\tau_{n-1}}] < \frac{1}{16\Exp{}}.
    \end{align}
    Denoting $N^*=N_*\vee 2$, we note that for some $c>0$
    \begin{align*}
        \Big\{N \ge N^*;\ \# \{n \leq N^* : \Delta_n \ge \delta\} \ge \tfrac {N^*}{2}  \Big\}\subset
        \big\{\tau\ge \tfrac {N^*}2\delta   \big\}\subset\big\{\tau\ge cr^{-1}\delta\big\}.  
    \end{align*}   
    Thus,
    \begin{multline}
        \label{eq:low-tau-estimate}
        \pp\left\{\tau < (cr^{-1}\delta)  \wedge \eta\right\}
        \leq \pp\{N \le  N_*\vee 1;\ \tau<\eta\}
            \\+ \pp\Big\{N \geq N^*;\ \# \{n \leq N^* : \Delta_n < \delta \} \ge \tfrac {N^*}2;\ \tau<\eta  \Big\}, 
    \end{multline}
    where the first term is estimated in~\eqref{eq:N-N-star}. Using the strong Markov property, 
    the inequality $\binom{n}{k}\le (n\Exp{}/k)^k$, the 
    requirement~\eqref{eq:choice-of-delta}, and the fact that $N^*\ge 2$,  we can bound the second one by  
    \begin{multline*}
        \sum_{\substack{K\subset \{1,\ldots,N^*\}\\ |K|=N^*/2}}\ee\bigg[\one_{\{N \geq N^*\}} 
        \prod_{n\in K} \one_{\{\Delta_{n} < \delta;\ \tau_n<\eta\}} \bigg]
        \\
        \le \sum_{1\le n_1<\ldots< n_{N^*/2}\le N_*} \prod_{k=1}^{ N^*/2}
        \Big\|
        \one_{F_{n_k-1}} \ee\big[\one_{\{\Delta_{n_k} < \delta;\ \tau_n<\eta\}}|\cF_{\tau_{n_k-1}}
        \big]\Big\|_\infty
        \\
        \leq \binom{N^*}{ \frac 12 N_*} \Big(\frac{1}{16\Exp{}}\Big)^{\frac{N^*}{2}} 
            \,\leq\, 2^{\frac{N^*}{2}} \Exp{\frac{N^*}{2}}  \Big(\frac{1}{16\Exp{}}\Big)^{\frac{N^*}{2}} 
            \,\leq\,  \Big(\frac 18\Big)^{\frac{N^*}{2}} 
        \,<\, \frac 18,
     \end{multline*}
    and the desired estimate follows with an appropriately adjusted constant.
\end{proof}

\begin{lemma}
\label{lem:lin-ub-different-extremes}
    If there are $\nu > 0$ and $r\in(0,1)$ such that
    \begin{alignat*}{2}
        \ell_1(y,x) &\leq -\nu, &\quad   (y,x)&\in (-\infty,-b)\times[0,r], \\
        \ell_1(y,x) &\geq \nu,  &   (y,x)&\in (b,\infty)\times[0,r],
    \end{alignat*}
    then there is a constant~$C$ such that  for all  $\ymoins<-b$ and $\yplus>b$,
    \begin{equation}
        \sup_{\substack{(y,x) \in I\times[0,r] }} \ee^{(y,x)}\, \zeta(\ymoins,\yplus,r) \leq C (\yplus  \wedge -\ymoins).
    \end{equation}
  
\end{lemma}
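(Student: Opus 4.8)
The plan is to reduce the two–dimensional exit time $\zeta(\ymoins,\yplus,r)$ to a one–dimensional problem for $Y_1$ alone, using the fact that $\zeta \le \tau(\ymoins,\yplus)$ and that the drift hypothesis on $\ell_1$ is uniform over $x \in [0,r]$. The key observation is that while the process stays in the strip $\rr\times[0,r)$ (which it does up to time $\zeta$), the $Y_1$-coordinate is a one-dimensional It\^o process whose drift points \emph{inward}: toward $+\infty$ above $b$ and toward $-\infty$ below $-b$. So $Y_1$ is pushed away from the boundary points $\yplus$ and $\ymoins$, and the time to exit $(\ymoins,\yplus)$ should be linear in $\yplus \wedge (-\ymoins)$ rather than quadratic.

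Concretely, first I would split the strip into three regions in $y$: the middle band $y\in(-b,b)$, the right region $y\in(b,\yplus)$, and the left region $y\in(\ymoins,-b)$. In the right region, as long as $X_2 < r$, the coefficient of $\dd t$ in~\eqref{eq:y1-ito} is $\le -\nu < 0$; by the boundedness and relative-ellipticity hypotheses on the diffusion coefficients, we can dominate the increments of $Y_1$ from below by a process $Z(t) = -\nu t + M(t)$ with $M\in\cM$ (having a deterministic Lipschitz constant $A$ for $\langle M\rangle$) while the process is in $(b,\yplus)\times[0,r)$. Corollary~\ref{cor:drift-wins-over-mart-bound} and its companions then show that starting from any point in $(b,\yplus)\times[0,r)$, the probability of ever reaching $\{y = \yplus\}$ before leaving the right region through $\{y=b\}$ or through $\{x=r\}$ is at most $\tfrac14$ once $\yplus - b \ge L$, and moreover the time to leave the right region has a uniformly bounded exponential moment (this is the analogue of Lemma~\ref{lem:finite-2d-mean} applied in a moving frame). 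The symmetric statement holds in the left region via a process $-\nu t + M(t)$ bounding $-Y_1$. In the middle band $(-b,b)\times[0,r)$, Lemma~\ref{lem:finite-2d-mean} directly gives a uniform bound $T_0$ on the expected time to leave the band (through $\{y=-b\}$, $\{y=b\}$, or $\{x=r\}$), with a uniform exponential moment.

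Next I would set up the renewal/excursion argument. Define stopping times that track successive visits to the levels $\{y=-b\}$ and $\{y=b\}$: from a point on $\{y=b\}$ with $x<r$, the process either exits the strip (reaching $\yplus$ or $x=r$) or returns to the band and then possibly to $\{y=-b\}$; each such ``block'' takes an expected time that is $O(1)$ (uniformly, using the exponential-moment bounds above) and, crucially, each block carries a probability $\ge p > 0$ (depending only on $\nu, A, L$ and $\yplus\wedge(-\ymoins)\ge$ the threshold) of terminating $\zeta$ — either because the inward drift drives the process out through $\{x=r\}$, or because in the worst case the number of crossings between the two levels $\{y=\pm b\}$ needed to exit through $\yplus$ or $\ymoins$ is itself geometrically distributed. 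The cleanest route is: let $N$ be the number of level-crossing blocks before $\zeta$; show $\ee^{(y,x)}[\text{(duration of block)} \mid \cF] \le C_0$ uniformly, and show $\pp\{N > n\} \le (1-p_0)^n$ for a constant $p_0 > 0$ that depends on $\yplus\wedge(-\ymoins)$ only through the requirement $\yplus, -\ymoins \ge b+L$. Actually, more care is needed: the probability of exiting through $\{x=r\}$ in one block may degrade as $\yplus$ grows, so the correct statement is that the expected number of blocks is $O(\yplus\wedge(-\ymoins))$ — because to drift from $\{y=b\}$ all the way to $\{y=\yplus\}$ against... no, \emph{with} the drift, the number of excursions scales linearly. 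By Wald's identity, $\ee^{(y,x)}\zeta \le C_0 \cdot \ee^{(y,x)}N \le C(\yplus\wedge(-\ymoins))$, which is the claim.

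The main obstacle I expect is making the renewal bound $\ee N = O(\yplus\wedge(-\ymoins))$ precise and uniform: one has to control simultaneously (i) the chance that an excursion started near level $b$ travels distance $\gtrsim \yplus - b$ and actually exits at $\yplus$ rather than being absorbed at $x=r$, versus (ii) the chance it comes back to the band. Because the drift in the right region points toward $\yplus$, the natural one-dimensional comparison (Lemma~\ref{lem:ub-tauplus-prob}, Lemma~\ref{lem:ee-ap-bound}) gives that the time to reach $\yplus$ from $b$ is $O(\yplus-b)$ with high probability, so in fact it may be cleaner to \emph{not} use a geometric-excursion count at all but instead directly dominate $Y_1$ from below by $Z(t)=\nu t + M(t)$ on the event that the process stays in $(b,\yplus)\times[0,r)$, apply Lemma~\ref{lem:ee-ap-bound} to get the expected hitting time of $\yplus$ linearly, and handle the left region symmetrically, with the middle band contributing only $O(1)$ by Lemma~\ref{lem:finite-2d-mean}; the only remaining point is to glue these via the strong Markov property and the fact that each region is exited in finite expected time. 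I would write the proof along those lines, stating each comparison explicitly and invoking Lemmas~\ref{lem:finite-2d-mean}, \ref{lem:ee-ap-bound}, Corollary~\ref{cor:drift-wins-over-mart-bound}, and the strong Markov property, taking care that all constants depend only on $\nu$, $A$ (the uniform Lipschitz bound for the diffusion part), $b$, and $r$, but not on $\ymoins,\yplus$.
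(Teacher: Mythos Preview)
Your renewal/excursion outline in the middle paragraph is exactly the paper's argument, and the lemmas you name (\ref{lem:finite-2d-mean}, \ref{lem:ee-ap-bound}, \ref{lem:drift-wins-over-mart-bound}) are the right ones. But you invert the bookkeeping, and the alternative you settle on at the end has a real gap.

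First, a sign slip: in the right region $(b,\infty)\times[0,r]$ the hypothesis is $\ell_1 \ge \nu > 0$, not $\le -\nu$; the drift is \emph{outward}, pushing $Y_1$ toward $\yplus$ (you later say this correctly). Consequently Lemma~\ref{lem:ee-ap-bound} is what bounds the outer-leg duration, and Lemma~\ref{lem:drift-wins-over-mart-bound} is what bounds the return probability---not the other way around.

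The paper's accounting: let $\sigma_n$ be the successive hits of $\{-b-1,b+1\}$ and $\tau_n$ the subsequent hits of $\{\ymoins,-b,b,\yplus\}$ (both stopped at $\eta$). Each inner leg $\sigma_n-\tau_{n-1}$ has expectation $O(1)$ by Lemma~\ref{lem:finite-2d-mean}. Each outer leg $\tau_n-\sigma_n$ has expectation linear in $\yplus$ (resp.\ $-\ymoins$) by Lemma~\ref{lem:ee-ap-bound}, since the drift carries $Y_1$ from $b+1$ toward $\yplus$ (resp.\ from $-b-1$ toward $\ymoins$). From $b+1$, \emph{returning} to $b$ means going against the drift, so by Lemma~\ref{lem:drift-wins-over-mart-bound} the return probability is at most $1-p$ for a uniform $p>0$; hence the number $N$ of round trips is geometric with $\ee N=O(1)$, \emph{independent of $\yplus,\ymoins$}. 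Summing yields $\ee\zeta \le \sum_n n(1-p)^{n-2}\bigl(C'+C''(\yplus\wedge(-\ymoins))\bigr)$. You had the roles reversed---blocks of duration $O(1)$ and $\ee N=O(\yplus\wedge(-\ymoins))$.

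Your last-paragraph alternative (``dominate on each region separately and glue by strong Markov, no excursion count'') does not close as written: bounding the expected sojourn in each of the three $y$-regions individually says nothing about the total time unless you also bound the number of transitions between regions. That bound is precisely the geometric tail on $N$ coming from Lemma~\ref{lem:drift-wins-over-mart-bound}, i.e., the renewal argument you tried to discard.
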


\begin{proof}
    It suffices to prove our claim for $\ymoins< a-1$ and $\yplus> b+1$. For those $\yplus$ and $\ymoins$, we can
    define $\tau_0  = 0$ and, for $n\in\nn$, 
    \begin{align*}
        \sigma_{n} & = \inf\{ t > \tau_{n-1} : Y_1(t) \in \{-b-1,b+1\} \} \wedge \eta, \\
        \tau_{n} & = \inf\{t > \sigma_{n+1} : Y_1(t) \in \{\ymoins ,-b,b,\yplus \}\}\wedge \eta,\\
        \Delta_n&=\sigma_n-\tau_{n-1},\\
        \Delta'_n&=\tau_n-\sigma_n.
    \end{align*}
    Introducing $F_n  = \{ Y_1(\tau_n) \in \{-b,b\}\}$ 
    and $N  = \inf\{n : Y_1(\tau_{n}) \notin \{-b,b\}\}$, we almost surely have $\zeta = \tau_{N}$
    and $\{N \geq n\} = \bigcap_{k=1}^{n-1} F_k$. 

    By Lemma~\ref{lem:finite-2d-mean}
    and Corollary~\ref{lem:ee-ap-bound}, there are constants $C',C''>0$ 
    such that for all $\yplus$, $\ymoins$, and~$n$, we almost surely have
    \begin{align*}
        \ee^{(y,x)}[\one_{\{N \geq n\}}\Delta_n\,|\,\cF_{\tau_{n-1}}]&\le C',\\
        \ee^{(y,x)}[\Delta'_n\, |\, \cF_{\sigma_{n}}] &\leq C''(\yplus  \wedge -\ymoins ).
    \end{align*}
    Also, by Lemma~\ref{lem:drift-wins-over-mart-bound}, there exists $p > 0$ independent of $\ymoins$ and $\yplus$ such that 
    \[
        \ee^{(y,x)}[ \one_{N\geq n} \one_{F_n} |\cF_{\sigma_n}]
        \leq 1 - p.
    \]
    Therefore,
    \begin{align*}
        \ee^{(y,x)}\, \zeta
            &= \sum_{n=1}^\infty \ee^{(y,x)} [\one_{\{N=n\}} \tau_{n}] \\
            &\leq \sum_{n=1}^\infty \ee^{(y,x)} \left[\one_{\bigcap_{k=1}^{N-1} F_k} \sum_{k=1}^n(\Delta_k+\Delta'_k)\right] \\ 
            &\leq \sum_{n=1}^\infty \sum_{k=1}^{n} \ee^{(y,x)}[\one_{F_1} \dotsb \one_{F_{k-1}}(\Delta_k+\Delta'_k) \one_{F_{k+1}} \dotsb \one_{F_{n-1}}] \\
            &\leq \sum_{n=1}^\infty n (1-p)^{(n-2) \wedge 0} \left(C' + C''(\yplus  \wedge -\ymoins ) \right),
    \end{align*}
    and our claim follows.
\end{proof}

Finally, the same ingredients can be used to derive the following lemma, whose proof we omit.

\begin{lemma}
\label{lem:lin-ub-same-extremes}
    If there is $r\in(0,1)$ and $\nu>0$ such that $\ell_1(y,x) \geq \nu$ for all $y \notin I$ and $x\in[0,r]$,
    then there is a constant~$C$ such that for all 
    for all  $\ymoins<-b$ and $\yplus>b$,
    \begin{equation}
        \sup_{\substack{(y,x) \in I\times[0,r] }} \ee^{(y,x)}\, \zeta(\ymoins,\yplus,r)\leq C \yplus. 
    \end{equation}
\end{lemma}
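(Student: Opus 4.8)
The plan is to reduce the statement to a single Lyapunov inequality and one application of Dynkin's formula. Since $\zeta(\ymoins,\yplus,r)=\tau(\ymoins,\yplus)\wedge\eta(r)\le\tau(\ymoins,\yplus)$ and, for $t<\zeta$, we have $Y_1(t)\in(\ymoins,\yplus)$ and $X_2(t)\in[0,r)$ (so that the hypothesis $\ell_1\ge\nu$ is in force whenever $Y_1(t)\notin I$), it suffices to produce a $C^2$ function $\Phi:(-\infty,\yplus]\to[0,\infty)$ with $\cL\Phi\le-1$ on $(-\infty,\yplus)\times[0,r]$ and $\Phi(y)\le C\yplus$ for $y\in I$, and then to conclude exactly as in the derivation of the classical Foster--Lyapunov estimate. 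Throughout, write $B=\|\ell_1\|_\infty<\infty$ and $a_0=\inf_{\overline I\times[0,1]}(s_{11}^2+s_{21}^2)>0$ (positive by relative ellipticity and compactness); all constants below will depend only on $\nu,b,B,a_0$, never on $\ymoins,\yplus$.

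The construction of $\Phi$ is the only substantive point. I would fix a large constant $\alpha=\alpha(\nu,B,b,a_0)$, set $\Phi(y)=\alpha(\yplus-y)$ on $[b,\yplus]$, let $\Phi$ solve the Riccati-type ODE $\Phi''=-\tfrac{2}{a_0}(1+B|\Phi'|)$ on $[-b,b]$ with $\Phi(b)=\alpha(\yplus-b)$, $\Phi'(b)=-\alpha$, and continue $\Phi$ linearly with slope $\Phi'(-b)$ on $(-\infty,-b]$ (so $\Phi\uparrow+\infty$ as $y\to-\infty$, which is harmless). On $[-b,b]$ the linear equation for $u=\Phi'$ integrates to $\Phi'(y)=\tfrac1B-(\alpha+\tfrac1B)e^{(2B/a_0)(y-b)}$, which is negative and decreasing throughout $[-b,b]$ and satisfies $|\Phi'(-b)|\ge 1/\nu$ as soon as $\alpha\ge(\nu^{-1}+B^{-1})e^{4Bb/a_0}$; moreover $\Phi''<0$ there. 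Thus $\Phi$ is strictly decreasing with $\Phi(\yplus)=0$, hence $\Phi\ge0$, and $\Phi(y)\le\Phi(-b)=\alpha(\yplus-b)+\int_{-b}^b|\Phi'|\le 2\alpha\yplus$ for $y\in\overline I$ (using $\yplus>b$). For the generator: on $I$, using $\Phi'<0$, $\Phi''<0$, $\ell_1\Phi'\le B|\Phi'|$, $(s_{11}^2+s_{21}^2)\Phi''\le a_0\Phi''$ and the ODE $\tfrac12 a_0|\Phi''|=1+B|\Phi'|$, we get $\cL\Phi\le B|\Phi'|-\tfrac12a_0|\Phi''|=-1$; on $\{b<y<\yplus\}\cup\{y<-b\}$, where $\ell_1(y,x)\ge\nu$ for $x\in[0,r]$, we use $\Phi'<0$, $\Phi''\le0$ and $|\Phi'|\ge1/\nu$ to get $\cL\Phi(y,x)\le\nu\Phi'(y)=-\nu|\Phi'(y)|\le-1$. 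A routine mollification of $\Phi$ near $y=\pm b$, carried out on the side $y\notin I$ (where the drift already provides the decay), makes $\Phi$ genuinely $C^2$ while preserving $\cL\Phi\le-1$. With $\Phi$ in hand, the Dynkin/optional-stopping argument (cf.\ Lemma~\ref{lem:Dynkin} and Corollary~\ref{cor:Dynkin}) along a localizing sequence $\eta_m\uparrow\infty$ gives $\ee^{(y,x)}[\Phi(Y_1(\zeta\wedge\eta_m))]\le\Phi(y)-\ee^{(y,x)}[\zeta\wedge\eta_m]$; since $\Phi\ge0$, monotone convergence yields $\ee^{(y,x)}\,\zeta(\ymoins,\yplus,r)\le\Phi(y)\le 2\alpha\yplus$ for all $(y,x)\in I\times[0,r]$, i.e.\ the claim with $C=2\alpha$. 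The degenerate regime $\yplus\le b+1$ is handled by the same argument with trivial constants.

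The hard part is exactly the choice of $\alpha$ independent of $\ymoins,\yplus$: one must guarantee that $|\Phi'|$ does not fall below $1/\nu$ before $y$ reaches $-b$, which is where the explicit solution of the linear ODE (and the fixed length $2b$ of $I$) is used, and one must patch to $C^2$ on the correct side. An alternative proof, closer to "the same ingredients" as Lemma~\ref{lem:lin-ub-different-extremes}, is an excursion decomposition with cycles $b\to b+1\to\{b,\yplus\}$: the time to climb from $\overline I$ up to $b+1$ has bounded expectation (a bounded, local version of the Lyapunov function above, or relative ellipticity of $Y_1$ on compacts as in Lemma~\ref{lem:finite-2d-mean} together with the fact that the drift precludes escape to $-\infty$); each leg from $b+1$ to $\{b,\yplus\}$ has expected length at most $(\yplus-b-1)/\nu$ by Lemma~\ref{lem:ee-ap-bound} — crucially with no $\ymoins$-dependence, since here the left end is ``reflecting-like'', which is precisely why the bound is $C\yplus$ and not $C(\yplus\wedge(-\ymoins))$ as in Lemma~\ref{lem:lin-ub-different-extremes} — and each such leg reaches $\yplus$ before returning to $b$ with probability bounded below, by Lemma~\ref{lem:drift-wins-over-mart-bound} applied to $-Y_1$ together with Lemma~\ref{lem:positive-prob-to-escape-right}. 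In that route the delicate point is preventing the left region from inflating the number of excursions; hiding it inside the ``climb to $b+1$'' leg is what circumvents it.
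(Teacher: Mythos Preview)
Your primary proof via a single Lyapunov function and Dynkin's formula is correct. One small caveat on the mollification: at $y=b$ you cannot simultaneously keep $\Phi'(b+\delta)=-\alpha$ and $\Phi''\le 0$ on $[b,b+\delta]$, since then $\int_b^{b+\delta}\Phi''<0$. The clean fix (which is surely what you intend by ``on the side $y\notin I$'') is to let $\Phi''$ transition monotonically from its ODE value to $0$ on $[b,b+\delta]$ while staying $\le 0$, accept that the linear piece on $[b+\delta,\yplus]$ then has the steeper slope $-\beta$ with $\beta=\alpha+O(\alpha\delta)$, and set $\Phi(\yplus)=0$ for that linear piece. This keeps $\Phi\ge 0$, $\cL\Phi\le -1$, and $\Phi|_{\overline I}\le C\yplus$ with only a larger constant; the analogous adjustment at $-b$ is even easier since you need not match any outer piece there.

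The paper omits its own proof but indicates that ``the same ingredients'' as in Lemma~\ref{lem:lin-ub-different-extremes} work\,---\,i.e.\ precisely the excursion decomposition you sketch in your second paragraph: cycles between $\overline I$ and $b+L$, with the inner leg controlled by Lemma~\ref{lem:finite-2d-mean}, the outer leg by Lemma~\ref{lem:ee-ap-bound}, and a geometric number of cycles via Corollary~\ref{cor:drift-wins-over-mart-bound}. Your Lyapunov route is genuinely different and more self-contained: it bypasses the cycle bookkeeping, does not need Lemma~\ref{lem:finite-2d-mean}, and delivers the bound in one optional-stopping step. The excursion route, on the other hand, reuses the appendix's one-dimensional martingale lemmas verbatim and makes it structurally clear why the bound is $C\yplus$ rather than $C(\yplus\wedge(-\ymoins))$\,---\,only the right boundary is a genuine exit\,---\,exactly as you explain.
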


\section{Properties of the corrector defined in Section~\ref{sec:coord-prelim}}
\label{app:fk-function}

Throughout this section, we restrict ourselves to $\Ed$. Since $x_2=0$ for all $x\in \Ed$, we will use $X(t)$ as a shorthand for both $X_1(t)$ and $(X_1(t),0)$, use $x$ for both $x_1$ and $(x_1,0)$, and so on.
We suppose that there exists a (unique) invariant probability measure~$\invm^\Ed$ concentrated on $\Ed=(0,1)\times \{0\}$ as in Theorem~\ref{thm:1d}.2. In other words, we assume that both endpoints of $\Ed$ are sources for the dynamics restricted to $\Ed$. 
This also means that the process $Y(t)=y(X(t))$ on $\rr\times\{0\}$ is a solution of
the one-dimensional restriction
\begin{align*}
    \dd Y(t) &= \ell(Y(t)) \dd t + s_{1}(Y(t)) \dd W_{1}(t)+s_{2}(Y(t)) \dd W_{2}(t),
\end{align*}
of the It\^o SDE~\eqref{eq:alt-SDE-first-h},  
where $\ell,s_1,s_2\in C^2(\rr)$,  $s_1(y)^2+s_2(y)^2>0$ for all $y\in\rr$ and 
there are $\lambda^0,\lambda^1>0$ such that
\begin{subequations}
\label{eq:coefs-near-endpoints}
\begin{align}
    \lim_{y\to-\infty} \ell(y)&= \lambda^0,\quad
    \lim_{y\to\infty} \ell (y)= -\lambda^1, 
\end{align}  
\end{subequations}

Due to Remark~\ref{rem:mixing},  for every bounded $g\in C(\Ed)$ satisfying $\int g\dd \invm^{\Ed}=0$, the integral in the following definition converges absolutely:
\begin{equation}
    \label{eq:def_of_psi}
    \psi(x) = \int_0^\infty \ee g(X^x(t))\dd t,\quad x\in \Ed.
\end{equation}
We are interested in $g$ given by \eqref{eq:g_is_centered_Lambda}.
This function $g$ and its first derivative are bounded. 
This will be our only standing assumption on~$g$ in this section.
We denote by $\cL^\Ed$ the (local) generator  of the restriction of~\eqref{eq:main-eq-h} to~$\Ed$: for $\varphi\in C^2(\Ed)$,
\begin{align}
\label{eq:def-cL-as-diff-Ed}
    \cL^{\Ed} \varphi(x_1)
        &= b_1(x_1,0) \varphi'(x_1) 
        + \frac 12 \sum_{m} \partial_{1}\sigma_{m 1}(x_1,0) \sigma_{m 1}(x_1,0) \varphi'(x_1) + \frac 12 \sum_{m} \sigma_{m 1}(x_1,0)^2 \varphi''(x_1).
\end{align}

\begin{lemma}
\label{lem:psi-properties-i}
    The function $\psi$ belongs to $C^2(\Ed)$ and 
    satisfies the Poisson equation
    \begin{equation}
    \label{eq:ode-psi}
        \cL^{\Ed} \psi = {-g}.
    \end{equation}
\end{lemma}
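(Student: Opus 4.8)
The plan is to recognize $\psi$ as the \emph{resolvent-type corrector} familiar from ergodic theory of Markov processes, and to verify the two claims (regularity and the Poisson equation) by combining probabilistic representations with elliptic regularity for the one-dimensional operator $\cL^\Ed$. The key point is that $\cL^\Ed$ is a genuine second-order elliptic operator on $\Ed=(0,1)$ with $C^1$ coefficients (by Assumption~\ref{req:smoothness-2d}) and strictly positive diffusion coefficient $a(x_1)=\sum_m\sigma_{m1}(x_1,0)^2>0$ on $(0,1)$ (this positivity is exactly the edge-ellipticity that follows from Assumption~\ref{req:irreducibility} restricted to $\Ed$, just as in the Remark after Assumption~\ref{req:hyperbolicity-1d}). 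So the obstacle is not ellipticity but the behaviour near the endpoints, where $a$ degenerates and $\psi$ only grows logarithmically; one must be careful that ``$\psi\in C^2(\Ed)$'' is an \emph{interior} statement.

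First I would establish the Poisson equation formally on any compact subinterval. Fix $x_1\in\Ed$ and a small $t_0>0$. By the Markov property and the It\^o formula applied to the bounded function $P^tg$ (which is smooth in the interior by hypoellipticity, here just ellipticity on $\Ed$), one has
\begin{equation*}
    P^{t_0}\psi(x_1)-\psi(x_1)=\int_0^{t_0}\frac{\dd}{\dd s}P^s\psi(x_1)\,\dd s=\int_0^{t_0}P^s(\cL^\Ed\psi)(x_1)\,\dd s,
\end{equation*}
while on the other hand $P^{t_0}\psi(x_1)-\psi(x_1)=-\int_0^{t_0}P^sg(x_1)\,\dd s$ directly from the definition \eqref{eq:def_of_psi} and the exponential-mixing bound of Remark~\ref{rem:mixing} (which justifies differentiating under the integral and the interchange $\psi(X^{x_1}(t_0))=\int_0^\infty\ee g(X(t_0+t))\,\dd t$). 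Dividing by $t_0$ and letting $t_0\downarrow0$, using continuity of $s\mapsto P^sg(x_1)$ and of $s\mapsto P^s(\cL^\Ed\psi)(x_1)$, yields $\cL^\Ed\psi(x_1)=-g(x_1)$ pointwise. The only delicate part here is justifying that $\psi$ is regular enough for the It\^o formula; I would handle this by the bootstrap in the next paragraph and apply the argument with $\psi$ replaced by its known-smooth approximants, or equivalently run the whole identification on a fixed compact $[a,b]\subset(0,1)$ using the exit time from $[a,b]$ in place of $t_0$ and Dynkin's formula.

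Second I would prove $\psi\in C^2(\Ed)$. The cleanest route is elliptic regularity: on any $[a,b]\Subset(0,1)$, let $u$ solve the linear ODE boundary value problem $\cL^\Ed u=-g$ on $(a,b)$ with $u(a)=\psi(a)$, $u(b)=\psi(b)$. Since $a(\cdot)$ is $C^1$ and strictly positive on $[a,b]$ and $g$ is continuous, standard ODE theory (variation of parameters / the explicit scale-and-speed representation for one-dimensional diffusions) gives a solution $u\in C^2([a,b])$. A probabilistic uniqueness argument — Dynkin's formula applied to $\psi-u$ up to the exit time $\sigma$ from $(a,b)$, using that $\ee^{x_1}\sigma<\infty$ (edge-ellipticity) and that $\psi-u$ vanishes at $a,b$ — shows $\psi=u$ on $[a,b]$, using that $\psi(X(\sigma))=u(X(\sigma))$ and the martingale obtained by subtracting the drift $\int_0^{\cdot}\cL^\Ed(\psi-u)=0$. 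Hence $\psi\in C^2([a,b])$; since $a,b$ are arbitrary, $\psi\in C^2(\Ed)$, and \eqref{eq:ode-psi} holds on all of $\Ed$.

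I expect the main obstacle to be the circularity between ``$\psi$ solves the Poisson equation'' and ``$\psi$ is $C^2$'': one cannot apply It\^o to $\psi$ before knowing its regularity, yet the natural proof of regularity wants the equation. The resolution, as indicated above, is to decouple them: first produce an independently-constructed $C^2$ solution $u$ of the ODE on each compact subinterval (pure ODE theory, no reference to $\psi$'s smoothness), then identify $\psi=u$ by a probabilistic uniqueness/representation argument that uses only continuity and boundedness of $\psi$ together with the finiteness of exit times and the exponential-mixing bound of Remark~\ref{rem:mixing}. A secondary technical point — controlling $\psi$ near the endpoints of $\Ed$, where it is unbounded — is not needed for this lemma (which is purely an interior statement) but is flagged in the text after Lemma~\ref{lem:psi} for later use; the logarithmic growth rate there would be read off from \eqref{eq:coefs-near-endpoints} via the change of variables $Y=y(X)$ and the exit-time estimates of Appendix~\ref{app:martingale}.
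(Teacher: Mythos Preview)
Your overall strategy---localize to a compact subinterval $[a,b]\subset\Ed$, produce a $C^2$ solution $u$ of the Dirichlet problem $\cL^\Ed u=-g$ with $u(a)=\psi(a)$, $u(b)=\psi(b)$, then identify $\psi=u$---is exactly the paper's approach. The paper cites Proposition~7.2 in \cite[Section~5.7.A]{KaSh} for the existence/representation of the $C^2$ solution and then checks the two required ingredients: $\ee^x\tau<\infty$ and the stochastic representation \eqref{stoch-representation} for $\psi$.

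There is, however, a genuine gap in your identification step. You write ``Dynkin's formula applied to $\psi-u$ \ldots\ the martingale obtained by subtracting the drift $\int_0^{\cdot}\cL^\Ed(\psi-u)=0$''. This is circular twice over: applying It\^o/Dynkin to $\psi-u$ presupposes $\psi\in C^2$, and asserting $\cL^\Ed(\psi-u)=0$ presupposes $\cL^\Ed\psi=-g$, both of which are what you are trying to prove. You recognize the circularity in your final paragraph and say the identification should ``use only continuity and boundedness of $\psi$'', but you do not supply the mechanism. The paper's mechanism is this: apply the Feynman--Kac representation to $u$ (legitimate since $u\in C^2$ by construction), giving
\[
u(x)=\ee\big[u(X^x(\tau))\big]+\ee\int_0^\tau g(X^x(t))\,\dd t,
\]
and then, \emph{separately and without any regularity of $\psi$}, derive the identical formula for $\psi$ directly from its definition $\psi(x)=\int_0^\infty\ee g(X^x(t))\,\dd t$ via the strong Markov property at $\tau$:
\[
\psi(x)=\ee\int_0^\tau g(X^x(t))\,\dd t+\ee\Big[\int_0^\infty\ee g\big(X^{X^x(\tau)}(s)\big)\,\dd s\Big]=\ee\int_0^\tau g(X^x(t))\,\dd t+\ee\big[\psi(X^x(\tau))\big].
\]
Since $u$ and $\psi$ agree on $\{a,b\}$ by the choice of boundary data, the two representations coincide and $\psi=u$ on $[a,b]$. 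Once you replace your Dynkin-on-$(\psi-u)$ step with this strong-Markov computation, your proof is complete and matches the paper's.
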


\begin{proof}  
    The idea of the proof is to use the classical relation between diffusions and second-order differential operators and interpret
    \begin{align*}
        \ee \psi(X^{x}(t)) + \int_{0}^t \ee g(X^{x}(r)) \dd r
        &= \ee \left[\int_0^\infty \ee g(X^{(X^{x}(t),0)}(r)) \dd r\right] + \int_{0}^t \ee g(X^{x}(r)) \dd r2r
        \\ &= \int_t^\infty \ee[g(X^{x}(r))]\dd r+ \int_{0}^t \ee g(X^{x}(r))  \dd r 
        =\psi(x),
    \end{align*}
    as the (stationary) Feynman--Kac formula for~\eqref{eq:ode-psi}. We just need to make this interpretation precise but, since $\Ed$ is noncompact and $\sigma_1$ is not bounded away from $0$ on $\Ed$, we cannot apply existing results like Theorem~1 of \cite{Pardoux-Veretennikov:MR1872736} directly. It suffices though to prove that for an arbitrary interval $[a,b] \subset \Ed$, the function
    $\psi$ is a (unique) classical solution of the Poisson problem
    \begin{align*}
    \cL^\Ed\phi(x) &=-g(x),\quad x\in (a,b),\\
    \phi(a)&=\psi(a),\\   
    \phi(b)&=\psi(b).
    \end{align*}
    According to Proposition~7.2  and the discussion after Remark~7.5 in~\cite[Section~{5.7.A}]{KaSh}, it is sufficient to 
    introduce $\tau=\inf\{t\ge 0:\ X(t)\in\{a,b\}\}$ and 
    check that for all 
    $x\in(a,b)$
    \begin{equation}
    \label{eq:expected-exit-finite} 
    \ee^x\tau<\infty
    \end{equation}    
    and 
    \begin{equation}
    \label{stoch-representation}
    \psi(x)=\ee\left[\psi(a)\one_{\{X^x(\tau)=a\}}\right]+\ee\left[\psi(b)\one_{\{X^x(\tau)=b\}}\right]+\ee\int_0^\tau g(X^x(t))dt.
    \end{equation}  

    Since $\inf_{x\in [a,b]}\sigma_1(x)>0$, relation~\eqref{eq:expected-exit-finite} directly follows from Lemma~7.4 in~\cite[Section~{5.7.A}]{KaSh}. Relation~\eqref{stoch-representation} follows from the strong Markov property once we rewrite its right-hand side as 
    \begin{equation*}
    \ee\bigg[\one_{\{X^x(\tau)=a\}}  \int_\tau^{\infty} g(X^x(r))dr   +\one_{\{X^x(\tau)=b\}} \int_\tau^{\infty} g(X^x(r))dr   +\int_0^\tau g(X^x(t))dt\bigg],
    \end{equation*}    
    and the proof is completed.
\end{proof}    

Let us study the behavior of~$\psi$ near the endpoints of~$\Ed$.
The following result concerns the right endpoint $\Or^1=(1,0)$. A similar result holds near the left endpoint $\Or^0=(0,0)$. 
We  will need the map $x(\cdot)$ 
defined on~$\rr$ as the inverse of the smooth map $y(\cdot)$ introduced in Section~\ref{sec:coord-prelim}.

\begin{lemma}
\label{lem:psi-as-time}
    There is $y_0$ such that  if $y\ge y_0$ and $h>0$ then
    \begin{align}
        \label{eq:increment_of_psi}
        \psi(x(y+h)) - \psi(x(y)) 
        =
        \ee \int_{0}^{\tau} g(X^{x(y+h)}(t)) \dd t,
    \end{align}
    where $\tau  = \inf\{t > 0 : X^{x(y+h)}(t) \leq x(y) \}$. The function $y\mapsto \partial_y \psi(x(y))$ is bounded on $[y_0,+\infty)$.
\end{lemma}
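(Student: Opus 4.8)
The plan is to establish both assertions using the strong Markov property together with the one-dimensional exit-time and hitting-probability estimates from Appendix~\ref{app:martingale}, applied to the transformed process $Y(t) = y(X(t))$ on $\rr\times\{0\}$. First I would record the structure of the process near the right endpoint: by~\eqref{eq:coefs-near-endpoints}, there is $y_0$ and $\nu > 0$ such that $\ell(y) \leq -\nu$ for all $y \geq y_0 - 1$, so that, once $Y$ starts at or above $y_0$, its increments are dominated above by a process $Z(t) = -\nu t + M(t)$ with $M \in \cM$ (the quadratic-variation Lipschitz constant being uniform since $s_1^2 + s_2^2$ is bounded). In particular, for $y \geq y_0$ and $h > 0$, starting the process at $x(y+h)$, the hitting time $\tau = \inf\{t > 0 : X^{x(y+h)}(t) \leq x(y)\}$ corresponds to $Y$ first reaching level $y$ from above. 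By Corollary~\ref{cor:finite-mean} (applied to the process $Z$ with a bounded barrier above, or directly since $Y$ must eventually cross downward by Corollary~\ref{cor:drift-wins-over-mart-div}) the stopping time $\tau$ is almost surely finite, and in fact $X^{x(y+h)}(t)$ stays in the interval $(x(y), 1)$ for $t < \tau$ and hits $x(y)$ exactly at time $\tau$.

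Next I would prove the identity~\eqref{eq:increment_of_psi}. Using the definition~\eqref{eq:def_of_psi} of $\psi$ and the absolute convergence guaranteed by Remark~\ref{rem:mixing}, write
\[
    \psi(x(y)) = \ee\int_0^\infty g(X^{x(y)}(t))\,\dd t,
    \qquad
    \psi(x(y+h)) = \ee\int_0^\infty g(X^{x(y+h)}(t))\,\dd t.
\]
Splitting the second integral at $\tau$ and applying the strong Markov property at $\tau$, using that $X^{x(y+h)}(\tau) = x(y)$ almost surely, gives
\[
    \ee\int_\tau^\infty g(X^{x(y+h)}(t))\,\dd t
    = \ee\left[\ee^{X^{x(y+h)}(\tau)}\int_0^\infty g(X(t))\,\dd t\right]
    = \psi(x(y)),
\]
so subtracting yields~\eqref{eq:increment_of_psi}. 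Care is needed to justify interchanging the expectation with the splitting of the integral; this follows from the absolute integrability $\ee\int_0^\infty |g(X^{x(y+h)}(t))|\,\dd t < \infty$, which itself follows from Remark~\ref{rem:mixing} applied with $|g|$ in place of the centered quantity, or from a direct tail estimate on $\ee|g(X^{x(y+h)}(t))|$ using the exponential mixing.

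For the boundedness of $y \mapsto \partial_y\psi(x(y))$ on $[y_0,\infty)$, I would bound $\psi(x(y+h)) - \psi(x(y))$ using~\eqref{eq:increment_of_psi}: since $g$ is bounded, $|\psi(x(y+h)) - \psi(x(y))| \leq \|g\|_\infty\, \ee\,\tau$, and by Lemma~\ref{lem:ee-ap-bound} (applied to $Z$, noting $-\nu < 0$ means we instead use the version with positive drift for the reflected/reversed process, i.e. bounding the downward hitting time of a process with negative drift) we get $\ee\,\tau \leq C h$ with $C$ independent of $y \geq y_0$ and $h > 0$. Dividing by $h$ and letting $h \to 0$ gives $|\partial_y\psi(x(y))| \leq C\|g\|_\infty$ for $y \geq y_0$; the existence of this derivative is already guaranteed by $\psi \in C^2(\Ed)$ from Lemma~\ref{lem:psi-properties-i} combined with smoothness of $y \mapsto x(y)$. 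The main obstacle I anticipate is getting the linear-in-$h$ bound $\ee\,\tau \leq Ch$ uniform in $y$: the relevant estimate in Appendix~\ref{app:martingale} (Lemma~\ref{lem:ee-ap-bound}) is stated for the hitting time of an upper barrier by a process with positive drift, so one must carefully set up the comparison so that the downward crossing of $Y$ from level $y+h$ to level $y$, which is aligned with the negative drift $-\nu$, is controlled by $\ee\,\tau \leq h/\nu$ via the optional stopping argument in the proof of that lemma — using that $\ell(\cdot) \leq -\nu$ throughout the region $[y, y+h] \subset [y_0 - 1, \infty)$ traversed before time $\tau$.
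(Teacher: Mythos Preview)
Your argument for the boundedness of $y\mapsto\partial_y\psi(x(y))$ is correct and matches the paper's, but there is a genuine gap in your derivation of the identity~\eqref{eq:increment_of_psi}. You write $\psi(x(y+h)) = \ee\int_0^\infty g(X^{x(y+h)}(t))\,\dd t$ and then split the pathwise integral at~$\tau$. This interchange of expectation and integration would require $\ee\int_0^\infty |g(X^{x(y+h)}(t))|\,\dd t < \infty$, i.e.\ $\int_0^\infty \ee|g(X^{x(y+h)}(t))|\,\dd t < \infty$. But by ergodicity $\ee|g(X^{x}(t))| \to \int |g|\,\dd\invm^\Ed > 0$, so this integral diverges. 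Remark~\ref{rem:mixing} only gives $\int_0^\infty |\ee f(X^x(t)) - \int f\,\dd\invm^\Ed|\,\dd t < \infty$; applying it with $f=|g|$ confirms that $\ee|g(X^x(t))|$ approaches a \emph{nonzero} limit at an integrable rate, which is the opposite of what you need. The definition $\psi(x)=\int_0^\infty \ee g(X^x(t))\,\dd t$ is an integral of expectations, not an expectation of a pathwise integral, and the two cannot be identified here.

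The paper handles exactly this difficulty by a truncation argument: it works with $I(T,T_0)=\ee\big[\int_0^\tau g(X^{x(y+h)}(t))\,\dd t\,\one_{\{\tau<T-T_0\}}\big]$, expands $\int_0^\tau=\int_0^T-\int_\tau^{\tau+T_0}-\int_{\tau+T_0}^T$ on that event, and uses (i) the mixing bound~\eqref{eq:remainders} to control the tail of the \emph{expectation} integrals, (ii) the strong Markov property at~$\tau$ together with~\eqref{eq:remainders} at the point $x(y)$, and (iii) the exponential tail of~$\tau$ from Lemma~\ref{lem:ub-tauplus-prob} to kill the residual term $\ee\big[\int_0^T g\,\dd t\,\one_{\{\tau>T-T_0\}}\big]$. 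All manipulations are then on finite-horizon integrals where Fubini is unproblematic, and one passes to the limit $T\to\infty$ via dominated convergence. Your direct splitting could be salvaged, but only by reintroducing precisely this kind of finite-$T$ cutoff and $\eps$-control of the tails.
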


\begin{proof} 
    Using~\eqref{eq:coefs-near-endpoints}, we choose $y_0$ such that $\ell(y)< -\tfrac 12 \lambda^1 < 0$ for all $y>y_0$. This allows to apply results from Appendix~\ref{app:martingale} to the process~$Y(t)$ on $[y_0,+\infty)$.   Our goal is to compute $I$, the right-hand side of  \eqref{eq:increment_of_psi}.  Let $D=\sup|g(x)|$.  Since~$D<\infty$ and $\ee\tau<\infty$ due to 
    Corollary~\ref{cor:finite-mean},  $I$  is well-defined and finite. Moreover, the dominated convergence theorem implies that for all  $T_0>0$,
    \begin{equation}
        \label{eq:limit_Busemann}
        I=\lim_{T\to\infty}  I(T,T_0),
    \end{equation}
    where
    \[
        I(T,T_0)= \ee\bigg[\int_0^\tau g(X^{x(y+h)}(t)) \dd t \, \one_{\{\tau<T-T_0\}} \bigg].
    \]
    Let us fix arbitrary $\eps>0$ and choose $T_0$ such that 
    \begin{align}\label{eq:remainders}
        \int_{T_0}^\infty|\ee g(X^{x(y+h)}(t))|\dd t,\ 
        \int_{T_0}^\infty|\ee g(X^{x(y)}(t))|\dd t&<\eps. 
    \end{align}
    We can write
    \begin{align*}
        I(T,T_0)
        =& \ee \bigg[\int_0^T g(X^{x(y+h)}(t))\dd t \, \one_{\{\tau<T-T_0\}}\bigg]  - \ee\bigg[ \int_\tau^T  g(X^{x(y+h)}(t)) \dd t  \, \one_{\{\tau<T-T_0\}}\bigg]
        \\=
        & \int_0^T \ee g(X^{x(y+h)}(t))\dd t - \ee \bigg[\int_0^T g(X^{x(y+h)}(t))\dd t\, \one_{\{\tau>T-T_0\}}\bigg]\\
        &- \ee\bigg[ \int_\tau^{\tau+T_0}  g(X^{x(y+h)}(t)) \dd t  \, \one_{\{\tau<T-T_0\}}\bigg]
        - \ee\bigg[ \int_{\tau+T_0}^T  g(X^{x(y+h)}(t)) \dd t  \, \one_{\{\tau<T-T_0\}}\bigg]
        \\
        =& I_1(T)-I_2(T,T_0)-I_3(T,T_0)-I_4(T,T_0).
    \end{align*}
    If $T>T_0$, we have $|I_1-\psi(x(y+h))|<\eps$,  
    $|I_3-\psi(x(y))|<\eps$, $|I_4|<\eps$ due to the strong Markov property and~\eqref{eq:remainders}. Also, 
    Lemma~\ref{lem:ub-tauplus-prob} implies that 
    for sufficiently large $T$,
    \[
    |I_2|<TD\pp\{\tau>T-T_0\}<\eps. 
    \]
    Combining these estimates with \eqref{eq:limit_Busemann} and letting $\eps\to0$, we obtain \eqref{eq:increment_of_psi}. We can use it along with the bound on $\ee \tau$ given in Lemma~\ref{lem:ee-ap-bound} and boundedness of $g$ to derive the Lipschitz property of~$\psi(x(\cdot))$. Boundedness of $\partial_y\psi(x(y))$ follows.
\end{proof}    

\begin{lemma}
\label{lem:psi-properties-ii}
    The functions $x \mapsto x(1-x)\psi'(x)$ and $x \mapsto [x(1-x)]^2\psi''(x)$ are bounded.
\end{lemma}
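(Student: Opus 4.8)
The plan is to leverage the stochastic representation established in Lemma~\ref{lem:psi-as-time} near each endpoint of $\Ed$, together with the explicit behavior of the coefficients there, to extract decay rates for $\psi'$ and $\psi''$ that exactly cancel the blow-up of the factors $x(1-x)$ and $[x(1-x)]^2$ near $x=0$ and $x=1$.

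First I would work near the right endpoint $\Or^1=(1,0)$, where in the chart $y = -\ln(1-x)$ we have $x(y) = 1 - \Exp{-y}$, hence $1-x(y) = \Exp{-y}$ and $\partial_y = (1-x)\partial_x$. Thus $x(1-x)\psi'(x) = x\,\partial_y(\psi\circ x)(y)$, and boundedness of $x\mapsto x(1-x)\psi'(x)$ on a neighborhood of $1$ is equivalent to boundedness of $y\mapsto \partial_y(\psi\circ x)(y)$ on $[y_0,\infty)$, which is precisely the last assertion of Lemma~\ref{lem:psi-as-time}. For the second derivative, I would differentiate the relation $\partial_y(\psi\circ x)(y) = (1-x)\psi'(x(y))$ once more in $y$ to get $\partial_{yy}(\psi\circ x) = (1-x)^2\psi''(x) - (1-x)\psi'(x)$ (using $\partial_y(1-x) = -(1-x)$), so $[x(1-x)]^2\psi''(x) = x^2\big(\partial_{yy}(\psi\circ x)(y) + \partial_y(\psi\circ x)(y)\big)$. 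Since $x$ is bounded and $\partial_y(\psi\circ x)$ is already controlled, it remains to bound $\partial_{yy}(\psi\circ x)$ on $[y_0,\infty)$. To this end I would rewrite the Poisson equation $\cL^\Ed\psi = -g$ in the $y$-chart: using Lemma~\ref{lem:Y} (or a direct computation), $\psi\circ x$ solves $\tfrac12 s(y)^2 (\psi\circ x)'' + \ell(y)(\psi\circ x)' = -g(x(y))$ with $s(y)^2 = s_1(y)^2 + s_2(y)^2$ bounded away from $0$ on $[y_0,\infty)$ (relative ellipticity), $\ell$ bounded, and $g$ bounded. Solving for $(\psi\circ x)''$ gives $(\psi\circ x)''(y) = -\tfrac{2}{s(y)^2}\big(g(x(y)) + \ell(y)(\psi\circ x)'(y)\big)$, which is bounded because each term on the right is bounded (using again boundedness of $(\psi\circ x)'$). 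The symmetric argument at the left endpoint $\Or^0=(0,0)$, in the chart $y = \ln x$ so that $x(y) = \Exp{y}$ and $x\,\partial_x = \partial_y$, gives $x(1-x)\psi'(x) = (1-x)\,\partial_y(\psi\circ x)(y)$ and $[x(1-x)]^2\psi''(x) = (1-x)^2\big(\partial_{yy}(\psi\circ x)(y) - \partial_y(\psi\circ x)(y)\big)$, and the same Poisson-equation manipulation applies, using the analog of Lemma~\ref{lem:psi-as-time} near the left endpoint and $\ell(y)\to\lambda^0$ as $y\to-\infty$.

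On the compact middle part of $\Ed$, away from the endpoints, the claim is immediate: $\psi\in C^2(\Ed)$ by Lemma~\ref{lem:psi-properties-i}, so $\psi'$ and $\psi''$ are continuous hence bounded on any compact subinterval, and the weights $x(1-x)$, $[x(1-x)]^2$ are bounded there too. I would therefore fix $y_0$ large enough (from Lemma~\ref{lem:psi-as-time} and its left-endpoint counterpart) so that the two endpoint neighborhoods $\{x < x(-y_0)\}$ and $\{x > x(y_0)\}$ together with the compact middle cover $\Ed$, and conclude by combining the three estimates.

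The main obstacle I expect is not any single estimate but making sure the endpoint analysis is airtight: specifically, confirming that the coordinate changes $y=\ln x$ and $y=-\ln(1-x)$ transform $\cL^\Ed$ into an operator with bounded drift and uniformly elliptic diffusion coefficient on the relevant half-lines (this is essentially the content of Lemma~\ref{lem:Y} restricted to $\Ed$, but one must check the relative-ellipticity hypothesis $s_1^2+s_2^2>0$ is used correctly to get a positive lower bound on $s(y)^2$), and that the boundedness of $\partial_y(\psi\circ x)$ from Lemma~\ref{lem:psi-as-time} is genuinely uniform over $[y_0,\infty)$ rather than merely locally bounded. Once these are in hand, the rest is the routine chain-rule bookkeeping sketched above.
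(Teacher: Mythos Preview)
Your reduction to bounding $(\psi\circ x)'$ and $(\psi\circ x)''$ in the $y$-chart is exactly the paper's approach, and your treatment of the first derivative via Lemma~\ref{lem:psi-as-time} and of the compact middle via Lemma~\ref{lem:psi-properties-i} is fine. The gap is in your bound for $(\psi\circ x)''$: you solve the transformed Poisson equation $\tfrac12 s(y)^2(\psi\circ x)'' + \ell(y)(\psi\circ x)' = -g(x(y))$ for $(\psi\circ x)''$ by dividing by $s(y)^2$, asserting that relative ellipticity gives a positive lower bound on $s(y)^2$ as $y\to\infty$. But the hypothesis in this appendix is only the pointwise condition $s_1(y)^2+s_2(y)^2>0$ for all $y\in\rr$; it does not rule out $s(y)^2\to 0$ at infinity. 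Concretely, $s_m(y)\to -\partial_1\sigma_{m1}(1,0)$ as $y\to\infty$, and nothing in Assumptions~\ref{req:smoothness-2d}--\ref{req:hyperbolicity-2d} prevents both of these derivatives from vanishing. You flag this as something to check, but it is not a bookkeeping issue: the lower bound can genuinely fail.

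The paper handles the degenerate case by a different route: set $\phi(y)=\ell(y)(\psi\circ x)'(y)+g(x(y))$, so that the Poisson equation reads $\tfrac12 s(y)^2(\psi\circ x)''=-\phi$. Differentiating and using the equation again, $\phi$ is seen to solve a first-order linear ODE $\phi'+\beta(y)\phi=f(y)$ with $f$ bounded and $\beta(y)=2\ell(y)/s(y)^2$ negative and bounded away from $0$ for large $y$ (since $\ell(y)\to-\lambda^1<0$ while $s(y)^2$ stays bounded). The unique bounded solution of such an equation is given by variation of constants, and one reads off boundedness of $\phi'$ directly; since $\phi' = \ell'(\psi\circ x)' + \ell(\psi\circ x)'' + (g\circ x)'$ and all terms except $\ell(\psi\circ x)''$ are already bounded (using $\ell\to-\lambda^1\ne 0$), boundedness of $(\psi\circ x)''$ follows without ever dividing by $s(y)^2$.
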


\begin{proof}
    It suffices to prove that $\partial_y\psi(x(y))$ and $\partial^2_y\psi(x(y))$ are bounded. 
    We focus on estimates as $y\to \infty$; those near $-\infty$ are similar. Changing variables and using 
    Lemmas~\ref{lem:psi-properties-i} and~\ref{lem:psi-as-time}, we obtain that $\psi(x(\cdot))$ is a $C^2$ and globally Lipschitz solution of 
    \begin{equation}
        \label{eq:backwardK-for-trans-psi}
        \frac{1}{2}s(y)^2\partial_{y}^2\psi(x(y)) + \ell(y)\partial_y\psi(x(y))
        = -g(x(y)).
    \end{equation}
    Boundedness of $\partial_y\psi(x(y))$ follows from the Lipschitz property.  If $s(y)$ is assumed bounded away from~$0$ as $y\to\infty$, then equation~\eqref{eq:backwardK-for-trans-psi} immediately implies that $\partial_{y}^2\psi(x(y))$ is bounded (since $\ell(y)$ and $g(y)$ are bounded).

    In the general case,  we instead introduce
    $
        \phi(y) = \ell(y) \partial_y \psi(x(y)) + g(y).
    $
    It is straightforward to show using \eqref{eq:backwardK-for-trans-psi} and Lemma~\ref{lem:psi-as-time}, that $\phi$ is a bounded solution of an equation
    of the form $\partial_y \phi(y) + \beta(y) \phi(y) = f(y)$, where $f$ is bounded and $\beta$ is negative and bounded away from 0 as $y\to\infty$. One can use these properties and variation of constants to show that a bounded solution is unique and allows for a concise expression whose derivative is easy to bound.
\end{proof}

\begin{proof}[Proof of Lemma~\ref{lem:psi}]
    We derive the estimates for $x_1 \leq \tfrac 12$ only; those for $x_1 \geq \tfrac 12$ are obtained in the same way, replacing estimates as $x_1 \to 0$ with their counterparts as $x_1 \to 1$. Inspecting the structure in~\eqref{eq:def-cL-as-diff}, we see that there are two kinds of contributions to $(\cL\varphi)(x_1,x_2)$: those that involve $x_{i'}$-derivatives (as part of the Stratonovich-to-It\^o correction), and those that do not.

    But since the function at hand depends on~$x_1$ only (namely $\varphi(x_1,x_2) = \psi(x_1)$), the only terms of the first kind are the following:
    \[  
        \frac 12 \sum_{m} \partial_2 \sigma_{m 1}(x_1,x_2) \sigma_{m 2}(x_1,x_2) \psi'(x_1).
    \]
    By Lemma~\ref{lem:psi-properties-ii}, we have $\psi'(x_1) = O(\tfrac{1}{x_1})$ as $x_1 \to 0$. By Assumptions~\ref{req:smoothness-2d} and~\ref{req:invar-2d}, we have $\partial_2\sigma_{m 1}(x_1,x_2) = O(x_1)$ and $\sigma_{m 2}(x_1,x_2) = O(x_2)$ as $x_1 \to 0$ and $x_2 \to 0$. Hence, these terms are indeed $O(x_2)$ as $x_2 \to 0$, uniformly in~$x_1\leq \tfrac 12$. 

    As for terms of the second kind, we claim that they amount to an $O(x_2)$-perturbation of $(\cL^{\Ed}\psi)(x_1)$ as $x_2 \to 0$. To see this, we use Assumptions~\ref{req:smoothness-2d} and~\ref{req:invar-2d} to expand
    \begin{align*}
        & b_1(x_1,x_2)\psi'(x_1) + \frac 12 \sum_{m} {\partial_1 \sigma_{m 1}(x_1,x_2) \sigma_{m 1}(x_1,x_2)} \psi'(x_1)
             + \frac 12 \sum_{m} {\sigma_{m 1}(x_1,x_2)^2} \psi''(x_1) 
        \\
        &\qquad = 
         [b_1(x_1,0) + O(x_2)O(x_1)]\psi'(x_1) 
         + \frac 12 \sum_{m} {[\partial_1 \sigma_{m 1}(x_1,0) + O(x_2)] [\sigma_{m 1}(x_1,0) + O(x_2)O(x_1)]} \psi'(x_1) \\
             &\qquad\qquad {} + \frac 12 \sum_{m} {[\sigma_{m 1}(x_1,0) + O(x_2)O(x_1)]^2} \psi''(x_1).
     \end{align*}
    By Lemma~\ref{lem:psi-properties-ii}, we have $\psi'(x_1) = O(\tfrac{1}{x_1})$ and $\psi''(x_1) = O(\tfrac{1}{x_1^2})$ as $x_1 \to 0$. By Assumptions~\ref{req:smoothness-2d} and~\ref{req:invar-2d}, we have $\sigma_{m 1}(x_1,0) = O(x_1)$  as $x_1 \to 0$. Hence, these terms are indeed $O(x_2)$-perturbations of $(\cL^{\Ed}\psi)(x_1)$
    as $x_2 \to 0$, uniformly in $x_1 \leq \tfrac 12$; see~\eqref{eq:def-cL-as-diff-Ed}.
\end{proof}

\providecommand{\bysame}{\leavevmode\hbox to3em{\hrulefill}\thinspace}

\end{document}